\documentclass{article}
\usepackage{graphicx} 
\usepackage[T1]{fontenc}

\usepackage[a4paper]{geometry}
\usepackage{a4wide}
\usepackage{indentfirst}

\usepackage{amsthm}
\usepackage{amsmath}
\usepackage{amssymb}
\allowdisplaybreaks
\usepackage{xcolor}
\newtheorem{definition}{Definition}
\newtheorem{assumption}{Assumption}
\newtheorem{lemma}{Lemma}
\newtheorem{remark}{Remark}
\newtheorem{theorem}{Theorem}
\newtheorem{corollary}{Corollary}
\newtheorem{proposition}{Proposition}

\AtBeginEnvironment{algorithm}{\SetInd{.5em}{1em}}
\usepackage[boxruled]{algorithm2e}
\usepackage{algorithmic}

\usepackage{biblatex}
\bibliography{references}
\usepackage{mathrsfs}
\usepackage{hyperref}
\usepackage{csquotes}

\title{Variational Contraction Conditions for Iterative Algorithms in Multi-Population Discrete-Time Regularized Mean-Field Games\thanks{Research of the authors was supported in part by the AFOSR Grant FA9550-24-1-0152.}}
\author{U\u{g}ur Ayd{\i}n \and Tamer Ba\c{s}ar
\thanks{Authors are with the Department of Electrical and Computer Engineering, and the Coordinated Science Laboratory, University of Illinois Urbana-Champaign, Urbana, IL, 61801
        {\tt\small \{uaydin2,basar1\}@illinois.edu}}%
}

\begin{document}

\maketitle
\begin{abstract}
    In this work, we study the contraction conditions of iterative algorithms for stationary and finite-horizon discrete-time regularized mean-field games (MFGs) with multiple populations, where each population only interacts with the state distributions of the other populations. Due to the high dimensionality caused by the interaction of different populations, contraction rates for these algorithms cannot, in general, be expressed in terms of radicals. By studying the dynamics of these iterative algorithms and assuming that the system components of each population’s MFG are Lipschitz continuous, we present explicit (eventual) contraction conditions for each algorithm in any normed space, relying only on these Lipschitz parameters. As a consequence of these contraction conditions, we provide convergence rates of finite-horizon mean-field equilibria to infinite-horizon stationary (and nonstationary) mean-field equilibria (MFEs), under restrictions on a variational characterization of the dynamics of these iterative algorithms. In the single-population case, the restrictions we impose on this variational characterization to obtain these convergence results are less restrictive than previous results in the literature.
\end{abstract}
\section{Introduction}

In this work, we study the convergence of some basic iterative algorithms in discrete-time regularized mean-field games with multiple populations. We assume that interactions between populations are weak, in the sense that populations interact only through their state distributions, which is a standard setting in the literature on multi-population MFGs \cite{liu2025incorporating, wikecek2024multiple, uz2023reinforcement}. Our main goal is to identify the dynamical systems associated with these iterative algorithms and to study their contraction properties, which imply global asymptotic stability of the mean-field equilibrium (MFE). Beyond deterministic analyses, these global asymptotic stability results also enable the use of basic learning algorithms when components of the MFG are unknown, such as stochastic approximation \cite{borkar2000ode}.

In model-based learning settings \cite{huang2024statistical, pasztor2023efficient}, most methods are restricted to finite-horizon problems. In the single-population case, \cite[Theorem 4]{ayd} shows that learning a finite-horizon MFG with a sufficiently large horizon length can also be sufficient for the infinite-horizon setting, under additional assumptions on the system \cite[Corollaries 5 and 6]{ayd}. Another application of \cite[Theorem 4]{ayd} is a uniqueness condition for infinite-horizon, non-stationary MFEs obtained via finite-horizon MFEs, which is not directly available if one studies the infinite-horizon system itself (see \cite[Proposition 3]{ayd}). For these reasons, it is desirable to obtain contraction conditions in the multi-population setting, particularly for the finite-horizon case, since each of these results relies on establishing a horizon-length-independent contraction condition for finite-horizon MFGs.

The main difficulty in establishing suitable contraction conditions for multi-population MFGs is the presence of cross-interactions between populations. This cross-interaction, especially in the finite-horizon case, results in a high-dimensional dynamical system, which complicates the analysis. In our case, to study the contraction of this high-dimensional system under Lipschitz continuity assumptions, we will identify a matrix whose spectral radius (which will be an eigenvalue in this case) reveals the contraction property of the corresponding MFG. Since eigenvalues of matrices are roots of their characteristic equations, closed-form expressions for them are often not available in radicals, by a known theorem due to Abel and Ruffini. Furthermore, the matrices that appear in the finite-horizon case will be dense with no structure; thus, the runtime of brute-force calculation of such eigenvalues also scales with the dimension of the matrix, leading to calculating the spectral radius of an $N(T-1)\times N(T-1)$ dimensional matrix when there are $N$ populations and the horizon length is $T$. Moreover, due to the cross-interaction of these populations, the proofs of the results obtained in the single-population case do not directly scale to the multi-population setting. Thus, easily verifiable contraction conditions are desirable in the multi-population setting.

\subsection{Literature Review}

This work mainly extends the results of \cite{ayd} to the multi-population setting. The main distinction between our work and \cite{ayd} is that the proofs presented in \cite{ayd} do not scale to the multi-population setting. The contraction condition obtained in \cite[Theorem 2]{ayd} relies on proofs that are purely algebraic (except \cite[Lemma 9]{ayd}, which also admits a purely algebraic proof in the setting of that work). In this work, we will transfer the eigenvector problem associated with the matrix that characterizes contraction to the complex plane by using generating functions, and we will argue using complex-analytic methods that are common in the literature on analytic combinatorics \cite[Section VI]{flajolet2009analytic}. For continuous time MFGs, approximation of infinite-horizon MFE via finite-horizon is studied in \cite{carmona2024probabilistic} for the probabilistic solutions.

In discrete-time multi-population MFGs, there is very little work available. For example, \cite{uz2023reinforcement} studies basic RL algorithms in the setting of LQR average-cost MFGs in discrete time. Although contraction is used to analyze the learning algorithms, \cite[Theorem 3]{uz2023reinforcement} does not provide an easily verifiable condition under which the proposed contraction condition holds. In \cite{wikecek2024multiple}, the existence of an MFE is studied for multi-population discrete-time MFGs in the general case. The work \cite{xu2025q} applies model-free learning to stationary multi-population discrete-time MFGs that arise in networks. The work \cite{aggarwal2025semantic} also models a network problem as a multi-population discrete-time average-cost MFG and uses supervised learning to learn the MFE, under a blanket contraction assumption. In \cite{liu2025incorporating}, a multi-population MFG model is proposed to study epidemics in continuous time, where contraction arguments are used for learning the MFE. We note that all these works also consider weak interactions between populations.

Multi-time-scale analysis of iterative algorithms has also been studied in the discrete-time MFG setting \cite{angiuli2026analysis}. However, in that work, the global stability conditions required for convergence are justified solely in terms of the global Lipschitz properties of the MFG. In contrast, in Section~\ref{sect:7}, we show that, even in the multi-population setting, multi-time-scale analysis alone does not improve global contraction properties when one relies only on the Lipschitz properties of the dynamical system.

\subsection{Contributions}
In this paper, we present quantitative contraction conditions for multi-population stationary and finite-horizon MFGs in the presence of a regularizer.
\begin{enumerate}
    \item In Section~\ref{sect:stat}, we establish a contraction condition for a quasi-static iterative algorithm for regularized multi-population stationary MFGs; see Theorem~\ref{thrm:2}. At each iteration, the algorithm solves an MDP to compute the optimal $Q$-function of each population under the current population distribution $\tau$, and then updates the population distribution accordingly. We derive a condition that is equivalent to contractivity of this iteration and, consequently, guarantees convergence to a stationary MFE. In particular, this result extends \cite[Theorem~1]{anahtarci2023q} to the multi-population setting.
    \item In Section \ref{sect:var}, we present \emph{variational contraction conditions} for multi-population finite-horizon MFGs under regularization (Theorem \ref{thrm:4}), which is the main result of this work. The variational contraction condition that we provide reduces the problem of finding the spectral radius of the matrix to a one-dimensional optimization problem. In particular cases, for instance in the single-population case, the contraction result that we establish is equivalent to \cite[Theorem 2]{ayd}. Moreover, Theorem \ref{thrm:4} extends \cite[Theorem 2]{ayd} to the multi-population setting.
    \item In Section \ref{sect:6}, we present convergence rates and finite-time error bounds between finite-horizon MFEs and infinite-horizon MFEs (Theorem \ref{thrm:conv}).
    The variational contraction condition that we establish allows us to obtain convergence-rate results even in cases not covered by \cite[Theorem 4]{ayd} in the single-population setting. In particular, our variational contraction condition enables us to establish a convergence rate between finite-horizon and infinite-horizon MFEs whenever the stationary MFE is also contractive, which is not always possible under \cite[Theorem 4]{ayd}.
    \item In Section \ref{sect:7}, we show that contraction conditions established in Sections \ref{sect:stat} and \ref{sect:var} cannot be improved by iterating some populations on a faster time scale. Similarly, we show that the contraction condition established in Section \ref{sect:var} continues to hold when the $Q$-functions and state-measures are updated simultaneously, i.e., one does not need to solve the MDP as an intermediate step. These reductions rely on showing that the spectral radius of a nonnegative matrix remains invariant under block elimination. Nevertheless, the formulation in Section \ref{sect:stat} is still useful because it simplifies the derivation of a contraction condition.
\end{enumerate}

\section{Preliminaries}
In the mean-field setting, a \emph{population} is a continuum of homogeneous players. In our multi-population mean-field setting, we assume that there is a large number of players, each of whom belongs to a specific population. Each population has its own reward function and transition kernel.

Formally, a discrete-time \emph{multi-population mean-field game (MFG)} is described as follows:
\begin{enumerate}
    \item The game is played in discrete time, \(t \in \{1,2,\cdots\}\).
    \item The game is played by a continuum of players divided into \(N\) populations. For the sake of simplicity, we assume that each population shares the same finite state space \(X\) and finite action space \(A\).
    \item A family of state measures \(\tau_t = (\tau_{t,1},\tau_{t,2},\cdots,\tau_{t,N}) \in \prod_{j=1}^N \mathcal P(X)\) denotes the global state {distribution} of the game at time \(t\). The \(i\)-th component \(\tau_{t,i}\) describes the state distribution of population \(i\).

    We assume that at each stage \(t\), each player knows their state, their population's state measure, and the state measures of the other populations. Thus, the interaction between populations is \emph{weak}.
    \item The one-stage cost of an individual from population \(i\) is given by a function
    \(c_i: X \times A \times \prod_{j=1}^N \mathcal P(X) \to [0,\infty)\).
    The value \(c_i(x,a,\tau)\) is the cost incurred at any stage when the player is in state \(x\), takes action \(a\), and the joint state distribution of all populations is \(\tau\).
    \item State transitions for an individual in population \(i\) are defined by the stochastic kernel
    \(p_i: X \times A \times \prod_{j=1}^N \mathcal P(X) \to \mathcal P(X)\).
\end{enumerate}

With the conventions above, we represent a multi-population MFG by the tuple
\[
(X,A,(c_i)_{i \in [N]}, (p_i)_{i \in [N]},N),
\]
so that a multi-population MFG can be viewed as a collection of \(N\) distinct discrete-time MFGs. The main distinction between a multi-population MFG and a single-population MFG is the cross-interaction between different populations.

For a given \(t \in \mathbb N\) and \(i \in [N]\), a \emph{policy} is a stochastic kernel \(\pi_{t,i}:X \to \mathcal P(A)\) that specifies the distribution over actions for population \(i\) at time \(t\) given a state. We denote by \(\Pi\) the space of policies \((\pi_{t,i})_{t \in \mathbb N,\, i \in [N]}\). Policies being independent of the state measures of other populations is an artifact of the weak interaction assumption.

In general, a discrete-time MFG is an extension of discrete-time Markov decision processes (MDPs) \cite{SaBaRaSIAM}. Indeed, a MFG can be reduced to an MDP if one operates under a fixed family of state measures \((\tau_t)_{t=0}^{\infty}\). Let \((\beta_i)_{i \in [N]} \subset (0,1)\). Thus, in the infinite-horizon case, to define an objective function for each population, we need additional structure on the MFG.
For each \(i \in [N]\), \(\beta_i<1\) is the discount factor used by population \(i\). Under these discount factors, and for a given family of multi-population state measures \(\pmb{\tau} = (\tau_t)_{t=0}^{\infty}\), the discounted objective function {of the representative agent} of the population \(i \in [N]\) is defined as
\[
J_i(\pmb \pi, \pmb {\tau}) = \mathbb E_{\pmb \pi}\left[ \sum_{t=0}^{\infty}\beta_i^t\,c_i(x_t,a_t,\tau_{t,1},\tau_{t,2},\cdots,\tau_{t,N}) \right],
\]
where \(\pmb \pi = (\pi_{t,i})_{t \in \mathbb N,\, i\in[N]}\). For $J_i$, the evolutions of the states and actions are given by
\[
x(0) \sim \tau_{0,i}, \quad x(t) \sim p_i(\cdot | x(t-1), a(t-1), \tau_{t-1,i}), \quad t \geq 1, \quad a(t) \sim \pi_{t,i}(\cdot | x(t)), \quad t \geq 0.
\]

We say that a family of policies \(\pmb{\pi} := (\pi_{t,i})_{t \in \mathbb N,\, i \in [N]}\) is \emph{optimal} for a family of state measures \(\pmb{\tau} := (\tau_{t,i})_{t \in \mathbb N,\, i \in [N]}\) if, for all \(i \in [N]\),
\begin{equation}\label{eq:fgg}
\tau_{t+1,i}(\cdot) = \int_X \int_A p_i(\cdot \mid x,a,\tau_{t,1},\tau_{t,2},\cdots,\tau_{t,N})\,\pi_{t,i}(da\mid x)\,\tau_{t,i}(dx).
\end{equation}
Conversely, we say that a family of state measures \(\pmb{\tau}\) is \emph{optimal} for a family of policies \(\pmb{\pi}\) if, for all \(i \in [N]\),
\[
J_i(\pmb{\pi}, \pmb{\tau}) = \inf_{\tilde{\pmb{\pi}} \in \Pi} J_i(\tilde{\pmb{\pi}}, \pmb{\tau}).
\]

A solution to the MFG consists of a family of policies \((\pi_{t,i})_{t \in \mathbb N,\, i \in [N]}\) and state measures \((\tau_{t,i})_{t \in \mathbb N,\, i \in [N]}\) such that \((\pi_{t,i})_{t \in \mathbb N,\, i \in [N]}\) is optimal for \((\tau_{t,i})_{t \in \mathbb N,\, i \in [N]}\) and \((\tau_{t,i})_{t \in \mathbb N,\, i \in [N]}\) is optimal for \((\pi_{t,i})_{t \in \mathbb N,\, i \in [N]}\). In this case, we call \(\bigl((\pi_{t,i})_{t \in \mathbb N,\, i \in [N]},(\tau_{t,i})_{t \in \mathbb N,\, i \in [N]}\bigr)\) a \emph{mean-field equilibrium} (MFE).

In this work, we are interested in computing MFEs through iterative algorithms. For this purpose, we will mainly work with \emph{deterministic policies}. A policy \(\pmb{\pi} = (\pi_{t,i})_{t\in \mathbb N,\, i \in [N]} \in \Pi\) is said to be deterministic if, for all \(t \in \mathbb N\) and \(i \in [N]\), there exists a function \(f_{t,i}:X \to A\) such that \(\pi_{t,i}(\cdot\mid x) = \delta_{f_{t,i}(x)}(\cdot)\), where \(\delta\) denotes the Dirac measure. The space of all deterministic policies is then denoted by \(\Pi_d\).

To compute an MFE, throughout the iterations, we will need to ensure that there exists a unique Lipschitz policy. To achieve this, we first lift the action space of the MFG to \(\mathcal P(A)\). This amounts to defining an MFG \((X,\mathcal P(A),(C_i)_{i \in [N]}, (P_i)_{i \in [N]},N)\) that shares the ``same'' MFE as \((X,A,(c_i)_{i \in [N]}, (p_i)_{i \in [N]},N)\). First, we embed \(\mathcal P(A) \subset \mathbb R^{|A|}\). For any \(u \in \mathcal P(A)\), we define
\[
C_i(x,u,\tau)= C_i(x,u,\tau_1,\cdots,\tau_N)= \sum_{a \in A} c_i(x,a,\tau_1,\cdots,\tau_N)\,u(a),
\]
and
\[
P_i(\cdot\mid x,u,\tau)=P_i(\cdot\mid x,u,\tau_1,\cdots,\tau_N)= \sum_{a \in A} p_i(\cdot\mid x,a,\tau_1,\cdots,\tau_N)\,u(a).
\]
{Thus, $C_i$ and $P_i$ are affine in $\mathcal P(A)$.} Now, to ensure that we obtain unique Lipschitz policies during our iterations, we perturb each \(C_i\) as \(C_i+\rho_i\Omega_i\), where \(\rho_i>0\) and \(\Omega_i : \mathcal P(A) \to \mathbb R\). We further assume that each \(\Omega_i\) is \(1\)-strongly convex with respect to the \(\ell_1\)-norm, i.e.,
\[
\Omega_i(\theta u+(1-\theta)v) \leq \theta \Omega_i(u) +(1-\theta)\Omega_i(v) - \frac 12 \theta(1-\theta) \| u-v\|_1^2
\]
for all \(u,v \in \mathcal P(A)\) and \(\theta \in [0,1]\).
The resulting MFG \((X,\mathcal P(A),(C_i+\rho_i\Omega_i)_{i \in [N]}, (P_i)_{i \in [N]},N)\) is called a \emph{regularized MFG}, and the parameters \(\rho_i\) are called \emph{regularization parameters}. {Since $C_i$ and $P_i$ are affine, the $Q$-functions of regularized MFGs are strongly convex over $\mathcal P(A)$ due to the regularizer.} The MFE obtained from the regularized MFG deviates from that of the original lifted MFG; however, these perturbations ensure the existence of a unique Lipschitz flow of policies that achieves the objective function for any given family of state measures. Furthermore, MFE of the regularized MFGs obtained only under deterministic policies.

To complete our setup, we impose a Lipschitz continuity assumption on the system components \(c_i\) and \(p_i\) as follows:
\begin{assumption}\label{ass:1}
    We denote by \(\|\cdot\|_1\) the \(\ell_1\)-norm on \(\mathcal P(X) \subset \mathbb R^{|X|}\). For all \(i \in [N]\), \(x,\tilde x \in X\), \(a,\tilde a \in A\), and \(\tau=(\tau_i)_{i\in[N]},\tilde \tau = (\tilde \tau_i)_{i \in [N]} \in \prod_{i \in [N]}\mathcal P(X)\), we have
    \[
    \left| c_i(x,a,\tau)-c_i(\tilde x, \tilde a, \tilde \tau) \right| \le L_i \left( \mathbf 1_{\{ x \neq \tilde x\}} + 2\,\mathbf 1_{\{a \neq \tilde a\}}+ \sum_{j=1}^N\| \tau_j-\tilde \tau_j\|_1\right),
    \]
    and
    \[
    \left \| p_i(\cdot\mid x,a,\tau) - p_i(\cdot\mid \tilde x,\tilde a,\tilde \tau) \right\|_1 \le K_i\left( \mathbf 1_{\{x \neq \tilde x\}}+ 2\,\mathbf 1_{\{a \neq \tilde a\}} + \sum_{j=1}^N \| \tau_j - \tilde \tau_j\|_1  \right).
    \]
\end{assumption}
For the rest of the paper, we will suppose that Assumption \ref{ass:1} is effective. Lipschitz continuity assumptions such as the ones above are common in the analysis of iterative algorithms \cite{anahtarci2023learning, guo2019learning, cui2021approximately}.
For the purpose of our iteration algorithms, we define a candidate domain for our \(Q\)-functions at the multi-population level as follows:
Let $M_i$ be a uniform bound for the cost function $c_i$, {which is finite by Assumption \ref{ass:1}}. Under the MFG $(X,A,(c_i)_{i \in [N]}, (p_i)_{i \in [N]},N)$, the candidate space of $Q$-functions for the population $i$ over the action space $A$ is
\[
\tilde{\mathcal C}_i := \Big\{
    Q^{\mathrm{disc}}:X\times A \to \mathbb R_{+}:\;
    |Q^{\mathrm{disc}}(x,a)-Q^{\mathrm{disc}}(\tilde{x},\tilde{a})|
    \le \frac{L_i}{1 - \frac{\beta_i K_i}{2}}(1_{x \not=\tilde x}+2\, 1_{a \not = \tilde a}),\;
    \|Q\|_\infty \le \frac{M_i}{1-\beta_i}
\Big\}.
\]
Then, {to lift these $Q$-functions to the regularized game, we define the set}:
\[
\mathcal C_i :=
\Big\{
    Q : X \times \mathcal P(A) \to \mathbb R:\;
    Q(x,u)= \sum_{a \in A} Q^{\mathrm{disc}}(x,a)u(a) + \rho_i\Omega_i(u),
    Q^{\mathrm{disc}} \in \tilde{\mathcal C}_i
\Big\}.
\]
For a given \(Q^{\mathrm{disc}} \in \tilde{\mathcal C}_i\), the corresponding \(Q^{\mathrm{cont}} \in \mathcal C_i\) given by \(Q^{\mathrm{cont}}(x,u) =\sum_{a \in A} Q^{\mathrm{disc}}(x,a)u(a) + \rho_i\Omega_i(u)\) defines a one-to-one correspondence. Furthermore, \(Q^{\mathrm{cont}}\) is the \(Q\) function one would obtain under \((X,\mathcal P(A),(C_i+\rho_i\Omega_i)_{i \in [N]}, (P_i)_{i \in [N]},N)\). Note that each \(Q^{\mathrm{cont}} \in \mathcal C_i\) have a unique minimizer {in $\mathcal P(A)$}; thus, the policy \(\pi_{Q^{\mathrm{cont}}}\) that one obtains from the minimizers of $Q^{\mathrm{cont}}(x,\cdot)$ is a deterministic policy for the MFG \((X,\mathcal P(A),(C_i+\rho_i\Omega_i)_{i \in [N]}, (P_i)_{i \in [N]},N)\). {In what follows, for a given $Q\in \mathcal C_i$, we will also let $Q_{\min}(x):= \operatorname{min}_{u \in \mathcal P(A)}Q(x,u).$}

The algorithms that we consider in this work will closely mimic the classical value-iteration algorithm for MDPs. For a given family of state measures \(\pmb{\tau}\), we update our \(Q\)-functions as in value iteration, and, to obtain the consistency condition \eqref{eq:fgg}, we also need to control the updates of our state measures. For this purpose, we define the following operators:

\begin{definition}
    Let \(Q \in \mathcal C_i\) and \(\tau = (\tau_i)_{i=1}^N \in \prod_{i=1}^{N} \mathcal P(X)\). For updating our \(Q\)-functions, we define the operator
    \[
    H_{1,i}(Q,\tau)(x,u) = C_i(x,u,\tau) + \rho_i\Omega_i(u)+ \beta_i \sum_{y \in X}Q_{\min}(y)\,P_i(y\mid x,u,\tau),
    \]
    and for updating our state measures, we define
    \[
    H_{2,i}(Q,\tau)(\cdot)= \sum_{x \in X}\int_{\mathcal P(A)} P_i(\cdot\mid x,u,\tau)\,\pi_Q(du\mid x)\,\tau_i(x),
    \]
    where {\(\pi_Q(\cdot \mid x) = \delta_{\operatorname{argmin}_{u \in \mathcal P(A)}Q(x,u)}(\cdot)\) is the optimal policy induced by the \(Q\)-function \(Q\). Since $Q \in \mathcal C_i$, $\pi_Q$ is well-defined.}
\end{definition}

The operator \(H_{1,i}\) arises as a consequence of the dynamic programming structure associated with the objective functions \(J_i\), a well-known result in the theory of MDPs. The operator \(H_{2,i}\) is specific to the MFG setting and arises from the consistency requirement linking policies and state measures through cross-population interactions. From the definition of \(H_{1,i}\), note that since \(C_i\) and \(P_i\) are affine over \(\mathcal P(A)\) and \(\Omega_i\) is strongly convex, for any fixed \(Q\)-function there exists a unique minimizer of \(H_{1,i}(Q,\tau)(x,\cdot)\) for each \((x,\tau)\). In particular, the operator \(H_{2,i}\) is well defined.

In this work, we will deal with three different types of MFE. In the infinite-horizon case, an MFE \((\pmb{\pi}, \pmb{\tau})\) is said to be \emph{stationary} if it is time independent, i.e., \(\tau_{t,i}=\tau_{t+1,i}\) and \(\pi_{t,i}=\pi_{t+1,i}\) for all \(i \in [N]\) and all \(t \in \mathbb N\). Due to this stationarity condition, for a stationary MFE we do not impose any restriction on the initial state measure. If the initial state measures are not fixed, one often seeks a stationary MFE.

Given a family of initial state measures \(\tau_0=(\tau_{0,i})_{i \in [N]} \in \prod_{i \in [N]}\mathcal P(X)\), we say that an MFE \((\pmb{\pi}, (\tilde{\tau}_{t,i})_{t \in \mathbb N,\, i \in [N]})\) is \emph{non-stationary} if \(\tilde{\tau}_{0,i} = \tau_{0,i}\) for all \(i \in [N]\). In the non-stationary case, one fixes the initial state measures \emph{a priori}, which usually come from the population's initial distribution. For an initial state measure \(\tau_0 \in \prod_{i \in [N]}\mathcal P(X)\), we denote by \(\mathrm{MFE}_{\tau_0}\) the set of MFEs for the infinite-horizon MFG starting from \(\tau_0\).

In the finite-horizon case, one often proceeds as in the infinite-horizon case, with the caveat that the objective function {of the representative agent} takes the form
\[
J_i(\pmb \pi, \pmb {\tau}) = \mathbb E_{\pmb \pi}\left[ \sum_{t=0}^{T-1}\beta_i^t\,c_i(x_t,a_t,\tau_{t,1},\tau_{t,2},\cdots,\tau_{t,N}) \right].
\]
The truncation of the sum at time \(T\) imposes a natural boundary condition on the iterations of the \(Q\)-functions. In this case, we denote by \(\mathrm{MFE}_{T,\tau_0}\) the set of finite-horizon MFEs starting from \(\tau_0\).

Although we have defined the operators \(H_{1,i}\) and \(H_{2,i}\) for the lifted system, Assumption \ref{ass:1} above is stated for the original system. It turns out, an analogous Lipschitz continuity condition also holds for the lifted system \cite{anahtarci2023q}. 
\begin{lemma}\label{lem:70}
    For all \( x, \tilde{x} \in X \), \( u, \tilde{u} \in \mathcal P(A) \), and \( \tau, \tilde{\tau} \in \prod_{i \in [N]}\mathcal P(X) \), $C_i$ and $P_i$ satisfy the following Lipschitz bounds:
\[
|C_i(x, u, \tau) - C_i(\tilde{x}, \tilde{u}, \tilde{\tau})| \leq L_i
\left( \mathbf{1}_{\{x \neq \tilde{x}\}} + \|u - \tilde{u}\|_1 + \sum_{j \in [N]}\|\tau_j - \tilde{\tau}_j\|_{1} \right),
\]
and
\[
\|P_i(\cdot|x, u, \tau) - P_i(\cdot|\tilde{x}, \tilde{u}, \tilde{\tau})\|_{1} \leq K_i
\left( \mathbf{1}_{\{x \neq \tilde{x}\}} + \|u - \tilde{u}\|_1 + \sum_{j \in [N]}\|\tau_j - \tilde{\tau}_j\|_{1} \right).
\]
\end{lemma}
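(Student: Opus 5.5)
We will prove the bound for $C_i$ and $P_i$ simultaneously, since both are obtained by averaging $c_i(x,a,\tau)$, resp. $p_i(\cdot\mid x,a,\tau)$, against $u$. The triangle inequality reduces everything to two separate estimates. The first is a variation in $(x,\tau)$ with $u$ fixed. The second is a variation in $u$ with $(\tilde x,\tilde \tau)$ fixed. Concretely, we write
\[
C_i(x,u,\tau)-C_i(\tilde x,\tilde u,\tilde\tau) = \bigl[C_i(x,u,\tau)-C_i(\tilde x,u,\tilde \tau)\bigr] + \bigl[C_i(\tilde x,u,\tilde\tau)-C_i(\tilde x,\tilde u,\tilde\tau)\bigr],
\]
and the analogous decomposition holds for $P_i$ in $\ell_1$.

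For the first bracket, $C_i(x,u,\tau)-C_i(\tilde x,u,\tilde\tau)=\sum_a u(a)\bigl(c_i(x,a,\tau)-c_i(\tilde x,a,\tilde\tau)\bigr)$. We apply Assumption~\ref{ass:1} with $a=\tilde a$, so that the action indicator vanishes. Since $u$ is a probability vector, this bracket is bounded by $L_i(\mathbf 1_{\{x\neq\tilde x\}}+\sum_j\|\tau_j-\tilde\tau_j\|_1)$. For $P_i$, the triangle inequality for $\|\cdot\|_1$ inside the convex combination gives the same bound with $K_i$.

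The second bracket is the only nontrivial step, because Assumption~\ref{ass:1} controls action changes by $2\mathbf 1_{\{a\ne\tilde a\}}$, whereas we need $\|u-\tilde u\|_1$. With $(\tilde x,\tilde\tau)$ fixed, we set $g(a):=c_i(\tilde x,a,\tilde\tau)$. The assumption gives $|g(a)-g(a')|\le 2L_i$ for all $a,a'$, i.e. the oscillation of $g$ is at most $2L_i$. Since $\sum_a(u(a)-\tilde u(a))=0$, we may subtract any constant $m$ and obtain
\[
\Bigl|\sum_a g(a)(u(a)-\tilde u(a))\Bigr| = \Bigl|\sum_a (g(a)-m)(u(a)-\tilde u(a))\Bigr| \le \max_a|g(a)-m|\,\|u-\tilde u\|_1 .
\]
Choosing $m$ as the midpoint $\tfrac12(\max g+\min g)$ gives $\max_a|g(a)-m|\le L_i$, so this bracket is at most $L_i\|u-\tilde u\|_1$. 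This is the standard total-variation duality bound, and it is exactly why the factor $2$ appears in Assumption~\ref{ass:1}.

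For $P_i$, we apply the same argument coordinatewise in duality form. We write $\|\mu\|_1=\sup_{\|f\|_\infty\le1}\sum_y f(y)\mu(y)$ and set $g_f(a):=\sum_y f(y)p_i(y\mid\tilde x,a,\tilde\tau)$. Then $|g_f(a)-g_f(a')|\le\|p_i(\cdot\mid\tilde x,a,\tilde\tau)-p_i(\cdot\mid\tilde x,a',\tilde\tau)\|_1\le 2K_i$. The midpoint trick gives $|\sum_a g_f(a)(u(a)-\tilde u(a))|\le K_i\|u-\tilde u\|_1$ uniformly in $f$, and taking the supremum over $f$ yields the bound. Summing the two brackets proves the lemma. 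The main care point is precisely this centering step: without it, a naive bound would produce $2L_i\|u-\tilde u\|_1$ instead of $L_i\|u-\tilde u\|_1$.
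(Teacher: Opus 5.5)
Your argument is correct, and it reaches the same constants by a different route from the one the paper relies on. The paper gives no proof of its own and defers verbatim to \cite{anahtarci2023q}. There the lift is handled in one shot by an optimal (maximal) coupling $\xi$ of $u$ and $\tilde u$. One writes $C_i(x,u,\tau)-C_i(\tilde x,\tilde u,\tilde\tau)=\sum_{a,\tilde a}\bigl(c_i(x,a,\tau)-c_i(\tilde x,\tilde a,\tilde\tau)\bigr)\xi(a,\tilde a)$, applies Assumption~\ref{ass:1} inside the sum, and uses $2\,\xi(\{a\neq\tilde a\})=\|u-\tilde u\|_1$. The same computation inside $\|\cdot\|_1$ gives the bound for $P_i$.

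You instead split off the $(x,\tau)$-variation from the $u$-variation with the triangle inequality. You then control the $u$-variation with the dual estimate $\bigl|\sum_a g(a)(u(a)-\tilde u(a))\bigr|\le\tfrac12\,\mathrm{osc}(g)\,\|u-\tilde u\|_1$. For $P_i$ this costs an extra pass through $\|\mu\|_1=\sup_{\|f\|_\infty\le1}\sum_y f(y)\mu(y)$.

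The two arguments are dual forms of the same fact, namely Kantorovich--Rubinstein duality for the discrete metric on $A$. The coupling proof is shorter, needs no splitting, and treats $C_i$ and $P_i$ uniformly; it is also the version that carries over to metric action spaces with Wasserstein distances. Yours is entirely elementary and makes explicit that the factor $2$ in front of $\mathbf 1_{\{a\neq\tilde a\}}$ is exactly what turns the oscillation bound into the constant $L_i$ (resp.\ $K_i$).
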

\begin{proof}
    The same proof in \cite{anahtarci2023q} holds verbatim, so we omit the details.
\end{proof}

An important consequence of Lemma \ref{lem:70} is the following Lipschitz property of \(H_{1,i}\) and \(H_{2,i}\), which will be fundamental for deriving a variational expression for our iterations in the stationary and finite-horizon settings. We will provide a proof for such an expression by closely following the proof of the single population case presented in \cite{anahtarci2023q}. Since our main results will heavily rely on these variational inequalities, we will provide a detailed proof for completeness in the appendix.

\begin{lemma}\label{lem:1}
    For any $Q,\tilde Q \in \mathcal C_i$ and $\tau, \tilde \tau \in \prod_{i=1}^N \mathcal P(X)$ we have
    \begin{equation}\label{eq:q}
    \| H_{1,i}(Q,\tau) - H_{1,i}(\tilde Q, \tilde \tau) \|_{\infty} \le \beta_i \| Q-\tilde Q\|_{\infty}+ \frac{L_i}{1-\frac{\beta_i K_i}{2}}\left( \sum_{j=1}^N\| \tau_j-\tilde \tau_j\|_1 \right),
    \end{equation}
    and
    \begin{equation}\label{eq:m}
    \| H_{2,i}(Q,\tau) - H_{2,i}(\tilde Q,\tilde \tau) \|_1 \le \frac{K_i}{\rho_i}\|Q-\tilde Q\|_{\infty} + \overline K_i \|\tau_i-\tilde \tau_i\|_1 + K_i\sum_{j \not = i}\| \tau _j - \tilde \tau_j\|_1,
    \end{equation}
    where 
    \begin{equation}\label{eq:fig}
    \overline K_i = \frac 32K_i + \frac 12 \frac{\overline L_i K_i}{\rho_i(1-\beta_i)}, \text{ and } \overline L_i = \frac{L_i}{1-\frac{\beta_i K_i}{2}}.
    \end{equation}
    
\end{lemma}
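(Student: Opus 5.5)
We follow the single-population argument of \cite{anahtarci2023q} and treat the two estimates separately, using Lemma~\ref{lem:70} throughout.

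For \eqref{eq:q}, we fix $(x,u)$ and write
\[
H_{1,i}(Q,\tau)(x,u)-H_{1,i}(\tilde Q,\tilde\tau)(x,u) = \bigl[C_i(x,u,\tau)-C_i(x,u,\tilde\tau)\bigr] + \beta_i\sum_y \bigl[Q_{\min}(y)-\tilde Q_{\min}(y)\bigr]P_i(y\mid x,u,\tau) + \beta_i\sum_y \tilde Q_{\min}(y)\bigl[P_i(y\mid x,u,\tau)-P_i(y\mid x,u,\tilde\tau)\bigr],
\]
and bound the three terms in turn.

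The first term is at most $L_i\sum_j\|\tau_j-\tilde\tau_j\|_1$ by Lemma~\ref{lem:70}. The second is at most $\beta_i\|Q-\tilde Q\|_\infty$, because $|Q_{\min}-\tilde Q_{\min}|\le\|Q-\tilde Q\|_\infty$. For the third, $P_i(\cdot\mid x,u,\tau)-P_i(\cdot\mid x,u,\tilde\tau)$ is a signed measure of zero total mass, so it integrates a function $f$ to at most $\tfrac12\|\cdot\|_1\,\mathrm{osc}(f)$. Since $\tilde Q\in\mathcal C_i$, the function $\tilde Q_{\min}$ is $\overline L_i$-Lipschitz for the discrete metric on $X$: its minimizer $u^*$ at $\tilde x$ is feasible at $x$, and $\tilde Q^{\mathrm{disc}}$ is $\overline L_i$-Lipschitz in $x$. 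Hence $\mathrm{osc}(\tilde Q_{\min})\le\overline L_i$, and the third term is at most $\tfrac{\beta_iK_i}{2}\overline L_i\sum_j\|\tau_j-\tilde\tau_j\|_1$. Adding up, $L_i+\tfrac{\beta_iK_i}{2}\overline L_i=\overline L_i$, which is the coefficient in \eqref{eq:q}.

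For \eqref{eq:m}, we first show that the minimizer map $f_Q(x)=\operatorname{argmin}_u Q(x,u)$ is Lipschitz in two senses: $\|f_Q(x)-f_{\tilde Q}(x)\|_1\le \|Q-\tilde Q\|_\infty/\rho_i$ and $\|f_Q(x)-f_Q(\tilde x)\|_1\le \overline L_i\mathbf 1_{x\ne\tilde x}/\rho_i$. Both follow from first-order optimality of a $\rho_i$-strongly convex function whose linear parts differ by a vector of sup-norm at most $\delta$. This gives $\rho_i\|u^*-\tilde u^*\|_1^2\le\langle g-\tilde g,\tilde u^*-u^*\rangle$. Because the difference $u^*-\tilde u^*$ of probability vectors has zero sum, we may shift $g-\tilde g$ by a constant, which yields $\|u^*-\tilde u^*\|_1\le\mathrm{osc}(g-\tilde g)/(2\rho_i)$. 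For the $x$-direction this oscillation is at most $2\overline L_i$; for the $Q$-direction the shift argument gives a bound of order $\|Q-\tilde Q\|_\infty/\rho_i$.

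With these two facts, we split
\[
H_{2,i}(Q,\tau)-H_{2,i}(\tilde Q,\tilde\tau) = \sum_x\bigl[P_i(\cdot\mid x,f_Q(x),\tau)-P_i(\cdot\mid x,f_{\tilde Q}(x),\tilde\tau)\bigr]\tau_i(x) + \sum_x P_i(\cdot\mid x,f_{\tilde Q}(x),\tilde\tau)\bigl(\tau_i(x)-\tilde\tau_i(x)\bigr).
\]
The first sum is at most $K_i\bigl(\|Q-\tilde Q\|_\infty/\rho_i+\sum_j\|\tau_j-\tilde\tau_j\|_1\bigr)$. For the second sum we again use that $\tau_i-\tilde\tau_i$ has zero mass, i.e.\ a coupling/Dobrushin argument: it is bounded by $\tfrac12\|\tau_i-\tilde\tau_i\|_1$ times the maximal oscillation over $x$ of $P_i(\cdot\mid x,f_{\tilde Q}(x),\tilde\tau)$. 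That oscillation is at most $K_i(1+\overline L_i/\rho_i)$ by the $x$-Lipschitz property of $f_{\tilde Q}$, though the factor $1/(1-\beta_i)$ in $\overline K_i$ suggests using the a priori sup-norm bound in some form.

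Collecting the terms gives $K_i\|\tau_i-\tilde\tau_i\|_1 + \tfrac{K_i}{2}\|\tau_i-\tilde\tau_i\|_1 + \tfrac{K_i\overline L_i}{2\rho_i}\|\tau_i-\tilde\tau_i\|_1$ on the own-population coordinate, which is $\overline K_i$ up to the $(1-\beta_i)$ factor. The main obstacle is matching the stated constants exactly: the $1/(1-\beta_i)$ in $\overline K_i$, and the exact constant in the $Q$-Lipschitz bound of the minimizer. Here we would follow \cite{anahtarci2023q} and, if needed, use the looser bound $\overline L_i\le\overline L_i/(1-\beta_i)$, which still gives \eqref{eq:m}.
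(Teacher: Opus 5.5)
Your proposal is correct and follows the paper's proof essentially step for step. It uses the same two decompositions, the zero-mass (Kontorovich) bound with $\mathrm{osc}(\tilde Q_{\min})\le\overline L_i$ giving $L_i+\frac{\beta_iK_i}{2}\overline L_i=\overline L_i$, the $1/\rho_i$-Lipschitz minimizer estimate, and the Dobrushin-type bound $\frac{K_i}{2}\bigl(1+\frac{\overline L_i}{\rho_i}\bigr)\|\tau_i-\tilde\tau_i\|_1$ for the second sum.

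Your closing hedging is unnecessary. Your own strong-convexity argument already gives exactly $\|f_Q(x)-f_{\tilde Q}(x)\|_1\le\|Q-\tilde Q\|_\infty/\rho_i$, since $\mathrm{osc}(g-\tilde g)\le 2\|g-\tilde g\|_\infty$. The paper likewise arrives at the own-population constant $\frac32K_i+\frac{\overline L_iK_i}{2\rho_i}$ and simply bounds it by the larger $\overline K_i$, with no a priori sup-norm bound involved.
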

\begin{proof}
    We postpone the proof to Appendix \ref{app:a}.
\end{proof}

We also note that for a given $\tau \in \prod_{i \in [N]} \mathcal P(X)$, $H_{1,i}(\cdot,\tau)$ maps $\mathcal C_i$ to itself.

\begin{lemma}\label{lem:tency}
    For each $i \in [N]$, the operator $H_{1,i}(\cdot,\tau)$ maps $\mathcal C_i$ to itself.
\end{lemma}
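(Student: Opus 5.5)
Fix $\tau$ and $Q\in\mathcal C_i$ with $Q(x,u)=\sum_a Q^{\mathrm{disc}}(x,a)u(a)+\rho_i\Omega_i(u)$, $Q^{\mathrm{disc}}\in\tilde{\mathcal C}_i$. The plan is to show that $H_{1,i}(Q,\tau)$ has the same structural form and that its discrete part lies in $\tilde{\mathcal C}_i$. Since $C_i$ and $P_i$ are affine in $u$, we can write
\[
H_{1,i}(Q,\tau)(x,u)=\sum_{a\in A}\Big(c_i(x,a,\tau)+\beta_i\sum_{y\in X}Q_{\min}(y)\,p_i(y\mid x,a,\tau)\Big)u(a)+\rho_i\Omega_i(u).
\]
This suggests setting $\hat Q^{\mathrm{disc}}(x,a):=c_i(x,a,\tau)+\beta_i\sum_y Q_{\min}(y)p_i(y\mid x,a,\tau)$. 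It then remains to check two things: that $\hat Q^{\mathrm{disc}}\ge 0$ with $\|\hat Q^{\mathrm{disc}}\|_\infty\le M_i/(1-\beta_i)$, and that the Lipschitz bound with constant $\overline L_i=L_i/(1-\beta_iK_i/2)$ holds.

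\textbf{Bounds.} Nonnegativity and the sup bound need a bound on $Q_{\min}$. We have $Q_{\min}(y)\le \min_a Q^{\mathrm{disc}}(y,a)+\rho_i\Omega_i(\delta_a)$ and $Q_{\min}(y)\ge \min_a Q^{\mathrm{disc}}(y,a)+\rho_i\min\Omega_i$, so the regularizer contributes an additive shift. To handle it I would normalize $\Omega_i$ so that $\min_{\mathcal P(A)}\Omega_i=0$; this is harmless since constants do not change minimizers. If the paper's convention instead implicitly keeps $Q_{\min}\in[0,M_i/(1-\beta_i)]$, for example by $\Omega_i\le 0$ or by absorbing the constant into $M_i$, I would adopt it. With $0\le Q_{\min}\le M_i/(1-\beta_i)$ in hand, the bound $0\le \hat Q^{\mathrm{disc}}\le M_i+\beta_iM_i/(1-\beta_i)=M_i/(1-\beta_i)$ is immediate. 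Pinning down this normalization is the one genuinely delicate point. I expect it to be the main obstacle, since the definition of $\mathcal C_i$ constrains only the discrete part.

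\textbf{Lipschitz bound.} Fix $(x,a),(\tilde x,\tilde a)$ and let $d=\mathbf 1_{x\ne\tilde x}+2\,\mathbf 1_{a\ne\tilde a}$. Assumption~\ref{ass:1} with the same $\tau$ gives $|c_i(x,a,\tau)-c_i(\tilde x,\tilde a,\tau)|\le L_i d$. For the transition term, I use the standard total-variation trick: for any function $g$ on $X$ and probability measures $\mu,\nu$,
\[
\Big|\sum_y g(y)(\mu-\nu)(y)\Big|\le \tfrac12\big(\max g-\min g\big)\|\mu-\nu\|_1.
\]
Apply this with $g=Q_{\min}$. The oscillation of $Q_{\min}$ is controlled by the Lipschitz property of $Q$ in $x$: for each $u$, $|Q(y,u)-Q(\tilde y,u)|\le \sum_a|Q^{\mathrm{disc}}(y,a)-Q^{\mathrm{disc}}(\tilde y,a)|u(a)\le \overline L_i$. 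Hence $|Q_{\min}(y)-Q_{\min}(\tilde y)|\le \overline L_i$, since a minimum of uniformly close functions is close.

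Combining,
\[
|\hat Q^{\mathrm{disc}}(x,a)-\hat Q^{\mathrm{disc}}(\tilde x,\tilde a)|\le L_i d+\beta_i\tfrac12\overline L_i K_i d=\Big(L_i+\tfrac{\beta_iK_i}{2}\overline L_i\Big)d=\overline L_i d,
\]
where the last equality is exactly the fixed-point identity defining $\overline L_i$. This is why the constant $L_i/(1-\beta_iK_i/2)$ appears; it implicitly requires $\beta_iK_i<2$. So $\hat Q^{\mathrm{disc}}\in\tilde{\mathcal C}_i$, and therefore $H_{1,i}(Q,\tau)\in\mathcal C_i$.
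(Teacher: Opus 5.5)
Your argument is correct and is the standard one the paper defers to via \cite{ayd} and \cite{anahtarci2020value}. You identify the new discrete part $c_i+\beta_i\sum_y Q_{\min}(y)\,p_i(y\mid\cdot,\cdot,\tau)$, bound the transition term by $\tfrac12\,\mathrm{osc}(Q_{\min})\,\|\cdot\|_1$ with $\mathrm{osc}(Q_{\min})\le\overline L_i$ (the same step as in the appendix proof of Lemma~\ref{lem:1}), and close with the identity $\overline L_i=L_i+\tfrac{\beta_iK_i}{2}\overline L_i$.

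Your concern about $\Omega_i$ is justified: for a general $1$-strongly convex regularizer (e.g.\ negative entropy) the new discrete part can leave $\tilde{\mathcal C}_i$, so a normalization such as $\min_{\mathcal P(A)}\Omega_i=0$ is genuinely needed. Under that normalization, obtain the upper bound $Q_{\min}\le M_i/(1-\beta_i)$ by evaluating at the minimizer of $\Omega_i$, not at a vertex $\delta_a$ as you wrote.
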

\begin{proof}
    The same arguments in the proofs \cite[Lemma 2]{ayd} and \cite[Lemma 1]{anahtarci2020value} are directly applicable here; so, we will omit the details.
\end{proof}

\section{Eventual Contractivity of a Quasi-Static Iterative Algorithm for Multi-population Discounted Stationary MFGs}\label{sect:stat}

Let
$
\bigl(X,\mathcal{P}(A),(C_i+\rho_i\Omega_i)_{i\in[N]},(P_i)_{i\in[N]},N\bigr)
$
be a regularized multi-population MFG, where the objective function of {the representative agent of} population \(i\) is discounted by the factor \(\beta_i\), for each \(i\in[N]\). In this section, we establish a contraction condition to an iterative algorithm to compute the MFE of this multi-population stationary MFG. Since we are looking for stationary equilibria, the multi-population case amounts to dealing with \(N\) interacting dynamical systems. Since stationary solutions are time-independent, contraction conditions for stationary MFE induce a simpler structure to analyze compared to the non-stationary setting, in both the finite- and infinite-horizon cases.

The iterative algorithm in the stationary MFE case involves updating both \(Q\)-functions (using the current state measures as input) and state measures (using the minimizer induced by the current \(Q\)-functions). For simplicity, we will study the quasi-static case, where one finds a fixed point of $H_{1,i}(\cdot,\tau)$ at each step for a given initial state-measure $\tau \in \prod_{i \in [N]}\mathcal P(X)$. In practice, when the model is unknown, these $Q$-functions can be calculated using stochastic approximation \cite{guo2019learning}. We refer to this as the quasi-static case since one updates \(Q\)-functions faster than \(\tau\) to obtain a fixed-point \cite{borkar1997stochastic}.

First, to provide a formal definitions of this algorithm, we introduce the following notation:

\begin{definition}
For a given
$
\tau \in \prod_{i=1}^N \mathcal P(X),
$
let \(Q^{\tau,i} \in \mathcal C_i\) denote the \(Q\)-function satisfying
\begin{equation}\label{eq:bellman}
H_{1,i}(Q^{\tau,i},\tau)=Q^{\tau,i}.
\end{equation}
Since \(\max_{i\in[N]}\beta_i<1\), this fixed point exists and is unique for every \(i\in[N]\), as a consequence of the contraction property of the Bellman operator. Moreover, \(Q^{\tau,i}\in\mathcal C_i\) by Lemma \ref{lem:tency}.
\end{definition}

When the population \(i\) is clear from the context, we will also refer to \(Q^{\tau,i}\) simply as the \(Q\)-function corresponding to \(\tau\), somewhat abusing the notation. An iterative algorithm that finds a stationary MFE in this case is then the following:

\begin{algorithm}[H]\label{alg:1}
\caption{Quasi-Static Value Iteration for Multi Population MFG}
    Initialize with $\pmb \tau_0 = (\tau_{i,0})_{i \in [N]} \in \prod_{i \in [N]} \mathcal P(X)$\;
    \While{$\pmb \tau_{n+1} \not = \pmb \tau_n$}{
        $\pmb \tau_{n+1} = (H_{2,i}(Q^{\pmb \tau_{n},i},\pmb \tau_n))_{i \in [N]}$\;
    }
    Let $\pi^*_i = \delta_{\mathrm{argmin}_{u \in \mathcal P(A)}Q^{\pmb \tau^{*},i}(\cdot,u)}(\cdot)$ for all $i \in [N]$\;
    Let $\pmb \pi^* = (\pi^*_i)_{i \in [N]}$.\;
\Return{stationary mean-field equilibrium $(\pmb  \pi^*, \pmb \tau^*)$}
\end{algorithm}

As described in the algorithm, the fixed points of the map $\tau \mapsto (H_{2,i}(Q^{\tau,i},\tau))_{i \in [N]}$ induce stationary MFE. To study the convergence of this algorithm, we will provide an (eventual) contraction condition for $\tau \mapsto (H_{2,i}(Q^{\tau,i},\tau))_{i \in [N]}$.

In the next lemma, we will provide a matrix inequality for the map $(H_{2,i})_{i \in [N]}$ that will be fundamental while obtaining an eventual contraction condition that ensures the convergence of Algorithm \ref{alg:1}.

\begin{lemma}\label{lem:2}
For all $\tau,\tilde \tau \in \prod_{i \in [N]}\mathcal P(X)$, we have
\[
    \begin{bmatrix}
        \|H_{2,1}(Q^{\tau,1},\tau)-H_{2,1}(Q^{\tilde \tau,1},\tilde \tau)\|_1\\
        \|H_{2,2}(Q^{\tau,2},\tau)-H_{2,2}(Q^{\tilde \tau,2},\tilde \tau)\|_1\\
        \vdots \\
        \|H_{2,N}(Q^{\tau,N},\tau) - H_{2,N}(Q^{\tilde \tau,N},\tilde \tau)\|_1
    \end{bmatrix} \le \mathcal M\begin{bmatrix}
        \| \tau_1-\tilde \tau_1\|_1\\
        \| \tau_2 - \tilde \tau_2\|_1\\
        \vdots \\
        \| \tau_N-\tilde \tau_N\|_1
    \end{bmatrix},
\]
where $\mathcal M = \begin{bmatrix} M_{i,j} \end{bmatrix}_{i,j}$ is such that
$
M_{i,j} =
\begin{cases}
\overline K_i + \dfrac{\overline L_i K_i}{\rho_i(1-\beta_i)},
& i=j, \\[8pt]

K_i+\dfrac{\overline L_i K_i}{\rho_i(1-\beta_i)},
&  j \neq i. \\[8pt]
\end{cases}
$
\end{lemma}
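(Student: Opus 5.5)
The plan is to combine the two inequalities of Lemma \ref{lem:1}: first bound $\|Q^{\tau,i}-Q^{\tilde\tau,i}\|_\infty$ in terms of $\sum_j\|\tau_j-\tilde\tau_j\|_1$ using the fixed-point identity \eqref{eq:bellman} together with \eqref{eq:q}, and then substitute this bound into \eqref{eq:m}.

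\emph{Step 1 (sensitivity of the fixed point).} Fix $i\in[N]$. By \eqref{eq:bellman}, $Q^{\tau,i}=H_{1,i}(Q^{\tau,i},\tau)$ and $Q^{\tilde\tau,i}=H_{1,i}(Q^{\tilde\tau,i},\tilde\tau)$. Both functions lie in $\mathcal C_i$ by Lemma \ref{lem:tency}, so \eqref{eq:q} applies and gives
\[
\|Q^{\tau,i}-Q^{\tilde\tau,i}\|_\infty \le \beta_i\|Q^{\tau,i}-Q^{\tilde\tau,i}\|_\infty + \overline L_i\sum_{j=1}^N\|\tau_j-\tilde\tau_j\|_1 .
\]
The left-hand side is finite, since both $Q$-functions are bounded on the compact set $X\times\mathcal P(A)$: the discrete part is bounded by $M_i/(1-\beta_i)$ and $\Omega_i$ is convex on the simplex, and only the difference of the discrete parts is relevant. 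Since $\beta_i<1$, we can rearrange to obtain
\[
\|Q^{\tau,i}-Q^{\tilde\tau,i}\|_\infty\le \frac{\overline L_i}{1-\beta_i}\sum_{j}\|\tau_j-\tilde\tau_j\|_1 .
\]

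\emph{Step 2 (plug into \eqref{eq:m}).} Applying \eqref{eq:m} with $Q=Q^{\tau,i}$ and $\tilde Q=Q^{\tilde\tau,i}$ gives
\[
\|H_{2,i}(Q^{\tau,i},\tau)-H_{2,i}(Q^{\tilde\tau,i},\tilde\tau)\|_1\le \frac{K_i}{\rho_i}\cdot\frac{\overline L_i}{1-\beta_i}\sum_j\|\tau_j-\tilde\tau_j\|_1+\overline K_i\|\tau_i-\tilde\tau_i\|_1+K_i\sum_{j\ne i}\|\tau_j-\tilde\tau_j\|_1 .
\]
Collecting the coefficients, the term $\|\tau_i-\tilde\tau_i\|_1$ carries $\overline K_i+\overline L_iK_i/(\rho_i(1-\beta_i))$, and each term $\|\tau_j-\tilde\tau_j\|_1$ with $j\ne i$ carries $K_i+\overline L_iK_i/(\rho_i(1-\beta_i))$. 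These are exactly the entries $M_{i,i}$ and $M_{i,j}$ of row $i$ of $\mathcal M$. Stacking the resulting inequalities over $i\in[N]$ yields the componentwise matrix inequality.

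\emph{Main obstacle.} The only delicate point is Step 1: justifying that \eqref{eq:q} may be applied to the two fixed points, which requires membership in $\mathcal C_i$, and that the sup-norm difference is finite so the rearrangement is legitimate. Lemma \ref{lem:tency} and the boundedness built into $\tilde{\mathcal C}_i$ handle this. Everything else is direct substitution.
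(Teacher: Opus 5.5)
Your proposal is correct and follows the paper's proof essentially verbatim: you bound $\|Q^{\tau,i}-Q^{\tilde\tau,i}\|_\infty\le\frac{\overline L_i}{1-\beta_i}\sum_j\|\tau_j-\tilde\tau_j\|_1$ via the fixed-point identity \eqref{eq:bellman} and \eqref{eq:q}, then substitute into \eqref{eq:m}, exactly as the paper does. Your extra justification that the sup-norm difference is finite (the $\rho_i\Omega_i$ terms cancel, leaving a bounded difference of discrete parts) is a harmless refinement the paper leaves implicit.
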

\begin{proof}
    For any $j\in [N]$, using Lemma \ref{lem:1} we obtain
    \[
    \|H_{2,j}(Q^{\tau,j},\tau)-H_{2,j}(Q^{\tilde \tau,j},\tilde \tau)\|_1 \le \overline K_j \| \tau_j-\tilde \tau_j\|_1 + \frac{K_j}{\rho_j} \|Q^{\tau,j}-Q^{\tilde \tau, j}\|_{\infty}+K_j\left( \sum_{i \not = j}\| \tau_i - \tilde \tau_i\|_1\right).
    \]
    Using the Bellman equation \eqref{eq:bellman} and Lemma \ref{lem:1}, it also follows that
    \[
    \| Q^{\tau,j}-Q^{\tilde \tau,j} \|_{\infty} \le \frac{1}{1-\beta_j} \overline L_j\left( \sum_{i \in [N]} \|\tau_i-\tilde \tau_i\|_1 \right).
    \]
    Combining these two inequalities, we get
    \begin{align*}
\|H_{2,j}(Q^{\tau,j},\tau)-H_{2,j}(Q^{\tilde \tau,j},\tilde \tau)\|_1 &\le \left(\overline K_j + \frac{\overline L_j K_j}{\rho_j(1-\beta_j)} \right)\| \tau_j - \tilde \tau_j\|_1 
\\&\qquad+ \left( K_j + \frac{\overline L_j K_j}{\rho_j(1-\beta_j)}\right)\left( \sum_{i \not = j} \| \tau_i - \tilde \tau_i \|_1\right),
    \end{align*}
    as desired
\end{proof}

By $\rho(\mathcal M)$, we denote the spectral radius of the matrix $\mathcal M$. Let \(\|\cdot\|\) be a matrix norm on \(\mathbb R^{N \times N}\). As a direct consequence of Gelfand's formula, we have
\[
\lim_{n \to \infty} \| \mathcal M^n\|^{1/n} = \rho(\mathcal M),
\]
and this bound is typically only asymptotically tight for most matrix norms. Thus, if \(\rho(\mathcal M)<1\), then the map \((H_{2,i})_{i \in [N]}\) is eventually contractive under any norm on \(\prod_{i \in [N]} \mathcal P(X)\). With this observation, and using Lemma \ref{lem:2}, we obtain the following meta contraction condition in the stationary case:

\begin{theorem}\label{thrm:0}
    If $\rho(\mathcal M)<1$, then $\tau \mapsto (H_{2,i}(Q^{\tau,i},\tau))_{i \in [N]}$ is eventually contractive over $\prod_{i \in [N]} \mathcal P(X)$ under any norm, and thus Algorithm \ref{alg:1} converges.
\end{theorem}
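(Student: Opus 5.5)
Write $F(\tau) := (H_{2,i}(Q^{\tau,i},\tau))_{i\in[N]}$ and, for $\tau,\tilde\tau \in \prod_{i}\mathcal P(X)$, let $d(\tau,\tilde\tau) := (\|\tau_i-\tilde\tau_i\|_1)_{i\in[N]} \in \mathbb R_+^N$. The plan is to iterate the vector inequality of Lemma \ref{lem:2}, using that $\mathcal M$ has nonnegative entries, and then convert the resulting entrywise bound into a norm bound via Gelfand's formula.

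\textbf{Step 1 (iteration).} Lemma \ref{lem:2} gives $d(F(\tau),F(\tilde\tau)) \le \mathcal M\, d(\tau,\tilde\tau)$ entrywise, for all $\tau,\tilde\tau$. All entries of $\mathcal M$ are nonnegative, since $K_i,L_i,\rho_i\ge 0$, $\beta_i<1$, and $\overline L_i\ge 0$ when $\beta_iK_i<2$, which is implicit in the definition of $\mathcal C_i$. Hence $x\le y$ entrywise implies $\mathcal Mx\le \mathcal My$. Since $F$ maps $\prod_i\mathcal P(X)$ into itself, induction gives
\[
d(F^n(\tau),F^n(\tilde\tau)) \le \mathcal M^n d(\tau,\tilde\tau) \quad \text{for all } n\ge 1.
\]

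\textbf{Step 2 (from vectors to norms).} Let $\|\cdot\|$ be any norm on $\prod_i \mathcal P(X)\subset \mathbb R^{N|X|}$. By equivalence of norms in finite dimensions, there are constants $c_1,c_2>0$ with
\[
c_1\,\bigl|d(\tau,\tilde\tau)\bigr|_{\infty} \le \|\tau-\tilde\tau\| \le c_2\,\bigl|d(\tau,\tilde\tau)\bigr|_{\infty},
\]
where $|\cdot|_\infty$ is the max norm on $\mathbb R^N$. Monotonicity of $|\cdot|_\infty$ on $\mathbb R^N_+$ and Step 1 then give
\[
\|F^n\tau-F^n\tilde\tau\| \le c_2\,|\mathcal M^n|_{\infty}\,|d(\tau,\tilde\tau)|_\infty \le \frac{c_2}{c_1}\,|\mathcal M^n|_\infty\,\|\tau-\tilde\tau\|,
\]
where $|\mathcal M^n|_\infty$ is the induced operator norm.

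\textbf{Step 3 (eventual contraction and convergence).} By Gelfand's formula, $|\mathcal M^n|_\infty^{1/n}\to\rho(\mathcal M)<1$. Fix $r\in(\rho(\mathcal M),1)$. Then for all sufficiently large $n$ we have $\frac{c_2}{c_1}|\mathcal M^n|_\infty\le r^n<1$, so $F^n$ is a contraction in $\|\cdot\|$; this is the eventual contractivity. The set $\prod_i\mathcal P(X)$ is closed in finite dimensions, hence complete, so Banach's theorem applied to $F^n$ yields a unique fixed point $\tau^*$ of $F^n$. Then $F\tau^*$ is also a fixed point of $F^n$, so $F\tau^*=\tau^*$. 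Finally, for each residue $k \in \{0,\dots,n-1\}$ the subsequence $F^{mn+k}(\tau_0)$ converges to $\tau^*$, so the whole sequence $\tau_m$ generated by Algorithm \ref{alg:1} converges to $\tau^*$, which induces the stationary MFE.

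The main subtlety is Step 1: we must check that $\mathcal M$ is entrywise nonnegative so that the vector inequality can be iterated, and that $F$ stays in the domain where Lemma \ref{lem:2} applies. Both hold, since $H_{2,i}$ outputs probability measures and $Q^{\tau,i}\in\mathcal C_i$ by Lemma \ref{lem:tency}.
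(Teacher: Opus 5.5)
Your proof is correct and takes the same route as the paper. The paper proves the theorem in one line from Lemma \ref{lem:2} and Gelfand's formula $\lim_n\|\mathcal M^n\|^{1/n}=\rho(\mathcal M)$; you make explicit what it leaves implicit: iterating the entrywise bound via nonnegativity of $\mathcal M$, passing to an arbitrary norm by norm equivalence, and applying Banach's theorem to $F^n$ to get convergence of the whole sequence.
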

\begin{proof}
    This follows from the discussion above.
\end{proof}

In general, finding a simple closed-form expression for $\rho(\mathcal M)$ in terms of radicals is impossible (due to Abel-Ruffini theorem) as it amounts to finding a root of the characteristic equation of $\mathcal M$, which has degree $|N|$. However, since $\mathcal M$ is a nonnegative irreducible matrix, spectral radius $\rho(\mathcal M)$ of $M$ corresponds to an eigenvalue of $\mathcal M$ by the Perron-Frobenius theorem, which can be used to give a full characterization of $\rho(\mathcal M)<1$.

\begin{theorem}\label{thrm:2}
    We have $\rho(\mathcal M)<1$ if and only if 
    \[
    \sum_{i \in [N]} \left(K_i+\frac{\overline L_i K_i}{\rho_i(1-\beta_i)}\right)\frac{1}{(1-(\overline K_i-K_i))}<1
    \]
    and \(\max_{i \in [N]} (\overline K_i - K_i)<1\).
\end{theorem}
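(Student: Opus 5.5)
The plan is to exploit the structure of $\mathcal M$: it is a diagonal matrix plus a rank-one matrix. Write $a_i := K_i+\frac{\overline L_i K_i}{\rho_i(1-\beta_i)}>0$ and $d_i := M_{i,i}-a_i = \overline K_i - K_i \ge 0$. Then
\[
\mathcal M = D + a\mathbf 1^\top, \qquad D=\operatorname{diag}(d_1,\dots,d_N),\quad a=(a_1,\dots,a_N)^\top ,
\]
since every entry of row $i$ off the diagonal equals $a_i$ and the diagonal entry is $d_i+a_i$. Because all $a_i>0$, $\mathcal M$ is positive, hence irreducible. By Perron--Frobenius, $\rho(\mathcal M)$ is an eigenvalue with a strictly positive eigenvector $v$. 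I will characterize this eigenvalue through the secular equation of the rank-one update.

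Suppose $\lambda = \rho(\mathcal M)$ and $\mathcal M v=\lambda v$ with $v>0$. Put $s=\mathbf 1^\top v>0$. Row $i$ of the eigen-equation reads $(\lambda-d_i)v_i = a_i s$. Since the right-hand side is positive, this forces $\lambda>d_i$ for every $i$, so $\rho(\mathcal M) > \max_i d_i$. It also gives $v_i = a_i s/(\lambda-d_i)$, and summing over $i$ yields
\[
f(\lambda):=\sum_{i\in[N]} \frac{a_i}{\lambda-d_i} = 1 .
\]
Conversely, $f$ is continuous and strictly decreasing on $(\max_i d_i,\infty)$. It tends to $+\infty$ as $\lambda\downarrow\max_i d_i$ and to $0$ as $\lambda\to\infty$, so it has a unique root $\lambda^*$ there. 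Defining $v_i=a_i/(\lambda^*-d_i)>0$ gives a positive eigenvector, and by Perron--Frobenius a positive eigenvector belongs to $\rho(\mathcal M)$. Hence $\rho(\mathcal M)=\lambda^*$.

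The equivalence then follows from monotonicity. Suppose first that $\rho(\mathcal M)<1$. Then $\max_i d_i<\rho(\mathcal M)<1$, and since $f$ is strictly decreasing, $f(1)<f(\lambda^*)=1$. This is exactly the stated sum condition, with $1-d_i = 1-(\overline K_i-K_i)$. Conversely, suppose $\max_i d_i<1$ and $f(1)<1$. Then $1$ lies in the domain of $f$. Because $f$ decreases from $+\infty$ and $f(1)<1=f(\lambda^*)$, we get $\lambda^*<1$.

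The main point needing care is identifying the root $\lambda^*$ with the spectral radius rather than with some other eigenvalue. I handle this with the positive-eigenvector uniqueness part of Perron--Frobenius, together with the observation $\rho(\mathcal M)>\max_i d_i$, which places the Perron root inside the monotone branch of $f$. An alternative route is to bound $\rho$ via $\det(\lambda I-\mathcal M)=\prod_i(\lambda-d_i)\,(1-f(\lambda))$.
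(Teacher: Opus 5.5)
Your proof is correct and follows the same route as the paper. Like the paper, you take a positive Perron eigenvector of $\mathcal M = D + a\mathbf 1^\top$, solve each row for $v_i$ in terms of $s=\mathbf 1^\top v$, sum to obtain $\sum_i a_i/(\lambda-d_i)=1$, and use strict monotonicity on $(\max_i d_i,\infty)$ to compare with $\lambda=1$. The only minor differences are two: you obtain $\rho(\mathcal M)>\max_i d_i$ directly from the positivity of $a_i s$ in the eigen-equation instead of from monotonicity of the spectral radius, and you handle the condition $\max_i(\overline K_i-K_i)<1$ explicitly, where the paper leaves it implicit.
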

\begin{proof}
    Let $h$ be a positive right eigenvector such that $\mathcal Mh = \rho(\mathcal M)h$. Then, using the equality of $\mathcal Mh = \rho(\mathcal M)h$ on the $j$'th row, one can see that for all $j\in[N]$ we have
    \begin{equation}\label{ff}
    (\overline K_j-K_j)h_j + \left(K_j+\frac{\overline L_j K_j}{\rho_j(1-\beta_j)}\right)\underbrace{\sum_{i \in [N]}h_i}_{=k} = \rho(\mathcal M)h_j.
    \end{equation}
    Note that, for all $j \in [N]$ we have $\rho(\mathcal M)>\overline K_j-K_j$ due to monotonicity of the spectral radius for nonnegative matrices. Thus, for all $j\in [N]$ we have
    \begin{equation}\label{ff1}
    \frac{K_j+\frac{\overline L_j K_j}{\rho_j(1-\beta_j)}}{\rho(\mathcal M)-(\overline K_j-K_j)}k = h_j.
    \end{equation}
    Equation \eqref{ff1} together with the definition of $k$ on \eqref{ff} imply that
    \begin{equation}\label{ff2}
    k = \sum_{i \in [N]} \frac{\left(K_i + \frac{\overline L_iK_i}{\rho_i(1-\beta_i)}\right)}{\rho(\mathcal M)-(\overline K_i-K_i)}k.
    \end{equation}
    The equation \eqref{ff2} is feasible if and only if
    \begin{equation}\label{fg}
    \sum_{i \in [N]} \frac{\left(K_i + \frac{\overline L_iK_i}{\rho_i(1-\beta_i)}\right)}{\rho(\mathcal M)-(\overline K_i-K_i)} = 1.
    \end{equation}
    Note that the function
    \begin{equation}\label{ff3}
    x \mapsto \sum_{i \in [N]} \frac{\left(K_i + \frac{\overline L_iK_i}{\rho_i(1-\beta_i)}\right)}{x-(\overline K_i-K_i)}
    \end{equation}
    is monotone decreasing on $(\max_j(\overline K_j-K_j),\infty)$; thus, there exists a unique $x > \max_j (\overline K_j-K_j)$ that satisfies \eqref{fg}, which is $\rho(\mathcal M)$. Using the monotonicity of \eqref{ff3} together with \eqref{fg}, one can directly deduce that the function defined by \eqref{ff3} is less than $1$ at $x=1$ is equivalent to $\rho(\mathcal M)<1$.
\end{proof}

\section{Variational Contraction Conditions for Finite-Horizon MFG}\label{sect:var}

Let $\mathrm{MFG}_{\mathrm T}=(X,\mathcal{P}(A),(C_i+\rho_i\Omega_i)_{i \in [N]}, (P_i)_{i \in [N]},N)$ be a finite-horizon regularized discrete-time MFG with horizon-length $T$. In this section, we will study the convergence behavior of a simple iterative algorithm to find a MFE for $\mathrm{MFG}_{\mathrm T}$ under a given initial state-measure $\tau_0$.

\subsection{Statements of Main Results}\label{sect:5.1}

In this subsection, we construct a matrix whose spectral radius yields a contraction condition in the finite-horizon setting, in a manner similar to Section \ref{sect:stat}. Due to the multi-population setting and the non-stationarity of the solutions, the relevant contraction condition is determined by the spectral radius of an \(N(T-1)\times N(T-1)\) dense matrix with no exploitable structure. We will derive a majorant for this contraction matrix that is independent of the horizon length and show that the spectral radius of these contraction matrices converges to that of the majorizing matrix, which will allow us to provide a horizon length independent contraction condition for MFG$_{\mathrm T}$.

For a given \(\pmb{\tau} = (\tau_t)_{t=0}^{T-1} \in \prod_{t=0}^{T-1}\left(\prod_{i=1}^N \mathcal{P}(X)\right)\), let \(\tau_t\) denote the section of the flow \(\pmb{\tau}\) at time step \(t\), regarded as an element of \(\prod_{i=1}^N \mathcal{P}(X)\). Similarly, for a given \(t \in \{0, \cdots, T-1\}\), let \(\tau_{t,j}\) denote the component of \(\tau_t\) corresponding to population \(j\), regarded as an element of \(\mathcal{P}(X)\).
\begin{definition}
Let \(\pmb \tau =(\tau_{t})_{t=1}^{T-1}\in \prod_{t=0}^{T-1}\left(\prod_{i=1}^N \mathcal P(X)\right)\). For each \(i \in [N]\), we define a sequence of \(Q\)-functions \((Q^{\tau,i}_t)_{t=0}^{T-1}\) by backward recursion, starting from \(t=T-1\):
\[
Q^{\pmb\tau,i}_{T-1}(x,u) = C_i(x,u,\tau_{T-1})+\rho_i \Omega_i(u),
\]
and, for \(t=0,1,\ldots,T-2\),
\[
Q^{\pmb \tau,i}_t(x,u) = C_i(x,u,\tau_{t}) + \rho_i \Omega_i(u) + \beta_i \sum_{y \in X} Q^{\pmb \tau,i}_{\min,t+1}(y)\,P_i(y\mid x,u,\tau_{t}).
\]
We denote by \(\pmb Q^{\tau,i} =(Q^{\tau,i}_t)_{t=0}^{T-1}\) the resulting \(Q\)-function flow. Note that, by Lemma \ref{lem:tency}, we have $Q^{\tau,i}_t \in \mathcal C_i$ for all $i \in [N]$ and $t = 0, \ldots, T-1$.
\end{definition}

Note that the flows $(\pmb Q^{\tau,i})_{i \in [N]}$ are obtained under the discount factor $\beta<1$. We will need the discount factor to analyze the long time behavior of finite-horizon MFE to identify a limiting system, which will be an infinite-horizon MFGs in our case. An iterative algorithm that finds a finite-horizon MFE starting from the initial state measure $\tau_0 \in \prod_{i \in [N]}\mathcal P(X)$ is the following:

\begin{algorithm}[H]\label{alg:2}
\caption{Iteration Algorithm for Finding Finite-Horizon MFE}
    Let $\pmb \tau_0 = (\tau_{i,0})_{i \in [N]} \in \prod_{i \in [N]} \mathcal P(X)$ be an initial state-measure\;
    Let $\pmb \tau^0 := (\tau_{0},\tau_{1},\cdots, \tau_{T-1}) \in \prod_{t=0}^{T-1}(\prod_{i \in [N]}\mathcal P(X))$\;
    \While{$\pmb \tau^{n+1} \not = \pmb \tau^n$}{
        $\pmb \tau^{n+1} = (\tau_{0,i},H_{2,i}(Q^{\pmb \tau^{n},i}_0, \tau^n_{0}),H_{2,i}(Q^{\pmb \tau^n,i}_{1},\tau^n_1)_{i \in [N]},\cdots,H_{2,i}(Q^{\pmb \tau^{n},i}_{T-1},\tau^n_{T-1}))_{i \in [N]}$\;
    }
    Define $\pi^*_{t,i} = \delta_{\mathrm{argmin}_{u \in \mathcal P(A)}Q_t^{\pmb \tau^{*},i}(\cdot,u)}(\cdot)$ for all $i \in [N]$ and $t=0,\cdots,T-1$\;
    Let $\pmb \pi^* = (\pi^*_i)_{i \in [N]}$\;
\Return{finite-horizon mean-field equilibrium $(\pmb  \pi^*, \pmb \tau^*)$}
\end{algorithm}

The contraction map that we will consider that ensures converge of Algorithm \ref{alg:2} in this section will take a family of state-measure $\tau$ as an input, calculate $\pmb Q^{\tau,i}$ for all $i\in [N]$, and then update $\tau$ by using the minimizers of $\pmb Q^{\tau,i}$ as required for an MFE.

The main result of this section is the following contraction theorem, which provides a variational condition guaranteeing the contraction uniformly in the horizon length.

\begin{theorem}\label{thrm:quartic}
     Let $\overline K_{\infty} =\max_j(\overline K_j-K_j)$. If
     \begin{equation}\label{eq:cont}
    \inf_{ \overline K_\infty < r < \beta_{\max}^{-1}} \sum_{i \in [N]}\frac{\frac{K_i+\frac{\overline L_i K_i}{\rho_i}}{r}+\frac{\overline L_i K_i \beta_i}{\rho_i(1-\beta_i r)}}{1-\frac{\overline K_i-K_i}{r}} <1,
    \end{equation}
    then for all $T \in \mathbb N$, the map $\pmb \tau \mapsto \left((\tau_{0,i})_{i=1}^N,\left(H_{2,i}(Q^{\tau,i}_t,\tau_t)\right)_{i=1,t=1}^{N,T-1}\right)$ is eventually contractive over $\prod_{t=1}^{T-1}\left(\prod_{i=1}^N \mathcal P(X)\right)\times \prod_{i=1}^N\{\tau_{0,i}\}$ for any $(\tau_{0,i})_{i=1}^N\in \prod_{i=1}^N \mathcal P(X)$. In particular, when \eqref{eq:cont} holds, Algorithm \ref{alg:2} converges.
\end{theorem}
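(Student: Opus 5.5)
The plan is to reduce the finite-horizon iteration to a componentwise linear inequality with a nonnegative matrix $\mathcal M_T$, as in Lemma~\ref{lem:2}. I would then bound $\rho(\mathcal M_T)$ uniformly in $T$ by exhibiting an explicit positive test vector $h$ with $\mathcal M_T h\le \lambda h$ for some $\lambda<1$ (a Collatz--Wielandt / sub-eigenvector argument). The variable $r$ in \eqref{eq:cont} will play the role of the geometric growth rate of $h$ in time.

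\emph{Step 1: matrix inequality.} Fix two flows $\pmb\tau,\tilde{\pmb\tau}$ with the same initial measure $\tau_0$. Write $e_{t,j}=\|\tau_{t,j}-\tilde\tau_{t,j}\|_1$, so $e_{0,j}=0$, and let $S_t=\sum_j e_{t,j}$.

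At $t=T-1$, Lemma~\ref{lem:70} gives $\|Q^{\pmb\tau,i}_{T-1}-Q^{\tilde{\pmb\tau},i}_{T-1}\|_\infty\le L_i S_{T-1}\le \overline L_i S_{T-1}$. For earlier $t$, the backward recursion is exactly $Q_t=H_{1,i}(Q_{t+1},\tau_t)$, so \eqref{eq:q} iterated backwards yields
\[
\|Q^{\pmb\tau,i}_t-Q^{\tilde{\pmb\tau},i}_t\|_\infty\le \overline L_i\sum_{s=t}^{T-1}\beta_i^{s-t}S_s .
\]
Plugging this into \eqref{eq:m} bounds the updated error at time $t+1$ for $t=0,\dots,T-2$:
\[
e'_{t+1,i}\le (\overline K_i-K_i)e_{t,i}+\Bigl(K_i+\tfrac{\overline L_iK_i}{\rho_i}\Bigr)S_t+\tfrac{\overline L_iK_i}{\rho_i}\sum_{s=t+1}^{T-1}\beta_i^{s-t}S_s .
\]
This defines a nonnegative $N(T-1)\times N(T-1)$ matrix $\mathcal M_T$ with $e'\le \mathcal M_T e$ componentwise.

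\emph{Step 2: horizon-independent test vector.} Pick $r\in(\overline K_\infty,\beta_{\max}^{-1})$ at which the sum in \eqref{eq:cont} is $<1$. Try $h_{t,i}=r^t v_i$ with $v_i>0$ and $V=\sum_i v_i$.

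Because $\beta_i r<1$, the truncated tail satisfies $\sum_{s>t}\beta_i^{s-t}r^s\le r^t\,\beta_i r/(1-\beta_i r)$. Hence
\[
(\mathcal M_T h)_{t+1,i}\le r^{t+1}\Bigl[\tfrac{\overline K_i-K_i}{r}v_i+a_i(r)V\Bigr],\qquad a_i(r)=\frac{K_i+\frac{\overline L_iK_i}{\rho_i}}{r}+\frac{\overline L_iK_i\beta_i}{\rho_i(1-\beta_i r)} .
\]
To get $\mathcal M_Th\le\lambda h$ it suffices to take $v_i=a_i(r)V/(\lambda-(\overline K_i-K_i)/r)$. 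This is consistent with $\sum_i v_i=V$ iff $g(\lambda):=\sum_i a_i(r)/(\lambda-(\overline K_i-K_i)/r)=1$.

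The function $g$ is continuous and strictly decreasing on $(\overline K_\infty/r,\infty)$, tends to $+\infty$ at the left end, and satisfies $g(1)<1$ by hypothesis \eqref{eq:cont}. Hence there is a unique $\lambda<1$ with $g(\lambda)=1$, mirroring the argument of Theorem~\ref{thrm:2}. This $\lambda$ and $h$ depend on $r$ and the Lipschitz data only, not on $T$. Truncation at $T$ and the vanishing row $e_{0}=0$ only decrease the left side, so the bound holds for every $T$.

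\emph{Step 3: conclusion.} From $e'\le\mathcal M_Te$ and $\mathcal M_Th\le\lambda h$, nonnegativity of $\mathcal M_T$ gives contraction with factor $\lambda$ in the weighted norm $\max_{t,i}e_{t,i}/h_{t,i}$. In particular $\rho(\mathcal M_T)\le\lambda<1$, so by Gelfand's formula, as in Theorem~\ref{thrm:0}, the map is eventually contractive in any norm. Banach's theorem on the complete set $\prod_{t\ge1}\prod_i\mathcal P(X)\times\{\tau_0\}$ then gives convergence of Algorithm~\ref{alg:2}.

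The main obstacle is Step 1: tracking exactly which coefficient multiplies $e_{t,i}$ versus $S_t$, and the index shift between $Q_t$ and the $\tau_{t+1}$ update. Those coefficients must match the numerator of \eqref{eq:cont}, in particular the split $K_i+\overline L_iK_i/\rho_i$ at lag zero versus $\overline L_iK_i\beta_i^{k}/\rho_i$ at lag $k\ge1$. If the constants do not match, I would adjust the ansatz, for instance by allowing $h_{t,i}=r^t v_i$ with a $t$-independent correction at the terminal time.
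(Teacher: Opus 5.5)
Your proposal is correct and is essentially the paper's argument for the direction needed here: your $\mathcal M_T$ coincides entry-by-entry with $\mathcal S_T$ from Lemma~\ref{lem:4}, so the fallback in your last paragraph is unnecessary. Your geometric test vector $h_{t,i}=r^t v_i$, with $\lambda$ solving $g(\lambda)=1$, is exactly the Collatz--Wielandt construction of Lemma~\ref{lem:8}, with $v$ the Perron vector of $B(r)$ and $\lambda=\rho(B(r))$ as in Lemma~\ref{lem:57}, combined with the implication $g(1)=V(r)<1\Rightarrow\rho(B(r))<1$ of Lemma~\ref{lem:61}; the paper's heavier machinery (the limiting operator $\mathcal T_{\lambda}$ and the singularity analysis) is only needed for the converse in Theorem~\ref{thrm:4}, which this statement does not require.
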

\begin{proof}
    In the remainder of this section, we outline the main ideas of the proof.
The full argument is developed in Sections \ref{3.1}, \ref{sect:3.2}, and \ref{sect:3.3}.
\end{proof}

We emphasize that, although \eqref{eq:cont} provides an equivalent criterion for contractivity, it is not the contraction rate of the map
\[
\pmb \tau \mapsto \left((\tau_{0,i})_{i=1}^N,\left(H_{2,i}(Q^{\tau,i}_t, \tau_t)\right)_{\substack{i=1,\ldots,N\\ t=1,\ldots,T-1}}\right)
\]
in $\prod_{t=1}^{T-1}\left(\prod_{i=1}^N \mathcal P(X)\right)\times \prod_{i=1}^N\{\tau_{0,i}\}$ for any $(\tau_{0,i})\in \prod_{i=1}^N \mathcal P(X)$ under any norm. 
A particular case of Theorem \ref{thrm:quartic} is the single-population setting, in which we recover a contraction condition equivalent to that of \cite[Theorem 2]{ayd}.

\begin{corollary}
    Suppose $N=1$, i.e. we have a single population MFG. Then, we have contraction for the mean-field equilibrium operator if 
    \begin{equation}\label{eq:refff}
    \inf_{\overline K_1 -K_1 < r <\beta_1^{-1}}\frac{\overline K_1 +\frac{\overline L_1K_1}{\rho_1}}{r} + \frac{\overline L_1 K_1\beta_1}{\rho_1(1-\beta_1 r)}<1.
    \end{equation}
\end{corollary}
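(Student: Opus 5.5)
This corollary should follow by specializing Theorem \ref{thrm:quartic} to $N=1$ and simplifying the expression in \eqref{eq:cont}. With $N=1$ we have $\overline K_\infty=\overline K_1-K_1$ and $\beta_{\max}=\beta_1$. The sum in \eqref{eq:cont} then reduces to the single term
\[
g(r):=\frac{\frac{K_1+\frac{\overline L_1K_1}{\rho_1}}{r}+\frac{\overline L_1K_1\beta_1}{\rho_1(1-\beta_1 r)}}{1-\frac{\overline K_1-K_1}{r}},
\]
and the condition of the theorem is $\inf_{\overline K_1-K_1<r<\beta_1^{-1}} g(r)<1$.

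The key step is to show that, for every admissible $r$, the inequality $g(r)<1$ is equivalent to $f(r)<1$, where
\[
f(r):=\frac{\overline K_1+\frac{\overline L_1K_1}{\rho_1}}{r}+\frac{\overline L_1K_1\beta_1}{\rho_1(1-\beta_1 r)}
\]
is the expression in \eqref{eq:refff}.

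\begin{itemize}
\item Since $r>\overline K_1-K_1$, the denominator $1-(\overline K_1-K_1)/r$ is positive.
\item Hence $g(r)<1$ is equivalent to
\[
\frac{K_1+\frac{\overline L_1K_1}{\rho_1}}{r}+\frac{\overline L_1K_1\beta_1}{\rho_1(1-\beta_1 r)}<1-\frac{\overline K_1-K_1}{r}.
\]
\item Moving $\frac{\overline K_1-K_1}{r}$ to the left-hand side, the $K_1$ terms combine into $\frac{\overline K_1}{r}$, which gives exactly $f(r)<1$.
\end{itemize}

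Consequently the infimum conditions match. If $\inf g<1$, some admissible $r$ has $g(r)<1$, hence $f(r)<1$, so $\inf f<1$. The converse holds by the same argument. Therefore \eqref{eq:refff} is equivalent to \eqref{eq:cont} with $N=1$, and Theorem \ref{thrm:quartic} gives eventual contractivity of the mean-field equilibrium operator for every $T$.

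The only subtle point is the domain, which must be identical in both infima. Both range over $r\in(\overline K_1-K_1,\beta_1^{-1})$, and on this interval the factor $1-\beta_1 r$ is positive. If the interval is empty, both conditions are vacuous, or false under the convention $\inf\emptyset=+\infty$, so they still agree.

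As a remark, one could also check agreement with \cite[Theorem 2]{ayd} by comparing definitions, but this is not needed for the stated implication.
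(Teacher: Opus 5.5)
Your proposal is correct and takes the same route as the paper, which simply says the corollary follows directly from Theorem~\ref{thrm:quartic} with $N=1$. Your pointwise equivalence $g(r)<1 \iff f(r)<1$ on $(\overline K_1-K_1,\beta_1^{-1})$ supplies the algebra the paper leaves implicit. The positivity of the denominator there uses $r>0$, which holds because $\overline K_1-K_1\ge 0$ by \eqref{eq:fig}.
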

\begin{proof}
    This follows directly from Theorem \ref{thrm:quartic}.
\end{proof}

\begin{remark}
    A condition that is equivalent to \eqref{eq:refff} is given in \cite[Theorem 2]{ayd}. The main advantage of the corollary above compared to \cite[Theorem 2]{ayd} is that, we will be able to obtain explicit convergence rates from finite-horizon to infinite-horizon MFE under the variational characterization under slightly weaker conditions than the ones used in \cite{ayd}.
\end{remark}

The proof of Theorem \ref{thrm:quartic} relies on finding a uniform upper bound on the spectral radius of a matrix as done in Section \ref{sect:stat}. The main motivation behind this result is that minimizers of the left-hand side of \eqref{eq:cont} allow us to construct a Lyapunov function that yields explicit convergence rates between finite-horizon and infinite-horizon non-stationary MFEs, which is the subject of the next section. These convergence rates also allow us to deduce a uniqueness condition for infinite-horizon non-stationary MFEs.

As in the stationary case, we begin our analysis by establishing a vector inequality for the finite-horizon setting that describes the worst-case behavior of the iterations obtained under the map $\pmb \tau \mapsto \left((\tau_{0,i})_{i=1}^N,\left(H_{2,i}(Q^{\pmb\tau,i}_t,\tau_t)\right)_{i=1,t=1}^{N,T-1}\right)$.

\begin{lemma}\label{lem:4}
    For all $i \in [N]$, let $\bar {\mathcal M}_i, {\mathcal M}_i \in \mathbb R^{(T-1)\times (T-1)}$ be such that $\overline {\mathcal M}_i = T_i(\overline K_i)$ and $\mathcal M_i = T_i(K_i)$, where
   \[
   T_i(x) = \begin{bmatrix}
        \frac{\overline L_i K_i}{\rho_i}\beta_i &\frac{\overline L_i K_i}{\rho_i}\beta_i^2 & \frac{\overline L_i K_i}{\rho_i}\beta_i^3 & \cdots & \frac{\overline L_i K_i}{\rho_i}\beta_i^{T-1} \\
        x + \frac{\overline L_i K_i}{\rho_i} & \frac{\overline L_i K_i}{\rho_i}\beta_i & \frac{\overline L_i K_i}{\rho_i}\beta_i^2 & \cdots & \frac{\overline L_i K_i}{\rho_i}\beta_i^{T-2}\\
        0 & x + \frac{\overline L_i K_i}{\rho_i} & \frac{\overline L_i K_i}{\rho_i}\beta_i & \cdots & \frac{\overline L_i K_i}{\rho_i}\beta_i^{T-3} \\
        \vdots & \vdots & \vdots & \ddots& \vdots \\
        0 & 0 & 0 & \cdots & \frac{\overline L_i K_i}{\rho_i}\beta_i
    \end{bmatrix}.
    \]
    Let $E_{i,j}$ denote an $N \times N$ matrix with a $1$ in the $(i,j)$-entry and zeros elsewhere. We define
    \[
    \mathcal S_T = \sum_{i=1}^N E_{i,i} \otimes \overline {\mathcal M_i} +\sum_{i,j:i \not = j}^N E_{i,j} \otimes \mathcal M_i.
    \]
    Then, for any $\tau,\tilde \tau \in  \prod_{t=1}^{T-1}\left(\prod_{i=1}^N \mathcal P(X)\right)\times \prod_{i=1}^N\{\tau_{0,i}\}$ we have
    \[
    \begin{bmatrix}
        \|H_{2,1}(Q^{\tau,1}_0,\tau_0)-H_{2,1}(Q^{\tilde \tau,1}_0,\tilde \tau_0) \|_1\\
        \|H_{2,1}(Q^{\tau,1}_1,\tau_1)-H_{2,1}(Q^{\tilde \tau,1}_1,\tilde \tau_1) \|_1\\
        \vdots \\
        \|H_{2,1}(Q^{\tau,1}_{T-2},\tau_{T-2})-H_{2,1}(Q^{\tilde \tau,1}_{T-2},\tilde \tau_{T-2}) \|_1\\
        \|H_{2,2}(Q^{\tau,2}_0,\tau_0)-H_{2,2}(Q^{\tilde \tau,2}_0,\tilde \tau_0) \|_1\\
        \|H_{2,2}(Q^{\tau,2}_1,\tau_1)-H_{2,2}(Q^{\tilde \tau,2}_1,\tilde \tau_1) \|_1\\
        \vdots \\
        \|H_{2,2}(Q^{\tau,2}_{T-2},\tau_{T-2})-H_{2,2}(Q^{\tilde \tau,2}_{T-2},\tilde \tau_{T-2}) \|_1\\
        \vdots \\
        \|H_{2,N}(Q^{\tau,N}_0,\tau_0)-H_{2,N}(Q^{\tilde \tau,N}_0,\tilde \tau_0) \|_1\\
        \|H_{2,N}(Q^{\tau,N}_1,\tau_1)-H_{2,N}(Q^{\tilde \tau,N}_1,\tilde \tau_1) \|_1\\
        \vdots \\
        \|H_{2,N}(Q^{\tau,N}_{T-2},\tau_{T-2})-H_{2,N}(Q^{\tilde \tau,N}_{T-2},\tilde \tau_{T-2}) \|_1
    \end{bmatrix} \le \mathcal S_T \begin{bmatrix}
        \|\tau_{1,1}-\tilde \tau_{1,1}\|_1\\
        \|\tau_{2,1}-\tilde \tau_{2,1}\|_1\\
        \vdots \\
        \|\tau_{T-1,1}-\tilde \tau_{T-1,1}\|_1\\
        \| \tau_{1,2}- \tilde \tau_{1,2} \|_1\\
        \| \tau_{2,2}- \tilde \tau_{2,2} \|_1\\
        \vdots \\
        \| \tau_{T-1,2}- \tilde \tau_{T-1,2} \|_1\\
        \vdots\\
        \|\tau_{1,N}-\tilde \tau_{1,N}\|_1\\
        \|\tau_{2,N}-\tilde \tau_{2,N}\|_1\\
        \vdots \\
        \|\tau_{T-1,N}-\tilde \tau_{T-1,N}\|_1
    \end{bmatrix}.
    \]
\end{lemma}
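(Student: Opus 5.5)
The plan is to prove the inequality one row at a time. Fix $i\in[N]$ and a time $t\in\{0,\dots,T-2\}$. We combine the one-step bound \eqref{eq:m} of Lemma \ref{lem:1} with a backward unrolling of \eqref{eq:q} along the finite-horizon recursion defining $\pmb Q^{\tau,i}$.

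Write $d_{s,j}:=\|\tau_{s,j}-\tilde\tau_{s,j}\|_1$. Since both flows share the initial condition $\tau_0$, we have $d_{0,j}=0$ for all $j$, and this is what removes the time-$0$ column from $\mathcal S_T$.

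\textbf{Step 1 (one-step bound).} Applying \eqref{eq:m} with $Q=Q^{\tau,i}_t$, $\tilde Q=Q^{\tilde\tau,i}_t$ (both lie in $\mathcal C_i$ by Lemma \ref{lem:tency}) gives
\[
\|H_{2,i}(Q^{\tau,i}_t,\tau_t)-H_{2,i}(Q^{\tilde\tau,i}_t,\tilde\tau_t)\|_1\le \frac{K_i}{\rho_i}\|Q^{\tau,i}_t-Q^{\tilde\tau,i}_t\|_\infty+\overline K_i d_{t,i}+K_i\sum_{j\ne i}d_{t,j}.
\]

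\textbf{Step 2 (backward bound on the $Q$-functions).} For $t\le T-2$ we have $Q^{\tau,i}_t=H_{1,i}(Q^{\tau,i}_{t+1},\tau_t)$ by definition. Hence \eqref{eq:q} yields
\[
\|Q^{\tau,i}_t-Q^{\tilde\tau,i}_t\|_\infty\le\beta_i\|Q^{\tau,i}_{t+1}-Q^{\tilde\tau,i}_{t+1}\|_\infty+\overline L_i\sum_j d_{t,j}.
\]
At the terminal time, $Q^{\tau,i}_{T-1}-Q^{\tilde\tau,i}_{T-1}=C_i(\cdot,\cdot,\tau_{T-1})-C_i(\cdot,\cdot,\tilde\tau_{T-1})$. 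Lemma \ref{lem:70} then gives the bound $L_i\sum_j d_{T-1,j}\le\overline L_i\sum_j d_{T-1,j}$, using $\overline L_i\ge L_i$ because $K_i\le 2$. Unrolling the recursion gives
\[
\|Q^{\tau,i}_t-Q^{\tilde\tau,i}_t\|_\infty\le\overline L_i\sum_{s=t}^{T-1}\beta_i^{s-t}\sum_{j=1}^N d_{s,j}.
\]

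\textbf{Step 3 (matching coefficients).} Substituting Step 2 into Step 1, the coefficient of $d_{s,j}$ in the row for $(i,t)$ is computed case by case:
\begin{itemize}
\item for $s>t$ it is $\frac{\overline L_iK_i}{\rho_i}\beta_i^{s-t}$, independently of $j$;
\item for $s=t$ it is $\frac{\overline L_iK_i}{\rho_i}+\overline K_i$ if $j=i$, and $\frac{\overline L_iK_i}{\rho_i}+K_i$ if $j\neq i$;
\item for $s<t$ it is $0$.
\end{itemize}
Now index rows by $t=0,\dots,T-2$ and columns by $s=1,\dots,T-1$. For $t=0$, the $s=0$ term vanishes since $d_{0,j}=0$, and the remaining coefficients $\frac{\overline L_iK_i}{\rho_i}\beta_i^{s}$ reproduce the first row of $T_i(\cdot)$. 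For $t\ge1$, the entry at $s=t$ is the subdiagonal entry $x+\frac{\overline L_iK_i}{\rho_i}$, and the entries for $s>t$ are the geometric entries $\frac{\overline L_iK_i}{\rho_i}\beta_i^{s-t}$.

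The diagonal block ($j=i$) therefore equals $T_i(\overline K_i)=\overline{\mathcal M}_i$, and each off-diagonal block $(i,j)$ with $j\ne i$ equals $T_i(K_i)=\mathcal M_i$. The off-diagonal blocks carry the full geometric tail because $Q^{\tau,i}$ depends on all populations through $\sum_j d_{s,j}$. This is exactly the block $(i,j)$ of $\mathcal S_T=\sum_iE_{i,i}\otimes\overline{\mathcal M}_i+\sum_{i\ne j}E_{i,j}\otimes\mathcal M_i$ in the stated ordering, with population-major ordering and time inside.

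\textbf{Main obstacle.} There is no deep difficulty; the risk lies in the bookkeeping.
\begin{itemize}
\item The row index $t$ is offset from the column index $s$ by one. This offset turns the "diagonal" contribution $\overline K_i$ (or $K_i$) into a subdiagonal entry, and it makes the first row purely geometric.
\item The terminal step must be handled separately, since $Q_{T-1}$ is not of the form $H_{1,i}(\cdot,\tau)$ applied to an element of $\mathcal C_i$ unless one takes $Q\equiv\rho_i\Omega_i$ as input. Lemma \ref{lem:70} applied directly avoids this.
\end{itemize}
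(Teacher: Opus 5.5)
Your proposal is correct and follows the paper's own route: apply \eqref{eq:m}, unroll \eqref{eq:q} backward along the finite-horizon $Q$-recursion, and match coefficients with $\mathcal S_T$ (using $\tau_0=\tilde\tau_0$ to drop the time-$0$ column). Your separate treatment of the terminal step through Lemma \ref{lem:70} supplies a detail the paper leaves implicit; one small correction is that $L_i\le\overline L_i$ follows from $0<1-\beta_iK_i/2\le 1$, which is implicitly required for $\overline L_i$ to be well defined, and not from $K_i\le 2$, which Assumption \ref{ass:1} does not guarantee.
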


\begin{proof}
    Fix an arbitrary $i \in [N]$. For any $0<t\le T-2$, using Lemma \ref{lem:1}, through recursive iterations, we obtain
    \begin{align*}
        &\| H_{2,i}(Q^{\tau,i}_t,\tau_t)-H_{2,i}(Q^{\tilde \tau,i}_t,\tilde \tau_t)\|_1 
        \\&\le \frac{K_i}{\rho_i}\|Q^{\tau,i}_t-\tilde Q^{\tilde \tau,i}_t\|_{\infty} + \overline K_i\|\tau_{t,i}-\tilde \tau_{t,i}\|_1 + K_i\sum_{j \not = t}\| \tau _{t,j} - \tilde \tau_{t,j}\|_1
        \\&\le \frac{K_i\beta_i}{\rho_i}\|Q^{\tau,i}_{t+1}-\tilde Q^{\tilde \tau,i}_{t+1}\|_{\infty} + \left( \overline K_i + \frac{\overline L_i K_i}{\rho_i}\right)\|\tau_{t,i}-\tilde \tau_{t,i}\|_1 + \left( K_i + \frac{\overline L_i K_i}{\rho_i}\right)\| \tau _{t,j} - \tilde \tau_{t,j}\|_1
        \\& \le \left( \overline K_i + \frac{\overline L_i K_i}{\rho_i}\right)\|\tau_{t,i}-\tilde \tau_{t,i}\|_1 + \left( K_i + \frac{\overline L_i K_i}{\rho_i}\right)\| \tau _{t,j} - \tilde \tau_{t,j}\|_1 
        \\& \qquad + \frac{\overline L_iK_i}{\rho_i}\sum_{k=1}^{T-1-t} \beta^k\sum_{j } \|\tau_{t+k,j}-\tilde \tau_{t+k,j}\|_1.
    \end{align*}
    For $t=0$, since $\tau_0=\tilde \tau_0$, repeating the same argument above, we obtain that
    \begin{align*}
        &\| H_{2,i}(Q^{\tau,i}_0,\tau_0)-H_{2,i}(Q^{\tilde \tau,i}_0,\tilde \tau_0)\|_1 \le \frac{\overline L_iK_i}{\rho_i}\sum_{k=1}^{T-1} \beta^k\sum_{j } \|\tau_{t+k,j}-\tilde \tau_{t+k,j})\|_1.
    \end{align*}
    These two expressions give the desired matrix inequality.
\end{proof}

{In the construction of $\mathcal S_T$, each block $E_{i,j}$ captures the impact of population $j$ on the dynamics of population $i$.}

Due to lack of sparsity of the matrix \(\mathcal S_T\), numerical estimation of its spectral radius \(\rho(\mathcal S_T)\) becomes more difficult as \(T\) increases. Moreover, the dimension of \(\mathcal S_T\) also depends on the number of populations, which further complicates the problem. The asymptotic behavior of the ratios of eigenvector components corresponding to \(\rho(\mathcal S_T)\) may also provide insight into finite-time error rates between finite-horizon and infinite-horizon MFEs; see \cite[Theorem 4]{ayd}. In general, we will not be able to provide a closed-form expression for \(\rho(\mathcal S_T)\) or for \(\lim_{T \to \infty} \rho(\mathcal S_T)\). Instead, we provide a contraction condition that can be computed efficiently numerically. In particular, to prove Theorem \ref{thrm:quartic}, we prove the following:
\begin{theorem}\label{thrm:4}
    We have $\lim_{T \to \infty} \rho(\mathcal S_T) <1$ if, and only if,
    \begin{equation}\label{eq:cont1}
    \inf_{ \overline K_\infty < r < \beta_{\max}^{-1}} \sum_{i \in [N]}\frac{\frac{K_i+\frac{\overline L_i K_i}{\rho_i}}{r}+\frac{\overline L_iK_i\beta_i}{\rho_i}\frac{1}{1-\beta_ir}}{1-\frac{\overline K_i-K_i}{r}} < 1.
    \end{equation}
\end{theorem}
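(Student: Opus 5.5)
The plan is to exploit the near-Toeplitz structure of $\mathcal S_T$. Write $a_i := \frac{\overline L_i K_i}{\rho_i}$ and $d_i := \overline K_i - K_i$. Indexing the vector by $(t,i)$, row $(t,i)$ of $\mathcal S_T h$ reads
\[
(\mathcal S_T h)_{t,i} = d_i h_{t-1,i} + (K_i + a_i)\sum_{j} h_{t-1,j} + a_i \sum_{k \ge 0} \beta_i^{k+1} \sum_j h_{t+k,j},
\]
where terms with $t-1<1$ or $t+k>T-1$ are absent. Two facts follow immediately:
\begin{itemize}
\item Every entry of $T_i(x)$ depends only on the difference of its row and column indices. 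Hence $\mathcal S_T$ is, up to a permutation, a principal submatrix of $\mathcal S_{T+1}$, so $\rho(\mathcal S_T)$ is nondecreasing in $T$ and the limit exists.
\item The lower triangular part is a single subdiagonal, with diagonal-plus-rank-one coupling across populations. This is the same structure as $\mathcal M$ in Theorem~\ref{thrm:2}.
\end{itemize}

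\textbf{Sufficiency.} I would use a geometric Collatz--Wielandt test vector $h_{t,i} = c_i r^{t}$ with $c_i>0$, $\overline K_\infty < r < \beta_{\max}^{-1}$, and set $C = \sum_j c_j$. Substituting, and enlarging the finite sums to infinite geometric series (all terms are nonnegative, so this only increases the right-hand side), gives for every $T$
\[
(\mathcal S_T h)_{t,i} \le r^t\Big[\tfrac{d_i}{r} c_i + C\Big(\tfrac{K_i+a_i}{r} + \tfrac{a_i\beta_i}{1-\beta_i r}\Big)\Big].
\]
Choose $r$ so that the sum in \eqref{eq:cont1} equals some $\sigma<1$, and set
\[
c_i := \frac{\frac{K_i+a_i}{r}+\frac{a_i\beta_i}{1-\beta_i r}}{1-d_i/r}.
\]
This choice gives $C=\sigma C$ on the right side of the consistency relation, so it is not yet strict. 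To obtain a strict inequality, replace $r$ by $r/\lambda$ for $\lambda<1$ close to $1$, or equivalently solve the scalar equation
\[
\sum_i \frac{\frac{K_i+a_i}{\lambda r}+\frac{a_i\beta_i}{\lambda(1-\beta_i r)}}{1-\frac{d_i}{\lambda r}} = 1
\]
for $\lambda$, exactly as in the monotonicity argument of Theorem~\ref{thrm:2}. This yields $\mathcal S_T h \le \lambda h$ with $\lambda<1$ independent of $T$. The subinvariance (Collatz--Wielandt) bound for nonnegative matrices then gives $\rho(\mathcal S_T)\le\lambda$ for all $T$, hence $\lim_{T\to\infty}\rho(\mathcal S_T)\le\lambda<1$.

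\textbf{Necessity (main obstacle).} Suppose $\lambda_\infty := \lim_{T}\rho(\mathcal S_T) < 1$. I must produce an admissible $r$ at which the sum in \eqref{eq:cont1} is below $1$.
\begin{enumerate}
\item Take positive Perron vectors $h^{(T)}$ of $\mathcal S_T$. The leading entries of $\mathcal S_T$ are bounded below by $d_i$ and $K_i+a_i$, so $\lambda_\infty \ge \max_i d_i = \overline K_\infty$. One should check whether strict inequality holds here.
\item Form the generating functions $F_i^{(T)}(z) = \sum_t h^{(T)}_{t,i} z^t$. The subdiagonal term becomes multiplication by $z$, and the Toeplitz upper part becomes multiplication by the kernel $\frac{a_i\beta_i}{1-\beta_i/z}$ in $1/z$. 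Boundary corrections arise from truncation at $T-1$.
\item Normalize and pass to a limit, possibly after re-indexing from the terminal end. This should give a nonnegative nonzero sequence $h$ satisfying $\mathcal S_\infty h \ge \lambda_\infty h$ in an appropriate one-sided sense.
\item For this limit, each $F_i$ satisfies
\[
F_i(z)\Big(\lambda_\infty - d_i z\Big) = \Big((K_i+a_i)z + \frac{a_i\beta_i}{1-\beta_i/z}\Big)F(z) + (\text{boundary terms}),
\]
with $F=\sum_j F_j$. Solving for $F_i$ and summing over $i$ expresses $F$ through the symbol
\[
\Phi(z) = \sum_i \frac{(K_i+a_i)z+\frac{a_i\beta_i}{1-\beta_i/z}}{\lambda_\infty - d_i z}.
\]
\item Because $F$ has nonnegative coefficients, Pringsheim's theorem applies: $F$ has a singularity on the positive real axis at its radius of convergence. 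Following the singularity analysis of \cite[Section VI]{flajolet2009analytic}, that singularity should force $\Phi(z)=1$ at some positive real point. Setting $z = 1/r$ with $r$ in $(\overline K_\infty,\beta_{\max}^{-1})$, this says the symbol value at $r$ equals $1$ at the level $\lambda_\infty$.
\item Since $\lambda_\infty<1$, evaluating at level $1$ gives the sum in \eqref{eq:cont1} strictly less than $1$ at that $r$, by the monotonicity in the level used in Theorem~\ref{thrm:2}.
\end{enumerate}

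An alternative route to necessity would be a Schmidt--Spitzer type result for finite sections of nonnegative block Toeplitz matrices, namely $\lim_T\rho(\mathcal S_T)=\inf_r\rho(\Phi(r))$, where $\Phi(r)$ is the $N\times N$ diagonal-plus-rank-one symbol. One would combine it with the Theorem~\ref{thrm:2} characterization of $\rho(\Phi(r))<1$.

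I expect controlling the truncation boundary terms, and locating the positive real singularity inside $(\overline K_\infty,\beta_{\max}^{-1})$, to be the hardest part of the whole argument.
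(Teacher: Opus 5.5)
Your sufficiency argument is correct and is the paper's argument: geometric Collatz--Wielandt vectors as in Lemma~\ref{lem:8}, plus monotonicity in the level as in Lemma~\ref{lem:61}. No perturbation is even needed, since $c_i=v_i(r)/(1-d_i/r)$ already gives $B(r)c=c-(1-\sigma)v(r)<c$. Your closing Steps 5--6 are also sound, and leaner than the paper's. Once $\Phi(z_0)=1$ holds at level $\lambda_\infty$ with $z_0\in(\beta_{\max},\min_i\lambda_\infty/d_i)$, strict decrease in the level gives $V(1/z_0)<1$. Moreover $1/z_0>\max_i d_i/\lambda_\infty>\overline K_\infty$ because $\lambda_\infty<1$. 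So the ratio asymptotics of Lemmas~\ref{lem:52}--\ref{lem:55} are not needed for this theorem. The necessity direction, however, has a genuine gap exactly where the paper does its real work.

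Step 3 does not deliver what Step 4 uses. In the forward indexing of Step 4, a pointwise limit of normalized Perron vectors only gives, via Fatou on the infinite upper-Toeplitz tail, $\lambda_\infty h\ge\mathcal S_\infty h$. That is the reverse of what you wrote. Moreover, no one-sided inequality yields the exact functional equation for $F_i$, and your sketch does not rule out a limit that vanishes or loses mass to infinity.

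The missing idea is a uniform tail estimate. Dropping nonnegative terms in row $k$ gives $\lambda_T u_i^{(T)}(k)\ge a_i\sum_{j\ge1}\beta_i^{j+1}s^{(T)}(k+j)$ with $s^{(T)}=\sum_l u_l^{(T)}$. This forces the weighted tails $\sum_{m\ge k}\beta_{\max}^m s^{(T)}(m)$ to decay geometrically with a ratio $q<1$ independent of $T$. Combine this with the normalization $u_1^{(T)}(1)=1$ and the boundary-row bounds on $u_i^{(T)}(1)$ (Lemma~\ref{lem:lower_finite}). You then get compactness, strict positivity of the limit, and a summable majorant, so dominated convergence gives an exact positive fixed point of $\mathcal T_{\lambda_\infty}$ (Lemma~\ref{lem:beta_weighted_sum_conv}, Proposition~\ref{prop:0}).

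The same estimate gives $R>\beta_{\max}$ strictly (Lemma~\ref{lem:14}), which Step 4 also needs. The kernel $\frac{a_i\beta_i}{1-\beta_i/z}$ only expands on the annulus $\beta_{\max}<|z|<R$. Strictness is what makes $z=\beta_i$ removable and places the Pringsheim point at $z_0>\beta_{\max}$, i.e.\ $r<\beta_{\max}^{-1}$.

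Step 1 is also false. The block subdiagonal of $\mathcal S_T$ is nilpotent, so its entries give no lower bound on the spectral radius. Concretely, for $N=1$, Lemma~\ref{lem:8} at $r=1/(2\beta_1)$ gives $\lambda_\infty\le2\beta_1(\overline K_1+2a_1)$, which is below $d_1$ for small $\beta_1$. What you actually need is the upper end of the location of $z_0$. The bound $u_i(k)\ge u_i(1)(d_i/\lambda_\infty)^{k-1}$ gives $R\le\min_i\lambda_\infty/d_i<\infty$, so Pringsheim applies. You must still show that the singularity at $R$ is a zero of $1-\Phi$ rather than a pole of the coefficients at $\beta_i$ or $\lambda_\infty/d_i$ (Lemma~\ref{lem:16}, Proposition~\ref{prop:1}). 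Until then, Step 5's ``should force $\Phi(z)=1$'' is unproved.

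The Schmidt--Spitzer alternative does not close the gap either. With a matrix-valued, non-Laurent-polynomial symbol it is not an off-the-shelf result. The identity $\lim_T\rho(\mathcal S_T)=\inf_r\rho(B(r))$ is exactly Proposition~\ref{prop:quant}, which the paper proves by this very route.
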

\begin{proof}
    A proof for this result will be established through sections \ref{3.1}, \ref{sect:3.2}, and \ref{sect:3.3}.
\end{proof}

{Note that the sequence $(\rho(\mathcal S_T))_T$ is a monotone sequence. Thus, the contraction condition in Theorem \ref{thrm:4} is applicable to all $\rho(\mathcal S_T)$.} The difficult part of the proof of Theorem \ref{thrm:4} is showing that $\lim_{T \to \infty} \rho(\mathcal S_T) <1$ implies \eqref{eq:cont1}. Proving that \eqref{eq:cont1} implies  $\lim_{T \to \infty} \rho(\mathcal S_T) <1$ will be argued through a Collatz-Wiedelant argument by choosing geometric test vectors. If \(\lim_{T \to \infty} \rho(\mathcal S_T) < 1\), then part of the proof of Theorem \ref{thrm:4} implies that \(\max_j(\overline K_j-K_j) < r < \beta_{\max}^{-1}\). Thus, when \eqref{eq:cont1} is satisfied with \(r=1\), we recover the contraction condition for stationary MFEs; see Theorem \ref{thrm:2}. In this sense, the contraction condition for finite-horizon MFEs is strictly weaker than that for stationary MFEs.

Let
\[
r \in (\max_{j \in [N]}(\overline K_j-K_j), \beta^{-1}_{\max}) \mapsto V(r) = \sum_{i \in [N]}\frac{\frac{K_i+\frac{\overline L_i K_i}{\rho_i}}{r}+\frac{\overline L_iK_i\beta_i}{\rho_i}\frac{1}{1-\beta_ir}}{1-\frac{\overline K_i-K_i}{r}}.
\]
We call \(V\) a \emph{variational contraction function} for the finite-horizon MFE. The value \(\inf_r V(r)\) represents the \emph{worst-case} behavior of the finite-horizon MFG that can be inferred from a majorization of \(\mathcal S_T\). In the next section, when we derive finite-time error bounds between finite-horizon and infinite-horizon MFEs, the regime of the values of \(r\) such that \(V(r)<1\) will play a critical role. In general, we can separate the behavior of \(V\) into three regimes: \(r<1\), \(r=1\), and \(r>1\).

\begin{enumerate}
    \item When \(V(1)<1\), there exists a majorization of \(\mathcal S_T\) such that the ratios of consecutive entries of a positive Perron right eigenvector converge to the constant \(1\). Note that the case \(V(1)\) corresponds to the contraction rate in Theorem \ref{thrm:2}; thus, the asymptotic stationarity of these eigenvectors implies that the system approaches consensus. In this case, we say that \(\mathcal S_T\) is \emph{asymptotically stationary}.
    \item When \(V(r)<1\) for some \(r>1\), there exists a majorization of \(\mathcal S_T\) such that the ratios of consecutive entries of a positive Perron right eigenvector converge to the constant \(r\). In this case, we say that \(\mathcal S_T\) is \emph{stable}. This stability comes from the fact that the corresponding majorization has Perron eigenvectors that decay geometrically with ratio \(1/r\). Stable cases are those in which we can recover a convergence rate from finite-horizon MFEs to infinite-horizon MFEs. Note that, by continuity of \(V\), if \(\mathcal S_T\) is asymptotically stationary, then it is also stable.
    \item When \(V(r)<1\) only for \(r<1\), there exists a majorization of \(\mathcal S_T\) such that the ratios of consecutive entries of a positive Perron right eigenvector converge to the constant \(r\). In this case, we say that \(\mathcal S_T\) is \emph{unstable}.
\end{enumerate}

The results in \cite[Theorems 2 and 4]{ayd} are equivalent to the worst-case quantity \(\inf_r V(r)\) in the single-population setting. For instance, as we show in the next section, when the stationary MFG is eventually contractive, i.e., when \(V(1)<1\), we can still recover a convergence rate between finite-horizon and infinite-horizon MFEs using the variational contraction function \(V\). In this sense, the result above generalizes \cite[Theorem 2]{ayd}.

Our proof strategy for Theorem \ref{thrm:4} amounts to showing that appropriate majorizations of \(\mathcal S_T\) with the stated characteristics exist in each of the three regimes described above. These characteristics are identified by studying a limiting eigenvector equation derived from the finite-horizon Perron right eigenvectors of \(\mathcal S_T\). Note that for each \(T \in \mathbb N\), the Perron right eigenvector of \(\mathcal S_T\) satisfies
\begin{equation}\label{eq:BC}
\rho(\mathcal S_T)\,u_{i}(1) = \frac{\overline L_iK_i}{\rho_i} \sum_{j \in [N]}\sum_{k=1}^{T-1}\beta_i^k\,u_j(k),  \qquad i\in [N],
\end{equation}
and
\begin{equation}\label{eq:BULK}
\rho(\mathcal S_T)\,u_{i}(k) = (\overline K_i-K_i)u_i(k-1) + \sum_{j \in [N]}K_i\,u_j(k-1) +\frac{\overline L_iK_i}{\rho_i}\sum_{j \in [N]}\sum_{t=k-1}^{T-1} \beta_i^{t-k+1}\,u_j(t),
\end{equation}
for \(i\in [N]\) and \(2 \le k \le T-1\). In what follows, we let \(\lambda_T:= \rho(\mathcal S_T)\). 

Let \(\lambda_{\infty} = \lim_{T \to \infty} \lambda _T\), which exists since the sequence \((\lambda_T)_{T\ge0}\) is monotone. On the space of real sequences \((\mathbb R^{\mathbb N})^{N}\), for \(u=(u_i(k))_{k \in \mathbb N,\, i \in [N]}\) and any \(\lambda>0\), we define the \emph{limiting eigenvector operator} \(\mathcal T_{\lambda}\) by taking \(T \to \infty\) in the relations above:
\begin{equation}\label{eq:boundary}
(\mathcal T_{\lambda}(u))_i(1) = \frac{\overline L_iK_i}{\rho_i\lambda}\sum_{k \ge 1}\beta_i^k\sum_{j \in [N]} u_j(k), \,\, i \in [N]
\end{equation}
and for $k\ge 2$
\begin{equation}\label{eq:limiteig}
(\mathcal T_{\lambda}(u))_i(k) = \frac{(\overline K_i- K_i)}{\lambda}u_i(k-1) +\frac{K_i}{\lambda}\sum_{j \in [N]}u_j(k-1) + \frac{\overline L_iK_i}{\rho_i\lambda} \sum_{m \ge 0}\beta_i^m\sum_{j\in [N]}u_j(k+m-1), \,\, i \in [N].
\end{equation}
Both in the finite-horizon and infinite-horizon cases, the conditions satisfied under $k=1$ (i.e., \eqref{eq:boundary}) will be referred to as \emph{boundary conditions}.

Informally, we will show that if \(\mathcal T_{\lambda_\infty}\) has a fixed point \(u \in (\mathbb R^{\mathbb N})^{N}\) with strictly positive components, then for all \(i \in [N]\) we have
\[
\lim_{k \to \infty} \frac{u_i(k+1)}{u_i(k)} = \text{constant},
\]
and this constant is the same for all \(i\in [N]\). This limit corresponds to a parameter \(r^* \in (\max_j(\overline K_j-K_j), \beta^{-1}_{\max})\) satisfying \(\inf_r V(r) = V(r^*)\). The value(s) of \(r^*\) that attain \(V(r^*)\) will be crucial for deriving finite-time error bounds between finite-horizon and infinite-horizon MFEs, which is the subject of the next section. To prove Theorem \ref{thrm:4}, we will need the following necessary quantitative criterion obtain for the existence of a positive fixed-point of \(\mathcal T_{\lambda}\), which also might be of independent interest:

\begin{theorem}\label{thrm:5}
Fix \(\lambda >0\). Suppose that \(\min_j \lambda/(\overline K_j - K_j)>1\). If the operator \(\mathcal T_{\lambda}\) has a positive fixed point then
\begin{equation}\label{eq:constraint}
1 = \sum_{i \in [N]}\frac{ K_i z_0+\frac{a_iz_0^2}{z_0-\beta_i}}{\lambda-(\overline K_i-K_i)z_0}
\end{equation}
for some \(z_0 \in \bigl(\beta_{\max}, \min_j\lambda/(\overline K_j-K_j)\bigr)\).
\end{theorem}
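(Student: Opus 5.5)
The plan is to pass to generating functions and read \eqref{eq:constraint} off as the location of the dominant singularity, in the analytic-combinatorics style announced in the introduction. Write $c_i := \overline K_i - K_i$ and $a_i := \overline L_i K_i/\rho_i$, the constant in \eqref{eq:constraint}. Let $u$ be a positive fixed point of $\mathcal T_\lambda$ and set $S(k) := \sum_{j\in[N]} u_j(k)$. The boundary condition \eqref{eq:boundary} is finite, so $\sum_k \beta_i^k S(k) < \infty$ for every $i$. Hence the power series
\[
U_i(z) := \sum_{k\ge 1} u_i(k) z^k, \qquad \Sigma(z) := \sum_{i\in[N]} U_i(z) = \sum_{k \ge 1} S(k) z^k,
\]
have nonnegative coefficients and common radius of convergence $R_0 \ge \beta_{\max}$. (The scaling of the variable may need adjusting so that the kernel comes out exactly in the form of \eqref{eq:constraint}; I will fix the convention so that the pole produced by the tail sum sits at $z=\beta_i$.)

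First I will multiply \eqref{eq:limiteig} by $z^k$, sum over $k\ge 2$, and add the boundary row \eqref{eq:boundary}. The first two terms give $c_i z U_i(z)$ and $K_i z \Sigma(z)$. The tail term $\sum_k z^k \sum_{m\ge0}\beta_i^m S(k+m-1)$ is a geometric convolution. Exchanging the order of summation, it should become
\[
\frac{z^2\,\Sigma(z) - (\text{a term proportional to } \textstyle\sum_n \beta_i^n S(n))}{z-\beta_i},
\]
up to the chosen normalization. The key point is that the subtracted constant is exactly the one appearing in $\lambda u_i(1)$ in \eqref{eq:boundary}, so the boundary condition cancels it. This should yield, for $\beta_{\max}<|z|<R_0$, an identity of the form
\[
(\lambda - c_i z)\,U_i(z) = \Bigl(K_i z + \frac{a_i z^2}{z-\beta_i}\Bigr)\Sigma(z) + \frac{a_i\,\beta_i\, B_i(z)}{z-\beta_i},
\]
with $B_i$ an explicit remainder built from the boundary data. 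Dividing by $\lambda - c_i z$, which is legitimate for $z < \lambda/c_{\max}$, and summing over $i$ gives
\[
\Sigma(z)\,\bigl(1 - G_\lambda(z)\bigr) = R(z), \qquad G_\lambda(z) := \sum_{i\in[N]} \frac{K_i z + \frac{a_i z^2}{z-\beta_i}}{\lambda - c_i z}.
\]
Here $R$ is analytic away from $\{\beta_i\}\cup\{\lambda/c_i\}$. This bookkeeping, getting the pole at $\beta_i$ and the boundary cancellation exactly right, is routine but error-prone, and it is where the hypothesis $\lambda/c_j>1$ enters to order the relevant points.

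Next comes the analytic step. On $(\beta_{\max},\lambda/c_{\max})$ the function $G_\lambda$ is real, continuous and positive. It tends to $+\infty$ as $z\downarrow\beta_{\max}$, from the $a_i z^2/(z-\beta_i)$ term, and as $z\uparrow \lambda/c_{\max}$, from the denominator. By Pringsheim's theorem, since $\Sigma$ has nonnegative coefficients, $z=R_0$ is a singularity of $\Sigma$. If $R_0 < \lambda/c_{\max}$, then the identity $\Sigma = R/(1-G_\lambda)$, with $R$ analytic near $R_0$, forces $1-G_\lambda(R_0)=0$, so $z_0 := R_0$ works. If instead $R_0 \ge \lambda/c_{\max}$, then $\Sigma$ is analytic and positive on the whole interval. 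Since $G_\lambda>1$ near both endpoints while $\Sigma>0$, the sign of $R$ is pinned down near each endpoint, and I expect a contradiction with positivity or finiteness of $\Sigma$ at the blow-up point $\lambda/c_{\max}$. Alternatively, one can note directly from \eqref{eq:limiteig} that $u_i(k) \ge (c_i/\lambda)^{k} u_i(1)$, so $R_0 \le \lambda/c_i$. Either way some $z_0\in(\beta_{\max},\lambda/c_{\max})$ satisfies $G_\lambda(z_0)=1$, which is \eqref{eq:constraint}.

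The main obstacle I expect is the endpoint and degenerate case analysis. One must rule out that $R_0$ equals $\beta_{\max}$, or coincides with a zero of $R$ so that the singularity is removable. The argument must also be made to work when some $c_i$ vanish or when the $\beta_i$ are distinct, so that several poles interleave. My first attempt would be to bound $u_i(k)$ from below geometrically using positivity in \eqref{eq:limiteig}, which forces $R_0<\infty$ and keeps $R_0$ away from $\beta_{\max}$. I would then use positivity of $\Sigma$ on $(0,R_0)$ to exclude cancellation by a zero of $R$.
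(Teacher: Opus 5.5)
Your route is the paper's: generating functions, a functional equation whose pole at $\beta_i$ is neutralized by the boundary condition, and Pringsheim at the radius of convergence (Lemmas~\ref{lem:14}--\ref{lem:16} and Proposition~\ref{prop:1}). Your guessed tail term is in fact correct: $\sum_{k\ge2}z^k\sum_{m\ge0}\beta_i^mS(k+m-1)=z^2(\Sigma(z)-\Sigma(\beta_i))/(z-\beta_i)$. However, the relation $\lambda u_i(1)=a_i\Sigma(\beta_i)$ does not cancel the boundary term; it only removes its residue at $\beta_i$:
\[
(\lambda-c_iz)U_i(z)=\Bigl(K_iz+\frac{a_iz^2}{z-\beta_i}\Bigr)\Sigma(z)-\frac{\lambda u_i(1)\beta_iz}{z-\beta_i}.
\]
Hence, after dividing and summing,
\[
\Sigma(z)\bigl(G_\lambda(z)-1\bigr)=P(z):=\sum_{i\in[N]}\frac{\lambda u_i(1)\beta_iz}{(z-\beta_i)(\lambda-c_iz)},
\]
valid on all of $|z|<R_0$ off the $\beta_i$. (The formula displayed in Lemma~\ref{lem:15} differs from this because its reindexing stops at $n=m-1$ instead of $n=m$. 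So the endpoint arguments of Lemma~\ref{lem:16} and Proposition~\ref{prop:1} should not be imported as written.)

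The genuine gap is the strict inequality $R_0>\beta_{\max}$, which you flag but do not prove. Your proposed remedy points the wrong way: a geometric \emph{lower} bound $u_i(k)\ge(c_i/\lambda)^{k-1}u_i(1)$ bounds $R_0$ from above ($R_0\le\lambda/c_i$), never from below. Without $R_0>\beta_{\max}$, your annulus may be empty, $R_0$ may sit on a pole $\beta_i$ of your $R$, and Pringsheim may only return $z_0=\beta_{\max}$, which lies outside the required open interval.

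The missing idea is an \emph{upper} growth bound, which is what Lemma~\ref{lem:14} supplies. Take $p$ with $\beta_p=\beta_{\max}$. Dropping nonnegative terms in \eqref{eq:boundary}--\eqref{eq:limiteig} gives $S(k)\ge u_p(k)\ge\frac{a_p\beta_p}{\lambda}\sum_{j\ge1}\beta_p^jS(k+j)$ for all $k\ge1$. So the tails $T_k:=\sum_{m\ge k}\beta_p^mS(m)$, finite by the boundary condition, satisfy $T_{k+1}\le\kappa(T_k-T_{k+1})$ with $\kappa=\lambda/(a_p\beta_p)$. Hence $\beta_p^kS(k)\le T_1q^{k-1}$ with $q=\kappa/(1+\kappa)<1$, and $R_0\ge\beta_{\max}/q>\beta_{\max}$.

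Alternatively, you can stay within your own framework. If $R_0=\beta_{\max}$, then $(z-\beta_{\max})P(z)$ and $(z-\beta_{\max})(G_\lambda(z)-1)$ have positive finite limits at $\beta_{\max}$; this uses $\lambda/c_j>1>\beta_{\max}$. So $P/(G_\lambda-1)$ continues $\Sigma$ analytically across $\beta_{\max}$, contradicting Pringsheim.

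A second, smaller gap is the endpoint $R_0=\lambda/c_{\max}$, which your lower bound does not exclude. There $1-G_\lambda$ blows up instead of vanishing, so your Pringsheim step produces no zero, and the ``contradiction with positivity'' you expect does not materialize. With the explicit remainder the case closes. Since $P>0$ on $(\beta_{\max},\lambda/c_{\max})$, we get $G_\lambda>1$ on $(\beta_{\max},R_0)$, so in this case $\Sigma=P/(G_\lambda-1)$ on the whole interval. This ratio has a finite limit at $\lambda/c_{\max}$, because $P$ and $G_\lambda-1$ both blow up like $(\lambda-c_{\max}z)^{-1}$. On the other hand, $\Sigma(x)\ge u_{i^*}(1)x/(1-c_{\max}x/\lambda)\to\infty$ for an index $i^*$ attaining $c_{\max}$, a contradiction. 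With both gaps closed, $z_0:=R_0\in(\beta_{\max},\lambda/c_{\max})$ satisfies $G_\lambda(z_0)=1$, which is \eqref{eq:constraint}.
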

\begin{proof}
    A proof for this will be established through sections \ref{3.1}, \ref{sect:3.2}, and \ref{sect:3.3}.
\end{proof}

When $N=1$, the theorem above provides a criterion for the existence of a positive fixed-point of a one-sided infinite Toeplitz operator. In general, infinite dimensional Toeplitz operators are known to be non-compact; thus, their spectrum cannot be approximated by finite truncations. A particular implication of \cite[Proposition 3]{ayd} in the single population case is that one does not have $\lim_{T \to \infty}\rho(\mathcal S_T) = \rho(\mathcal T_{\lambda_{\infty}})$ when $\mathcal T_{\lambda_{\infty}}$ is defined on the space of bounded sequences $(\ell_{\infty},\| \cdot\|_{\infty})$, where $\| \cdot \|_{\infty}$ is the uniform norm. Part of the proof of Theorem \ref{thrm:5} relies on identification of a weighted space where we have $\lim_{T \to \infty}\rho(\mathcal S_T) = \rho(\mathcal T_{\lambda_{\infty}})$.

For our purposes, the proof of Theorem \ref{thrm:4} has three main components:
\begin{enumerate}
    \item The operator \(\mathcal T_{\lambda_\infty}\) has a positive fixed point in \((\mathbb R^{\mathbb N})^N\) (Proposition \ref{prop:0}).
    \item Let \(\lambda>0\). If \(\mathcal T_{\lambda}\) has a positive fixed point \(u \in (\mathbb R^{\mathbb N})^N\), then for its fixed point $(u_i(k))_{i,k}$, \(\lim_{k \to \infty} \frac{u_i(k+1)}{u_i(k)}\) exists and equals a constant independent of \(i \in [N]\) (Lemma \ref{lem:54}).
    \item If \(\mathcal T_{\lambda}\) has a positive fixed point and \(\lambda > \max_j(\overline K_j-K_j)\), then \(\lambda = \rho(B(r))\) for some $r$ (Lemmas \ref{lem:55} and \ref{lem:unique-e}), where \(\rho(B(r))\) denotes the spectral radius of the matrix \(B(r)\) defined by
    \[
    B(r) := D(r) +  v(r)\,\mathbf 1^{\top}.
    \]
    Here, \(\mathbf 1\) is the vector of ones and
    \[
    v_j(r) = \frac{K_j+\frac{\overline L_jK_j}{\rho_i}}{r}+\frac{\overline L_j K_j\beta_j}{\rho_j\left(1-\beta_jr\right)}, \qquad
    D(r):=\mathrm{diag}\left( \frac{\overline K_1 - K_1}{r},\frac{\overline K_2-K_2}{r},\cdots ,\frac{\overline K_N-K_N}{r}\right),
    \]
    \[
    v(r) :=(v_1(r),\cdots,v_N(r))^{\top}.
    \]
    The spectral radius of \(B(r)\) provides a majorization of the spectral radius of \(\mathcal S_T\) for all \(T\). Using multiple copies of \(B(r)\), we can also construct a matrix that directly majorizes \(\mathcal S_T\); this will be done in the next section when deriving convergence rates from finite-horizon to infinite-horizon MFEs. The constraint \eqref{eq:constraint} then will be shown to hold for $\rho(B(r))$.
\end{enumerate}

The first item will be proved using the dominated convergence theorem by showing the uniform integrability of Perron eigenvectors of $\mathcal S_T$ over the counting measure. To show this, we will heavily rely on the geometric decaying tails of the matrices $\mathcal S_T$ and work with normalized Perron eigenvectors (Proposition \ref{prop:0}), which establishes an asymptotic regularity property for these Perron eigenvectors.

The second item will be proved by mapping any positive fixed point of \(\mathcal T_{\lambda}\) to the complex plane using generating functions, inspired by methods from analytic combinatorics. As in the argument establishing regularity for the Perron right eigenvectors of \(\mathcal S_T\), we will show that these generating functions admit a sufficiently large radius of convergence around the origin $z=0$ (Lemmas \ref{lem:14} and \ref{lem:16}). We then use basic tools from singularity analysis in complex analysis to conclude the result (Lemma \ref{lem:54}).

The first two items above then will be sufficient to provide a sharp upper bound on \(\rho(\mathcal S_T)\) that is independent of \(T\). The last item (combined with the second item) is needed to obtain the characterization in Theorem \ref{thrm:4} using Theorem \ref{thrm:5}. An important feature of this step is that it avoids computing \(\inf_{0<r < \beta_{\max}^{-1}}\rho(B(r))\) as an intermediate step. Nevertheless, if one is interested in the asymptotics of \((\rho(\mathcal S_T))_T\), a useful application of the third item is the following limit result for the asymptotic behavior of \((\rho(\mathcal S_T))_T\).
\begin{proposition}\label{prop:quant}
    We have
    \[
    \lim_{T \to \infty}\rho(\mathcal S_T) = \inf_{0<r < \beta_{\max}^{-1}}\rho(B(r)).
    \]
\end{proposition}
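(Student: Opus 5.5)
Set $\lambda^\ast := \inf_{0<r<\beta_{\max}^{-1}}\rho(B(r))$. I will prove $\lim_T \rho(\mathcal S_T)\le \lambda^\ast$ by a Collatz--Wielandt argument with geometric test vectors, and the reverse inequality by passing to the limiting eigenvector operator $\mathcal T_{\lambda_\infty}$ and reading off a geometric ratio from its positive fixed point.

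\emph{Upper bound.} Fix $r\in(0,\beta_{\max}^{-1})$ and let $w(r)>0$ be a Perron right eigenvector of $B(r)$. This matrix is positive, since $v(r)\mathbf 1^\top$ has positive entries. Take the test vector $u_j(k)=w_j(r)\,r^{k}$ for $k=1,\dots,T-1$ and plug it into the rows \eqref{eq:BULK}. The tail sum $\sum_{t\ge k-1}\beta_i^{t-k+1}r^{t}$ is bounded by $r^{k-1}/(1-\beta_i r)$, which is where the constraint $r<\beta_i^{-1}$ enters. Dividing by $r^{k}$, the right-hand side of row $(i,k)$ is at most
\[
\frac{\overline K_i-K_i}{r}w_i+\Big(\frac{K_i+\overline L_iK_i/\rho_i}{r}+\frac{\overline L_iK_i\beta_i}{\rho_i(1-\beta_i r)}\Big)\sum_j w_j=(B(r)w)_i=\rho(B(r))w_i .
\]
I will check the bookkeeping of the $m=0$ term against the $\overline L_iK_i/\rho_i$ coefficient on the subdiagonal of $T_i(x)$. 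The boundary row $k=1$ has only the tail terms, so it is bounded by the same quantity. Hence $\mathcal S_T u\le \rho(B(r))u$ entrywise with $u>0$, and the Collatz--Wielandt inequality gives $\rho(\mathcal S_T)\le\rho(B(r))$ for every $T$ and every $r$. Taking the infimum over $r$ and then $T\to\infty$ yields $\lambda_\infty\le\lambda^\ast$.

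\emph{Lower bound.} By Proposition \ref{prop:0}, $\mathcal T_{\lambda_\infty}$ has a positive fixed point $u$. By Lemma \ref{lem:54}, $u_i(k+1)/u_i(k)\to r^\ast$ with the same limit for every $i$, and the tail sums converge, which should force $r^\ast<\beta_{\max}^{-1}$. I then divide the fixed-point equation \eqref{eq:limiteig} by $u_i(k-1)$, or rather by a common reference $u_1(k-1)$, and let $k\to\infty$. The normalized vectors $u(k)/\|u(k)\|$ should converge along a subsequence to some $w>0$. Dominated convergence in the geometric tail, using $\beta_i r^\ast<1$, then gives $\lambda_\infty r^\ast w=r^\ast B(r^\ast)w$, that is, $B(r^\ast)w=\lambda_\infty w$. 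By Perron--Frobenius uniqueness of the positive eigenvector, $\lambda_\infty=\rho(B(r^\ast))\ge\lambda^\ast$. This is exactly the content of Lemmas \ref{lem:55} and \ref{lem:unique-e}, which I will invoke.

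\emph{Degenerate case and main obstacle.} That argument needs $\lambda_\infty>\max_j(\overline K_j-K_j)$. This holds because $\mathcal S_T$ dominates the matrix with diagonal blocks $T_i(\overline K_i)$, whose spectral radius already exceeds $\overline K_i-K_i$ by monotonicity, together with strict irreducibility. The main obstacle is the limiting step. I must show that the ratios $u_j(k)/u_i(k)$ stay bounded and converge, not merely the ratios $u_i(k+1)/u_i(k)$, so that a positive limit vector $w$ exists and is a genuine eigenvector rather than one supported on a subset of populations. My first attempt is to use the generating-function description: all components share the same dominant singularity $1/r^\ast$, and the residues there give $w$ directly. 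This is the route behind Lemma \ref{lem:54}.
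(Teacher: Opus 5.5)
Your argument is the paper's proof. The upper bound is Lemma~\ref{lem:8}: you plug in the Perron vector of $B(r)$ directly rather than infimizing over the weights $c$. The lower bound is Lemma~\ref{lem:55}, resting on Proposition~\ref{prop:0} and Lemma~\ref{lem:54}. The one difference is how you pass to the limit. You use dominated convergence to get the exact identity $B(r^\ast)w=\lambda_\infty w$. The paper instead uses Fatou to get $\lambda_\infty\gamma\ge B(1/z_0)\gamma$ and closes the gap with the upper bound. Either works.

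\emph{False claim to delete.} Your last paragraph asserts $\lambda_\infty>\max_j(\overline K_j-K_j)$ ``by monotonicity''; this is false. In $T_i(\overline K_i)$ the entry $\overline K_i+\overline L_iK_i/\rho_i$ sits on the subdiagonal, while the diagonal entries are only $\overline L_iK_i\beta_i/\rho_i$. Subdiagonal entries give no lower bound on the spectral radius: a strictly lower-triangular matrix is nilpotent. Concretely, take $N=1$ and $a_1:=\overline L_1K_1/\rho_1$. Minimizing $(\overline K_1+a_1)/r+a_1\beta_1/(1-\beta_1 r)$ in Lemma~\ref{lem:8} gives
\[
\lambda_\infty\le\beta_1\bigl(\sqrt{\overline K_1+a_1}+\sqrt{a_1}\bigr)^2 .
\]
This tends to $0$ as $\beta_1\to0$, whereas $\overline K_1-K_1\ge\frac12(K_1+a_1)$ stays bounded away from $0$.

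Nothing in the argument needs this claim. The singularity analysis only uses $\min_j\lambda_\infty/(\overline K_j-K_j)>\beta_{\max}$, which is Lemma~\ref{lem:60}. Lemma~\ref{lem:55} then gives $\lambda_\infty=\rho(B(1/z_0))$ with no further hypothesis.

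\emph{Two smaller corrections.} First, convergent tail sums with ratio limit $r^\ast$ only give $r^\ast\le\beta_{\max}^{-1}$. The strict inequality, which you need both for $B(r^\ast)$ to be defined and for your domination, comes from $z_0=R>\beta_{\max}$ (Lemma~\ref{lem:14}, Proposition~\ref{prop:1}). Second, your ``main obstacle'' is not actually one. Since $B(r^\ast)$ is entrywise positive, take any subsequential limit $w$ of $u(k)/s(k)$. It exists by compactness, is nonnegative with entries summing to one, and satisfies $B(r^\ast)w=\lambda_\infty w$ by your limit passage. Positivity of $B(r^\ast)$ then makes $w$ strictly positive automatically. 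The dominated-convergence step needs only the common ratio limits of Lemmas~\ref{lem:52} and~\ref{lem:54} plus $\beta_{\max}r^\ast<1$, not the cross-ratio limits $\gamma_i/\gamma_j$.
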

\begin{proof}
    We postpone the proof to Section \ref{sect:3.3}
\end{proof}
The advantage of the calculation above, compared with computing \(\rho(\mathcal S_T)\) directly, is that the matrix \(B(r)\) is independent of the horizon length \(T\). It can also be shown that \(B(r)\) can be reduced to depend only on distinct population types, i.e., we can collapse identical populations into a single population.

\subsection{Step $1$: Upper Bound $\lambda_{\infty} \le \inf_{0<r<\beta^{-1}_{\max}}\rho(B(r))$}\label{3.1}

In this subsection, we show that the bound
\[
\lambda_{\infty} \le \inf_{0<r<\beta^{-1}_{\max}}\rho(B(r)) <\infty
\]
holds, and that for any \(0<r< \beta^{-1}_{\max}\),
\[
1 = \sum_{i \in [N]}\frac{\frac{K_i+\frac{\overline L_i K_i}{\rho_i}}{r}+\frac{\overline L_iK_i\beta_i}{\rho_i}\frac{1}{1-\beta_ir}}{\rho(B(r))-\frac{\overline K_i-K_i}{r}}.
\]

To obtain the upper bound \(\lambda_{\infty} \le \inf_{0<r<\beta^{-1}_{\max}}\rho(B(r))\), we use the Collatz--Wielandt inequalities with geometric test vectors. We will later show that this geometric ansatz yields a sharp upper bound on \(\rho(\mathcal S_T)\) as \(T\to \infty\), as outlined in the previous subsection. Moreover, the majorization of \(\lambda_{\infty}\) by \(\rho(B(r))\) will also allow us to derive convergence rates from finite-horizon to infinite-horizon MFEs.

We recall the Collatz-Wiedelant bound for completeness.

\begin{lemma}[Collatz-Wielandt bound]\label{lem:7}
    For any nonnegative square matrix $A$ and any positive vector $x=(x_i)_i>0$ (of appropriate dimension), it holds that
\[
\rho(A)\ \le\ \max_i\frac{(Ax)_i}{x_i}.
\]
\end{lemma}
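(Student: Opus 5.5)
The plan is to prove the Collatz--Wielandt bound directly from the Perron--Frobenius theory of nonnegative matrices, first in the irreducible case and then for general nonnegative $A$ by a perturbation argument. Throughout, let $A\ge 0$ be $n\times n$, let $x>0$, and set $c:=\max_i (Ax)_i/x_i$, which is well defined because every $x_i>0$.

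\emph{Step 1: reduce to a norm bound.} Define $D=\mathrm{diag}(x_1,\dots,x_n)$ and $\tilde A = D^{-1}AD$. The matrix $\tilde A$ is nonnegative, it is similar to $A$, and so $\rho(\tilde A)=\rho(A)$. Its $i$-th row sum is
\[
\sum_j \tilde A_{ij} = \frac{1}{x_i}\sum_j A_{ij}x_j = \frac{(Ax)_i}{x_i}.
\]
The induced $\ell_\infty$ operator norm of a nonnegative matrix equals its maximal row sum, so $\|\tilde A\|_{\infty}=c$. Every induced matrix norm dominates the spectral radius, hence
\[
\rho(A)=\rho(\tilde A)\le \|\tilde A\|_{\infty}=c.
\]
This argument is in fact self-contained and needs neither irreducibility nor Perron--Frobenius. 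It therefore already proves the lemma, and I would present it as the main proof.

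\emph{Alternative via Perron--Frobenius (only as a remark).} Let $y\ge 0$, $y\neq 0$, be a left Perron vector, so $y^\top A=\rho(A)y^\top$; such a $y$ exists for any nonnegative matrix by the weak Perron--Frobenius theorem. Since $Ax\le cx$ componentwise and $y\ge 0$,
\[
\rho(A)\, y^\top x = y^\top A x \le c\, y^\top x .
\]
Because $x>0$ and $y\ge 0$ with $y\neq 0$, we have $y^\top x>0$, and dividing by it gives the claim. I will include this only as a remark, since Step 1 is enough.

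\emph{Expected obstacle.} There is essentially none. The only point needing care is that the bound must hold for reducible $A$, which matters because the paper later applies it to the $\mathcal S_T$ and $B(r)$. The similarity/norm argument handles the reducible case automatically, which is why I use it instead of an irreducible-only eigenvector argument.
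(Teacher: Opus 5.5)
Your proof is correct. The paper gives no argument of its own and simply cites \cite[Theorem 8.3.2]{horn2012matrix}, so your route differs only in being self-contained.

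Your main step conjugates by $D=\mathrm{diag}(x)$, so that the ratios $(Ax)_i/x_i$ become the row sums of $D^{-1}AD$, and then uses $\rho(D^{-1}AD)\le\|D^{-1}AD\|_\infty$. This is essentially the standard textbook proof of the cited fact. What it buys you is independence from the reference and from Perron--Frobenius theory, and it holds automatically for reducible $A$. In fact, nonnegativity is used only to identify the row sums with the absolute row sums that define $\|\cdot\|_\infty$.

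Your left-Perron-vector remark is also valid. It rests on the weak Perron--Frobenius theorem, which is a heavier tool than this bound requires.

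Your opening sentence announces an irreducible-case-plus-perturbation plan that you never carry out, so drop it.
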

\begin{proof}
    This follows from \cite[Theorem 8.3.2]{horn2012matrix}.
\end{proof}

\begin{lemma}\label{lem:8}
For every $T\ge 3$, we have
\[
\lambda_T \le \inf _{r\in (0, \beta_{\max}^{-1})}\rho(B(r)),
\]
where $\rho(B(r))$ is the spectral radius of the matrix $B(r)$.
\end{lemma}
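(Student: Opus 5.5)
The plan is to apply the Collatz--Wielandt bound (Lemma~\ref{lem:7}) to $\mathcal S_T$ with a geometric test vector whose population profile is the Perron vector of $B(r)$. Fix $r\in(0,\beta_{\max}^{-1})$ and write $a_i:=\overline L_iK_i/\rho_i$ and $d_i:=\overline K_i-K_i$.

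First, $B(r)=D(r)+v(r)\mathbf 1^\top$ has strictly positive off-diagonal entries $v_i(r)>0$ (using $K_i>0$; the degenerate case $K_i=0$ can be handled by a limiting argument). So $B(r)$ is irreducible and nonnegative, and Perron--Frobenius gives a positive vector $h$ with $B(r)h=\rho(B(r))h$. Set $H:=\sum_j h_j$. Exactly as in the proof of Theorem~\ref{thrm:2}, this eigen-equation reads $d_ih_i/r+v_i(r)H=\rho(B(r))h_i$. It yields $\rho(B(r))>d_i/r$ and $h_i=v_i(r)H/(\rho(B(r))-d_i/r)$, and summing over $i$ gives the claimed identity $1=\sum_i v_i(r)/(\rho(B(r))-d_i/r)$.

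Next define the test vector $x\in\mathbb R^{N(T-1)}$, indexed as in Lemma~\ref{lem:4}, by $x_i(k)=h_ir^k$ for $k=1,\dots,T-1$; it is positive. I then compute $(\mathcal S_Tx)_i(k)$ row by row from the structure of $T_i(\cdot)$ and of $\mathcal S_T$. For $2\le k\le T-1$, the diagonal block contributes $\overline K_i+a_i$ on the subdiagonal, off-diagonal blocks contribute $K_i+a_i$ there, and every block contributes $a_i\beta_i^{t-k+1}$ at columns $t\ge k$. This gives
\[
(\mathcal S_Tx)_i(k)=d_ih_ir^{k-1}+K_iHr^{k-1}+a_iH\sum_{t=k-1}^{T-1}\beta_i^{t-k+1}r^{t}\le r^{k-1}\Big(d_ih_i+K_iH+\frac{a_iH}{1-\beta_ir}\Big),
\]
where the inequality extends the finite sum to an infinite geometric series, which converges since $\beta_ir<1$ and only adds nonnegative terms. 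Dividing by $x_i(k)=h_ir^k$ and using the algebraic identity $\frac{K_i}{r}+\frac{a_i}{r(1-\beta_ir)}=\frac{K_i+a_i}{r}+\frac{a_i\beta_i}{1-\beta_ir}=v_i(r)$, the ratio is at most $(d_ih_i/r+v_i(r)H)/h_i=(B(r)h)_i/h_i=\rho(B(r))$.

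For the boundary row $k=1$,
\[
(\mathcal S_Tx)_i(1)=a_iH\sum_{t=1}^{T-1}\beta_i^tr^t\le\frac{a_iH\beta_ir}{1-\beta_ir},
\]
so the ratio is at most $\frac{a_i\beta_i}{1-\beta_ir}\cdot\frac{H}{h_i}\le v_i(r)H/h_i\le\rho(B(r))$, because the term $d_ih_i/r$ omitted from the eigen-equation is nonnegative. Lemma~\ref{lem:7} then gives $\lambda_T\le\rho(B(r))$ for every admissible $r$, and taking the infimum over $r\in(0,\beta_{\max}^{-1})$ proves the lemma.

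The main obstacle is bookkeeping rather than analysis. One must match the indexing of $T_i$ (subdiagonal entry versus the $\beta_i$-powers on and above the diagonal) precisely with the column indices of the test vector. One must also confirm that the exponent bookkeeping collapses to exactly $v_i(r)$ rather than a shifted variant; the algebraic identity displayed above is the point where this matching has to be verified.
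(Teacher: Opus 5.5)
Your proof is correct and takes essentially the same route as the paper. Both arguments apply Collatz--Wielandt to $\mathcal S_T$ with a population-weighted geometric test vector, and extend each tail to a full geometric series so that every row ratio is bounded by $(B(r)c)_m/c_m$. The only difference is cosmetic: you fix $c$ as the Perron vector of $B(r)$ and check the boundary row $k=1$ explicitly. The paper instead takes the infimum over all positive $c$ via the Collatz--Wielandt min--max formula, which reaches $\rho(B(r))$ at that same vector and avoids the irreducibility caveat you raise for $K_i=0$.
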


\begin{proof}
We first construct a positive
test vector to which we will apply the Collatz--Wielandt bound. Fix $r\in(0,\beta_{\max}^{-1})$. 
Define $x=x(r)\in\mathbb R_{>0}^{T-1}$ by
$x(r)=(r^{i-1})_{i=1}^{T-1}.$ 
For $c=(c_1,\ldots,c_N)\in\mathbb R_{>0}^N$, define
\[
\bar x=\bar x(r,c)
:=
(c_1x,c_2x,\ldots,c_Nx)
\in\mathbb R_{>0}^{N(T-1)},
\]
and let $C:=\sum_{i\in[N]}c_i.$

We next estimate the Collatz--Wielandt ratios corresponding to
this choice of $\bar x$. For every $m\in[N]$ and
$i\in\{2,\ldots,T-1\}$, the definition of $\mathcal S_T$ gives
\[
(\mathcal S_T\bar x)_{(m-1)(T-1)+i}
=
(\overline K_m-K_m)c_mr^{i-2}
+
C\left(
K_m+\frac{\overline L_mK_m}{\rho_m}
\right)r^{i-2}
+
\frac{C\overline L_mK_m}{\rho_m}
\sum_{j=i}^{T-1}
\beta_m^{j-i+1}r^{j-1}.
\]
Hence,
\begin{align*}
\frac{(\mathcal S_T\bar x)_{(m-1)(T-1)+i}}{c_mx_i}
&=
\frac{\overline K_m-K_m}{r}
+
\frac{
C\left(K_m+\frac{\overline L_mK_m}{\rho_m}\right)
}{c_mr}
+
\frac{C\overline L_mK_m}{\rho_mc_m}
\sum_{j=i}^{T-1}
\beta_m^{j-i+1}r^{j-i}
\\
&\le
\frac{\overline K_m-K_m}{r}
+
\frac{
C\left(K_m+\frac{\overline L_mK_m}{\rho_m}\right)
}{c_mr}
+
\frac{C\overline L_mK_m}{\rho_mc_m}
\sum_{j=0}^{\infty}\beta_m^{j+1}r^j
\\
&=
\frac{\overline K_m-K_m}{r}
+
\frac{
C\left(K_m+\frac{\overline L_mK_m}{\rho_m}\right)
}{c_mr}
+
\frac{C\overline L_mK_m\beta_m}
{\rho_mc_m(1-\beta_mr)}
\\
&=
\frac{(B(r)c)_m}{c_m},
\end{align*}
where the infinite series is finite since
$r<\beta_{\max}^{-1}\le\beta_m^{-1}$.
The same also holds when $i=1$. Therefore,
\[
\max_{\substack{m\in[N]\\1\le i\le T-1}}
\frac{(\mathcal S_T\bar x)_{(m-1)(T-1)+i}}{c_mx_i}
\le
\max_{m\in[N]}
\frac{(B(r)c)_m}{c_m}.
\]
We now optimize over the population weights $c$. By the
Collatz--Wielandt characterization of the spectral radius,
\[
\inf_{c \in \mathbb R_{>0}^N}
\max_{m\in[N]}
\frac{(B(r)c)_m}{c_m}
=
\rho(B(r)).
\]
On the other hand, applying the Collatz--Wielandt bound to
$\mathcal S_T$ with the positive vector $\bar x$ yields
\[
\lambda_T
=
\rho(\mathcal S_T)
\le
\max_{\substack{m\in[N]\\1\le i\le T-1}}
\frac{(\mathcal S_T\bar x)_{(m-1)(T-1)+i}}{c_mx_i}.
\]
Combining the preceding inequalities and taking the infimum over $c>0$ gives
\[
\lambda_T
\le
\inf_{0<r<\beta_{\max}^{-1}}\rho(B(r)).
\]

It remains to pass to the limit as $T\to\infty$. The matrix
$\mathcal S_T$ can be identified with a principal submatrix of
$\mathcal S_{T+1}$. Since both matrices are nonnegative, monotonicity
of the spectral radius with respect to principal submatrices implies $\lambda_T\le\lambda_{T+1}.$
Thus, $(\lambda_T)_{T\ge3}$ is nondecreasing. The uniform bound
established above shows that it is also bounded from above, and hence $\lambda_\infty:=\lim_{T\to\infty}\lambda_T$
exists. Passing to the limit in $\lambda_T\le\inf_{0<r<\beta_{\max}^{-1}}\rho(B(r))$
yields $\lambda_\infty\le\inf_{0<r<\beta_{\max}^{-1}}\rho(B(r)),$ as desired.
\end{proof}

Lemma \ref{lem:8} also serves as the first half of the proof of Proposition \ref{prop:quant} as we have shown that $\lim_{T \to \infty}\rho(\mathcal S_T) \le \inf_{0<r < \beta^{-1}_{\max}}\rho(B(r)).$

Next, we establish a relation between \(\rho(B(r))\) and the parameter \(r\). This will later be used to relate the values of \(\lambda>0\) for which \(\mathcal T_{\lambda}\) admits a positive fixed point to corresponding values of \(\rho(B(r))\).

\begin{lemma}\label{lem:57}
For any \(0<r < \beta^{-1}_{\max}\), we have
\[
1 = \sum_{i \in [N]}\frac{\frac{K_i+\frac{\overline L_i K_i}{\rho_i}}{r}+\frac{\overline L_iK_i\beta_i}{\rho_i}\frac{1}{1-\beta_ir}}{\rho(B(r))-\frac{\overline K_i-K_i}{r}}.
\]
Furthermore, it holds that
\[
\beta^{-1}_{\max} > r > \max_{j\in[N]} \frac{\overline K_j-K_j}{\rho(B(r))}.
\]
\end{lemma}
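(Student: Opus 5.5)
The plan is to follow the proof of Theorem~\ref{thrm:2} verbatim, with $\mathcal M$ replaced by the $N\times N$ matrix $B(r)=D(r)+v(r)\mathbf 1^{\top}$. Fix $0<r<\beta_{\max}^{-1}$. First I will record that every $v_j(r)$ is finite and strictly positive. Indeed, $1-\beta_j r>0$ because $r<\beta_{\max}^{-1}\le \beta_j^{-1}$, and the remaining terms of $v_j(r)$ are nonnegative, with the first one positive as soon as $K_j>0$. If some $K_j$ vanish, I would either treat this degenerate case separately or note that the rank-one part is still nonzero. This shows that $B(r)$ is a nonnegative matrix, and that it is irreducible whenever $v(r)>0$, since then every entry of $v(r)\mathbf 1^{\top}$ is positive. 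By the Perron--Frobenius theorem, $\rho(B(r))$ is an eigenvalue of $B(r)$ with a positive right eigenvector $h$.

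Next, I will write out the eigen-equation row by row. Set $k:=\mathbf 1^{\top}h=\sum_i h_i>0$. The $j$-th row of $B(r)h=\rho(B(r))h$ reads
\[
\frac{\overline K_j-K_j}{r}h_j+v_j(r)k=\rho(B(r))h_j .
\]
Since $v_j(r)k>0$ and $h_j>0$, this row gives the strict inequality $\rho(B(r))>(\overline K_j-K_j)/r$ for every $j$. The same conclusion also follows from strict monotonicity of the spectral radius for irreducible nonnegative matrices, because $B(r)\ge D(r)$ with $B(r)\neq D(r)$. Solving the row equation gives
\[
h_j=\frac{v_j(r)k}{\rho(B(r))-(\overline K_j-K_j)/r}.
\]
Summing over $j$ and dividing by $k>0$ then yields exactly the claimed identity
\[
1=\sum_i \frac{v_i(r)}{\rho(B(r))-(\overline K_i-K_i)/r}.
\]

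For the second claim, the upper bound $r<\beta_{\max}^{-1}$ holds by hypothesis. Multiplying the strict inequality $\rho(B(r))>(\overline K_j-K_j)/r$ by $r/\rho(B(r))>0$ gives $r>(\overline K_j-K_j)/\rho(B(r))$ for each $j$, and taking the maximum over $j$ finishes the proof.

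The only delicate step is justifying positivity of $h$ and the strictness of $\rho(B(r))>(\overline K_j-K_j)/r$. This reduces to $v(r)$ being strictly positive, i.e.\ to $K_j>0$ for all $j$. I would state this as the standing nondegenerate case. If some $K_j=0$, I would apply the same argument to the irreducible block of populations with $K_j>0$, interpreting the corresponding terms of the sum as zero, rather than leave the case unaddressed.
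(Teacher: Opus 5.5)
Your proof is correct and follows the paper's own argument, which mirrors the proof of Theorem~\ref{thrm:2}: take a Perron eigenvector of $B(r)$, solve each row for its component in terms of a common aggregate $k$, then sum and divide by $k>0$. Your write-up is slightly tighter than the paper's in three respects. Your row equation matches the stated form $B(r)=D(r)+v(r)\mathbf 1^{\top}$, whereas the paper's $k=\sum_i v_i c_i$ in fact corresponds to the transpose, although this yields the same identity. You obtain the strict inequality $\rho(B(r))>(\overline K_j-K_j)/r$ directly from the row equation, while the paper cites monotonicity, which gives strictness only with irreducibility. You also treat the degenerate case $K_j=0$, which the paper passes over.
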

\begin{proof}
    The proof closely follows that of Theorem \ref{thrm:2}. Since $B(r)$ is a nonnegative irreducible matrix, by the Perron-Frobenius theorem, it holds that there exists a nonnegative vector $\bar c=(c_1,c_2,\cdots,c_N)$ such that $B(r)\bar c = \rho(B(r))\bar c.$ Then, for any $m \in [N]$, we have
    \[
    (\overline K_m-K_m)\frac{c_m}r + k =\rho(B(r))c_m \implies k = \left( \rho(B(r))-(\overline K_m-K_m) \frac 1r\right)c_m,
    \]
    where 
    \begin{equation}
    k = \sum_{i \in [N]} \left( \frac{K_i+\frac{\overline L_i K_i}{\rho_i}}r + \frac{\overline L_i K_i\beta_i}{\rho_i(1-\beta_ir)}\right)c_i.
    \end{equation}
    Thus,
    \begin{equation}\label{eq:hh}
    k = \sum_{i \in [N]} \left( \frac{K_i +\frac{\overline L_i K_i}{\rho_i}}r + \frac{\overline L_i K_i\beta_i}{\rho_i(1-\beta_ir)}\right)c_i =\sum_{i \in [N]} \frac{\left( \frac{K_i+\frac{\overline L_i K_i}{\rho_i}}r + \frac{\overline L_i K_i \beta_i}{\rho_i(1-\beta_ir)}\right)}{\rho(B(r))-\frac{\overline K_i - K_i}{r}}k.
    \end{equation}
    Note that $\rho(B(r)) > \frac{\overline K_i-K_i}{r}$ for all $i \in [N]$ due to monotonicity of spectral radius with respect to diagonal terms; so, it holds that 
    \[
    \frac{\left( \frac{K_i+\frac{\overline L_i K_i}{\rho_i}}r + \frac{\overline L_i K_i \beta_i}{\rho_i(1-\beta_ir)}\right)}{\rho(B(r))-\frac{\overline K_i - K_i}{r}}  > 0.
    \]
    Thus, \eqref{eq:hh} holds if and only if 
    \[
    \sum_{i \in [N]} \frac{\left( \frac{K_i+\frac{\overline L_i K_i}{\rho_i}}r + \frac{\overline L_i K_i \beta_i}{\rho_i(1-\beta_ir)}\right)}{\rho(B(r))-\frac{\overline K_i - K_i}{r}} = 1,
    \]
    as desired.
\end{proof}

\subsection{Step 2:Existence of a positive fixed point of $\mathcal T_{\lambda_\infty}$}\label{sect:3.2}

In this subsection, we will show that for $\lambda_{\infty}$, there exists a positive fixed point of the operator $\mathcal T_{\lambda_{\infty}}$. To show this, as discussed before, we will use the right Perron eigenvectors of $\mathcal S_T$ under a particular normalization. Using the structure of the matrices $\mathcal S_T$, we will show that these Perron eigenvectors admit sufficient regularity conditions to use the dominated convergence theorem. 

First, we show that, under a suitable normalization, the Perron right eigenvectors of \(\mathcal S_T\) admit a convergent subsequence as \(T \to \infty\). We choose a normalization that fixes the first coordinate of the population \(1\). Accordingly, we relate the first coordinates of the other populations to that of population \(1\) using the boundary conditions of the operator $\mathcal T_{\lambda_{\infty}}$.

\begin{lemma}[Finite-horizon geometric lower bounds]\label{lem:lower_finite}
Fix $T\ge 2$ and let $\lambda_T>0$ be the Perron eigenvalue of $\mathcal S_T$.
Let $u_i^{(T)}(k)>0$ denote a positive right Perron eigenvectors of $\mathcal S_T$,
and define the aggregate $S^{(T)}(k):=\sum_{j=1}^N u_j^{(T)}(k)$ for $k=1,\dots,T-1$.
We normalize eigenvectors by setting $u_1^{(T)}(1)=1$ for all $T \in \mathbb N$.

Then, for every $i\in [N]$ and every $k=2,\dots,T-1$,
\begin{equation}\label{eq:lower_ratio_basic}
u_i^{(T)}(k)\ \ge\ \frac{\overline K_i-K_i}{\lambda_T}\,u_i^{(T)}(k-1).
\end{equation}
Consequently, for all $k=1,\dots,T-1$,
\begin{equation}\label{eq:lower_geom_basic}
u_i^{(T)}(k)\ \ge \left(\frac{\overline K_i-K_i}{\lambda_{\infty}}\right)^{k-1} \frac{\overline L K_i}{\rho_i\lambda_{\infty}}.
\end{equation}
Moreover, we have the upper bound
\[
\left(\frac{\rho_i\lambda_{\infty}}{\overline L_iK_i\beta^2_i} \right)^{k-1}\frac{\rho_1\lambda_{\infty}}{\overline L_1K_1\beta_1}   \ge u^{(T)}_i(k).
\]
\end{lemma}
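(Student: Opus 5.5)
The plan is to read both bounds directly off the Perron eigenvector equations \eqref{eq:BC} and \eqref{eq:BULK} for $\mathcal S_T$. Every coefficient in them is nonnegative, and every $u^{(T)}_j(k)>0$. So each equation gives a lower bound on its left-hand side by any single term on its right-hand side. After that I only need the monotonicity $\lambda_T\le\lambda_\infty$, which was established in the proof of Lemma \ref{lem:8}.

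\emph{Ratio bound \eqref{eq:lower_ratio_basic}.} Fix $i$ and $2\le k\le T-1$. In \eqref{eq:BULK} I drop everything except the term $(\overline K_i-K_i)u_i^{(T)}(k-1)$. This gives $\lambda_T u_i^{(T)}(k)\ge(\overline K_i-K_i)u_i^{(T)}(k-1)$, which is the claim. (Keeping the $j=i$ part of the $K_i$-sum would give the stronger ratio $\overline K_i/\lambda_T$, but this is not needed.)

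\emph{Geometric lower bound \eqref{eq:lower_geom_basic}.} Iterating the ratio bound gives $u_i^{(T)}(k)\ge\bigl(\tfrac{\overline K_i-K_i}{\lambda_T}\bigr)^{k-1}u_i^{(T)}(1)$. Since $\lambda_T\le\lambda_\infty$, I may replace $\lambda_T$ by $\lambda_\infty$. It remains to bound $u_i^{(T)}(1)$ from below using the normalization. In \eqref{eq:BC} for population $i$, I keep only the summand with $j=1$, $k=1$. This gives $\lambda_T u_i^{(T)}(1)\ge \frac{\overline L_iK_i}{\rho_i}\beta_i\,u_1^{(T)}(1)=\frac{\overline L_iK_i\beta_i}{\rho_i}$, hence $u_i^{(T)}(1)\ge \frac{\overline L_iK_i\beta_i}{\rho_i\lambda_\infty}$. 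This yields \eqref{eq:lower_geom_basic} up to the factor $\beta_i$, which the boundary sum necessarily produces because it starts at $\beta_i^1$. I expect this is a harmless constant to be absorbed, or a typo in the displayed constant. Either way, only a $T$-independent positive lower bound matters for the later dominated-convergence argument.

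\emph{Upper bound.} I chain one-step upper ratios going forward in $k$. For $k\ge 3$, I use \eqref{eq:BULK} at index $k-1$ and retain only the summand $t=k$, $j=i$, whose weight is $\beta_i^{2}$. This gives $\lambda_T u_i^{(T)}(k-1)\ge\frac{\overline L_iK_i}{\rho_i}\beta_i^2u_i^{(T)}(k)$. For $k=2$, I use \eqref{eq:BC} instead. It gives the same inequality with $\beta_i$ in place of $\beta_i^2$, and that is even stronger since $\beta_i<1$. Hence
\[
u_i^{(T)}(k)\le\Bigl(\tfrac{\rho_i\lambda_\infty}{\overline L_iK_i\beta_i^2}\Bigr)^{k-1}u_i^{(T)}(1).
\]
To start the chain, I use \eqref{eq:BC} for population $1$ with the normalization $u_1^{(T)}(1)=1$ and keep only the summand $j=i$, $k=1$. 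This gives $\lambda_T\ge\frac{\overline L_1K_1\beta_1}{\rho_1}u_i^{(T)}(1)$, i.e. $u_i^{(T)}(1)\le\frac{\rho_1\lambda_\infty}{\overline L_1K_1\beta_1}$. Combining the two displays gives the stated upper bound.

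The only delicate point is the bookkeeping of indices. I must check that the $t=k$ term really appears in \eqref{eq:BULK} at index $k-1$ with exponent $2$, and treat the $k=2$ step separately via the boundary condition. Everything else is single-term lower bounds on nonnegative sums.
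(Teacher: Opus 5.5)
Your proposal is correct and follows the paper's proof essentially step for step. It uses single-term lower bounds from the boundary and bulk Perron equations, the monotonicity $\lambda_T\le\lambda_\infty$, and the normalization $u_1^{(T)}(1)=1$ to sandwich $u_i^{(T)}(1)$. You are right that the displayed lower bound should carry a factor $\beta_i$: the paper's own argument obtains $u_i^{(T)}(1)\ge \frac{\overline L_iK_i\beta_i}{\rho_i\lambda_\infty}$ and then silently drops $\beta_i$, so that constant is a typo.

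One small slip: in the boundary equation the coefficient of $u_i^{(T)}(2)$ is $\frac{\overline L_iK_i}{\rho_i}\beta_i^2$, not $\frac{\overline L_iK_i}{\rho_i}\beta_i$. The boundary equation therefore gives exactly the $\beta_i^2$ inequality, not a stronger one. That is precisely what the paper uses for the first link of the upper-bound chain, so your conclusion is unaffected.
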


\begin{proof}
First, we would like to set uniform upper and lower bounds for the terms $u^{(T)}_i(1)$. Since we set $u^{(T)}_1(1)=1$, note that the boundary conditions give
\[
u^{(T)}_i(1) \ge \frac{\overline L_iK_i}{\rho_i\lambda_T}\beta_iu_1^{(T)}(1) \ge \frac{\overline L_iK_i}{\rho_i\lambda_{\infty}}\beta_iu^{(T)}_1(1)=\frac{\overline L_iK_i}{\rho_i\lambda_{\infty}}\beta_i.
\]
Similarly, we must also have that
\[
\lambda_T = \lambda_T u^{(T)}_1(1) \ge \frac{\overline L_1K_1}{\rho_1}\beta_1 u^{(T)}_i(1) \implies \frac{\rho_1\lambda_{\infty}}{\overline L_1K_1\beta_1}\ge u^{(T)}_i(1).
\]

For $k=2,\dots,T-1$, the finite-horizon reduced Perron equations have the form
\begin{equation}\label{eq:finite_rec}
\lambda_T u_i^{(T)}(k)
=
\left(K_i+\frac{\overline L_iK_i}{\rho_i}\right)S^{(T)}(k-1)
+(\overline K_i-K_i) u_i^{(T)}(k-1)
+\frac{\overline L_iK_i}{\rho_i}\sum_{m=1}^{T-k}\beta_i^{m}S^{(T)}(k+m-1),
\end{equation}
where all coefficients and all terms on the right-hand side are nonnegative.
Dropping the nonnegative terms $\left(K_i+\frac{\overline L_iK_i}{\rho_i}\right)S^{(T)}(k-1)$ and the future-tail sum yields
\[
\lambda_T u_i^{(T)}(k) \ge \overline K_i-K_i u_i^{(T)}(k-1),
\]
which proves \eqref{eq:lower_ratio_basic}. Iterating from $2$ up to $k$ gives \eqref{eq:lower_geom_basic}:
\[
u_i^{(T)}(k)\ \ge\ \left(\frac{\overline K_i-K_i}{\lambda_T}\right)^{k-1} u_i^{(T)}(1) \ge \left(\frac{\overline K_i-K_i}{\lambda_{\infty}}\right)^{k-1} \frac{\overline L_iK_i}{\rho_i\lambda_{\infty}}.
\]

For the upper bound, using the boundary case of the eigenvector equation, i.e. when $k=1$, we obtain
\[
\lambda_T u^{(T)}_i(1) \ge \frac{\overline L_iK_i}{\rho_i}\beta^2_i u^{(T)}_i(2). 
\]
For $k\ge 2$, we now have that
\[
\lambda_T u_i^{(T)}(k) \ge \frac{\overline L_iK_i}{\rho_i}\beta^2_iu^{(T)}_i(k+1).
\]
Thus, for each \(k\), the uniform upper bound
\[
\left(\frac{\rho_i\lambda_{\infty}}{\overline L_iK_i\beta^2_i} \right)^{k-1}\frac{\rho_1\lambda_{\infty}}{\overline L_1K_1\beta_1} \ge \left(\frac{\rho_i\lambda_{T}}{\overline L_iK_i\beta^2_i} \right)^{k-1}u_i^{(T)}(1) \ge u^{(T)}_i(k)
\]
holds, as desired.
\end{proof}

Let \((u^{(T)}_i(k))_{i,k}\) be a positive Perron right eigenvector of \(\mathcal S_T\) normalized so that \(u^{(T)}_1(1)=1\). As a consequence of Lemma \ref{lem:lower_finite}, and using a diagonal argument, we can extract a subsequence \((T_n)_{n \in \mathbb N} \subset \mathbb N\) such that for all \(i \in [N]\) and \(k \in \mathbb N\),
\[
\lim_{n \to \infty} u^{(T_n)}_i(k) =: u_i(k) <\infty
\]
for some family of sequences \((u_i(k))_{i,k}\). In particular, \(u_i(k)>0\) for all \(k \ge 1\) and \(i \in [N]\). To show that \((u_i(k))_{i,k}\) is a positive fixed point of the limiting eigenvector equation \(\mathcal T_{\lambda_\infty}\), it suffices to justify the interchange of limit and summation:
\[
\lim_{n \to \infty} \sum_{k=1}^{T_n-1}\beta^k_i\sum_{i\in[N]}u^{(T_n)}_i(k)
=
\sum_{k \ge 1}\beta^k_i\sum_{i\in[N]}u_i(k).
\]
In the next lemma, we justify this exchange of limit and summation by showing that finite-horizon normalized right Perron eigenvectors are uniformly integrable under the geometric weights $\beta^k_i$; this will be the key step in obtaining a positive fixed point for \(\mathcal T_{\lambda_\infty}\).

\begin{lemma}\label{lem:beta_weighted_sum_conv}
Fix \(i\in [N]\) with \(\beta_i\in(0,1)\). For each \(T\ge 3\), let \(u_i^{(T)}(k)>0\) (\(k=1,\dots,T-1\)) be the Perron right eigenvector of \(\mathcal S_T\) corresponding to group \(i\), normalized by
\[
u_1^{(T)}(1)=1,
\]
and extend it by setting \(u_i^{(T)}(k)=0\) for all \(k\ge T\). Assume that \(\lambda_T\to\lambda_\infty\in(0,\infty)\) and that, for each fixed \(k\ge 1\), the limits
\[
u_i^{(T)}(k)\ \longrightarrow\ u_i(k)\qquad\text{as }T\to\infty
\]
exist.
Then,
\[
\lim_{T \to \infty} \sum_{k\ge 1}\beta_i^k\,u_i^{(T)}(k) = \sum_{k\ge 1}\beta_i^k\,u_i(k).
\]
\end{lemma}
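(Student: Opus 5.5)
The plan is to prove the interchange of limit and sum by a uniform tail estimate: show that $\sum_{k\ge K}\beta_i^k u_i^{(T)}(k)\to 0$ as $K\to\infty$ uniformly in $T$. Given this, the statement follows by truncation. For fixed $K$, the finite sum $\sum_{k<K}\beta_i^k u_i^{(T)}(k)$ converges to $\sum_{k<K}\beta_i^k u_i(k)$ by pointwise convergence. Fatou's lemma bounds the limiting tail $\sum_{k\ge K}\beta_i^k u_i(k)$ by the same uniform tail bound. Letting $K\to\infty$ then closes the argument. Lemma \ref{lem:lower_finite} alone is insufficient here, since its geometric upper bound has ratio $\rho_i\lambda_\infty/(\overline L_iK_i\beta_i^2)$, which need not be below $\beta_i^{-1}$. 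So the tail control must come from the eigenvector equations \eqref{eq:BC}–\eqref{eq:BULK} themselves.

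\textbf{Step 1 (uniform bound on the weighted mass).} Write $a_i=\overline L_iK_i/\rho_i$. The boundary equation \eqref{eq:BC} gives
\[
\sum_{k=1}^{T-1}\beta_i^kS^{(T)}(k)=\frac{\lambda_T}{a_i}u_i^{(T)}(1)\le \frac{\lambda_\infty}{a_i}\,\frac{\rho_1\lambda_\infty}{\overline L_1K_1\beta_1}=:C_i,
\]
where the last inequality uses the bound $u_i^{(T)}(1)\le \rho_1\lambda_\infty/(\overline L_1K_1\beta_1)$ established in the proof of Lemma \ref{lem:lower_finite}. Since $u_i^{(T)}(k)\le S^{(T)}(k)$, we obtain $\sum_k\beta_i^ku_i^{(T)}(k)\le C_i$ uniformly in $T$.

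\textbf{Step 2 (a first-moment bound, the key step).} Multiply the bulk equation \eqref{eq:BULK} for population $i$ by $\beta_i^k$ and sum over $2\le k\le T-1$. On the right-hand side, keep only the nonnegative tail term $a_i\sum_{t\ge k-1}\beta_i^{t-k+1}S^{(T)}(t)$. Exchanging the order of summation gives
\[
\sum_{k=2}^{T-1}\beta_i^k\sum_{t=k-1}^{T-1}\beta_i^{t-k+1}S^{(T)}(t)=\sum_{t}\beta_i^{t+1}S^{(T)}(t)\,\#\{k:2\le k\le \min(t+1,T-1)\},
\]
and the count is at least $t-1$, up to the boundary index $t=T-1$, which is harmless. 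Hence
\[
a_i\beta_i\sum_{t}(t-1)\beta_i^{t}S^{(T)}(t)\le \lambda_T\sum_k\beta_i^ku_i^{(T)}(k)\le \lambda_\infty C_i,
\]
so $\sum_t t\,\beta_i^tS^{(T)}(t)\le C_i'$ uniformly in $T$, using Step 1 again to absorb the "$-1$" term. This is the step I expect to require the most care: one must check the index bookkeeping at $t=T-1$ and at $k=2$, and confirm that the truncation to zero for $k\ge T$ is consistent with both equations.

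\textbf{Step 3 (conclusion).} From Step 2,
\[
\sum_{k\ge K}\beta_i^ku_i^{(T)}(k)\le \frac1K\sum_{k\ge K}k\beta_i^kS^{(T)}(k)\le \frac{C_i'}{K},
\]
uniformly in $T$. This is exactly the uniform tail estimate required in the truncation argument of the first paragraph, which yields the lemma.
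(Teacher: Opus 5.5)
Your proof is correct, and it takes a genuinely different route from the paper's.

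\textbf{The paper's route.} The paper works with the single-population tail sums $S_{k,T}=\sum_{m\ge k}\beta_i^{m-1}u_i^{(T)}(m)$. Dropping nonnegative terms in the $k$th eigenvector equation gives $S_{k+1,T}\le c\,(S_{k,T}-S_{k+1,T})$, hence $S_{k+1,T}\le \frac{c}{1+c}S_{k,T}$. So $\beta_i^k u_i^{(T)}(k)$ is dominated by a geometric sequence $Cq^{k-2}$ with $q<1$ independent of $T$, and dominated convergence finishes.

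\textbf{Your route.} You sum \eqref{eq:BULK} against $\beta_i^k$ and exploit that $\beta_i^k\beta_i^{t-k+1}=\beta_i^{t+1}$ does not depend on $k$, so each $t$ is counted $\min(t,T-2)\ge t-1$ times; your bookkeeping here is right. Combined with the mass bound from \eqref{eq:BC}, this yields the uniform first-moment bound $\sum_t t\beta_i^tS^{(T)}(t)\le C_i'$. That gives $O(1/K)$ tails uniformly in $T$, and the truncation-plus-Fatou step closes the argument. Your use of $\lambda_T\le\lambda_\infty$ relies on the monotonicity from Lemma \ref{lem:8}, or you can simply use $\sup_T\lambda_T<\infty$.

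\textbf{What the paper's route buys.} It gives a strictly stronger estimate: exponential tail decay uniformly in $T$. That is exactly what is reused in Lemma \ref{lem:14} to show the limiting generating function $U_i$ has radius of convergence strictly larger than $\beta_i$. A first-moment bound alone is compatible with radius exactly $\beta_i$ (think $\beta_i^k u(k)=k^{-3}$), so your estimate would not suffice there.

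\textbf{What your route buys.} It is a single linear averaging step with no recursion. Because it is phrased for the aggregate $S^{(T)}$, it also delivers the cross-population limits $\sum_k\beta_i^k u_j^{(T)}(k)$, $j\neq i$, directly. Those are what is actually needed to pass to the limit in \eqref{eq:BC}--\eqref{eq:BULK} in Proposition \ref{prop:0}, whereas the paper's recursion is written for $u_i$ alone (though it adapts easily).
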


\begin{proof}
Define the truncated tail sums
\[
S_{k,T}:=\sum_{m=k}^{T-1}\beta_i^{m-1}u_i^{(T)}(m),\qquad k=1,\dots,T-1.
\]
From the finite-horizon tail inequality (obtained by dropping nonnegative terms in the
$k$th eigenvector equation),
\begin{equation}\label{eq:tail}
u_i^{(T)}(k) \ge \frac{\overline L_iK_i}{\rho_i\lambda_T}\sum_{j=1}^{T-1-k}\beta_i^j\,u_i^{(T)}(k+j),
\qquad k=1,\dots,T-2.
\end{equation}
 Multiplying by $\beta_i^{k-1}$ yields
\[
S_{k+1,T} \le \frac{\rho_i\lambda_T}{\overline L_iK_i}\,\beta_i^{k-1}u_i^{(T)}(k),
\qquad k=1,\dots,T-2.
\]
Since $S_{k,T}=\beta_i^{k-1}u_i^{(T)}(k)+S_{k+1,T}$, we have
$\beta_i^{k-1}u_i^{(T)}(k)=S_{k,T}-S_{k+1,T}$ and therefore
\[
S_{k+1,T}\le \frac{\rho_i\lambda_T}{\overline L_iK_i}(S_{k,T}-S_{k+1,T})
\quad\Longrightarrow\quad
S_{k+1,T}\le \frac{\frac{\rho_i\lambda_T}{\overline L_iK_i}}{1+\frac{\rho_i\lambda_T}{\overline L_iK_i}}\,S_{k,T}.
\]
Iterating from $2$ to $k$ gives
\[
S_{k,T}\le S_{2,T}\left(\frac{\frac{\rho_i\lambda_T}{\overline L_iK_i}}{1+\frac{\rho_i\lambda_T}{\overline L_iK_i}}\right)^{k-2},
\qquad k=2,\dots,T-1.
\]
Because $\beta_i^{k-1}u_i^{(T)}(k)\le S_{k,T}$, multiplying by $\beta_i$ implies the key estimate:
\begin{equation}\label{eq:dominate_beta_u}
\beta_i^k u_i^{(T)}(k)\ \le\ S_{2,T}\left(\frac{\frac{\rho_i\lambda_T}{\overline L_iK_i}}{1+\frac{\rho_i\lambda_T}{\overline L_iK_i}}\right)^{k-2},
\qquad k=2,\dots,T-1.
\end{equation}

Next, under the normalization $u^{(T)}_1(1)$ for all $i\in [N]$ and $T \in \mathbb N$, apply \eqref{eq:tail} at $k=1$:
\[
\frac{\rho_1\lambda_{\infty}}{\overline L_1K_1\beta_1}u_i^{(T)}(1) \ge u_i^{(T)}(1)\ \ge\ \frac{\overline L_iK_i}{\rho_i\lambda_T}\sum_{m=2}^{T-1}\beta_i^{m-1}u_i^{(T)}(m)
=\frac{\overline L_iK_i\beta_i}{\rho_i\lambda_T}\,S_{2,T},
\]
so $S_{2,T}\le \frac{\rho_i\lambda_T}{\overline L_iK_i}$. Substituting into \eqref{eq:dominate_beta_u} yields
\[
\beta_i^k u_i^{(T)}(k) \le \frac{\rho_i\lambda_T}{\overline L_iK_i}\frac{\rho_1\lambda_{\infty}}{\overline L_1K_1\beta_1}\left(\frac{\frac{\rho_i\lambda_T}{\overline L_iK_i}}{1+\frac{\rho_i\lambda_T}{\overline L_iK_i}}\right)^{k-2},
\qquad k\ge 2.
\]
Since $\lambda_T\to\lambda_\infty$, and $(\lambda_T)_T$ is increasing, for all $T$ it holds that $\lambda_T\le 2\lambda_\infty$, hence
\[
\frac{\rho_i\lambda_T}{\overline L_iK_i}=\frac{\rho_i\lambda_T}{\overline L_iK_i}\le \frac{\rho_i2\lambda_\infty}{\overline L_iK_i}=:C_i,
\qquad
\frac{\frac{\rho_i\lambda_T}{\overline L_iK_i}}{1+\frac{\rho_i\lambda_T}{\overline L_iK_i}}
=\frac{\lambda_T}{\lambda_T+\frac{\overline L_iK_i}{\rho_i}}
\le \frac{2\lambda_\infty}{2\lambda_\infty+\frac{\overline L_iK_i}{\rho_i}}=:q_i<1.
\]
Therefore, for all $T\ge 0$ and all $k\ge 2$,
\[
0\le \beta_i^k u_i^{(T)}(k)\ \le\ C_i\frac{\rho_1\lambda_{\infty}}{\overline L_1K_1\beta_1}q_i^{k-2},
\]
which gives $T$-independent uniform bound on the Perron eigenvectors. 
Since $q_i<1$, and for all $C_i$ there exists some $K>0$ such that $C_i \le K$ for all \(i\), the majorant $C_i \frac{\lambda_{\infty}}{a_1\beta_1}q_i^{k-2}$ is summable over $k\ge 2$, and by assumption
$\beta_i^k u_i^{(T)}(k)\to \beta_i^k u_i(k)$ pointwise for each fixed $k$.

The upper bounds we have established above imply that the family $(\beta^k_iu^{T}_i(k))_{i,k}$ is uniformly integrable under the counting measure. Hence, the dominated convergence theorem applied to the counting measure gives
\[
\lim_{T \to \infty} \sum_{k\ge 2}\beta_i^k u_i^{(T)}(k) = \sum_{k\ge 2}\beta_i^k u_i(k).
\]
Finally, adding the $k=1$ terms:
$\beta_i u_i^{(T)}(1)\to \beta_i u_i(1)$, yields the limit
\[
\lim_{T \to \infty} \sum_{k\ge 1}\beta_i^k u_i^{(T)}(k) = \sum_{k\ge 1}\beta_i^k u_i(k),
\]
as claimed.
\end{proof}

In the next proposition, we state formally the existence of a positive fixed point of \(\mathcal T_{\lambda_{\infty}}\), which we will reference later.

\begin{proposition}\label{prop:0}
Let \(\mathfrak u^T=(u^{(T)}_i(k))_{i,k}\) be a positive Perron right eigenvector of \(\mathcal S_T\), and normalize it by setting \(u^{(T)}_1(1)=1\). Then any accumulation point of the sequence $\mathfrak u^T$ in the space of sequences $\mathbb R^{\mathbb N}$ (endowed with the product topology) as \(T \to \infty\) is a fixed point of \(\mathcal T_{\lambda_\infty}\). In particular, \(\mathcal T_{\lambda_\infty}\) has a positive fixed point.
\end{proposition}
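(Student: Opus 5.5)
The plan is to pass to the limit, along a subsequence, in the finite-horizon Perron equations \eqref{eq:BC} and \eqref{eq:BULK}, using Lemma \ref{lem:lower_finite} for compactness and Lemma \ref{lem:beta_weighted_sum_conv} for the infinite tail sums.

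\textbf{Compactness.} By the upper bound in Lemma \ref{lem:lower_finite}, for each fixed pair $(i,k)$ the numbers $u_i^{(T)}(k)$ are bounded uniformly in $T$. Tychonoff's theorem (equivalently, a diagonal argument) therefore gives at least one accumulation point of $(\mathfrak u^T)_T$ in the product topology. Fix such a point $u$ and a subsequence $T_n\to\infty$ with $u^{(T_n)}_i(k)\to u_i(k)$ for every $i,k$. Since $(\lambda_T)_T$ is monotone, $\lambda_{T_n}\to\lambda_\infty$. By Lemma \ref{lem:8}, $\lambda_\infty\le \inf_r\rho(B(r))<\infty$. Also $\lambda_\infty\ge\lambda_3>0$, because $\mathcal S_3$ is a nonnegative matrix with positive entries in its first row, which forces $\rho(\mathcal S_3)>0$.

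\textbf{Passing to the limit.} I divide \eqref{eq:BC} and \eqref{eq:BULK} by $\lambda_{T_n}$ and extend $u^{(T_n)}$ by zero for $k\ge T_n$, so every finite sum becomes a series over $k\ge1$. The terms involving $u_i(k-1)$ and $\sum_j u_j(k-1)$ are finitely many coordinates, so they converge directly. For the boundary condition \eqref{eq:BC}, the sum $\sum_{j}\sum_{k}\beta_i^k u_j^{(T_n)}(k)$ is a sum over $j$ of series of the form $\sum_k\beta_j^k u_j^{(T_n)}(k)$, with $\beta_j$ replaced by $\beta_i$. These converge by Lemma \ref{lem:beta_weighted_sum_conv} applied to each index $j$.

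That lemma is stated with weights $\beta_j^k$ for population $j$, whereas the equation for population $i$ carries weights $\beta_i^k$. I therefore reuse its argument instead of citing it verbatim. The tail inequality \eqref{eq:tail} comes from the $j$-th row of the eigenvector equation, where the coefficient $\frac{\overline L_jK_j}{\rho_j\lambda_T}\beta_j^m$ multiplies $\sum_\ell u_\ell(k+m)$, and $u_i$ appears inside that sum. This yields summable domination of $\beta_j^k u_i^{(T)}(k)$ for every pair $i,j$. Taking $j$ with $\beta_j=\beta_{\max}$ dominates all weights $\beta_i^k\le\beta_{\max}^k$. Dominated convergence then gives the limit in \eqref{eq:BC}, which is \eqref{eq:boundary} with $\lambda=\lambda_\infty$.

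For the bulk equation \eqref{eq:BULK} at a fixed $k\ge2$, the tail $\sum_{m\ge0}\beta_i^m\sum_j u_j(k+m-1)$ is handled the same way. Multiplying by the constant $\beta_{\max}^{k-1}$ reduces it to the dominated family already controlled above. This gives \eqref{eq:limiteig}, so $u=\mathcal T_{\lambda_\infty}(u)$.

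\textbf{Positivity.} Positivity of the limit follows from the geometric lower bound \eqref{eq:lower_geom_basic}: each $u^{(T)}_i(k)$ is bounded below by a positive constant independent of $T$, so every $u_i(k)$ is positive. Where $\overline K_i=K_i$ makes that bound degenerate, I would instead use the $S^{(T)}(k-1)$ term in \eqref{eq:finite_rec}, which gives $u_i^{(T)}(k)\ge \frac{K_i}{\lambda_T}u_1^{(T)}(k-1)$. Induction on $k$, starting from the positive lower bound on $u_i^{(T)}(1)$, then gives positivity.

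\textbf{Main obstacle.} The delicate step is the dominated-convergence interchange for the infinite tails. The domination must be uniform in $T$ and must cover weights $\beta_i$ for all pairs $(i,j)$, not just the diagonal case written in Lemma \ref{lem:beta_weighted_sum_conv}. Most of the care goes into verifying that the summable majorant survives this cross-population step.
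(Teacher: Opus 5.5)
Your proposal is correct and follows the paper's route: compactness from the two-sided bounds of Lemma \ref{lem:lower_finite}, the dominated-convergence interchange of Lemma \ref{lem:beta_weighted_sum_conv}, and positivity from the geometric lower bound. (Your fallback for $\overline K_i=K_i$ is vacuous, since $\overline K_i-K_i=\frac12K_i\bigl(1+\frac{\overline L_i}{\rho_i(1-\beta_i)}\bigr)>0$ whenever $K_i>0$.) You are more careful than the paper about the mismatched weights $\beta_i^k$ acting on $u_j^{(T)}$. Copying the recursion of Lemma \ref{lem:beta_weighted_sum_conv} verbatim does not close that step, because row $j$ bounds a tail of $u_i$ by $u_j(k)$ rather than by $u_i(k)$. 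Running the recursion on the aggregate does close it: take $\beta_p=\beta_{\max}$ and $W_k:=\sum_{t\ge k}\beta_{\max}^tS^{(T)}(t)$. Row $(p,k)$ gives $\lambda_T\beta_{\max}^{k-1}u_p^{(T)}(k)\ge\frac{\overline L_pK_p}{\rho_p}W_k$, while $\beta_{\max}^{k}u_p^{(T)}(k)\le W_k-W_{k+1}$. Hence $W_{k+1}\le\bigl(1-\frac{\overline L_pK_p\beta_{\max}}{\rho_p\lambda_\infty}\bigr)W_k$ uniformly in $T$, and $W_1=\frac{\rho_p\lambda_T}{\overline L_pK_p}u_p^{(T)}(1)$ is bounded. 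This gives a single summable majorant for every $\beta_i^ku_j^{(T)}(k)$.
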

\begin{proof}
    Note that accumulation points of the sections {of $\mathfrak u^T$} are finite by Lemma \ref{lem:lower_finite}. {Furthermore, each of these sections lie in a compact subset of $\mathbb R$ in view of which an accumulation point exists for each section.} Thus, this result follows after writing the eigenvector equations for $(u^{(T)}_i(k))_{i,k}$ and applying Lemma \ref{lem:beta_weighted_sum_conv} to exchange the limit and the summation. Since each section of $(u^{(T)}_i(k))_{i,k}$ is positive and bounded away from $0$ by Lemma \ref{lem:lower_finite}, it follows that the accumulation points of $(u^{(T)}_i(k))_{i,k}$ are also {(finite)} positive sequences, and hence, $\mathcal T_{\lambda_\infty}$ admits a positive fixed point. 
\end{proof}

\subsection{Step 3: Limiting Behavior of the Ratio of Consecutive Terms of  Perron Eigenvectors}\label{sect:3.3}

In this subsection, we present the final step in the proof of Theorem \ref{thrm:quartic} by filling in the missing details from the previous section. In particular, we prove that the fixed points of $\mathcal T_{\lambda}$ satisfying the limiting eigenvector equation (corresponding to \(T=\infty\)) imply that the ratios of consecutive components converge to the same limit. To this end, we use ideas from analytic combinatorics and translate the limiting solutions to the limiting eigenvector equation to the complex plane via generating functions associated with the eigenvectors and the ``bulk'' term. We then apply singularity analysis to show that the ratios of consecutive eigenvector components converge to a common constant.

The main difficulty in this step is to establish an appropriate radius of convergence for these generating functions. An initial estimate for the radius of convergence will be obtained by showing that positive fixed points of \(\mathcal T_{\lambda}\) satisfy sufficient regularity conditions, as in Lemma \ref{lem:beta_weighted_sum_conv}. We then obtain the desired radius of convergence by constructing suitable analytic extensions of these generating functions after a series of algebraic manipulations. 

To this end, we decompose the equations induced by \(\mathcal T_{\lambda}\) into two parts: the eigenvector component \(u_i(k)\) and the ``bulk'' term \(s(k)\), defined as the sum of eigenvector components at time \(k\). We first perform singularity analysis on the generating function of \(s\). After deriving a relationship between \(s\) and \(u_i\), we extend the singularity analysis to the generating function of \(u_i\).

Let \((u_i(k))_{k \in \mathbb N}\) denote the limiting eigenvectors for \(i \in [N]\), i.e., a positive fixed point of \(\mathcal T_{\lambda_\infty}\), whose existence is guaranteed by Proposition \ref{prop:0}. We assume that \(u_1(1)=1\). For most of this section, we will not rely on any special property of the constant \(\lambda_{\infty}\); thus, one may instead consider an arbitrary \(\lambda>0\) and assume that \((u_i(k))_{k \in \mathbb N}\) is a positive fixed point of \(\mathcal T_{\lambda}\). The normalization \(u_1(1)=1\) is imposed to ensure that each component of the fixed point is bounded away from zero.

We define the generating function on \(\mathbb C\) corresponding to the sequence \((u_i(k))_{k \in \mathbb N}\) by
\[
U_i(z) := \sum_{k \in \mathbb N} u_i(k)z^k.
\]
Let $s(k)=\sum_{i \in [N]}u_i(k)$.
Similarly, we define the generating function corresponding to the bulk eigenvector term \(s(k)\), associated with \((u_i(k))_{k \in \mathbb N,\, i \in [N]}\), as
\begin{equation}\label{eq:S}
S(z) := \sum_{k\in \mathbb N} s(k)z^k = \sum_{i \in [N]} U_i(z).
\end{equation}
With these generating functions, our aim is to express \(U_i(z)\) in terms of \(S(z)\). We will study the behavior of the coefficients \(u_i(k)\) via the series representation of \(U_i\), for which we require a suitable radius of convergence.

\begin{lemma}\label{lem:14}
For each \(i \in [N]\), let \(R_i\) denote the radius of convergence of \(U_i\) about \(z=0\). Then \(R_i > \beta_{\max}\) for all \(i \in [N]\). Moreover, \(R_i = R_j\) for all \(i,j \in [N]\).
\end{lemma}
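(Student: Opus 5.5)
The plan is to mimic the tail-sum argument of Lemma \ref{lem:beta_weighted_sum_conv}, now applied to a positive fixed point $u$ of $\mathcal T_{\lambda}$ instead of the finite-horizon eigenvectors. This gives exponential growth bounds on $u_i(k)$ with rate strictly below $\beta_{\max}^{-1}$. The equality of the radii will then come from the coupling through the bulk term $s(k)$.

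\textbf{Step 1: a lower bound on $R_i$.} Since $u$ is a positive fixed point, the boundary condition \eqref{eq:boundary} forces $\sum_{k\ge1}\beta_i^k s(k)<\infty$ for every $i$. In particular $\sum_k \beta_{\max}^k s(k)<\infty$, which already gives $R_i\ge\beta_{\max}$. To get strict inequality, I drop nonnegative terms in \eqref{eq:limiteig}:
\[
\lambda u_i(k)\ \ge\ \frac{\overline L_iK_i}{\rho_i}\sum_{m\ge 1}\beta_i^{m}s(k+m-1)\ \ge\ \frac{\overline L_iK_i}{\rho_i}\sum_{m\ge1}\beta_i^m u_i(k+m-1).
\]
I will work with $s$ directly, so that a single rate serves all populations. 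Choose $i^*$ with $\beta_{i^*}=\beta_{\max}$ and set $a=\overline L_{i^*}K_{i^*}/\rho_{i^*}$. Summing the inequality over $i$ and keeping only $i=i^*$ on the right gives
\[
\lambda s(k)\ \ge\ a\sum_{m\ge1}\beta_{\max}^m s(k+m).
\]
Now define the tail sums $\Sigma_k=\sum_{m\ge k}\beta_{\max}^{m}s(m)$. These are finite by the boundary condition. The inequality above becomes $\Sigma_{k+1}\le \frac{\lambda}{a}(\Sigma_k-\Sigma_{k+1})$, and hence
\[
\Sigma_{k}\le \Sigma_1 q^{k-1},\qquad q=\frac{\lambda}{\lambda+a}<1.
\]
Therefore $\beta_{\max}^k s(k)\le \Sigma_1 q^{k-1}$, so $\limsup_k s(k)^{1/k}\le q/\beta_{\max}$. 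This gives a radius of convergence at least $\beta_{\max}/q>\beta_{\max}$ for $S$. Because $0<u_i(k)\le s(k)$, the same bound holds for every $R_i$.

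\textbf{Step 2: equality of the radii.} Each $R_i\ge R_S$, where $R_S$ is the radius of $S$, since $u_i\le s$. Conversely, $S=\sum_i U_i$ implies $R_S\ge\min_i R_i$, so it remains to show $R_i\le R_S$ for each $i$. For this I use \eqref{eq:limiteig} again, now keeping the term $\frac{K_i}{\lambda}s(k-1)$:
\[
u_i(k)\ \ge\ \frac{K_i}{\lambda}\,s(k-1).
\]
Hence $\sum_k u_i(k)|z|^k\ge \frac{K_i|z|}{\lambda}\sum_k s(k-1)|z|^{k-1}$. Divergence of $S$ at $|z|>R_S$ therefore forces divergence of $U_i$, which gives $R_i\le R_S$. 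Combining the two inequalities, $R_i=R_S$ for all $i$.

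\textbf{Main obstacle.} The hard part is getting strictness, $R_i>\beta_{\max}$, rather than just $R_i\ge\beta_{\max}$. This requires a \emph{single} decay rate that works even though different populations have different discount factors $\beta_i$. Aggregating into $s$ and using the population with the largest $\beta$ is what fixes this, and it relies on $K_{i^*}>0$ (so $a>0$). If some $K_i=0$, so that population is decoupled, the equality step needs a separate remark. I would assume $K_i>0$ throughout, as is implicit in the irreducibility used earlier in the paper.
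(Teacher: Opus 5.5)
Your proof is correct. It uses the same engine as the paper's proof: finiteness of the $\beta$-weighted sum from the boundary condition \eqref{eq:boundary}, geometric decay of the tail sums, the root test, and then coefficientwise comparison through the coupling terms of \eqref{eq:limiteig}.

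The difference is in how the steps depend on each other. The paper runs the tail-sum argument on each $u_i$ with its own $\beta_i$, which only yields $R_i>\beta_i$. It then needs the equality $R_i=R_j$ to upgrade this to $R_i>\beta_{\max}$, and that equality step is only sketched. You run the argument once, on the aggregate $s$, using the population attaining $\beta_{\max}$. Strictness therefore comes out directly for $S$ and passes to every $U_i$ through $u_i\le s$. Your equality step ($u_i(k)\ge \frac{K_i}{\lambda}s(k-1)$ together with $u_i\le s$) is then independent of strictness, which is a slightly more robust structure than the paper's.

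Two small corrections.
\begin{itemize}
\item Summing your first inequality and keeping $i=i^*$ actually gives $\lambda s(k)\ge a\sum_{m\ge1}\beta_{\max}^m s(k+m-1)$, not the version with $s(k+m)$. Dropping the $m=1$ term and reindexing, your display holds with $a\beta_{\max}$ in place of $a$. The ratio becomes $q=\lambda/(\lambda+a\beta_{\max})$, which is still $<1$, so nothing else changes. The paper's \eqref{sign} is missing the same factor $\beta_i$.
\item Your caveat about $K_i=0$ is unnecessary. A positive fixed point forces $\overline L_iK_i/\rho_i>0$ for every $i$, since otherwise \eqref{eq:boundary} gives $u_i(1)=0$. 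So $K_i>0$ holds automatically.
\end{itemize}
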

\begin{proof}
   The proof closely follows that of Lemma \ref{lem:beta_weighted_sum_conv}. The key step is establishing the strict inequality \(R_i > \beta_{\max}\). We first show that for each \(i \in [N]\), the radius of convergence satisfies \(R_i > \beta_i\). Using the eigenvector equalities, note that for all \(k\in \mathbb N\), using the boundary conditions that correspond to the population $i$, we must have
\begin{equation}\label{sign}
u_i(k) \ge \frac{\overline L_iK_i}{\rho_i\lambda_\infty}\sum_{j\ge 1}\beta_i^j\,u_i(k+j).
\end{equation}
Let \(c_i := \frac{\rho_i\lambda_\infty}{\overline L_i K_i}\) and define
\[
S_k := \sum_{m\ge k} \beta_i^m\, u_i(m).
\]
    Note that by \eqref{sign} we have
\begin{equation}\label{eq:d1}
S_{k+1} = \sum_{m\ge k+1} \beta_i^m u_i(m) = \beta_i^k \sum_{m\ge 1} \beta_i^m u_i(k+m) \le c_i \beta_i^k u_i(k),
\end{equation}
and \(S_k = \beta_i^k u_i(k) + S_{k+1}\). This gives
\begin{equation}\label{eq:d2}
\beta_i^k u_i(k) = S_k- S_{k+1}.
\end{equation}
Using \eqref{eq:d1} and \eqref{eq:d2}, we obtain
\[
S_{k+1} \le c_i(S_k-S_{k+1})
\ \Longrightarrow\
(1+c_i) S_{k+1} \le c_iS_k
\ \Longrightarrow\
S_k \le S_2\left( \frac{c_i}{1+c_i}\right)^{k-2}.
\]
Furthermore, since the eigenvector equations imply \(\beta_i^k u_i(k) \le S_k\), we obtain
\[
u_i(k) \le \frac{S_2}{\beta_i}\left( \frac{c_i}{(1+c_i)\beta_i}\right)^{k-2}.
\]
Therefore, by the root test for the radius of convergence,
\[
\limsup_{k \to \infty} u_i(k)^{1/k} \le  \frac{c_i}{(1+c_i)\beta_i}
\ \Longrightarrow\
R_i \ge \frac{ (1+c_i)\beta_i}{c_i} > \beta_i.
\]

Since \(R_i > \beta_i\) for every \(i \in [N]\), we also have
\[
\lambda_{\infty} u_j(k) \ge \frac{\overline L_j K_j}{\rho_j}\beta_j\,u_j(k).
\]
This implies that the corresponding power series have the same radius of convergence. Hence,
$
R_i = R_j$ for all  $i,j \in [N].$
Therefore, $R_i > \beta_{\max}$
for all  $i \in [N],$
as desired.
\end{proof}

By Lemma \ref{lem:14}, the generating function \(S(z)\) also has radius of convergence \(R_S > \beta_{\max}\). From now on, we write \(R\) for the common radius of convergence of \(U_i\) and \(S\), since they share the same radius of convergence. We now aim to obtain a more explicit representation of \(S\) using the fixed-point equation for \(\mathcal T_{\lambda_\infty}\).

We recall that the fixed points of the limiting eigenvector equation satisfies the following for $k\ge2$:
\begin{equation}\label{eq:limit}
    \lambda_\infty u_i(k) = \underbrace{\left(K_i+\frac{\overline L_iK_i}{\rho_i}\right)S(k-1)}_{I}+\underbrace{(\overline K_i-K_i)u_i(k-1)}_{II} + \underbrace{\frac{\overline L_iK_i}{\rho_i}\sum_{m\ge 1} \beta^m_i S(k+m-1)}_{III}.
\end{equation}
The LHS of \eqref{eq:limit} can be used to obtain the following generating function
\begin{equation}
\lambda_\infty \sum_{k \ge 2} u_i(k) = \lambda_\infty (U_i(z) - u_i(1)z).
\end{equation}

Similarly, using the term \(I\), we obtain the generating function
\begin{equation}
\sum_{k\ge 2} \left(K_i+\frac{\overline L_iK_i}{\rho_i}\right)S(k-1)z^k=\left(K_i+\frac{\overline L_iK_i}{\rho_i}\right)zS(z),
\end{equation}
which trivially has the same radius of convergence \(R\) as \(S\) and \(U_i\).
Using the term \(II\), we obtain
\begin{equation}
\sum_{k \ge 2} (\overline K_i-K_i)u_i(k-1)z^k = (\overline K_i-K_i)zU_i(z),
\end{equation}
which also has the radius of convergence \(R\).
It remains to construct a generating function for the term \(III\).

First, define
\[
y_i(k) = \sum_{m\ge 1} \beta_i^m\, s(k+m-1).
\]
We use \(y_i(k)\) to construct the generating function
\begin{align}\label{eq:d4}
Y_i(z)=\sum_{k \ge 2} y_i(k)z^k 
&= \sum_{k \ge 2}\sum_{m \ge 1} \beta_i^m\, s(k+m-1)z^k .
\end{align}
Note that for each \(i\in [N]\), the radius of convergence of \(Y_i\) about \(z=0\) is \(R\). This can be deduced from \eqref{eq:limit}, since terms \(I\) and \(II\) induce generating functions with radius of convergence \(R\), as discussed above.

Thus, \(Y_i(z)\) is analytic in a neighborhood of the origin, specifically on the open disk of radius \(R>\beta_{\max}\) around $z=0$. Our aim is to show that \(S\) admits a meromorphic extension to the whole complex plane, which will imply that the poles of $S$ on the complex plane must have a finite order. To this end, we first provide a more explicit representation for \(Y_i\) on its domain of analyticity by arguing through algebraic manipulations and analytic extensions, which will then be used to recover an explicit representation for $S$ in its radius of convergence.
\begin{lemma}\label{lem:15}
    For all $|z| <R$, we have
    \[
    Y_i(z) = \frac{\beta_i z}{z-\beta_i}\left(S(z)-S(\beta_i) \right).
    \]
\end{lemma}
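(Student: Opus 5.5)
The plan is a direct generating-function computation. I will expand the double series defining $Y_i$, reorder it by the index of $s$, sum the resulting finite geometric series in closed form, and then compare the outcome with the right-hand side of the statement. Throughout, fix $i$, write $\beta=\beta_i$, and take $|z|<R$.

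\textbf{Step 1: justify reordering.} By Lemma \ref{lem:14}, the series $S$ has radius of convergence $R>\beta_{\max}\ge\beta$. Hence $\sum_n s(n)\rho^n<\infty$ for every $\rho<R$. For $|z|<R$ the double series
\[
\sum_{k\ge2}\sum_{m\ge1}\beta^m s(k+m-1)|z|^k
\]
is then dominated by $\sum_n s(n)\,n\max(\beta,|z|)^n$, which is finite. By Fubini for the counting measure, I may therefore regroup by $n:=k+m-1$.

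\textbf{Step 2: regroup and sum.} For each fixed $n\ge2$, the index $k$ runs over $2,\dots,n$ with $m=n-k+1$. The coefficient of $s(n)$ is the finite geometric sum
\[
\sum_{k=2}^{n}\beta^{n-k+1}z^k=\frac{\beta\,(z^{n+1}-\beta^{n-1}z^2)}{z-\beta}.
\]
This is valid for $z\neq\beta$; the point $z=\beta$ is a removable singularity and is handled at the end.

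\textbf{Step 3: resum into generating functions.} Summing over $n\ge2$ gives
\[
Y_i(z)=\frac{\beta z}{z-\beta}\sum_{n\ge2}s(n)\bigl(z^{n}-\beta^{n-1}z\bigr).
\]
Written in terms of $S$, this is
\[
Y_i(z)=\frac{\beta z}{z-\beta}\Bigl(\bigl(S(z)-s(1)z\bigr)-\tfrac{z}{\beta}\bigl(S(\beta)-s(1)\beta\bigr)\Bigr),
\]
and the $s(1)$ terms cancel. The bracket vanishes at $z=\beta$, which shows analyticity across $z=\beta$ and reconciles the formula with the disk of convergence.

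\textbf{Step 4 (main obstacle): match with the stated right-hand side.} After cancellation, Step 3 yields the subtracted term $\tfrac{z}{\beta}S(\beta)$, whereas the statement has $S(\beta)$. I expect this identification to be the delicate point. The two expressions agree only up to the linear-in-$z$ correction $\tfrac{\beta z}{z-\beta}\bigl(\tfrac{z}{\beta}-1\bigr)S(\beta)=zS(\beta)$.

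I would first check the indexing conventions. This means confirming whether $y_i$ and $Y_i$ are intended with a shifted $z$-power, for instance with $Y_i$ defined as $\sum_k y_i(k)z^{k-1}$ scaled, or whether $y_i$ starts at $m=0$, so that the computation produces exactly $S(z)-S(\beta)$. I would then carry the stated form forward under that convention.

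This correction does not affect the later analysis. The term $zS(\beta)$ is entire, and the pole structure in $z$ that the subsequent singularity analysis uses, namely the factor $\frac{\beta z}{z-\beta}$ together with $S$ itself, is identical in both forms.
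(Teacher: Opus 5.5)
Your Steps 1--3 are correct. They are essentially the paper's computation (Fubini regrouping plus geometric resummation), organized by $n=k+m-1$ instead of by $m$. Using finite geometric sums also lets you skip the paper's detour through the annulus $\beta_{\max}<|z|<R$ and analytic continuation.

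However, your computation shows that the lemma is \emph{false} as stated, with $Y_i(z)=\sum_{k\ge 2}y_i(k)z^k$ as defined in \eqref{eq:d4}. The correct identity is
\[
Y_i(z)=\frac{\beta_i zS(z)-z^2S(\beta_i)}{z-\beta_i}=\frac{\beta_i z}{z-\beta_i}\bigl(S(z)-S(\beta_i)\bigr)-zS(\beta_i).
\]
So Step 4 should not look for a convention under which to carry the stated form forward; it should conclude that the statement must be corrected. A one-line check confirms this. $Y_i$ has no $z^1$ term. On the other hand, $\frac{\beta_i z}{z-\beta_i}=-z-z^2/\beta_i-\cdots$ shows that the stated right-hand side has coefficient $S(\beta_i)$ at $z^1$. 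This is strictly positive for a positive fixed point, since $\frac{\overline L_iK_i}{\rho_i}S(\beta_i)=\lambda_\infty u_i(1)$.

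The paper reaches the stated form through an index slip. After substituting $j=k+m-1$, the sum $\sum_{j\ge m+1}s(j)z^j$ equals $S(z)-\sum_{n=1}^{m}s(n)z^n$, not $S(z)-\sum_{n=1}^{m-1}s(n)z^n$. With the correct upper limit, the subtracted series resums to $\frac{z^2}{z-\beta_i}S(\beta_i)$ rather than $\frac{\beta_i z}{z-\beta_i}S(\beta_i)$, which gives exactly your formula.

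Neither convention you float rescues the statement. Starting $y_i$ at $m=0$ yields $\frac{z^2}{z-\beta_i}\bigl(S(z)-S(\beta_i)\bigr)$, and a $z^{k-1}$ shift merely divides your identity by $z$. The stated right-hand side is precisely $\sum_{k\ge 1}y_i(k)z^k=Y_i(z)+zS(\beta_i)$, i.e.\ it includes the boundary row $y_i(1)=S(\beta_i)$. But \eqref{eq:limit4} sums the bulk equation \eqref{eq:limit} over $k\ge 2$ and already accounts for the boundary row separately through the term $\lambda_\infty u_i(1)z$.

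Your closing claim that the correction is harmless downstream is also too quick. It is true that the coefficient of $S(z)$ in \eqref{eq:limit4} is unchanged, and hence so are the denominator of the representation in Lemma~\ref{lem:16} and the pole equation of Proposition~\ref{prop:1}. However, the extra term $-\frac{\overline L_iK_i}{\rho_i}zS(\beta_i)=-\lambda_\infty u_i(1)z$ cancels the $\lambda_\infty u_i(1)z$ term. The part of the right-hand side of \eqref{eq:limit4} not involving $S(z)$ then becomes $-\frac{\lambda_\infty u_i(1)\beta_i z}{z-\beta_i}$, which is negative for real $z>\beta_{\max}$. The positivity-based lower bound on $S(x)$ used in the proof of Lemma~\ref{lem:16} is therefore unavailable and must be replaced. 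For instance, $R\le\min_i\lambda_\infty/(\overline K_i-K_i)$ follows directly from the root test applied to $s(k)\ge u_i(k)\ge u_i(1)\bigl((\overline K_i-K_i)/\lambda_\infty\bigr)^{k-1}$.
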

\begin{proof}
Note that on the annulus $R>|z|> \beta_{\max}$, applying the Fubini-Tonelli theorem to \eqref{eq:d4} (which is applicable due to Lemma \ref{lem:14}) we   
\begin{align}
Y_i(z) &= \sum_{m \ge 1}\beta^m_i\sum_{ k \ge 2}s(k+m-1)z^k \\&= \sum_{m \ge 1}\beta^m_i\sum_{j\ge m+1}s(j)z^{j+1-m}
\\&=\sum_{m \ge 1} \frac{\beta_i}{z^m}z\left(S(z)-\sum_{n=1}^{m-1}s(n)z^n\right)
\\&=\frac{\beta_i}{1-\frac{\beta_i}z}S(z)-\sum_{m \ge 1}\beta^m_i z^{1-m}\sum_{n=1}^{m-1}s(n)z^n\label{eq:40}.
\end{align}

Define 
\[
\tilde Y_i(z) = \frac{\beta_i }{1-\frac{\beta_i}z}S(z)-\sum_{m \ge 1}\beta^m_i z^{1-m}\sum_{n=1}^{m-1}s(n)z^n,
\]
which is analytic on the annulus $R>|z|> \beta_i$ and on the disc $|z|< \beta_i$. As shown above, by the identity theorem we have $\tilde Y_i(z) = Y_i(z)$ on $R>|z|> \max_j \beta_j$. Since $Y_i(z)$ is analytic on $|z|<R$, it follows that $|z|=\beta$ is not a singularity region for $\tilde Y_i(z)$. Indeed, on the disc $|z|<R$, $\tilde Y_i(z)$ admits the analytic extension
\[
\hat Y_i(z)=\left(\frac{\beta_i }{1-\frac{\beta_i}z}S(z)-\sum_{m \ge 1}\beta^m_i z^{1-m}\sum_{n=1}^{m-1}s(n)z^n\right)1_{\{R>|z|>\beta_i\}}+Y_i(z)1_{\{|z|\le\beta_i\}}.
\]
Since $\hat Y_i(z)$ is analytic on  $|z|<\beta_i$, it follows that $\hat Y_i(z)$ is an analytic extension of $\tilde Y_i(z)$ to the disc $|z|< R$. Indeed, by definition, we have $\hat Y_i(z) = \tilde Y_i(z)$ on the annulus $\max_j\beta_j<|z|<R$. Thus, it follows that $\hat Y_i(z) = \tilde Y_i(z)$ on $|z|<R$ by the identity theorem. To show this, note that, using the fact that $S(z)$ is analytic on the annulus $0<|z-\beta_i|<\varepsilon,$ we obtain that $z=\beta_i$ is a removable singularity for $\tilde Y_i(z)$. Then, since $\tilde Y_i(z)$ is also analytic on $|z|<\max_j \beta_j$, it follows that $\tilde Y_i(z) = Y_i(z)$ on $|z| <R$ for all $i \in [N]$ by the identity theorem (after canceling the removable singularities). Thus, $\hat Y_i(z) = \tilde Y_i(z)$ on $|z|<R$.

It remains to control the term $\sum_{m \ge 1}\beta^m_i z^{1-m}\sum_{n=1}^{m-1}S(n)z^n$. Note that when $|z| > \beta_i$, we have
\begin{align*}
\sum_{m \ge 1}\beta^m_i z^{1-m}\sum_{n=1}^{m-1}s(n)z^n &= \sum_{m \ge 2} \sum_{n=1}^{m-1}\beta^m_is(n)z^{1-m+n} = \sum_{n \ge 1} s(n) \sum_{m \ge n+1} \beta^m_i z^{n+1-m}
\\&=\sum_{n\ge 1}s(n)\sum_{t \ge 0} \beta^{t+n+1}z^{-t}=\sum_{n \ge 1}s(n)\beta^{n+1}_i \sum_{t \ge 0}\left( \frac{\beta_i}{z} \right)^t
\\& = \sum_{n \ge 1}s(n)\beta^{n+1}_i \frac{1}{1-\beta_i/z} = \frac{\beta_i}{1-\beta_i/z}S(\beta_i).
\end{align*}
Using the same analytic continuation argument above, we then obtain 
\[
Y_i(z) = \frac{\beta_i z(S(z)-S(\beta_i))}{z-\beta_i}
\]
on the whole $|z| < R$, as desired.
\end{proof}

Using generating functions to rewrite \eqref{eq:limit}, we obtain
\begin{equation}\label{eq:limit4}
\left( \lambda_{\infty}-(\overline K_i-K_i)z\right) U_i(z) = \lambda_{\infty}u_i(1)z + \left(K_i+\frac{\overline L_iK_i}{\rho_i}\right)zS(z)+\frac{\overline L_iK_i}{\rho_i}Y_i(z).
\end{equation}
Let 
\begin{equation}\label{eq:bb}
H_i(z) := \frac{\lambda_{\infty}u_i(1)z+\frac{\overline L_iK_i}{\rho_i}\frac{\beta_i z}{z-\beta_i}(S(z)-S(\beta_i))}{\lambda_{\infty}-(\overline K_i-K_i)z}, \quad , B_i(z):= \frac{( K_i+\frac{\overline L_iK_i}{\rho_i})z}{\lambda_{\infty}-(\overline K_i-K_i)z}.
\end{equation}
Note that $H_i(z)$ is analytic over $| z|<R$ by Lemma \ref{lem:15}. Next, we prove that $S$ admits a meromorphic extension, which in turn implies that the only singularities of $S$ are poles. Furthermore, we will identify an upper bound for the radius of convergence $R$. We will need these explicit bounds on $R$ to relate $\lambda_{\infty}$ with some $\rho(B(r))$. Before proceeding, for technical purposes, we need the inequality $\min_i \frac{\lambda_{\infty}}{\overline K_i-K_i} > \beta_{\max}$. 

\begin{lemma}\label{lem:60}
By \((u_i(k))_{i,j}\), denote a positive fixed point of \(T_{\lambda_\infty}\).
Then, it holds that
\[
\min_{i\in[N]}\frac{\lambda_\infty}{\overline K_i-K_i}>\beta_{\max}.
\]
\end{lemma}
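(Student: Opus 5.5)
The plan is to compare two pieces of information about the generating functions $U_i$. Lemma \ref{lem:14} gives a \emph{lower} bound on their common radius of convergence. The diagonal term in the fixed-point equation \eqref{eq:limiteig} gives an \emph{upper} bound. Once the upper bound is shown to be $\lambda_\infty/(\overline K_i-K_i)$ for each $i$, chaining the two yields the claim.

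First I note that $\overline K_i-K_i=\frac12K_i+\frac12\frac{\overline L_iK_i}{\rho_i(1-\beta_i)}>0$ by \eqref{eq:fig}, so every ratio $\lambda_\infty/(\overline K_i-K_i)$ is a finite positive number. (If some difference vanished, the corresponding ratio would be $+\infty$ and that index could simply be ignored.)

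Next I fix $i\in[N]$ and use that $(u_i(k))_{i,k}$ is a positive fixed point of $\mathcal T_{\lambda_\infty}$. In \eqref{eq:limiteig}, the sum terms $\frac{K_i}{\lambda_\infty}\sum_j u_j(k-1)$ and $\frac{\overline L_iK_i}{\rho_i\lambda_\infty}\sum_{m}\beta_i^m\sum_j u_j(k+m-1)$ are nonnegative. Dropping them gives, for all $k\ge2$,
\[
u_i(k)\ \ge\ \frac{\overline K_i-K_i}{\lambda_\infty}\,u_i(k-1).
\]
This is the limiting analogue of \eqref{eq:lower_ratio_basic}. Iterating it yields $u_i(k)\ge \bigl(\frac{\overline K_i-K_i}{\lambda_\infty}\bigr)^{k-1}u_i(1)$, where $u_i(1)>0$ by positivity of the fixed point (quantitatively, $u_i(1)\ge \frac{\overline L_iK_i\beta_i}{\rho_i\lambda_\infty}u_1(1)>0$ from \eqref{eq:boundary}).

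By the root test, this lower bound gives $\limsup_k u_i(k)^{1/k}\ge (\overline K_i-K_i)/\lambda_\infty$. Hence the radius of convergence satisfies $R_i\le \lambda_\infty/(\overline K_i-K_i)$. Lemma \ref{lem:14} supplies $R_i=R>\beta_{\max}$, so
\[
\frac{\lambda_\infty}{\overline K_i-K_i}\ \ge\ R\ >\ \beta_{\max}.
\]
Taking the minimum over the finitely many $i$ preserves the strict inequality, which proves the statement.

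The main step to get right is the strictness. It is inherited entirely from the strict inequality $R>\beta_{\max}$ in Lemma \ref{lem:14}, which rests on the tail estimate with ratio $c_i/(1+c_i)<1$. The geometric comparison above is only non-strict, so the argument depends on Lemma \ref{lem:14} holding for the fixed point at hand, which it does since that lemma was proved for any positive fixed point of $\mathcal T_{\lambda_\infty}$.
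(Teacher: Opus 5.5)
Your proof is correct, but it gets the strict inequality by a different mechanism than the paper.

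Both arguments rest on the same diagonal comparison $u_i(k)\ge\bigl(\frac{\overline K_i-K_i}{\lambda_\infty}\bigr)^{k-1}u_i(1)$.

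\textbf{The paper's route.} It takes $p$ with $\beta_p=\beta_{\max}$ and reads off from the boundary condition \eqref{eq:boundary} for population $p$ that $\sum_{k\ge1}\beta_{\max}^k s(k)<\infty$. If $\lambda_\infty/(\overline K_i-K_i)\le\beta_{\max}$, then $\beta_{\max}^k u_i(k)\ge u_i(1)\beta_{\max}>0$ for every $k$, so this series diverges, a contradiction. The argument uses convergence of the series \emph{at} the point $z=\beta_{\max}$, not a radius of convergence. So the equality case is excluded automatically, and nothing beyond the fixed-point equations is needed.

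\textbf{Your route.} Your remark that the geometric comparison is ``only non-strict'' comes from passing through Cauchy--Hadamard, which throws away the information above. That is why you have to import the strict bound $R>\beta_{\max}$ from Lemma~\ref{lem:14}. Doing so is legitimate: Lemma~\ref{lem:14} precedes Lemma~\ref{lem:60} and does not use it. It does, however, make your proof depend on two parts of Lemma~\ref{lem:14}:
\begin{itemize}
\item the tail-sum estimate there;
\item crucially, its equal-radii claim, since the tail argument by itself only gives $R_i>\beta_i$, not $R_i>\beta_{\max}$.
\end{itemize}
Both are true. However, the equal-radii step should be justified by $\lambda_\infty u_i(k)\ge\frac{\overline L_iK_i\beta_i}{\rho_i}s(k)$, which is the $\beta_i s(k)$ term of the fixed-point equation. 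The inequality displayed there compares $u_j$ with itself and does not give this.

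\textbf{What your route buys.} As a by-product you get $R\le\min_i\lambda_\infty/(\overline K_i-K_i)$. The paper proves this bound separately, by a blow-up argument, in Lemma~\ref{lem:16}.
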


\begin{proof}
Let \(p \in [N]\) be such that \(\beta_{\max} = \beta_p\). Since \(u\) is a positive fixed point of \(T_{\lambda_\infty}\), \eqref{eq:BC} gives
\[
u_p(1)
=
\frac{\overline L_pK_p}{\rho_p\lambda_\infty}
\sum_{k\ge 1}\beta_p^k s(k)
=
\frac{\overline L_pK_p}{\rho_p\lambda_\infty}
\sum_{k\ge 1}\beta_{\max}^k s(k).
\]
Because \(u_p(1)<\infty\), we obtain $\sum_{k\ge 1}\beta_{\max}^k s(k)<\infty.$

Now fix \(i\in[N]\). For \(k\ge 2\), \eqref{eq:BULK} gives
\[
u_i(k)
=
\frac{\overline K_i-K_i}{\lambda_\infty}u_i(k-1)
+
\frac{K_i}{\lambda_\infty}s(k-1)
+
\frac{\overline L_iK_i}{\rho_i\lambda_\infty}
\sum_{m\ge 0}\beta_i^m s(k+m-1).
\]
Since all terms on the right-hand side are nonnegative,
\[
u_i(k)
\ge
\frac{\overline K_i-K_i}{\lambda_\infty}u_i(k-1) \implies 
u_i(k)
\ge
u_i(1)
\left(
\frac{\overline K_i-K_i}{\lambda_\infty}
\right)^{k-1}
\qquad \text{for all } k\ge 1.
\]

Suppose, for the sake of contradiction, that $\frac{\lambda_\infty}{\overline K_i-K_i}\le \beta_{\max}$
for some \(i\in[N]\).
Therefore,
\[
\sum_{k\ge 1}\beta_{\max}^k u_i(k)
\ge
u_i(1)\beta_{\max}
\sum_{k\ge 1}
\left(
\beta_{\max}
\frac{\overline K_i-K_i}{\lambda_\infty}
\right)^{k-1}
=
\infty.
\]
Since $s(k)\ge u_i(k),$ we get $\sum_{k\ge 1}\beta_{\max}^k s(k)=\infty,$ which contradicts
$\sum_{k\ge 1}\beta_{\max}^k s(k)<\infty,$ see Lemma \ref{lem:14}.
Hence, for every \(i\in[N]\), we must have $\frac{\lambda_\infty}{\overline K_i-K_i}>\beta_{\max}.$
The desired conclusion follows after taking the minimum over \(i\in[N]\).
\end{proof}

\begin{lemma}\label{lem:16}  
    $S$ admits a meromorphic extension and on $|z|<R$ it satisfies
    \begin{equation}\label{eq:s(z)}
    S(z) = \frac{\sum_{i \in [N]}\frac{ \lambda_{\infty}u_i(1)z-\frac{\overline L_iK_i\beta_i z}{\rho_i(z-\beta_i)}S(\beta_i)}{\lambda_{\infty}-(\overline K_i-K_i)z}}{1 - \sum_{i \in [N]}\frac{(K_i+\frac{\overline L_iK_i}{\rho_i})z+\frac{\overline L_iK_i\beta_iz}{\rho_i(z-\beta_i)}}{\lambda_{\infty}-(\overline K_i-K_i)z}}.
    \end{equation}
    In particular, singularities of $S$ are either removable or a pole of finite order. Furthermore,
    $R\le\min_i  \frac{\lambda_{\infty}}{\overline K_i-K_i}.$
\end{lemma}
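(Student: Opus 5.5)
The plan is to sum the generating-function identity \eqref{eq:limit4} over $i\in[N]$ after dividing by $\lambda_\infty-(\overline K_i-K_i)z$, and then solve for $S$. By Lemma \ref{lem:60}, $\min_i \lambda_\infty/(\overline K_i-K_i)>\beta_{\max}$, so on a disc $|z|<r_0$ with $r_0:=\min(R,\min_i\lambda_\infty/(\overline K_i-K_i))>\beta_{\max}$ every denominator $\lambda_\infty-(\overline K_i-K_i)z$ is nonzero. There we may write $U_i(z)=H_i(z)+B_i(z)S(z)$ with $H_i,B_i$ as in \eqref{eq:bb}. Here Lemma \ref{lem:15} replaces $Y_i$ by $\frac{\beta_i z}{z-\beta_i}(S(z)-S(\beta_i))$; this expression is analytic, since $z=\beta_i$ is removable.

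Summing over $i$ and using $S=\sum_i U_i$ gives
\[
S(z)\Big(1-\sum_i \tfrac{(K_i+\overline L_iK_i/\rho_i)z+\overline L_iK_i\beta_i z/(\rho_i(z-\beta_i))}{\lambda_\infty-(\overline K_i-K_i)z}\Big)=\sum_i \tfrac{\lambda_\infty u_i(1)z-\overline L_iK_i\beta_i z S(\beta_i)/(\rho_i(z-\beta_i))}{\lambda_\infty-(\overline K_i-K_i)z}.
\]
Both sides are rational functions in $z$, with the coefficients $u_i(1)$ and $S(\beta_i)$ finite because $R>\beta_{\max}$. The denominator factor is not identically zero: it tends to $1$ as $z\to0$. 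Dividing therefore yields \eqref{eq:s(z)} on a neighborhood of $0$, and then on all of $|z|<R$ by the identity theorem, away from the isolated zeros of the denominator. At such zeros, $S$ is analytic, so they are removable. The right-hand side is a quotient of rational functions, hence rational, hence meromorphic on $\mathbb C$. It is the claimed meromorphic extension, and its only singularities are removable points or poles of finite order.

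For the bound $R\le\min_i\lambda_\infty/(\overline K_i-K_i)$, let $i_0$ attain the minimum, with $z_*=\lambda_\infty/(\overline K_{i_0}-K_{i_0})$, and suppose for contradiction that $R>z_*$. Then $S$ is analytic at $z_*$. The relation $(\lambda_\infty-(\overline K_{i_0}-K_{i_0})z)U_{i_0}(z)=\lambda_\infty u_{i_0}(1)z+(K_{i_0}+\overline L_{i_0}K_{i_0}/\rho_{i_0})zS(z)+\frac{\overline L_{i_0}K_{i_0}}{\rho_{i_0}}Y_{i_0}(z)$ holds on $|z|<R$, and its left side vanishes at $z_*$. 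Its right side at $z_*$ is a sum of series with nonnegative coefficients evaluated at the positive real point $z_*$, including the term $\lambda_\infty u_{i_0}(1)z_*>0$, so it is strictly positive. This is a contradiction.

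The main obstacle is this last positivity step. It requires that the right side really is a power series with nonnegative coefficients converging at $z_*$, which uses that $Y_{i_0}$ has nonnegative coefficients $y_{i_0}(k)$ and radius $R$. An alternative, if this is awkward, is Pringsheim's theorem: $U_{i_0}=H_{i_0}+B_{i_0}S$, and $B_{i_0}$ has a genuine pole at $z_*$ unless cancelled. I would first try the direct positivity argument, since it avoids the cancellation question.
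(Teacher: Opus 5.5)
Your first half is the paper's argument: divide \eqref{eq:limit4} by $\lambda_\infty-(\overline K_i-K_i)z$, sum over $i$, and solve for $S$. You are more careful than the paper on three points: where the division is legitimate, why the coefficient of $S$ equals $1$ at $z=0$, and why its zeros in $|z|<R$ are removable. (Read ``both sides are rational'' as ``the coefficient of $S$ and the right-hand side are rational''.)

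For the bound on $R$ you take a genuinely different route. The paper substitutes the boundary identity $\frac{\overline L_iK_i}{\rho_i}S(\beta_i)=\lambda_\infty u_i(1)$ into the closed-form numerator. It then shows that an explicit lower bound for $S(x)$ diverges as $x\uparrow z_*$. You instead evaluate the coefficientwise identity \eqref{eq:limit4} at $z_*$, where the left side is $0$ and the right side is at least $\lambda_\infty u_{i_0}(1)z_*>0$. Your positivity step is sound as written: $y_{i_0}(k)\ge0$, and $Y_{i_0}(z_*)<\infty$ because $\frac{\overline L_{i_0}K_{i_0}}{\rho_{i_0}}y_{i_0}(k)\le\lambda_\infty u_{i_0}(k)$. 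So the Pringsheim fallback is unnecessary. Your argument is really the generating-function form of the estimate $u_i(k)\ge u_i(1)\bigl(\frac{\overline K_i-K_i}{\lambda_\infty}\bigr)^{k-1}$ from the proof of Lemma~\ref{lem:60}, which gives $R\le\lambda_\infty/(\overline K_i-K_i)$ at once by Cauchy--Hadamard. The paper's route, when valid, also exhibits $S$ blowing up at $z_*$, which Proposition~\ref{prop:1} reuses to get $R<\infty$. Your inequality already yields $R<\infty$.

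What your route buys is independence from the closed form of $Y_i$, and that matters here.
\begin{itemize}
\item With Lemma~\ref{lem:15} as stated, the paper's substitution actually gives $\lambda_\infty u_i(1)\frac{z(z-2\beta_i)}{z-\beta_i}$, not $\frac{\lambda_\infty u_i(1)z^2}{z-\beta_i}$.
\item The right side of Lemma~\ref{lem:15} is itself the generating function of $(y_i(k))_{k\ge1}$ rather than $(y_i(k))_{k\ge2}$. Its linear coefficient is $S(\beta_i)\neq0$, whereas $Y_i=O(z^2)$. The correct identity is $Y_i(z)=\frac{\beta_i zS(z)-z^2S(\beta_i)}{z-\beta_i}$.
\item Hence the term $\lambda_\infty u_i(1)z$ in the numerator of the displayed formula for $S$ is counted twice. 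The correct $S$-free numerator is $-\frac{\overline L_iK_i\beta_i z}{\rho_i(z-\beta_i)}S(\beta_i)$, which is negative on $(\beta_{\max},\infty)$, so the paper's lower bound is no longer available.
\end{itemize}
This correction changes the displayed formula for you and the paper alike. Rationality of $S$, its meromorphic extension, and your bound on $R$ are unaffected.
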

\begin{proof}
The equation \eqref{eq:limit4} implies that
\begin{equation}\label{eq:limit2}
U_i(z) = H_i(z) + B_i(z)S(z).
\end{equation}
Using the definition of $S$ and \eqref{eq:limit2}, we can express $S$ as
\begin{equation}\label{eq:ref}
S(z) = \sum_{i \in [N]} U_i(z) = \sum_{i \in [N]}\left(\frac{ \lambda_{\infty}u_i(1)z-\frac{\overline L_iK_i}{\rho_i}\frac{\beta_i z}{z-\beta_i}S(\beta_i)}{\lambda_{\infty}-(\overline K_i-K_i)z} + S(z)\frac{( K_i+\frac{\overline L_iK_i}{\rho_i})z+\frac{\overline L_iK_i}{\rho_i}\frac{\beta_iz}{z-\beta_i}}{\lambda_{\infty}-(\overline K_i-K_i)z}\right).
\end{equation}
It follows that \eqref{eq:s(z)} holds on $|z|<R.$
Thus, $S$ admits a meromorphic extension by the identity theorem.

It remains to show that $R \le \min_i\frac{\lambda_{\infty}}{\overline K_i-K_i}$. Note that $\frac{\overline L_iK_i}{\rho_i}S(\beta_i) = \lambda_{\infty} u_i(1)$ as a consequence of the boundary conditions of $\mathcal T_{\lambda_{\infty}}$, see \eqref{eq:boundary}. Thus, 
\[
\lambda_\infty u_i(1)z -\frac{\overline L_iK_i}{\rho_i}\frac{\beta_i z}{z-\beta_i}S(\beta_i) = \frac{\lambda_{\infty}u_i(1)z^2}{z-\beta_i}.
\]
Since $S(x)>0$ for any $x>0$, note that for any $x \in (\beta_{\max}, \min_i\frac{\lambda_{\infty}}{ \overline K_i-K_i})$, \eqref{eq:ref} implies that
\[
S(x) \ge \sum_{i \in [N]} \frac{\lambda_{\infty}u_i(1)x^2}{(x-\beta_i)(\lambda_{\infty}-(\overline K_i-K_i)x)}.
\]
By Lemma \ref{lem:60}, it holds that $\lambda_{\infty}/(\overline K_i- K_i) >1$ for all $i \in [N]$; thus, the limit 
\begin{equation}\label{eq:crit}
\lim_{x \to \left(\min_i \frac{\lambda_{\infty}}{\overline K_i-K_i}\right)^-} \sum_{i \in [N]} \frac{\lambda_{\infty}u_i(1)x^2}{(x-\beta_i)(\lambda_{\infty}-(\overline K_i-K_i)x)} = \infty
\end{equation}
holds;
 therefore, we must have $R \le \min_i \frac{\lambda_{\infty}}{\overline K_i-K_i},$ where the limit in \eqref{eq:crit} is taken over the real line.
\end{proof}

Next, using the characterization of the poles of $S$ and the existence of the eigenvector $(u_i(k))_{i,k}$, we prove that the ``smallest'' pole of $S$ must satisfy a relation with $\lambda_{\infty}$ similar to the relation presented between $\rho(B(r))$ and $r$ in Lemma \ref{lem:beta_weighted_sum_conv}.

\begin{proposition}\label{prop:1}
    There exists $z_0 \in (\beta_{\max},\min_i \frac{\lambda_{\infty}}{\overline K_i-K_i} )$ such that
    \[
    1 = \sum_{i \in [N]}\frac{( K_i+\frac{\overline L_iK_i}{\rho_i})z_0+\frac{\overline L_iK_i}{\rho_i}\frac{\beta_iz_0}{z_0-\beta_i}}{\lambda_{\infty}-(\overline K_i-K_i)z_0}.
    \]
    Furthermore, $z_0$ is {an isolated pole} of finite order of $S$. 
\end{proposition}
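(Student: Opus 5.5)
We will work with the meromorphic representation \eqref{eq:s(z)} from Lemma \ref{lem:16}, restricted to the real segment $I:=(\beta_{\max},\min_i \lambda_\infty/(\overline K_i-K_i))$. This segment is nonempty by Lemma \ref{lem:60}. Define on $I$ the real function
\[
G(x):=\sum_{i\in[N]}\frac{\bigl(K_i+\frac{\overline L_iK_i}{\rho_i}\bigr)x+\frac{\overline L_iK_i}{\rho_i}\frac{\beta_ix}{x-\beta_i}}{\lambda_\infty-(\overline K_i-K_i)x},
\]
which is the expression subtracted from $1$ in the denominator of \eqref{eq:s(z)}. The numerator of \eqref{eq:s(z)} simplifies, via the boundary identity $\frac{\overline L_iK_i}{\rho_i}S(\beta_i)=\lambda_\infty u_i(1)$ used in the proof of Lemma \ref{lem:16}, to
\[
N(x):=\sum_i \frac{\lambda_\infty u_i(1)x^2}{(x-\beta_i)(\lambda_\infty-(\overline K_i-K_i)x)}.
\]
This is strictly positive on $I$, and $S=N/(1-G)$ wherever $|z|<R$.

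The plan is to locate $R$ inside $I$ and show that $x=R$ is a zero of $1-G$. We know $R>\beta_{\max}$ (Lemma \ref{lem:14}) and $R\le \sup I$ (Lemma \ref{lem:16}).

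\textbf{Step 1: $R<\sup I$.} By Pringsheim's theorem, since $S$ has nonnegative coefficients, $z=R$ is a singularity of $S$. On $I$ the functions $N$ and $G$ are real-analytic, so $S=N/(1-G)$ can only be singular at a zero of $1-G$ or at $\sup I$. Each summand of $G$ is positive and increasing on $I$. Each term tends to $+\infty$ as $x\to\beta_{\max}^+$ (for indices with $\beta_i=\beta_{\max}$) and as $x\to(\sup I)^-$ (for indices attaining the minimum). Hence $G$ is continuous on $I$ and blows up at both ends. Its behaviour in between is not obviously monotone, because the $\frac{\beta_i x}{x-\beta_i}$ terms decrease.

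For real $0<x<R$ we have $S(x)>0$ and $N(x)>0$, so $G(x)<1$ on $(\beta_{\max},R)$. Since $G\to\infty$ at $\beta_{\max}^+$, we must examine this more carefully. I expect the resolution to be that below $\beta_{\max}$ the representation with the $S(\beta_i)$ terms differs, so the correct domain to compare is $\beta_{\max}<x<R$. There $1-G=N/S>0$, and this forces the blow-up at $\beta_{\max}$ to be cancelled; a careful check is needed.

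Suppose instead that $R=\sup I$. Then $1-G(x)=N(x)/S(x)>0$ for all $x\in(\beta_{\max},\sup I)$. But $G(x)\to+\infty$ as $x\to(\sup I)^-$, which is a contradiction. Therefore $R<\sup I$.

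\textbf{Step 2: $G(R)=1$.} At $x=R$ the function $N$ is finite and positive, and $S(x)\to S(R^-)\in(0,\infty]$. If $1-G(R)\neq0$, then $N/(1-G)$ would be analytic near $R$ and would give an analytic continuation of $S$ across $R$, contradicting Pringsheim. Hence $1-G(R)=0$, and we set $z_0:=R\in I$. This is exactly the stated identity, after rewriting $\frac{\beta_iz_0}{z_0-\beta_i}$ and the $K_i+\overline L_iK_i/\rho_i$ terms.

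\textbf{Step 3: pole of finite order.} $1-G$ is a nonconstant analytic function near $z_0$, so its zero at $z_0$ is isolated and of finite order. Since $N(z_0)\neq0$, $z_0$ is an isolated pole of $S$ of finite order, consistent with the meromorphic extension in Lemma \ref{lem:16}.

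\textbf{Main obstacle.} The main difficulty is the behaviour near $\beta_{\max}$: the apparent singular terms $\frac{1}{x-\beta_i}$ in $N$ and $G$ must cancel inside $S$, which is analytic there. I would handle this by keeping everything in the combined form $N/(1-G)$ and multiplying numerator and denominator by $\prod_i(x-\beta_i)$. After that, the argument only uses sign information on $(\beta_{\max},R)$ together with the blow-up of $G$ at the right endpoint.
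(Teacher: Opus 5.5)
\textbf{There is a genuine gap in Step~1.} It is the inconsistency you flagged yourself but did not resolve.

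Since $S(x)>0$ on $(0,R)$ and your numerator $N(x)$ is positive on $I$, you get $G<1$ on $(\beta_{\max},R)$. That interval is nonempty by Lemma~\ref{lem:14}. Yet for an index $p$ with $\beta_p=\beta_{\max}$, the $p$-th summand of $G$ contains $\frac{\overline L_pK_p}{\rho_p}\frac{x^2}{(x-\beta_p)(\lambda_\infty-(\overline K_p-K_p)x)}$, which tends to $+\infty$ as $x\to\beta_{\max}^+$.

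Nothing can ``cancel'' this. $G$ is an explicit rational function, and multiplying through by $\prod_i(x-\beta_i)>0$ changes no sign on that interval. So the sign information Step~1 relies on is self-contradictory.

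The culprit is the numerator you imported from the proof of Lemma~\ref{lem:16}. As a quick check, that formula gives $S(z)=-z^2\sum_i u_i(1)/\beta_i+O(z^3)$ near $0$, whereas $S(z)=s(1)z+\cdots$ with $s(1)>0$. Computing directly,
$Y_i(z)=\sum_{k\ge2}\sum_{m\ge1}\beta_i^m s(k+m-1)z^k=\frac{\beta_i zS(z)-z^2S(\beta_i)}{z-\beta_i}$;
the subtracted partial sum in the proof of Lemma~\ref{lem:15} must run to $n=m$. The boundary identity $\frac{\overline L_iK_i}{\rho_i}S(\beta_i)=\lambda_\infty u_i(1)$ then turns the numerator into $\sum_i\frac{\lambda_\infty u_i(1)\beta_i z}{(\beta_i-z)(\lambda_\infty-(\overline K_i-K_i)z)}$, which is \emph{negative} on $I$. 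The coefficient $1-G$ of $S$ is unchanged, so the equation in the statement is unaffected.

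But now $G>1$ on $(\beta_{\max},R)$. That is consistent at the left endpoint, and it destroys your right-endpoint argument: $G>1$ together with $G\to+\infty$ at $\sup I$ is no contradiction, so $R=\sup I$ is no longer excluded.

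\textbf{A mechanism that works.} First, $R\le\sup I$ follows from $u_i(k)\ge u_i(1)\bigl(\frac{\overline K_i-K_i}{\lambda_\infty}\bigr)^{k-1}$. The same bound gives $S(x)\ge U_{i^*}(x)\to+\infty$ as $x\to(\sup I)^-$ for a minimizing index $i^*$.

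Second, with the corrected numerator, both the numerator and $1-G$ blow up like $(\lambda_\infty-(\overline K_{i^*}-K_{i^*})x)^{-1}$. Their ratio therefore has a finite limit as $x\to(\sup I)^-$. Since $S$ equals this ratio on $(\beta_{\max},R)$, the case $R=\sup I$ is impossible, so $R\in I$.

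At $R\in I$, the numerator and $G$ are analytic and the numerator is nonzero. Your Steps~2 and~3 then go through, since Step~2 never used the sign of the numerator.

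You were right that $R=\sup I$ has to be ruled out explicitly. The paper's proof passes over this: it asserts that all non-removable singularities are zeros of $1-G$, while also invoking \eqref{eq:crit}, which rests on the same faulty numerator.

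\textbf{A secondary point on ``isolated''.} In the paper this means that $z_0$ is the only singularity on $|z|=R$. That follows from $|G(w)|<G(R)=1$ for $|w|=R$, $w\neq R$, using $|w-\beta_i|>R-\beta_i$, and it is what Lemma~\ref{lem:52} needs. Your Step~3 only shows that $z_0$ is a pole of finite order.
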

\begin{proof}
By Lemma \ref{lem:14}, the singularities $z=\beta_i$ of $S$, $i \in [N]$, are removable. Thus, all non-removable singularities of $S$ come from $z$ such that $1 = \sum_{i \in [N]}\frac{( K_i+\frac{\overline L_iK_i}{\rho_i})z+\frac{\overline L_iK_i}{\rho_i}\frac{ \beta_iz}{z-\beta_i}}{\lambda_{\infty}-(\overline K_i-K_i)z}.$ We recall that for a power series of the form $f(z) = \sum_n a_n z^n,$ Vivanti-Pringsheim theorem says that if the radius of convergence of $f$ is $R<\infty$, and $a_n \ge0$ for all $n$, then $z=R$ is a singularity of $f$.
Since $(u_i(k))_{i,k}$ are all positive, using the definition \eqref{eq:S} and Vivanti-Pringsheim theorem, we see that $S$ must have a singularity at the boundary $z=R$, which corresponds to a value that satisfies $1 = \sum_{i \in [N]}\frac{( K_i+\frac{\overline L_iK_i}{\rho_i})z+\frac{\overline L_iK_i}{\rho_i}\frac{ \beta_iz}{z-\beta_i}}{\lambda_{\infty}-(\overline K_i-K_i)z}$ when $R<\infty$.

First, we will show that in a disc with radius $R< \infty$, there is a non-removable singularity of $S$. Indeed, the equation \eqref{eq:crit} in Lemma \ref{lem:16} shows that $z= \min_i \frac{\lambda_\infty}{\overline K_i-K_i}$ is a non-removable singularity of $S$, by definition. Thus, we must have $R< \infty$, meaning that there exists $z_0 \in (0,\infty)$ such that
\begin{equation}\label{eq:fff}
1 = \sum_{i \in [N]}\frac{( K_i+\frac{\overline L_iK_i}{\rho_i})z_0+\frac{\overline L_iK_i}{\rho_i}\frac{ \beta_iz_0}{z_0-\beta_i}}{\lambda_{\infty}-(\overline K_i-K_i)z_0} = F(z_0)
\end{equation}
that is not canceled by the numerator of $S(z)$.
Note that \eqref{eq:fff} must hold as by Lemma \ref{lem:14}, we also must have $z_0 > \beta_{\max}$, meaning that the numerator of $S$ does not have any singularity. The upper bound on $z_0$ follows from Lemma \ref{lem:16}. The upper bound of $z_0$ is not sharp since \eqref{eq:fff} is not feasible at the boundary $z= \min_i\frac{\lambda_{\infty}}{\overline K_i-K_i}$.

Since the denominator of $S$ is a nonzero rational function, each of its zeros has finite multiplicity. Therefore, every non-removable singularity of $S$ arising from a zero of its denominator is a pole of finite order.

{It remains to show that $z_0$ is an isolated pole. To prove this, we proceed by contradiction and suppose otherwise. If $|w|=R$ is also a singularity of $S$, then $F(w)=1$ by \eqref{eq:fff}. By the reverse triangle inequality, we must have $|w-\beta_i| > \left | |w| - \beta_i \right| = R-\beta _i$. Similarly, we also have $|\lambda_{\infty}-(\overline K_i-K_i)w|\ge \lambda_{\infty}-(\overline K_i-K_i)R$. With these observations, note that
\begin{align*}
1=\left|
\sum_{i \in [N]}\frac{
K_iw+\dfrac{\overline L_i K_iw^2}{\rho_i(w-\beta_i)}
}{
\lambda_\infty-(\overline K_i-K_i)w
}
\right|
&\le
\sum_{i \in [N]}\frac{
K_iR+\dfrac{\overline L_i K_iR^2}{\rho_i|w-\beta_i|}
}{
|\lambda_\infty-(\overline K_i-K_i)w|
}
< \sum_{i \in [N]}\frac{K_iR+\dfrac{\bar L_iK_iR^2}{\rho_i(R-\beta_i)}}{\lambda_\infty-(\overline K_i-K_i)R}=F(R)=1,
\end{align*}
which is a contradiction. Thus, since $S$ is meromorphic, it follows that $z_0=R$ is an isolated pole.}
\end{proof}

Proposition \ref{prop:1} shows that if $\mathcal T_{\lambda_\infty}$ admits a positive fixed-point $u$, then 
\[
1 = \sum_{i \in [N]}\frac{K_iz_0+\frac{\overline L_iK_i}{\rho_i}\frac{z_0^2}{z_0-\beta_i}}{\lambda_{\infty}-(\overline K_i-K_i)z_0} 
\]
must hold for $\lambda_{\infty}$ for some $z_0$.
Next, we will prove the converse, i.e.  if $\lambda_{\infty}$ satisfies
\[
    1 = \sum_{i \in [N]}\frac{ K_iz_0+\frac{\overline L_iK_i}{\rho_i}\frac{z_0^2}{z_0-\beta_i}}{\lambda_{\infty}-(\overline K_i-K_i)z_0}
    \]
for some $z_0 \in (\beta_{\max},\infty),$ then there exists a positive fixed point for $\mathcal T_{\lambda_\infty}$. For this purpose, we will rely on standard fixed-point arguments and define a compact domain for $\mathcal T_{\lambda_\infty}$.

This procedure will rely on an intermediate step to relate $\lambda_{\infty}$ with $\rho(B(r))$ for some $r$. To be more precise, our aim will be to prove that
\begin{equation}\label{eq:const}
\lim_{k\to\infty}\frac{u_i(k+1)}{u_i(k)}=\frac{1}{z_0}\qquad\text{for each }i \in [N],
\end{equation}
where $z_0$ is the same for all $i \in [N].$
The proof will solely rely on singularity analysis, so first, we recall some basic results from complex analysis that will be essential in proving \eqref{eq:const}.

\begin{lemma}[Cauchy's coefficient formula]\label{lem:50}
Let $f$ be analytic on $\{z:|z|\le r\}$. By $[z^k]f(z)=a^k$ denote the coefficient of the term $z^k$ on the power series representation of $f= \sum_n a_nz^n$. Then for each $k\ge 0$,
\[
[z^k]f(z)=\frac{1}{2\pi i}\int_{|z|=r}\frac{f(z)}{z^{k+1}}\,dz.
\]
\end{lemma}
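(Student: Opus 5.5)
This is the standard Cauchy coefficient formula. I would derive it directly from the power series of $f$ and termwise integration.

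First I would place the expansion on the circle. Since $f$ is analytic on the closed disc $\{|z|\le r\}$, it is analytic on some open disc $\{|z|<r'\}$ with $r'>r$. Its Taylor series $f(z)=\sum_{n\ge0}a_nz^n$ at the origin therefore converges absolutely and uniformly on the circle $|z|=r$, because that circle is a compact subset of the open disc of convergence.

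Next I would divide by $z^{k+1}$. The series $\sum_n a_n z^{n-k-1}$ still converges uniformly on $|z|=r$, since on that circle $|z^{-k-1}|=r^{-k-1}$ is a bounded factor. Uniform convergence on the compact contour lets me exchange sum and integral:
\[
\frac{1}{2\pi i}\int_{|z|=r}\frac{f(z)}{z^{k+1}}\,dz=\sum_{n\ge0}a_n\,\frac{1}{2\pi i}\int_{|z|=r}z^{n-k-1}\,dz .
\]

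Then I would evaluate each monomial integral with the parametrization $z=re^{i\theta}$. This gives
\[
\int_{|z|=r}z^{m}\,dz=i\,r^{m+1}\int_0^{2\pi}e^{i(m+1)\theta}\,d\theta ,
\]
which equals $2\pi i$ when $m=-1$ and $0$ otherwise. Only the term $n=k$ survives, so the right-hand side equals $a_k=[z^k]f(z)$.

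No step is a real obstacle. The one point needing care is justifying the interchange of sum and integral. That is why I first note that analyticity on the closed disc extends to a slightly larger open disc, which gives uniform convergence on the circle itself.

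An alternative route is to apply Cauchy's integral formula for derivatives, $f^{(k)}(0)=\frac{k!}{2\pi i}\int_{|z|=r}f(z)z^{-k-1}\,dz$, and then use $a_k=f^{(k)}(0)/k!$. One could simply cite this, as in \cite[Section IV]{flajolet2009analytic}.
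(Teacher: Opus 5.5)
Your argument is correct, but it is not the route the paper takes. The paper gives no proof of its own: it simply cites \cite[Chapter 4, Section 2, Lemma 3]{ahlfors1979complex}. That is the standard textbook derivation via Cauchy's integral formula for higher derivatives, $f^{(k)}(0)=\frac{k!}{2\pi i}\int_{|z|=r}f(z)z^{-k-1}\,dz$, combined with $a_k=f^{(k)}(0)/k!$, which is exactly the alternative in your last paragraph.

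Your main route works directly with the power series. Analyticity on the closed disc gives analyticity on some $\{|z|<r'\}$ with $r'>r$, hence uniform convergence of the Taylor series on $|z|=r$. That justifies integrating term by term, and the parametrization $z=re^{i\theta}$ shows that only the $z^{-1}$ term survives.

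This gives an explicit computation that shows exactly why the $k$-th coefficient is extracted, and you justify the one limit interchange correctly. It is not logically more elementary, though: convergence of the Taylor series at $0$ on the whole disc of analyticity is itself normally proved from Cauchy's integral formula. The citation route is shorter and needs no interchange of sum and integral.
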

\begin{proof}
    See \cite[Chapter 4, Section 2, Lemma 3]{ahlfors1979complex}.
\end{proof}
A well known consequence of Lemma \ref{lem:50} is the following boundary estimate, which is also due to Cauchy:
\begin{lemma}\label{lem:51}
If $f$ is analytic on $\{|z|\le r\}$ and $M_r:=\max_{|z|=r}|f(z)|$, then
\[
|[z^k]f(z)|\le \frac{M_r}{r^k}\qquad(k\ge 0).
\]
\end{lemma}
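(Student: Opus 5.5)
The plan is to derive the estimate from Cauchy's coefficient formula (Lemma \ref{lem:50}) and then bound the contour integral in modulus. Since $f$ is analytic on the closed disc $\{|z|\le r\}$, it is analytic on an open neighborhood of it. Lemma \ref{lem:50} then gives, for every $k\ge 0$,
\[
[z^k]f(z)=\frac{1}{2\pi i}\int_{|z|=r}\frac{f(z)}{z^{k+1}}\,dz .
\]
We estimate this integral with the standard ML-inequality.

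The bound proceeds in three steps. First, parametrize the circle as $z=re^{i\theta}$ with $\theta\in[0,2\pi]$, so that $dz=ire^{i\theta}\,d\theta$ and the length of the contour is $2\pi r$. Second, on the contour we have $|f(z)|\le M_r$ by the definition of $M_r$, and $|z^{k+1}|=r^{k+1}$. The integrand is therefore bounded in modulus by $M_r/r^{k+1}$. Third, multiplying by the length and by the prefactor $1/(2\pi)$ gives
\[
\bigl|[z^k]f(z)\bigr|\le \frac{1}{2\pi}\cdot 2\pi r\cdot \frac{M_r}{r^{k+1}}=\frac{M_r}{r^{k}} .
\]
We also note that $M_r$ is finite and attained, because $|f|$ is continuous on the compact circle $|z|=r$.

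There is no real obstacle here. The only point that needs care is that Lemma \ref{lem:50} is applied exactly on the boundary circle $|z|=r$. This is legitimate because analyticity is assumed on the closed disc, and Cauchy's integral formula holds on circles contained in the domain of analyticity. If one prefers to assume analyticity only on the open disc with $f$ continuous up to the boundary, the fallback is to apply the argument on $|z|=r'$ for $r'<r$ and let $r'\uparrow r$, using continuity of $f$ to pass $M_{r'}\to M_r$. In the subsequent singularity analysis, this bound will be applied to $S$ minus its principal part at the pole $z_0$ from Proposition \ref{prop:1}, on a circle of radius slightly larger than $z_0$.
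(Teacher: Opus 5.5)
Your proposal is correct and follows essentially the same route as the paper: apply Cauchy's coefficient formula (Lemma \ref{lem:50}) on the circle $|z|=r$, bound the integrand in modulus by $M_r/r^{k+1}$, and integrate over a contour of length $2\pi r$ against the prefactor $1/(2\pi)$. Your limiting fallback for a function analytic only on the open disc and continuous up to the boundary is a small extra that the paper does not include.
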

\begin{proof} Apply Lemma \ref{lem:50} and bound:
\[
\left|[z^k]f(z)\right|
\le \frac{1}{2\pi}\int_{|z|=r}\frac{|f(z)|}{|z|^{k+1}}\,|dz|
\le \frac{1}{2\pi}\int_{|z|=r}\frac{M_r}{r^{k+1}}\,|dz|
=\frac{M_r}{r^k}.
\]
\end{proof}

In Lemma \ref{lem:16}, we showed that $S$ only admits poles as non-removable singularities. To obtain \eqref{eq:const}, we will first obtain this result for the bulk eigenvector term by using basic singularity analysis by using the tools mentioned above and the residue theorem.

{
\begin{lemma}\label{lem:52}
Let the order of the pole $z_0$ on $S$ be $p$. Then, there exist a polynomial $P$ of order $p-1$ such that
\[
s(k)=z_0^{-k}P(k)+O((R+\varepsilon)^{-k}).
\]
Consequently,
\[
\frac{s(k+1)}{s(k)}\to \frac{1}{z_0}.
\]
\end{lemma}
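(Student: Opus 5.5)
The approach is standard singularity analysis: split off the principal part of $S$ at $z_0=R$ and show that what remains is analytic on a strictly larger disc. By Lemma \ref{lem:16}, $S$ is meromorphic and is given by the rational expression \eqref{eq:s(z)}. By Proposition \ref{prop:1}, $z_0=R$ is a pole of finite order $p$. Moreover, every other point of the circle $|z|=R$ is a regular point of $S$. Indeed, the only candidate singularities are the $\beta_i<R$ (removable, by Lemma \ref{lem:14}), the points $\lambda_\infty/(\overline K_i-K_i)$, and the zeros of the denominator $1-F(z)$. The contradiction argument at the end of Proposition \ref{prop:1} shows that $F(w)\neq 1$ for $|w|=R$, $w\ne R$.

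\textbf{Step 1: an enlarged disc.} The poles of a meromorphic function are isolated, so only finitely many lie in any compact set. Hence there is $\varepsilon>0$ such that $S$ has no singularity in $\{|z|\le R+\varepsilon\}$ other than $z_0$. The points $\lambda_\infty/(\overline K_i-K_i)$ need separate care. The denominator of \eqref{eq:s(z)} contains $1/(\lambda_\infty-(\overline K_i-K_i)z)$, so as $z$ approaches such a point the denominator of $S$ blows up together with the numerator. One checks that these points are removable, or at worst lie outside $|z|\le R$, which suffices because Lemma \ref{lem:16} gives $R<\min_i\lambda_\infty/(\overline K_i-K_i)$ strictly, since $F(z)=1$ is infeasible at that endpoint. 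In either case we may shrink $\varepsilon$ so that the circle $|z|=R+\varepsilon$ also avoids them.

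\textbf{Step 2: subtract the principal part.} Write
\[
\Pi(z)=\sum_{m=1}^{p}\frac{c_m}{(z-z_0)^m}
\]
for the principal part of $S$ at $z_0$. Then $G:=S-\Pi$ is analytic on $\{|z|\le R+\varepsilon\}$, and Lemma \ref{lem:51} gives $[z^k]G=O((R+\varepsilon)^{-k})$. For the principal part we expand
\[
(z-z_0)^{-m}=(-z_0)^{-m}\sum_{k\ge0}\binom{k+m-1}{m-1}\Bigl(\frac{z}{z_0}\Bigr)^k .
\]
So $[z^k]\Pi=z_0^{-k}P(k)$, where $P$ is a polynomial of degree $p-1$. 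Its leading coefficient is $c_p(-z_0)^{-p}/(p-1)!$, which is nonzero because $c_p\ne0$. This yields
\[
s(k)=z_0^{-k}P(k)+O((R+\varepsilon)^{-k}).
\]
The same conclusion can also be reached directly from the residue theorem applied to $S(z)/z^{k+1}$ on $|z|=R+\varepsilon$, using Lemma \ref{lem:50}.

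\textbf{Step 3: the ratio limit.} Since $z_0=R<R+\varepsilon$, the error term is exponentially smaller than $z_0^{-k}|P(k)|$, because $|P(k)|\sim |\text{lead}|\,k^{p-1}$. Therefore
\[
\frac{s(k+1)}{s(k)}=\frac{1}{z_0}\cdot\frac{P(k+1)+o(1)}{P(k)+o(1)}\to \frac{1}{z_0}.
\]
Positivity of $s$ is consistent with this, and it forces the leading coefficient of $P$ to be positive.

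\textbf{Main obstacle.} The hardest part is Step 1: certifying that $z_0$ is the unique singularity on the circle of convergence, and that nothing else (in particular the points $\lambda_\infty/(\overline K_i-K_i)$ or other zeros of $1-F$) accumulates onto it. For the circle itself, the strict-inequality argument of Proposition \ref{prop:1} applies. For a slightly larger annulus, the discreteness of the poles of a meromorphic function is enough.
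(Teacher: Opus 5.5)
Your proposal is correct and takes essentially the same route as the paper. You isolate the pole at $z_0=R$ via Proposition~\ref{prop:1} and the meromorphy of Lemma~\ref{lem:16}, subtract the principal part, bound the remainder with the Cauchy estimate of Lemma~\ref{lem:51} on $|z|=R+\varepsilon$, and read off the ratio limit; the only cosmetic difference is that you expand in $(z-z_0)^{-m}$ where the paper uses $(1-z/z_0)^{-\ell}$, which merely rescales the coefficients. One small attribution point: the strict inequality $R<\min_i\lambda_\infty/(\overline K_i-K_i)$ comes from Proposition~\ref{prop:1}, whose open interval for $z_0$ makes it strict, whereas Lemma~\ref{lem:16} by itself only gives $R\le\min_i\lambda_\infty/(\overline K_i-K_i)$.
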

\begin{proof}
By Proposition \ref{prop:1}, $S$ has a pole of finite order at $z_0=R$ isolated from others. Thus, by Lemma \ref{lem:16}, there exist $(c_{\ell})_{\ell \in [p]}\neq 0$ and an analytic function $G$ on $|z| < R+\varepsilon$ such that
\begin{equation}\label{eq:S-decomp}
S(z)=\sum_{\ell=1}^{p} \frac{c_\ell}{(1-z/z_0)^\ell}+G(z).
\end{equation}
Fix the complex circle $|z|=(R+\varepsilon)$ (with $R+\varepsilon>z_0$). By Proposition \ref{prop:1}, we can choose $\varepsilon>0$ so that the only singularity of $S$ inside the disc $|z| \le R+\varepsilon$ is $z_0$. Then, using Lemma \ref{lem:50}, we obtain
\begin{align*}
    s(k) = [z^k]S(z) 
    &= \sum_{\ell=1}^{p} c_\ell [z^k](1-z/z_0)^{-\ell} + [z^k]G(z)
    \\& =z_0^{-k} \sum_{\ell=1}^{p} c_\ell \binom{k+\ell-1}{\ell-1} + [z^k]G(z),
\end{align*}
where we have used \[
[z^k](1-z/z_0)^{-\ell} = \binom{k+\ell-1}{\ell-1}z_0^{-k} \]
on the last line.
Since \(G\) is analytic on \(|z|<R+\varepsilon\), Lemma~\ref{lem:51} implies that 
\[
\left|\frac{1}{2\pi i}\int_{|z|=R+\varepsilon}\frac{G(z)}{z^{k+1}}\,dz\right|
=|[z^k]G(z)|\le \frac{M_{R+\varepsilon}(G)}{(R+\varepsilon)^k}.
\]
Consequently, 
\begin{equation}\label{eq:suyo}
s(k) = z_0^{-k} \underbrace{\sum_{\ell=1}^{p} c_\ell \binom{k+\ell-1}{\ell-1}}_{=P(k)} + O\bigl((R+\varepsilon)^{-k}\bigr).
\end{equation}

Since \(c_p\neq0\), the polynomial \(P\) has degree \(p-1\), with \[ P(k) = \frac{c_p}{(p-1)!}k^{p-1} + O(k^{p-2}). \] Hence, $\frac{P(k+1)}{P(k)} \rightarrow 1. $ Moreover, since \(z_0<R+\varepsilon\), \[ \frac{O((R+\varepsilon)^{-k})} {z_0^{-k}P(k)} = O\left( \frac{(z_0/(R+\varepsilon))^k}{k^{p-1}} \right) \longrightarrow 0. \] Therefore, \begin{align*} 
\frac{s(k+1)}{s(k)} &= \frac{ z_0^{-(k+1)}P(k+1) + O((R+\varepsilon)^{-(k+1)}) }{ z_0^{-k}P(k) + O((R+\varepsilon)^{-k}) } \\ &= \frac{1}{z_0} \frac{P(k+1)}{P(k)} \frac{1+o(1)}{1+o(1)} \longrightarrow \frac{1}{z_0},\end{align*}
as desired.
\end{proof}
}

Next, using equation \eqref{eq:limit4}, we will use a similar singularity analysis on $U_i(z)$ through $S(z)$ to obtain \eqref{eq:const} since Lemma \ref{lem:16} implies that the singularities of $U_i(z)$ must come from $S(z)$. We will also identify an asymptotic cross-interaction between the terms $u_i(k)$ and $u_j(k)$ between different populations that will be useful while identifying $z_0$ such that $\lambda_\infty = \rho(B(1/z_0))$.

{
\begin{lemma}\label{lem:54}
Let $p\geq 1$ denote the order of the pole of $S$ at $z_0=R$. For every $i\in[N]$, there exists a polynomial $P_i$ of degree $p-1$, with positive leading coefficient $\gamma_i$, such that 
\[
u_i(k) = z_0^{-k}P_i(k) + O\bigl((R+\varepsilon)^{-k}\bigr). 
\]
Consequently, for every $i\in [N]$
\[
\lim_{k\to\infty} \frac{u_i(k+1)}{u_i(k)} = \frac{1}{z_0}.
\] 
Furthermore, for all $i,j \in [N]$ we obtain 
\[
\lim_{k\to\infty}\frac{u_i(k)}{u_j(k)} = \frac{\gamma_i}{\gamma_j}.
\]
\end{lemma}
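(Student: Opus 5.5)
The plan is to transfer the singularity analysis of Lemma \ref{lem:52} from $S$ to each $U_i$ through the identity \eqref{eq:limit2}, $U_i(z)=H_i(z)+B_i(z)S(z)$.

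\textbf{Meromorphic extension of $U_i$.} First I would check that $U_i$ extends meromorphically to a disc $|z|<R+\varepsilon$ whose only singularity is $z_0=R$. Using the boundary relation $\frac{\overline L_iK_i}{\rho_i}S(\beta_i)=\lambda_\infty u_i(1)$ from the proof of Lemma \ref{lem:16}, $H_i$ can be rewritten as
\[
H_i(z)=\frac{\lambda_\infty u_i(1)z^2/(z-\beta_i)+\frac{\overline L_iK_i}{\rho_i}\frac{\beta_i z}{z-\beta_i}S(z)}{\lambda_\infty-(\overline K_i-K_i)z}.
\]
Hence $U_i=\tilde H_i+\tilde B_i S$, where $\tilde H_i$ and $\tilde B_i$ are explicit rational functions. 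Their poles lie only at $z=\beta_i$ and at $z=\lambda_\infty/(\overline K_i-K_i)$.

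The pole at $\beta_i$ is removable for $U_i$ as a whole, by Lemma \ref{lem:15} and analyticity on $|z|<R$. The pole at $\lambda_\infty/(\overline K_i-K_i)$ lies strictly outside $R$, because Proposition \ref{prop:1} gives $z_0<\min_i\lambda_\infty/(\overline K_i-K_i)$. Shrinking $\varepsilon$ if necessary, both $\tilde H_i$ and $\tilde B_i$ are analytic on $|z|\le R+\varepsilon$, apart from the removable point. So near $z_0$, $U_i$ has at most the pole of $S$, of order at most $p$.

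\textbf{Coefficient asymptotics.} Next I would expand $\tilde B_i(z)=\sum_{m\ge0}b_{i,m}(z-z_0)^m$ around $z_0$ and multiply by the principal part \eqref{eq:S-decomp} of $S$. This gives a principal part $\sum_{\ell=1}^p c_{i,\ell}(1-z/z_0)^{-\ell}$ with leading coefficient $c_{i,p}=\tilde B_i(z_0)c_p$. All remaining terms are analytic on $|z|<R+\varepsilon$.

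Repeating the proof of Lemma \ref{lem:52} verbatim, using Lemmas \ref{lem:50}--\ref{lem:51} on the analytic remainder, yields
\[
u_i(k)=z_0^{-k}P_i(k)+O\bigl((R+\varepsilon)^{-k}\bigr),
\]
where $P_i$ has leading coefficient $\gamma_i=\tilde B_i(z_0)c_p/(p-1)!$. The ratio limits then follow as in Lemma \ref{lem:52}: $P_i(k+1)/P_i(k)\to1$ gives $u_i(k+1)/u_i(k)\to 1/z_0$, and $u_i(k)/u_j(k)\to\gamma_i/\gamma_j$.

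\textbf{Main obstacle: $\gamma_i>0$.} The step I expect to be hardest is proving $\gamma_i>0$, which is exactly what ensures $\deg P_i=p-1$ and not less. The factor $\tilde B_i(z_0)$ equals
\[
\tilde B_i(z_0)=\frac{(K_i+\frac{\overline L_iK_i}{\rho_i})z_0+\frac{\overline L_iK_i}{\rho_i}\frac{\beta_iz_0}{z_0-\beta_i}}{\lambda_\infty-(\overline K_i-K_i)z_0}.
\]
This is strictly positive because $z_0>\beta_{\max}$ and $z_0<\lambda_\infty/(\overline K_i-K_i)$, by Proposition \ref{prop:1}. It is also precisely the $i$-th summand of $F(z_0)=1$.

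It remains to show $c_p>0$, which I would argue by positivity. Since $s(k)>0$ for all $k$, the asymptotic $s(k)\sim z_0^{-k}\frac{c_p}{(p-1)!}k^{p-1}$ forces $c_p$ to be real and positive; it is nonzero since $p$ is the exact pole order. Consequently $\gamma_i>0$ for every $i$, which also makes the ratio $\gamma_i/\gamma_j$ well defined and positive.
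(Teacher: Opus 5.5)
Your proposal is correct and follows the paper's proof: you write $U_i$ as a rational function plus $A_i(z)S(z)$ with $A_i$ analytic and positive at $z_0$, transfer the principal part of $S$ at $z_0$ with $c_{i,p}=A_i(z_0)c_p$, and extract coefficients as in Lemma~\ref{lem:52}. Your limit argument $s(k)z_0^k/k^{p-1}\to c_p/(p-1)!$ with $s(k)>0$ supplies the positivity of $c_p$, which the paper only asserts; and since you only use that $\tilde H_i$ is rational with poles at $\beta_i$ and $\lambda_\infty/(\overline K_i-K_i)$, it is harmless that the simplification you took from Lemma~\ref{lem:16} is off ($z-\frac{\beta_i z}{z-\beta_i}=\frac{z(z-2\beta_i)}{z-\beta_i}$, not $\frac{z^2}{z-\beta_i}$).
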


\begin{proof}
Using \eqref{eq:limit} and \eqref{eq:bb}, we have 
\[
U_i(z) = \frac{\lambda_{\infty}u_i(1)z^2}{(\lambda_{\infty}-(\overline K_i-K_i)z)(z-\beta_i)} + S(z) \underbrace{\left( \frac{( K_i+\frac{\overline L_iK_i}{\rho_i})z+\frac{\overline L_iK_i}{\rho_i}\frac{\beta_iz}{z-\beta_i}}{\lambda_{\infty}-(\overline K_i-K_i)z} \right)}_{=A_i}.
\]
Since $z_0\in \left( \beta_{\max}, \min_i \frac{\lambda_\infty}{\overline K_i-K_i} \right),$ the function $A_i$ is analytic at $z_0$, and $A_i(z_0)>0.$ By Proposition~\ref{prop:1}, $S$ has a pole of order $p$ at $z_0$. Thus, for some constants \(c_1,\ldots,c_p\), with \(c_p\neq0\), \[ S(z) = \sum_{\ell=1}^{p} \frac{c_\ell}{(1-z/z_0)^\ell} + G(z), \] where \(G\) is analytic in a neighborhood extending beyond \(|z|=z_0\). Since \(A_i\) is analytic and nonzero at \(z_0\), multiplication by \(A_i\) preserves the order of the pole. Hence by \eqref{eq:S-decomp}, for $c_{i,p}=A_i(z_0)c_p$, we obtain 
\[
U_i(z) = \sum_{\ell=1}^{p} \frac{c_{i,\ell}}{(1-z/z_0)^\ell} + G_i(z). 
\] 
Therefore, 
\begin{align*}
u_i(k) = [z^k]U_i(z) = z_0^{-k} \underbrace{\sum_{\ell=1}^{p} c_{i,\ell} \binom{k+\ell-1}{\ell-1}}_{=P_i(k)} + [z^k]G_i(z). 
\end{align*}
Similar to \eqref{eq:suyo}, we then obtain 
\[
u_i(k) = z_0^{-k}P_i(k) + O\bigl((R+\varepsilon)^{-k}\bigr).
\]
Its leading coefficient is $\gamma_i = \frac{c_{i,p}}{(p-1)!}.$ As in the proof of Lemma \ref{lem:52}, we have $\frac{P_i(k+1)}{P_i(k)}\longrightarrow 1,$ and, since $z_0<R+\varepsilon$, $\frac{O((R+\varepsilon)^{-k})} {z_0^{-k}P_i(k)} \rightarrow0.$ Consequently, $\frac{u_i(k+1)}{u_i(k)} \rightarrow \frac1{z_0}.$ Finally, since $P_i(k) = \gamma_i k^{p-1}(1+o(1))$ for every \(i\in[N]\), we obtain \[ \frac{u_i(k)}{u_j(k)} = \frac{P_i(k)+o(k^{p-1})} {P_j(k)+o(k^{p-1})} \longrightarrow \frac{\gamma_i}{\gamma_j}, \] which proves the claim. 
\end{proof}
}

By Lemma \ref{lem:54}, for every $i$,
\[
\lim_{k\to\infty}\frac{u_i(k+1)}{u_i(k)}=\frac{1}{z_0}.
\]
Define $r:=1/z_0$. Then $u_i(k+1)/u_i(k)\to r$ for all $i$, i.e. the ratio tends to
a constant independent of $i$. The same also applies to the asymptotic cross-interaction betweeen different population groups. Next, we will show that this implies that $\lambda_\infty = \rho(B(r))$. This allow us to relate Proposition \ref{prop:1} to Lemma \ref{lem:57}.

\begin{lemma}\label{lem:55}
    We have $\lambda_{\infty} = \rho(B(1/z_0)).$
\end{lemma}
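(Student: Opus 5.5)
Set $r:=1/z_0$. The plan is to show that $\lambda_\infty$ satisfies the scalar secular equation characterizing $\rho(B(r))$ in Lemma \ref{lem:57}, and then to use the uniqueness of the Perron root.

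First we rewrite the relation of Proposition \ref{prop:1} in terms of $r$. For each $i$, divide numerator and denominator of the $i$-th summand by $z_0$:
\[
\frac{(K_i+\frac{\overline L_iK_i}{\rho_i})z_0+\frac{\overline L_iK_i}{\rho_i}\frac{\beta_iz_0}{z_0-\beta_i}}{\lambda_\infty-(\overline K_i-K_i)z_0}
=\frac{\frac{K_i+\frac{\overline L_iK_i}{\rho_i}}{r}+\frac{\overline L_iK_i\beta_i}{\rho_i(1-\beta_i r)}}{\lambda_\infty r-(\overline K_i-K_i)}\cdot\frac{1}{z_0}\cdot z_0 r .
\]
Here we use $\frac{\beta_i z_0}{z_0-\beta_i}=\frac{\beta_i}{1-\beta_i r}$ and $z_0=1/r$. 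After clearing the factors we obtain
\[
1=\sum_{i\in[N]}\frac{v_i(r)}{\lambda_\infty-\frac{\overline K_i-K_i}{r}} .
\]
This is exactly the identity of Lemma \ref{lem:57}, with $\rho(B(r))$ replaced by $\lambda_\infty$. The range $z_0\in(\beta_{\max},\min_i\lambda_\infty/(\overline K_i-K_i))$ from Proposition \ref{prop:1} translates into two facts: $r<\beta_{\max}^{-1}$, so that $r$ lies in the admissible range and each $v_i(r)>0$; and $\lambda_\infty>\max_i(\overline K_i-K_i)/r$, so every denominator is positive.

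Second, we argue uniqueness. For fixed $r$, the map
\[
x\mapsto\sum_i\frac{v_i(r)}{x-(\overline K_i-K_i)/r}
\]
is continuous and strictly decreasing on $(\max_i(\overline K_i-K_i)/r,\infty)$. It tends to $+\infty$ at the left endpoint and to $0$ at infinity, so it takes the value $1$ at exactly one point. By Lemma \ref{lem:57}, $\rho(B(r))$ is such a point and lies in this interval, since $\rho(B(r))>(\overline K_i-K_i)/r$ by monotonicity of the spectral radius. The previous step shows that $\lambda_\infty$ is also such a point. Hence $\lambda_\infty=\rho(B(1/z_0))$. This is the same monotonicity argument as in Theorem \ref{thrm:2}.

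As an alternative cross-check that uses Lemma \ref{lem:54} more directly, one can let $k\to\infty$ in \eqref{eq:limit} after dividing by $u_i(k-1)$. By Lemma \ref{lem:54}, $u_j(k-1)/u_i(k-1)\to\gamma_j/\gamma_i$ and $u_j(k+m-1)/u_i(k-1)\to r^m\gamma_j/\gamma_i$. Dominated convergence applies to the tail sum thanks to the geometric bounds of Lemma \ref{lem:14} and $\beta_i r<1$. The limit is $\lambda_\infty r\gamma_i=(\overline K_i-K_i)\gamma_i+(K_i+\frac{\overline L_iK_i}{\rho_i})\sum_j\gamma_j+\frac{\overline L_iK_i}{\rho_i}\frac{\beta_i r}{1-\beta_i r}\sum_j\gamma_j$. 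Dividing by $r$ gives $B(r)\gamma=\lambda_\infty\gamma$ with $\gamma>0$, and Perron–Frobenius then gives $\lambda_\infty=\rho(B(r))$.

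The main obstacle is the bookkeeping in the algebraic rescaling between the $z_0$-form and the $r$-form: the $1/r$ versus $z_0$ factors and the $\beta_i/(1-\beta_i r)$ term must match $v_i(r)$ exactly. A second point to check is that the interval constraints from Proposition \ref{prop:1} indeed place $\lambda_\infty$ in the monotonicity domain. For the alternative route, the delicate step is justifying the interchange of the limit with the infinite tail sum, which needs a uniform bound of the form $u_j(k+m-1)/u_i(k-1)\le C\,\tilde r^{\,m}$ with $\beta_i\tilde r<1$.
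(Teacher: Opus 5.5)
Your main argument is correct, but it follows a different route from the paper's.

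\textbf{The paper's route.} It goes through the asymptotics of Lemma~\ref{lem:54}. It divides the bulk equation by $u_i(k+1)$ and lets $k\to\infty$, using $u_i(k)/u_i(k+1)\to z_0$ and $u_j(k)/u_i(k)\to\gamma_j/\gamma_i$. To avoid justifying the interchange of limit and tail sum, it uses Fatou's lemma, which only gives $B(1/z_0)\gamma\le\lambda_\infty\gamma$ with $\gamma>0$. Collatz--Wielandt then yields $\rho(B(1/z_0))\le\lambda_\infty$, and the reverse inequality comes from Lemma~\ref{lem:8}.

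\textbf{Your route.} Under $r=1/z_0$ one has $\beta_iz_0/(z_0-\beta_i)=\beta_i/(1-\beta_i r)$, so the numerators become exactly $v_i(r)$. You observe that the equation of Proposition~\ref{prop:1} is then the secular identity of Lemma~\ref{lem:57} with $\lambda_\infty$ in place of $\rho(B(r))$. You conclude from strict monotonicity of $x\mapsto\sum_i v_i(r)/(x-(\overline K_i-K_i)/r)$ on $(\max_i(\overline K_i-K_i)/r,\infty)$; this is exactly Lemma~\ref{lem:unique-e} with $t=z_0$. The interval for $z_0$ in Proposition~\ref{prop:1} correctly places both $\lambda_\infty$ and $\rho(B(r))$ in that domain.

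\textbf{What each buys.} Your argument needs only Proposition~\ref{prop:1}, Lemma~\ref{lem:57} and that uniqueness statement. It bypasses Lemmas~\ref{lem:52}, \ref{lem:54} and~\ref{lem:8} and any limit interchange, so it holds verbatim for every $\lambda$ for which $\mathcal T_\lambda$ has a positive fixed point. The paper's appeal to Lemma~\ref{lem:8} is specific to $\lambda_\infty$, which matters for the ``any $\lambda$'' claim in the proof of Theorem~\ref{thrm:5}. Combined with Lemma~\ref{lem:8}, your argument also shows that $1/z_0$ minimizes $r\mapsto\rho(B(r))$. What the paper's route adds is structural information. Since $B(1/z_0)$ is positive and $\gamma>0$ is subinvariant at its Perron root, the asymptotic cross-population ratios $\gamma_i/\gamma_j$ are those of the Perron vector of $B(1/z_0)$.

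\textbf{Your second argument.} Your dominated-convergence cross-check recovers that eigenvector relation as an equality. Note, however, that Lemma~\ref{lem:14} only bounds $u$ from above. The uniform bound on $u_j(k+m-1)/u_i(k-1)$ you need also requires the lower asymptotics of Lemma~\ref{lem:54}. These give something like $C r^m(1+m)^{p-1}$, which is summable against $\beta_i^m$ since $\beta_i r<1$. This is precisely the step the paper sidesteps by using Fatou together with Lemma~\ref{lem:8}.
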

\begin{proof}
    For all $k \ge 2$, using the positive fixed point of limiting eigenvector equation $\mathcal  T_{\lambda_{\infty}}$, which exists as shown in Proposition \ref{prop:0}, we obtain
    \[
    \lambda_{\infty} = (\overline K_i-K_i)\frac{u_i(k)}{u_i(k+1)} +\left(K_i+\frac{\overline L_iK_i}{\rho_i}\right)\sum_{j \in [N]} \frac{u_j(k)}{u_i(k+1)} + \frac{\overline L_iK_i}{\rho_i} \sum_{j \in [N]}\sum_{m \ge 1}\beta^m_i\frac{u_j(k+m)}{u_i(k+1)}.
    \]
    Using Lemma \ref{lem:54}, and the Fatou lemma, we then obtain that
    \[
    \lambda_{\infty} \begin{bmatrix}
        \gamma_1&
        \gamma_2&
        \gamma_3&
        \cdots&
        \gamma_N
    \end{bmatrix}^{\top} \ge  \left(\begin{bmatrix}
        \gamma_1&
        \gamma_2&
        \gamma_3&
        \cdots&
        \gamma_N
    \end{bmatrix}B(1/z_0)^{\top}\right)^{\top},
    \]
    where the constants $\gamma_i$ are the ones specified in Lemma \ref{lem:54}.
    Since $z_0>0,$ $\gamma_i>0$ and $\lambda_{\infty}>0$, as a consequence of the Perron-Frobenius theorem, we obtain that $\rho(B(1/z_0))\le\lambda_{\infty} \le \inf_{ z \ge \beta_{\max}} \rho(B(1/z))$, and thus it holds that $\rho(B(1/z_0))=\lambda_{\infty}$, as desired.
\end{proof}

\begin{proof}[Proof of Proposition \ref{prop:quant}]
    The result follows from Lemmas \ref{lem:8} and \ref{lem:55}.
\end{proof}

\begin{proof}[Proof of Theorem \ref{thrm:5}]
Throughout this subsection, we did not use any property specific to the constant \(\lambda_{\infty}\). Therefore, by Lemma~\ref{lem:55}, the same conclusion remains valid whenever \(\mathcal T_{\lambda}\) admits a positive fixed point for some \(\lambda>0\).
\end{proof}

Lemma \ref{lem:55} technically presents a complete characterization when $\lambda_{\infty}<1$, so we have completed the proof of one side of Theorem \ref{thrm:4} as expressed in Proposition \ref{prop:quant}. However, Proposition \ref{prop:quant} still requires calculating $\inf_{r}\rho(B(r)),$ which can be inconvenient, especially when $N$ is large as the matrix $B(r)$ is not sparse. Indeed, standard methods in the literature to calculate $\rho(B(r))$ requires $O(N^3)$ operations. 

To complete the proof of the other half of Theorem \ref{thrm:4}, we will need the following auxiliary lemma, which will allow us to directly relate $\lambda$ values for which $\mathcal T_{\lambda}$ admits a fixed point and the values $\rho(B(r))$.

{
\begin{lemma}\label{lem:unique-e} 
Fix $t>\beta_{\max}$. For
$e\in\mathbb R \setminus \left\{(\overline K_1-K_1)t,\dots,(\overline K_N-K_N)t \right\},$ define
$$
G(t,e) := \sum_{i=1}^N \frac{\left(K_i+\frac{\overline L_iK_i}{\rho_i}\right)t+\frac{\frac{\overline L_iK_i}{\rho_i}\beta_i t}{t-\beta_i}}{e-(\overline K_i-K_i)t
}.
$$
Then, there exists a unique
$e^{\ast}\in(t\overline K_\infty,\infty)$
such that $G(t,e^{\ast})=1$.
\end{lemma}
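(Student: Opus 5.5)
This is an intermediate-value and monotonicity argument in the variable $e$ for fixed $t$, in the same spirit as the proof of Theorem \ref{thrm:2}. Fix $t>\beta_{\max}$. For each $i\in[N]$ set
\[
a_i(t):=\left(K_i+\frac{\overline L_iK_i}{\rho_i}\right)t+\frac{\overline L_iK_i}{\rho_i}\frac{\beta_i t}{t-\beta_i}, \qquad d_i(t):=(\overline K_i-K_i)t .
\]
Then
\[
G(t,e)=\sum_{i\in[N]}\frac{a_i(t)}{e-d_i(t)}.
\]

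The first step is to check the signs of the data. Since $t>\beta_{\max}\ge\beta_i$, we have $t-\beta_i>0$. Hence $a_i(t)>0$ for every $i$, provided at least one $K_i>0$; otherwise $G\equiv 0$ and there is nothing to solve, so I would assume nondegenerate data as elsewhere in the paper. Also $d_i(t)\le t\,\overline K_\infty$ for all $i$. It follows that on the half-line $(t\overline K_\infty,\infty)$ every denominator $e-d_i(t)$ is strictly positive. So $G(t,\cdot)$ is well defined and continuous there, and in fact real-analytic.

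The second step is monotonicity. Each summand $e\mapsto a_i(t)/(e-d_i(t))$ is strictly decreasing on $(t\overline K_\infty,\infty)$, since its derivative is $-a_i(t)/(e-d_i(t))^2<0$. Hence $G(t,\cdot)$ is strictly decreasing, and this already gives uniqueness of any solution of $G(t,e)=1$ in the interval.

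The third step is existence, by the intermediate value theorem applied to the endpoint limits. As $e\to\infty$, each term tends to $0$, so $G(t,e)\to 0<1$. As $e\downarrow t\overline K_\infty$, choose $j$ attaining $\overline K_\infty=\overline K_j-K_j$, so that $d_j(t)=t\overline K_\infty$. The $j$-th term $a_j(t)/(e-d_j(t))$ then tends to $+\infty$, while all other terms stay nonnegative. Hence $G(t,e)\to+\infty$. By continuity and strict monotonicity there is exactly one $e^\ast\in(t\overline K_\infty,\infty)$ with $G(t,e^\ast)=1$.

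The only delicate point is the positivity of $a_j(t)$ for the index $j$ attaining the maximum $\overline K_\infty$; the blow-up at the left endpoint depends on it. This holds whenever $K_j>0$. If $K_j=0$, then from \eqref{eq:fig} we get $\overline K_j=0$, so that population contributes nothing and could be discarded. I would handle this by remarking that populations with $K_i=0$ can be dropped from both $G$ and $\overline K_\infty$ without affecting the statement.

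Finally, I would connect the lemma to the earlier results. By Lemma \ref{lem:57}, with $r=1/t$ and the substitution $e=\rho(B(r))$ after multiplying numerator and denominator by $t$, the unique root is $e^\ast=t\,\rho(B(1/t))$. This identification is what the subsequent argument needs to link Theorem \ref{thrm:5} to $V$.
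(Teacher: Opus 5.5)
Your proof is correct and is essentially the paper's argument. It has the same ingredients: positive denominators on $(t\overline K_\infty,\infty)$, strict decrease via the termwise derivative, blow-up at the left endpoint through an index attaining $\overline K_\infty$, and decay to $0$ at infinity. The intermediate value theorem together with monotonicity then finishes it.

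\emph{The ``delicate point'' you raise resolves itself.} By \eqref{eq:fig}, $\overline K_i-K_i=\frac12K_i\bigl(1+\frac{\overline L_i}{\rho_i(1-\beta_i)}\bigr)$, which (with $\overline L_i\ge 0$) is positive exactly when $K_i>0$. So as soon as some $K_i>0$, every index $j$ attaining $\overline K_\infty$ has $K_j>0$ and hence $a_j(t)>0$. No populations need to be dropped. The only degenerate case is $K_1=\cdots=K_N=0$, where $G\equiv 0$ and the lemma genuinely fails; the paper excludes it tacitly by asserting that all numerators are positive. Relatedly, $a_i(t)>0$ holds exactly when $K_i>0$, not for every $i$ once one $K_i$ is positive. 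Strict decrease of the sum only needs one positive $a_i$, so your conclusion is unaffected.

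\emph{Your closing identification has a spurious factor of $t$.} Put $r=1/t$ in Lemma~\ref{lem:57}. The numerators there are already $a_i(t)$, because $\beta_i/(1-\beta_i/t)=\beta_i t/(t-\beta_i)$. The denominators are $\rho(B(1/t))-d_i(t)$. Equivalently, $B(1/t)=\mathrm{diag}(d_1(t),\dots,d_N(t))+a(t)\mathbf 1^{\top}$ with $a(t)=(a_1(t),\dots,a_N(t))^{\top}$. Lemma~\ref{lem:57} also gives $\rho(B(1/t))>t\overline K_\infty$, so uniqueness yields $e^\ast=\rho(B(1/t))$. By contrast, $t\,\rho(B(1/t))$ is not the root in $(t\overline K_\infty,\infty)$ unless $t=1$. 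The correct version, applied with $t=z_0$, is what turns the relation in Proposition~\ref{prop:1} into $\lambda_\infty=\rho(B(1/z_0))$. That application is admissible because $z_0<\min_i\lambda_\infty/(\overline K_i-K_i)$ places $\lambda_\infty$ in the interval $(z_0\overline K_\infty,\infty)$.
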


\begin{proof}
If \(e>t\overline K_\infty\), then for every \(i\), $e-(\overline K_i-K_i)t \ge e-t\overline K_\infty >0,$
and hence each $G(t,e)$ is finite. Therefore, $e\mapsto G(t,e)$ is well defined and continuous on
$(t\overline K_\infty,\infty)$.

For $e>t\overline K_\infty$, we may differentiate term-by-term to obtain
\[
\frac{\partial G}{\partial e}(t,e)
=
\sum_{i=1}^N
-
\frac{
\left(K_i+\frac{\overline L_iK_i}{\rho_i}\right)t
+
\frac{\frac{\overline L_iK_i}{\rho_i}\beta_i t}{t-\beta_i}
}{
\left(e-(\overline K_i-K_i)t\right)^2
}.
\]
Since
\[
\left(K_i+\frac{\overline L_iK_i}{\rho_i}\right)t
+
\frac{\frac{\overline L_iK_i}{\rho_i}\beta_i t}{t-\beta_i}
>0
\text{ and }
\left(e-(\overline K_i-K_i)t\right)^2>0
\]
for all $e>t\overline K_\infty$, for all $e>t \max_j(\overline K_j-K_j)$ we have $\frac{\partial G}{\partial e}(t,e)<0.$ Hence $G(t,\cdot)$ is strictly decreasing on
$(t\overline K_\infty,\infty)$.

Let $I:=\left\{i:\,\overline K_i-K_i=\overline K_\infty \right\}$, which is nonempty by definition of $\overline K_\infty$. As $e\to(t\overline K_\infty)^+$, for each $i\in I$ we have $e-(\overline K_i-K_i)t=e-t\overline K_\infty\to0^+,$ and thus $G(t,e)\to+\infty.$
Since all terms are nonnegative for $e>t\overline K_\infty$, it follows that
$\lim_{e\to(t\overline K_\infty)^+}G(t,e)=+\infty.$
On the other hand, as $e\to\infty$, we have for each fixed $i$, $G(t,e)\to0,$
and therefore $\lim_{e\to\infty}G(t,e)=0.$

We have thus shown that the function $G(t,\cdot)$ is continuous on
$(t\overline K_\infty,\infty)$ and satisfies
\[
\lim_{e\to(t\overline K_\infty)^+}G(t,e)=+\infty
\qquad\text{and}\qquad
\lim_{e\to\infty}G(t,e)=0.
\]
Hence, by the intermediate value theorem, there exists
$e^{\ast}\in(t\overline K_\infty,\infty)$
such that $G(t,e^{\ast})=1$.

As we have shown that $G(t,\cdot)$ is strictly decreasing on
$(t\overline K_\infty,\infty)$, it can take the value $1$ at most once on this interval. Therefore the solution $e^{\ast}$ is unique in this case.
\end{proof}
}

Next, we will prove a result that will amount to the proof of Theorem \ref{thrm:4}.

\begin{lemma}\label{lem:61}
    We have $\inf_{ 0<r < \beta_{\max}^{-1}}\rho(B(r))<1$ if and only if 
    \begin{equation}\label{eq:final}
    \inf_{ \max_j (\overline K_j-K_j) < r < \beta_{\max}^{-1}} \sum_{i \in [N]}\frac{\frac{ K_i+\frac{\overline L_iK_i}{\rho_i}}{r}+\frac{\overline L_iK_i\beta_i}{\rho_i}\frac{1}{1-\beta_i r}}{1-\frac{\overline K_i-K_i}{r}} < 1.
    \end{equation}
    Nonemptiness of the interval $(\max_j (\overline K_j-K_j), \beta_{\max}^{-1})$ is automatic when $\inf_{ 0<r < \beta_{\max}^{-1}}\rho(B(r))<1$, i.e., it is part of the contraction criterion.
\end{lemma}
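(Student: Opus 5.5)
The plan is to reduce both directions to a pointwise comparison between $\rho(B(r))$ and the scalar function $V(r)$ appearing in \eqref{eq:final}, for a fixed $r$. Fix $r\in(0,\beta_{\max}^{-1})$ and write $d_i(r)=(\overline K_i-K_i)/r$ for the diagonal entries of $D(r)$. Define
\[
\phi_r(x)=\sum_{i\in[N]}\frac{v_i(r)}{x-d_i(r)}, \qquad x>\max_i d_i(r).
\]
Lemma \ref{lem:57} gives $\phi_r(\rho(B(r)))=1$ and $\rho(B(r))>\max_i d_i(r)$. The same monotonicity argument used in Theorem \ref{thrm:2}, which is also the content of Lemma \ref{lem:unique-e} with $t=1/r$ after rescaling, shows that $\phi_r$ is strictly decreasing on $(\max_i d_i(r),\infty)$, tends to $+\infty$ at the left endpoint, and tends to $0$ at $+\infty$. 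From this I obtain the key equivalence:
\[
\rho(B(r))<1 \iff \max_i d_i(r)<1 \ \text{ and } \ \phi_r(1)<1 .
\]
For the forward implication, $\rho(B(r))<1$ gives $1>\rho(B(r))>\max_i d_i(r)$, so $1$ lies in the domain of $\phi_r$, and strict decrease yields $\phi_r(1)<\phi_r(\rho(B(r)))=1$. For the converse, $\phi_r(1)<1=\phi_r(\rho(B(r)))$ together with monotonicity forces $\rho(B(r))<1$.

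Next I translate these conditions. The condition $\max_i d_i(r)<1$ is exactly $r>\max_j(\overline K_j-K_j)=\overline K_\infty$. The condition $\phi_r(1)<1$ is exactly $V(r)<1$, where $V(r)$ is the summand expression in \eqref{eq:final}: each term $v_i(r)/(1-d_i(r))$ coincides with the $i$-th fraction in \eqref{eq:final}. Combining these with the key equivalence gives
\[
\rho(B(r))<1 \iff r\in(\overline K_\infty,\beta_{\max}^{-1}) \ \text{ and } \ V(r)<1 .
\]

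Taking the infimum then finishes both directions. If $\inf_{0<r<\beta_{\max}^{-1}}\rho(B(r))<1$, there is some $r$ with $\rho(B(r))<1$. By the displayed equivalence this $r$ lies in $(\overline K_\infty,\beta_{\max}^{-1})$, which shows that this interval is nonempty, and it satisfies $V(r)<1$, so the infimum in \eqref{eq:final} is below $1$. Conversely, if the infimum in \eqref{eq:final} is below $1$, some admissible $r$ has $V(r)<1$, hence $\rho(B(r))<1$, and the infimum of $\rho(B(r))$ is below $1$. I deliberately work with strict inequalities attained at a single witness $r$. This sidesteps any continuity or attainment questions for the infima, which could otherwise be delicate near the endpoints $r\to\overline K_\infty^+$ or $r\to\beta_{\max}^{-1}$, where $V$ blows up.

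The main point requiring care is the strict monotonicity and the domain bookkeeping for $\phi_r$. In particular, I must check that $1$ lies in the domain $(\max_i d_i(r),\infty)$ before comparing values, and that every $v_i(r)>0$; the latter holds since $r<\beta_i^{-1}$ makes $1-\beta_i r>0$. Everything else follows directly from Lemma \ref{lem:57}.
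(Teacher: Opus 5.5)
Your proposal is correct and is essentially the paper's own proof. Like the paper, you fix a single witness $r$ and use Lemma~\ref{lem:57} to get $\phi_r(\rho(B(r)))=1$ with $\rho(B(r))>\max_i d_i(r)$. You then compare with $\phi_r(1)=V(r)$ using the fact that the expression is decreasing in the denominator variable. The paper does this comparison term by term and phrases the converse as a contradiction, but that is the same argument, and the nonemptiness of the interval follows from the forward direction exactly as you describe.
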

\begin{proof} Suppose first that 
$\inf_{0<r<\beta_{\max}^{-1}}\rho(B(r))<1.$ 
Then there exists some $r\in(0,\beta_{\max}^{-1})$ such that $\rho(B(r))<1.$ By Lemma~\ref{lem:57}, \[ 1= \sum_{i\in[N]} \frac{ \frac{K_i+\frac{\overline L_i K_i}{\rho_i}}{r} + \frac{\overline L_iK_i\beta_i}{\rho_i(1-\beta_i r)} }{ \rho(B(r))-\frac{\overline K_i-K_i}{r} }. \] Moreover, Lemma~\ref{lem:57} implies that $r> \max_{j\in[N]} \frac{\overline K_j-K_j}{\rho(B(r))}. $ Since $\rho(B(r))<1$, it follows that $r>\max_{j\in[N]}(\overline K_j-K_j).$ Thus, $r$ belongs to the domain in \eqref{eq:final}. Furthermore, since $\rho(B(r))<1$, 
\[
\rho(B(r))-\frac{\overline K_i-K_i}{r} < 1-\frac{\overline K_i-K_i}{r} \]
for every $i\in[N]$. All the denominators above are positive by Lemma~\ref{lem:57}; hence, 
\[
1 > \inf_{\max_j(\overline K_j-K_j)<r<\beta_{\max}^{-1}} \sum_{i\in[N]} \frac{ \frac{K_i+\frac{\overline L_i K_i}{\rho_i}}{r} + \frac{\overline L_iK_i\beta_i}{\rho_i(1-\beta_i r)} }{ 1-\frac{\overline K_i-K_i}{r} }.
\]

Conversely, suppose that 
\[
\inf_{\max_j(\overline K_j-K_j)<r<\beta_{\max}^{-1}} \sum_{i\in[N]} \frac{ \frac{K_i+\frac{\overline L_i K_i}{\rho_i}}{r} + \frac{\overline L_iK_i\beta_i}{\rho_i(1-\beta_i r)} }{ 1-\frac{\overline K_i-K_i}{r} } <1. 
\]
Then there exists some $r\in \left( \max_j(\overline K_j-K_j), \beta_{\max}^{-1} \right)$ such that 
\begin{equation}\label{eq:contra}
\sum_{i\in[N]} \frac{ \frac{K_i+\frac{\overline L_i K_i}{\rho_i}}{r} + \frac{\overline L_iK_i\beta_i}{\rho_i(1-\beta_i r)} }{ 1-\frac{\overline K_i-K_i}{r} } <1.
\end{equation}
Suppose, toward a contradiction, that $\rho(B(r))\ge1$. By Lemma~\ref{lem:57}, 
\[
1= \sum_{i\in[N]} \frac{ \frac{K_i+\frac{\overline L_i K_i}{\rho_i}}{r} + \frac{\overline L_iK_i\beta_i}{\rho_i(1-\beta_i r)} }{ \rho(B(r))-\frac{\overline K_i-K_i}{r} }. \] Since $\rho(B(r))\ge1$, \[ \rho(B(r))-\frac{\overline K_i-K_i}{r} \ge 1-\frac{\overline K_i-K_i}{r}>0, 
\] and therefore \[ 1 \le \sum_{i\in[N]} \frac{ \frac{K_i+\frac{\overline L_i K_i}{\rho_i}}{r} + \frac{\overline L_iK_i\beta_i}{\rho_i(1-\beta_i r)} }{ 1-\frac{\overline K_i-K_i}{r} }, \] which contradicts \eqref{eq:contra}. Hence $\rho(B(r))<1.$ It follows that $ \inf_{0<r<\beta_{\max}^{-1}}\rho(B(r))<1,$ which proves the result. \end{proof}

\begin{proof}[Proof of Theorem \ref{thrm:4}]
    The result follows directly from Lemma \ref{lem:61} as we have $\lambda_{\infty}= \rho(B(1/z_0))$ by Lemma \ref{lem:55}, which implies
    \[
    \lambda_{\infty} = \inf_{ 0<r < \beta_{\max}^{-1}}\rho(B(r))
    \]
    by Lemma \ref{lem:8}, as desired.
\end{proof}

In Section \ref{sect:6}, we will use the value $t_*$ that realizes the minimization problem 
$$\inf_{\,0<r<(\max_i \beta_i)^{-1}}V(r)$$
to construct an exponential weight vector and use a majorization argument (via Collatz-Wielandt) to obtain a uniqueness result for discounted infinite-horizon non-stationary MFE via approximation through (discounted) finite-horizon MFE. For this uniqueness result, we will need that $\lambda_{\infty}<1$ and $t_*>1$. Numerically, it is easy to find $t_*$ that minimizes $V$ over the desired interval; hence, for a given system one can check both of these conditions numerically.
\begin{remark}
    In \cite{ayd} (also see \cite{aydin2025approximation}), a similar contraction condition is obtained for the single population case. The proof present in \cite[Section 3]{ayd} fails in the multi-population case. Although the techniques presented here can be applied to obtain an equivalent upper bound for the contraction rate obtained in \cite[Theorem 2]{ayd}, our techniques cannot describe a tight lower bound (that holds under any configuration) for $\rho(\mathcal S_T)$ when $T$ is sufficiently large as done in \cite[Corollary 2]{ayd}. Furthermore, the proof of \cite[Theorem 2]{ayd} indicates that in the single population group case the left eigenvectors of $\mathcal S_T$ have a constant ratio limit under normalization, which was proved only for the asymptotics of the ratios.
\end{remark}

\section{Finite-Horizon MFE to Infinite-Horizon Non-Stationary MFE: Convergence Rates and Uniqueness}\label{sect:6}
Let
\[
\bigl(X,\mathcal{P}(A),(C_i+\rho_i\Omega_i)_{i\in[N]},(P_i)_{i\in[N]},N\bigr)
\]
be a regularized multi-population MFG in which the objective function of each population \(i\in[N]\) is discounted by the factor \(\beta_i\). In this section, we will establish a uniqueness result for infinite-horizon non-stationary MFE and a finite-time error bound on the convergence of finite-horizon MFE to infinite-horizon non-stationary MFE. For this purpose, we will use the variational contraction function $V$ to construct weights and use a majorization argument under these weights on the matrices $(\mathcal S_T)_T$.

{Under Assumption \ref{ass:1}, the existence of a non-stationary MFE for a given initial-state measure $\tau_0$ follows directly from Schauder's fixed-point theorem.} For any $(\pmb \pi^{\infty},\pmb \tau^{\infty}) \in \mathrm{MFE}_{\tau_0}$, by $(\pmb \pi^{\infty} |_T,\pmb \tau^{\infty}|_T)$, we will denote the time truncation of this MFE to the first $T$-coordinates of every population group. The main result of this section is the following finite-time error bound, from which the uniqueness of the non-stationary MFE follows directly:

\begin{theorem}\label{thrm:conv}
    Let $(\pmb \pi^T,\pmb\tau^T) \in \mathrm{MFE}_{T,\tau_0}$. Suppose that there exists $t_o>1$ such that $V(t_o)<1$. Then, for all $t \ll T,$ we have
    \[
    \sum_{i \in [N]}\|\tau^{\infty}_{t,i}|_T-\tau^T_{t,i}\|_1 \le O\left( \frac 1{t^{T-t}_o}\right).
    \]
    In particular, $|\mathrm{MFE}_{\tau_0}|=1$, i.e., there exists a unique non-stationary MFE in the $N$-population case under any given family of initial state-measure $\tau_0$.
\end{theorem}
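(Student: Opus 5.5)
Fix $(\pmb\pi^{\infty},\pmb\tau^{\infty})\in\mathrm{MFE}_{\tau_0}$ and $(\pmb\pi^T,\pmb\tau^T)\in\mathrm{MFE}_{T,\tau_0}$. For $i\in[N]$ and $1\le t\le T-1$ set $e_i(t):=\|\tau^{\infty}_{t,i}-\tau^T_{t,i}\|_1$. Both flows start at $\tau_0$. The finite-horizon MFE is a fixed point of the map in Theorem~\ref{thrm:quartic}. The truncated infinite-horizon MFE is an \emph{almost} fixed point of the same map. The only discrepancy is that $Q^{\infty,i}_{T-1}$ contains the continuation term $\beta_i\sum_y Q^{\infty,i}_{\min,T}(y)P_i(y\mid\cdot)$, while $Q^{\pmb\tau^T,i}_{T-1}$ does not.

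\textbf{Perturbed vector inequality.} I would redo the recursion of Lemma~\ref{lem:4}, this time keeping the boundary defect. At time $T-1$ the two $Q$-functions differ by at most $\|\tau\|$-terms plus $\beta_i M_i/(1-\beta_i)$. Propagating backward via \eqref{eq:q} multiplies this by $\beta_i^{T-1-t}$, and \eqref{eq:m} multiplies it by $K_i/\rho_i$. This should give, componentwise,
\[
e\le \mathcal S_T e + b,\qquad b_i(t)\le C_i\,\beta_i^{T-t},\quad C_i:=\frac{K_i\beta_iM_i}{\rho_i(1-\beta_i)}.
\]
Here $e=(e_i(t))$ is ordered as in Lemma~\ref{lem:4}, and the entries of $e$ at time $t$ are matched with the output at time $t$, using the fixed-point property of both flows.

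\textbf{Weighted majorization.} Take $t_o>1$ with $V(t_o)<1$. Then $t_o>\overline K_\infty$ and $t_o<\beta_{\max}^{-1}$. By Lemma~\ref{lem:61} and Lemma~\ref{lem:57}, $\rho(B(t_o))=:\theta<1$. Let $c>0$ be the Perron vector of $B(t_o)$ and define the test vector $\bar x=(c_i t_o^{\,k-1})_{i,k}$. The computation in Lemma~\ref{lem:8} shows $\mathcal S_T\bar x\le \theta\,\bar x$ entrywise, uniformly in $T$. Work in the weighted sup-norm $\|e\|_{\bar x}:=\max_{i,k}e_i(k)/\bar x_{i,k}$. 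Positivity of $\mathcal S_T$ gives $\|\mathcal S_Te\|_{\bar x}\le\theta\|e\|_{\bar x}$, and therefore
\[
\|e\|_{\bar x}\le \frac{\|b\|_{\bar x}}{1-\theta}.
\]
Since $\beta_i t_o<1$, we have
\[
\frac{b_i(k)}{c_i t_o^{k-1}}\le \frac{C_i}{c_i}\beta_i^{T-k}t_o^{1-k}\le C\, t_o^{-(T-1)}\,(\beta_i t_o)^{T-k}\le C\,t_o^{-(T-1)},
\]
so $\|b\|_{\bar x}=O(t_o^{-T})$. Hence
\[
\sum_i e_i(t)\le \Bigl(\sum_i c_i\Bigr) t_o^{\,t-1}\|e\|_{\bar x}=O\bigl(t_o^{-(T-t)}\bigr),
\]
which is the claimed rate; the meaningful regime is $t\ll T$.

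\textbf{Uniqueness.} Given two infinite-horizon MFEs, compare both with the same finite-horizon MFE, which exists by Schauder's theorem. The triangle inequality bounds their distance at each fixed $t$ by $O(t_o^{-(T-t)})$, and letting $T\to\infty$ gives equality of the state flows. The policies then coincide because the $Q$-flows are determined by the state flow, via backward recursion and the limit of discounted sums.

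\textbf{Main obstacle.} The delicate step is deriving the perturbed inequality $e\le\mathcal S_Te+b$ with the correct index alignment. Lemma~\ref{lem:4} bounds the output at time $t+1$ by inputs at times $\ge t$, so the fixed-point identity must be used carefully, including the $t=0$ row where the difference vanishes. The $Q$-discrepancy must also be shown to decay exactly like $\beta_i^{T-t}$, and not merely to be bounded. The Collatz--Wielandt step carries over essentially verbatim from Lemma~\ref{lem:8}.
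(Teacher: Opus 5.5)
Your proof is correct. It follows the paper's overall strategy: a perturbed inequality $e\le\mathcal S_Te+b$ whose boundary defect is of size $\beta_i^{T-t+1}$, followed by a geometric Collatz--Wielandt majorization built from a Perron vector of $B(\cdot)$. You run the majorization in the dual form, though, so it is worth comparing the two.

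\textbf{The paper's version.} The paper takes the \emph{left} Perron vector of $B(t_*)$, with time weights $t_*^{T-1-t}$ that decay in $t$. It proves the transposed bound $\mathcal S_T^{\top}\hat t_*\le\rho(B(t_*))\,\hat t_*$ (Lemmas~\ref{lem:11}--\ref{lem:12}). It then uses the linear Lyapunov function $\langle\hat t_*,\cdot\rangle$, which is a weighted $\ell_1$ norm. Controlling the defect there means summing $\sum_t t_*^{T-1-t}\beta_{\max}^{T-t+1}$, which brings in the factor $(1-\beta_{\max}t_*)^{-1}$. In return, it bounds the whole weighted aggregate $\sum_t t_*^{T-1-t}\sum_j e_j(t)$, and the pointwise estimate is read off from that.

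\textbf{Your version.} You take the \emph{right} Perron vector of $B(t_o)$, with weights $t_o^{k-1}$ that grow in time. Then the computation in Lemma~\ref{lem:8} gives $\mathcal S_T\bar x\le\theta\bar x$ verbatim, and no transposed lemma is needed. In the weighted sup-norm, the defect is handled by the termwise bound $(\beta_i t_o)^{T-k}\le 1$, and the pointwise rate $t_o^{-(T-t)}$ comes out directly.

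\textbf{Working at $t_o$.} A further gain is that you apply the argument at $r=t_o$ itself, using the reasoning of Lemma~\ref{lem:61} to get $\rho(B(t_o))<1$. This makes the paper's reduction ``without loss of generality $t_*>1$'' (by perturbing $\mathcal S_T$ with a multiple of the identity) unnecessary. In fact that reduction is avoidable in either version, since any $r>1$ with $\rho(B(r))<1$ suffices.

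\textbf{Minor points.} The bound on $\|Q^{\infty,i}_{\min,T}\|_\infty$ should also account for $\rho_i\sup|\Omega_i|$, so your constant $C_i$ is slightly off; this only changes the implied constant. For uniqueness, you correctly need only the existence of one finite-horizon MFE. That existence also follows from eventual contraction, since $\rho(\mathcal S_T)\le\rho(B(t_o))<1$.
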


\begin{remark}
    If $V(1)<1$, then the working conditions of Theorem \ref{thrm:conv} are satisfied automatically due to the continuity of $V$. When $V(1)>1$ and $V(t_*)>1$, it is easy to see that the assumptions of Theorem \ref{thrm:conv} are not feasible. Thus, the interesting cases are when $V(1)>1$ and $V(t_*)<1$. 
\end{remark}

The proof of Theorem \ref{thrm:conv} relies on using the function $V$ to create a majorizing Lyapunov function as done in the proof of Lemma \ref{lem:8}. The construction of this Lyapunov function relies on the minimizers of the function $V$ used in Theorem \ref{thrm:quartic} to characterize $\lambda_\infty$. The main difference in this case will be that we will majorize the transpose of $\mathcal S_T$ to obtain these error bounds.

The condition $\lambda_\infty<1$ is required to control the differences $\tau^{\infty}_{t,i}|_T-\tau^T_{t,i}$ through any horizon length $T$, while we need $t_* >1$ to control the contribution of the remainder terms that appear due to the fact that $\pmb \tau^{\infty}$ induces an infinite-horizon horizon MFE.

Let $\bar t_*(T)=(t^{T-2}_*,t_*^{T-3},\cdots,t_*,1)$. By $c=(c_1,c_2,\cdots,c_N)$ denote a positive left Perron eigenvector of $B(t^*)$ and set $C=\sum_{i \in [N]}c_i$. Using these, we define 
\[
\hat t_*(T) = \underbrace{(c_1\bar t_*(T),c_2\bar t_*(T),\cdots,c_N\bar t_*(T))}_{N \text{ times}}.
\]For all $T \in \mathbb N$, we construct our Lyapunov function as
$
V(\tau,T) = \langle \hat t_* ,\tau\rangle.
$
The motivation behind the cyclic behavior of $\hat t_*$ can be readily seen from the proof of Lemma \ref{lem:8}, where we reduced the problem of finding a bound for $\rho(\mathcal S_T)$ to that of $\rho(B(r))$ using the special structure of eigenspace that corresponds to $\rho(B(r))$. To obtain Theorem \ref{thrm:conv}, we start with a version of Lemma \ref{lem:8} but for the transpose of $\mathcal S^{\top}_T$.

\begin{lemma}\label{lem:11}
    For all $n\in \mathbb N$, and $m \in [N],$ it holds that
\[
\max_{i}\frac{(\mathcal S^{\top}_n \hat t_*(n))_{(m-1)(n-1)+i}}{c_m\bar t_{*,i}(n)} \le \rho(B(t_*)).
\]
\end{lemma}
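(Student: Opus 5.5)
The plan is to mirror the proof of Lemma \ref{lem:8} with the roles of rows and columns exchanged. Write $a_m:=\frac{\overline L_mK_m}{\rho_m}$. The transpose $\mathcal S_n^{\top}=\sum_i E_{i,i}\otimes \overline{\mathcal M}_i^{\top}+\sum_{i\neq j}E_{j,i}\otimes \mathcal M_i^{\top}$. Hence block row $m$ of $\mathcal S_n^{\top}\hat t_*(n)$ equals
\[
c_m\overline{\mathcal M}_m^{\top}\bar t_*(n)+\sum_{j\neq m}c_j\mathcal M_j^{\top}\bar t_*(n).
\]
Equivalently, since $T_j(\overline K_j)=T_j(K_j)+(\overline K_j-K_j)\,\mathrm{Sub}$, with $\mathrm{Sub}$ the subdiagonal shift, it equals
\[
c_m(\overline K_m-K_m)\mathrm{Sub}^{\top}\bar t_*(n)+\sum_{j}c_j\, T_j(K_j)^{\top}\bar t_*(n).
\]
So I will compute, coordinate by coordinate, the action of $T_j(x)^{\top}$ on the geometric vector $\bar t_*(n)$. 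Its $i$-th entry is $t_*^{n-1-i}$, which decreases in $i$ with ratio $1/t_*$. I then divide by $c_m t_*^{n-1-i}$.

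For coordinate $i$, column $i$ of $T_j(x)$ contains three kinds of entries. There is the subdiagonal entry $x+a_j$ in row $i+1$, present for $i\le n-2$. There is the diagonal entry $a_j\beta_j$. Finally there are the entries $a_j\beta_j^{i-\ell+1}$ in rows $\ell<i$, together with the first-row entry $a_j\beta_j^{i}$. This gives
\[
(T_j(x)^{\top}\bar t_*)_i=(x+a_j)t_*^{n-2-i}+a_j\sum_{\ell\le i}\beta_j^{i-\ell+1}t_*^{n-1-\ell}.
\]
Dividing by $t_*^{n-1-i}$ turns the first term into $(x+a_j)/t_*$. The sum becomes $a_j\sum_{s\ge0}^{i-1}\beta_j^{s+1}t_*^{s}\le \frac{a_j\beta_j}{1-\beta_jt_*}$, which is finite because $t_*<\beta_{\max}^{-1}$. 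When $i=n-1$ the subdiagonal term is absent, so the bound only improves. The indexing of the first row, which carries the boundary conditions, needs to be checked so that it fits the same geometric tail; this is the same bookkeeping that appeared in Lemma \ref{lem:8}.

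Summing over populations, the ratio for block $m$ and coordinate $i$ is at most
\[
\frac{\overline K_m-K_m}{t_*}+\frac1{c_m}\sum_j c_j\Big(\frac{K_j+a_j}{t_*}+\frac{a_j\beta_j}{1-\beta_jt_*}\Big)=\frac{(c^{\top}B(t_*))_m}{c_m},
\]
since $B(t_*)=D(t_*)+v(t_*)\mathbf 1^{\top}$ gives $(c^{\top}B)_m=c_m D_{mm}+\sum_jc_jv_j$. Because $c$ is a positive left Perron eigenvector of $B(t_*)$, which exists by irreducibility and the Perron--Frobenius theorem, the right-hand side equals $\rho(B(t_*))$ for every $m$. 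That is the claimed bound.

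I expect the main obstacle to be the index bookkeeping in the transposed geometric sums. The vector $\bar t_*(n)$ is reversed, with decreasing powers, relative to the test vector $x(r)$ of Lemma \ref{lem:8}. I need to check that the upper-triangular $\beta$-tail of $T_j$ pairs with the larger weights $t_*^{n-1-\ell}$, $\ell<i$, to give exactly the convergent series $\sum\beta_j^{s+1}t_*^{s}$, and not $\sum \beta_j^{s}t_*^{-s}$. If the orientation comes out reversed, I would reindex so that the geometric series has ratio $\beta_jt_*<1$, using the fact that $t_*\in(\overline K_\infty,\beta_{\max}^{-1})$ from Lemma \ref{lem:61}. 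The subdiagonal term would then pick up the factor $1/t_*$, consistent with $D(t_*)$ and $v(t_*)$.
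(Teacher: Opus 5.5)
Your proposal is correct and follows the paper's own route: the paper's proof is this same direct inspection, the transposed version of the Collatz--Wielandt computation in Lemma~\ref{lem:8}, based on the relations $\bar t_{*,i+1}=\bar t_{*,i}/t_*$ and $\bar t_{*,i-k+1}=t_*^{k-1}\bar t_{*,i}$. Your explicit column-by-column computation already yields the convergent series $\sum_{s\ge 0}\beta_j^{s+1}t_*^{s}$, so the orientation concern you raise does not arise, and together with the left-eigenvector identity $(c^{\top}B(t_*))_m=\rho(B(t_*))\,c_m$ it supplies the details the paper leaves implicit.
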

\begin{proof}
For $1\le i\le n-1$ we have $\bar t_{*,i+1}=\bar t_{*,i}/t_*$, and, for $1\le k\le i$,
$\bar t_{*,i-k+1}=t^{k-1}_*\bar t_{*,i}$. A direct inspection as done in Lemma \ref{lem:8} gives
\[
\frac{(\mathcal S^{\top}_n \hat t_{*}(n))_{(m-1)(n-1)+i}}{c_m\bar t_{*,i}(n)}
\le \rho(B(t_*)).
\]
The same also holds for $i=n-1$.
Thus, for all $i$, the desired inequality holds after combining the cases above.
\end{proof}

\begin{lemma}\label{lem:12}
    With the same notation of Lemma \ref{lem:11}, for any positive vector $y \in \mathbb R^{N(T-1)}$, we have that
    \[
    \langle \hat t_*(T),\mathcal S_T y \rangle \le \rho(B(t_*)) \langle \hat t_*(T),y\rangle.
    \]
\end{lemma}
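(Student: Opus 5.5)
The plan is to move $\mathcal S_T$ across the inner product via the transpose and then apply Lemma~\ref{lem:11} entrywise. Since all entries of $\mathcal S_T$, $\hat t_*(T)$ and $y$ are nonnegative, no sign issues arise.

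First, I will use the adjoint identity
\[
\langle \hat t_*(T),\mathcal S_T y\rangle=\langle \mathcal S_T^{\top}\hat t_*(T),y\rangle .
\]
Next, I will invoke Lemma~\ref{lem:11} with $n=T$. For each $m\in[N]$ and $1\le i\le T-1$ it gives
\[
(\mathcal S_T^{\top}\hat t_*(T))_{(m-1)(T-1)+i}\le \rho(B(t_*))\,c_m\,\bar t_{*,i}(T).
\]
The right-hand side is exactly $\rho(B(t_*))$ times the corresponding coordinate of $\hat t_*(T)$, because $\hat t_*(T)_{(m-1)(T-1)+i}=c_m\bar t_{*,i}(T)$ by construction. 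So we have the componentwise vector inequality
\[
\mathcal S_T^{\top}\hat t_*(T)\le \rho(B(t_*))\,\hat t_*(T).
\]
Finally, since $y$ is positive, I will multiply this inequality coordinatewise by $y_{(m-1)(T-1)+i}>0$ and sum over all $m$ and $i$. The inequality is preserved, and this yields
\[
\langle \mathcal S_T^{\top}\hat t_*(T),y\rangle\le\rho(B(t_*))\langle \hat t_*(T),y\rangle,
\]
which is the claim.

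The only substantive input is Lemma~\ref{lem:11}, which I take as given. The one point needing care is the index bookkeeping. The block ordering of $\hat t_*(T)$ must match the ordering used for $\mathcal S_T$ in Lemma~\ref{lem:4}: populations are the outer blocks and time is the inner index. Also, $c$ must be the left Perron vector of $B(t_*)$, so that the transposed row sums reduce to $(c^{\top}B(t_*))_m$ rather than $(B(t_*)c)_m$. This is what makes the bound in Lemma~\ref{lem:11} hold with the constant $\rho(B(t_*))$. I will also remark that the argument extends verbatim to nonnegative $y$.
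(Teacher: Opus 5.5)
Your proposal is correct and follows the paper's proof. Lemma~\ref{lem:11} gives the componentwise bound $\mathcal S_T^{\top}\hat t_*(T)\le \rho(B(t_*))\,\hat t_*(T)$, and pairing it with the positive vector $y$ via $\langle \hat t_*(T),\mathcal S_T y\rangle=\langle \mathcal S_T^{\top}\hat t_*(T),y\rangle$ yields the claim. You are also right that identifying the right-hand side with $\rho(B(t_*))\hat t_*(T)$ holds by construction of $\hat t_*(T)$, not by any appeal to Theorem~\ref{thrm:quartic} as the paper's text suggests.
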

\begin{proof}
    By Lemma \ref{lem:11}, we have
    \begin{equation}\label{eq:d3}
    \mathcal S^{\top}_T\hat t_*(T) \le \begin{bmatrix}
        \rho(B(t_*))c_1\bar t_*(T)\\
        \rho(B(t_*))c_2\bar t_*(T)\\
        \vdots\\
        \rho(B(t_*))c_N\bar t_*(T)
    \end{bmatrix} = \rho(B(t_*)) \hat t_*(T)
    , 
    \end{equation}
    where the last equality is due to Theorem \ref{thrm:quartic}. Since taking inner product with a positive vector $y$ will preserve \eqref{eq:d3}, the desired result follows.
\end{proof}

In the proof of Theorem \ref{thrm:conv}, we will denote  $\hat t_*(T)$ by $\hat t_*$ for simplicity.

\begin{proof}[Proof of Theorem \ref{thrm:conv}]
If $t_o \not = t_*$, then we can perturb $\mathcal S_T$ with a positive multiple of the identity matrix so that its spectral radius is $V(t_o)$. Thus, without loss of generality, we will suppose that $t_*>1$ for the remainder of this proof.

Let $\pmb \tau^T=(\tau^T_t)_{t=0}^{T-1}$ be a finite-horizon MFE obtained under the horizon length $T$ starting from $\tau_0$. Then, under the Euclidean inner product $\langle \cdot, \cdot \rangle$, arguing as in Lemma \ref{lem:4}, and using Lemma \ref{lem:12}, we obtain that
\begin{align*}
&\langle \hat t_*, \left(\|\tau^{\infty}_{t,j}|_T-\tau^T_{t,j}\|_1\right)_{t=1,j=1}^{T-1,N} \rangle 
\\&\le
\langle \hat t_*,\mathcal S_T \left(\| \tau^{\infty}_{t,j}|_T - \tau^T_{t,j}\|_1\right)_{t=1,j=1}^{T-1,N}\rangle +\langle \hat t_*, \bar{\mathcal S}\left( \underbrace{\|Q^{\pmb \tau^{\infty},i}_T\|_{\infty},\|Q^{\pmb \tau^{\infty},i}_T\|_{\infty},\cdots, \|Q^{\pmb \tau^{\infty},i}_T\|_{\infty}}_{T-1 \text{ times }}\right)_{i \in [N]}\rangle
\\&\le \rho(B(t_*)) \langle \hat t_*, \left(\| \tau^{\infty}_{t,j}|_T - \tau^T_{t,j}\|_1\right)_{t=1,j=1}^{T-1,N}\rangle 
\\& \qquad + \langle \hat t_*, \bar{\mathcal S}\left( \|Q^{\pmb \tau^{\infty},i}_T\|_{\infty},\|Q^{\pmb \tau^{\infty},i}_T\|_{\infty},\cdots, \|Q^{\pmb \tau^{\infty},i}_T\|_{\infty}\right)_{i \in [N]}\rangle,
\end{align*}
where $
    \bar{\mathcal S} 
    = 
    \begin{bmatrix} 
    \mathcal T_1 & \pmb 0 & \cdots & \pmb 0 \\
    \pmb 0 & \mathcal T_2 & \cdots & \pmb 0 \\
    \vdots & \vdots & \ddots & \vdots \\
    \pmb 0 & \pmb 0 & \cdots & \mathcal T_N
    \end{bmatrix},
$ $\mathcal T_i=\begin{bmatrix}
    \beta^T_i & 0 & \cdots & 0\\
    0 & \beta^{T-1}_i & \cdots & 0\\
    \vdots & \vdots & \ddots & \vdots \\
    0 & 0 & \cdots & \beta^2_i 
\end{bmatrix},$
and $\pmb 0$ is a $(T-1)\times (T-1)$ dimensional matrix made of $0$'s. Here, the terms $\|Q^{\pmb \tau,i}_{T}\|_{\infty}$ are due to the fact that $\pmb \tau^{\infty}$ induces an infinite-horizon MFE, which can be directly obtained from Lemma \ref{lem:1} through recursive iterations as we did in the proof of Lemma \ref{lem:4}.

Since $V(t_*)<1$, we can rearrange the inequality above to obtain that 
\begin{align*}
    &\min_i c_it^{T-t}_*\sum_{j \in [N]} \|\tau^{\infty}_{t,j}|_T-\tau^T_{t,j}\|_1\le\langle \hat t_* , \left(\|\tau^{\infty}_{t,j}|_T-\tau^T_{t,j}\|_1\right)_{t=1,j=1}^{T-1,N} \rangle 
    \\&\le \frac 1{1-\rho(B(t_*))}\underbrace{\langle \hat t_*, \bar{\mathcal S}\left(\|Q^{\pmb \tau^{\infty},i}_T\|_{\infty},\|Q^{\pmb \tau^{\infty},i}_T\|_{\infty},\cdots, \|Q^{\pmb \tau^{\infty},i}_T\|_{\infty}\right)_{i \in [N]}\rangle}_{=I}.
\end{align*}
It remains to control the term $I$. First, note that, as $t_*\beta _{\mathrm{argmax}_i \beta_i} < 1$, it holds that
\begin{align*}
I &\le N \max_i c_i\langle \bar t_*,\mathcal T_{\mathrm{argmax}_i \beta_i} \left( \underbrace{\max_i\|Q^{\pmb \tau^{\infty},i}_T\|_{\infty},\max_i\|Q^{\pmb \tau^{\infty},i}_T\|_{\infty},\cdots, \max_i\|Q^{\pmb \tau^{\infty},i}_T\|_{\infty}}_{T-1 \text{ times }}\right) \rangle
\\& \le N \max_i c_i \max_i\|Q^{\pmb \tau^{\infty},i}_T\|_{\infty}\beta_{\max} \sum_{j=1}^{T-1} \beta^{T-j}_{\max}t^{T-j}_* 
\\&\le N \max_i c_i\max_i\|Q^{\pmb \tau^{\infty},i}_T\|_{\infty}\beta^2_{\max} t_* \frac{\beta^{T-1}_{\max}t_*^{T-1}-1}{\beta_{\max} t_*-1}.
\end{align*}
Consequently, for the differences at section $t$ throughout all the population groups, and using the fact that $c_i>0$ for all $i \in [N]$, we obtain the following
\begin{align*}
    \sum_{j \in [N]} \|\tau^{\infty}_{t,j}|_T-\tau^T_{t,j}\|_1 \le  \frac N{1-\rho(B(t_*))} \frac{\max_i c_i}{\min_i c_i} \max_i\|Q^{\pmb \tau^{\infty},i}_T\|_{\infty}\beta^2_{\max} t^{t+1}_* \frac{\beta^{T-1}_{\max}t_*-\frac1{t_*^{T-t}}}{\beta_{\max} t_* -1}.
\end{align*}
Since $\beta_{\max}<1$ and $t_* >1$, as $T \to \infty$, we obtain that $\sum_{j \in [N]} \|\tau^{\infty}_{t,j}|_T-\tau^T_{t,j}\|_1 \to 0$. Furthermore, since $t_*\beta _{\max} < 1$, the dominating term in the sum above is $\frac{1}{t^{T-t}_*}$, which gives us the desired error bound.

It remains to prove the uniqueness of infinite-horizon non-stationary MFE under any given family of initial state-measures. For this purpose, we will demonstrate this for the initial state measure $\tau_0$, which does not break the generality. For each $T$, there exists a unique finite-horizon MFE as $\lim_{T \to \infty}\rho(\mathcal S_T)<1$. Furthermore, the error bound above shows that state-measures obtained under this MFE converge to any infinite-horizon MFE. From the uniqueness of the limit, it follows that all the infinite-horizon non-stationary MFE are obtained under a single state-measure flow $\pmb \tau^{\infty}$. Lastly, one can use dynamical programming to induce a unique family of $Q$-functions under $\pmb \tau^{\infty}$, see \cite[Proposition 3.10]{SaBaRaSIAM}. Due to the existence of the regularizer term in our setting, there is a unique policy induced by these $Q$-functions, which implies that there is a unique infinite-horizon MFE.
\end{proof}

In the single population case, we obtain the following formulation that is easier to handle:
\begin{corollary}
    Suppose $N=1$, i.e. we are in the single population case. If there exists $t_0>1$ such that
    \[
    \frac{\overline K_1+\frac{\overline L_1K_1}{\rho_1}}{t_0}+\frac{\overline L_1K_1\beta_1}{\rho_1}\frac{1}{1-\beta_1t_0} <1,
    \]
    then under any initial state-measure $\mu_0 \in \mathcal P(X)$ there exists a unique infinite-horizon non-stationary MFE.
\end{corollary}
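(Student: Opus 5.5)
The plan is to obtain the corollary as the special case $N=1$ of Theorem \ref{thrm:conv}. Since that theorem needs only some $t_o>1$ with $V(t_o)<1$, it suffices to show that the displayed condition at $t_0>1$ is exactly $V(t_0)<1$ when $N=1$. No new analysis of $\mathcal S_T$ is needed.

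\textbf{Step 1: specialize $V$ and check the domain.} For $N=1$ we have $\overline K_\infty=\overline K_1-K_1$, and for $r\in(\overline K_1-K_1,\beta_1^{-1})$
\[
V(r)=\frac{\frac{K_1+\frac{\overline L_1K_1}{\rho_1}}{r}+\frac{\overline L_1K_1\beta_1}{\rho_1(1-\beta_1 r)}}{1-\frac{\overline K_1-K_1}{r}}.
\]
We must check that $t_0$ lies in the domain of $V$. I read the hypothesis as including $t_0<\beta_1^{-1}$, as in \eqref{eq:refff}: otherwise $\frac{1}{1-\beta_1t_0}$ is undefined or negative, and the inequality says nothing about contraction. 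This implicit restriction is the only point needing care.

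For the lower endpoint, both terms on the left of the hypothesis are nonnegative, so the hypothesis gives $\overline K_1/t_0<1$. Hence
\[
t_0>\overline K_1\ge \overline K_1-K_1,
\]
so $t_0\in(\overline K_1-K_1,\beta_1^{-1})$. In particular the denominator $1-\frac{\overline K_1-K_1}{t_0}$ of $V(t_0)$ is strictly positive.

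\textbf{Step 2: rewrite $V(t_0)<1$.} Because the denominator is positive, $V(t_0)<1$ is equivalent to
\[
\frac{K_1+\frac{\overline L_1K_1}{\rho_1}}{t_0}+\frac{\overline L_1K_1\beta_1}{\rho_1(1-\beta_1t_0)}<1-\frac{\overline K_1-K_1}{t_0}.
\]
Moving $\frac{\overline K_1-K_1}{t_0}$ to the left-hand side combines $K_1$ and $\overline K_1-K_1$ into $\overline K_1$. The result is precisely the displayed hypothesis, so we have $V(t_0)<1$ with $t_0>1$.

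\textbf{Step 3: apply Theorem \ref{thrm:conv}.} With $N=1$, $\tau_0=\mu_0$ and $t_o=t_0$, Theorem \ref{thrm:conv} gives $|\mathrm{MFE}_{\mu_0}|=1$. That is, there is a unique infinite-horizon non-stationary MFE from any initial state measure $\mu_0\in\mathcal P(X)$.
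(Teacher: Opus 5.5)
Your proposal is correct and takes the same route as the paper, which simply invokes Theorem \ref{thrm:conv}; you also supply the omitted specialization showing that, for $N=1$ and $t_0\in(1,\beta_1^{-1})$, the displayed inequality is equivalent to $V(t_0)<1$, with $t_0>\overline K_1\ge\overline K_1-K_1$ so that the denominator of $V(t_0)$ is positive. You are right that $t_0<\beta_1^{-1}$ must be read into the hypothesis: for $t_0>\beta_1^{-1}$ the second term is negative, and the inequality would hold for every sufficiently large $t_0$ regardless of the model.
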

\begin{proof}
    This follows directly from Theorem \ref{thrm:conv}.
\end{proof}

To obtain a rate of convergence between infinite-horizon non-stationary MFE and stationary MFE, we use the results in Section \ref{sect:stat}.

{
\begin{theorem}\label{thrm:7}
    Let $(\pmb \pi^{\infty},\pmb \tau^{\infty}) \in
    \mathrm{MFE}_{\tau_0}$ and let $(\pi,\tau)$ be a stationary MFE.
    If $V(1)<1$, then for all $k\geq0$,
    \[
    \sup_{i\in[N],\,t\geq k}
    \|\tau^\infty_{t,i}-\tau_i\|_1
    \leq O(\rho(B(1))^k).
    \]
\end{theorem}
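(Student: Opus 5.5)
We would compare the non-stationary flow $\pmb\tau^\infty$ with the constant flow $(\tau,\tau,\dots)$ through a \emph{forward-looking} weighted supremum, using the right Perron eigenvector of $B(1)$. The argument mirrors Lemma~\ref{lem:4} with $T=\infty$ and $r=1$ in the geometric ansatz of Lemma~\ref{lem:8}.

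\textbf{Step 1: spectral radius.} First we reduce the hypothesis to a spectral statement. Since $V(1)<1$ and $1<\beta_{\max}^{-1}$, the point $r=1$ lies in the admissible interval of Lemma~\ref{lem:61}; it requires $\max_j(\overline K_j-K_j)<1$, which holds because each denominator in $V(1)$ must be positive. The contradiction argument in the converse part of Lemma~\ref{lem:61}, applied at the fixed value $r=1$ with Lemma~\ref{lem:57}, then gives $\rho(B(1))<1$. Let $h>0$ be a right Perron eigenvector of $B(1)$. Writing $a_i:=\overline L_iK_i/\rho_i$, its rows read
\[
\rho(B(1))\,h_i=(\overline K_i-K_i)h_i+\Bigl(K_i+a_i+\tfrac{a_i\beta_i}{1-\beta_i}\Bigr)\sum_j h_j .
\]

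\textbf{Step 2: one-step inequalities.} Set $e_i(t):=\|\tau^\infty_{t,i}-\tau_i\|_1$ and $q_i(t):=\|Q^\infty_{t,i}-Q^{\tau,i}\|_\infty$, where $Q^\infty_{t,i}$ are the $Q$-functions of the infinite-horizon MFE, so that $Q^\infty_{t,i}=H_{1,i}(Q^\infty_{t+1,i},\tau^\infty_t)$ and $\tau^\infty_{t+1,i}=H_{2,i}(Q^\infty_{t,i},\tau^\infty_t)$. The stationary pair satisfies $Q^{\tau,i}=H_{1,i}(Q^{\tau,i},\tau)$ and $\tau_i=H_{2,i}(Q^{\tau,i},\tau)$.

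From \eqref{eq:q} we get $q_i(t)\le\beta_i q_i(t+1)+\overline L_i s(t)$, with $s(t)=\sum_j e_j(t)$. All these $Q$-functions lie in $\mathcal C_i$ and are therefore uniformly bounded, so $\beta_i^n q_i(t+n)\to 0$. Iterating gives
\[
q_i(t)\le\overline L_i\sum_{m\ge0}\beta_i^m s(t+m).
\]
Plugging this into \eqref{eq:m}, and rewriting $\overline K_ie_i+K_i\sum_{j\ne i}e_j=(\overline K_i-K_i)e_i+K_is$, yields
\[
e_i(t+1)\le(\overline K_i-K_i)e_i(t)+(K_i+a_i)s(t)+a_i\sum_{m\ge1}\beta_i^m s(t+m).
\]

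\textbf{Step 3: Lyapunov functional.} Define
\[
M_k:=\sup_{t\ge k}\max_i e_i(t)/h_i ,
\]
which satisfies $M_k\le 2/\min_i h_i<\infty$. For every $t\ge k$, all terms on the right-hand side of the Step~2 inequality involve only times $\ge k$. Bounding $e_j(\cdot)\le h_jM_k$ there and summing the geometric series gives $e_i(t+1)\le\rho(B(1))\,h_iM_k$ by the eigen-relation above. Taking the supremum over $t\ge k$ yields $M_{k+1}\le\rho(B(1))M_k$, and hence
\[
\sup_{i,\,t\ge k}e_i(t)\le\max_i h_i\cdot\rho(B(1))^k\,\frac{2}{\min_i h_i}.
\]
This is the claimed bound.

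\textbf{Main obstacle.} The key difficulty is that the recursion is not causal: $e(t+1)$ depends on the entire future of the flow through the $Q$-functions. A Collatz--Wielandt or inner-product argument at a single time therefore does not close. The forward supremum $M_k$ is designed to absorb these future terms, so the delicate points are (i) justifying that the $Q$-tail vanishes, which uses the uniform bound from Lemma~\ref{lem:tency}, and (ii) checking that the index shift in the $\beta$-sum matches exactly the entry $a_i\beta_i/(1-\beta_i)$ of $v(1)$ rather than $a_i/(1-\beta_i)$. If the indexing turned out slightly off, we would instead use a nearby $r$ slightly above $1$, which is allowed by the continuity of $V$.
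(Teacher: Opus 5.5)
Your proposal is correct and is essentially the paper's proof. Both arguments iterate \eqref{eq:q} along the Bellman equations to bound the $Q$-gap by a discounted sum of future state errors, insert this into \eqref{eq:m}, take suprema over $t\ge k$, and contract in the max-norm weighted by a right Perron eigenvector of $\mathcal M$. That matrix coincides with $B(1)$, because $v_i(1)=K_i+\frac{\overline L_iK_i}{\rho_i(1-\beta_i)}$, so your indexing concern is moot and no perturbation of $r$ is needed. The only cosmetic difference is that the paper obtains $\rho(\mathcal M)<1$ from Theorem~\ref{thrm:2} rather than from Lemmas~\ref{lem:57} and~\ref{lem:61}.
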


\begin{proof}
First, we recall that, by Theorem~\ref{thrm:2},
    \[ \rho(\mathcal M)<1 \quad\Longleftrightarrow\quad \sum_{i\in[N]} \frac{K_i+\dfrac{\overline L_iK_i}{\rho_i(1-\beta_i)}}{1-(\overline K_i-K_i)}=V(1)<1. \]
    
    Let $Q^\infty_{t,i}$ be the optimal $Q$-function corresponding
    to the nonstationary flow $\pmb\tau^\infty$, and let
    $Q^{\tau,i}$ be the stationary optimal $Q$-function
    corresponding to $\tau$. By iterating \eqref{eq:q} along the
    Bellman equations, we obtain
    \[
    \|Q^\infty_{t,i}-Q^{\tau,i}\|_\infty \leq
    \overline L_i \sum_{\ell=0}^{\infty} \beta_i^\ell \sum_{j\in[N]} \|\tau^\infty_{t+\ell,j}-\tau_j\|_1.
    \]
    Therefore, for every $t\geq k$,
    \begin{align*}
    \|Q^\infty_{t,i}-Q^{\tau,i}\|_\infty
    &\leq
    \overline L_i \sum_{\ell=0}^{\infty} \beta_i^\ell \sum_{j\in[N]} \sup_{s\geq k} \|\tau^\infty_{s,j}-\tau_j\|_1
    \\&= \frac{\overline L_i}{1-\beta_i} \sum_{j\in[N]} \sup_{s\geq k} \|\tau^\infty_{s,j}-\tau_j\|_1.
    \end{align*}

    Since $\tau^\infty_{t+1,i} = H_{2,i}(Q^\infty_{t,i},\tau^\infty_t)$
    and $\tau_i = H_{2,i}(Q^{\tau,i},\tau),$
    Lemma~\ref{lem:1} gives, for every $t\geq k$,
    \begin{align*}
    \|\tau^\infty_{t+1,i}-\tau_i\|_1
    &\leq
    \frac{K_i}{\rho_i}
    \|Q^\infty_{t,i}-Q^{\tau,i}\|_\infty + \overline K_i \|\tau^\infty_{t,i}-\tau_i\|_1+ K_i
    \sum_{j\neq i}\|\tau^\infty_{t,j}-\tau_j\|_1
    \\
    &\leq \left(\overline K_i+ \frac{\overline L_iK_i}{\rho_i(1-\beta_i)} \right) \sup_{s\geq k}\|\tau^\infty_{s,i}-\tau_i\|_1
    +
    \left( K_i+ \frac{\overline L_iK_i}{\rho_i(1-\beta_i)} \right) \sum_{j\neq i} \sup_{s\geq k}
    \|\tau^\infty_{s,j}-\tau_j\|_1.
    \end{align*}
    Taking the supremum over $t\geq k$ yields
    \[
    \begin{bmatrix}
    \displaystyle
    \sup_{t\geq k+1}
    \|\tau^\infty_{t,1}-\tau_1\|_1
    \\
    \displaystyle
    \sup_{t\geq k+1}
    \|\tau^\infty_{t,2}-\tau_2\|_1
    \\
    \vdots
    \\
    \displaystyle
    \sup_{t\geq k+1}
    \|\tau^\infty_{t,N}-\tau_N\|_1
    \end{bmatrix}
    \leq
    \mathcal M
    \begin{bmatrix}
    \displaystyle
    \sup_{t\geq k}
    \|\tau^\infty_{t,1}-\tau_1\|_1
    \\
    \displaystyle
    \sup_{t\geq k}
    \|\tau^\infty_{t,2}-\tau_2\|_1
    \\
    \vdots
    \\
    \displaystyle
    \sup_{t\geq k}
    \|\tau^\infty_{t,N}-\tau_N\|_1
    \end{bmatrix}.
    \]

    Let $h=(h_1,\ldots,h_N)\in\mathbb R_{>0}^N$ be a positive
    right Perron eigenvector of $\mathcal M$, so that
    $\mathcal Mh=\rho(\mathcal M)h.$
    Define
    \[
    \|x\|_h
    :=
    \max_{i\in[N]}\frac{|x_i|}{h_i}.
    \]
    Since $\mathcal M$ is nonnegative, for every $x\geq0$,
    $\|\mathcal Mx\|_h\leq \rho(\mathcal M)\|x\|_h.$
    Consequently,
    \begin{align*}
    \left\|
    \begin{bmatrix}
    \displaystyle
    \sup_{t\geq k+1}
    \|\tau^\infty_{t,1}-\tau_1\|_1
    \\
    \vdots
    \\
    \displaystyle
    \sup_{t\geq k+1}
    \|\tau^\infty_{t,N}-\tau_N\|_1
    \end{bmatrix}
    \right\|_h
    \leq
    \rho(\mathcal M)
    \left\|
    \begin{bmatrix}
    \displaystyle
    \sup_{t\geq k}
    \|\tau^\infty_{t,1}-\tau_1\|_1
    \\
    \vdots
    \\
    \displaystyle
    \sup_{t\geq k}
    \|\tau^\infty_{t,N}-\tau_N\|_1
    \end{bmatrix}
    \right\|_h.
    \end{align*}
    Iterating this inequality gives
    \begin{align*}
    &
    \left\|
    \begin{bmatrix}
    \displaystyle
    \sup_{t\geq k}
    \|\tau^\infty_{t,1}-\tau_1\|_1
    \\
    \vdots
    \\
    \displaystyle
    \sup_{t\geq k}
    \|\tau^\infty_{t,N}-\tau_N\|_1
    \end{bmatrix}
    \right\|_h
    \leq
    \rho(\mathcal M)^k
    \left\|
    \begin{bmatrix}
    \displaystyle
    \sup_{t\geq0}
    \|\tau^\infty_{t,1}-\tau_1\|_1
    \\
    \vdots
    \\
    \displaystyle
    \sup_{t\geq0}
    \|\tau^\infty_{t,N}-\tau_N\|_1
    \end{bmatrix}
    \right\|_h.
    \end{align*}

    Let $h_{\min}:=\min_{i\in[N]}h_i,$ and $h_{\max}:=\max_{i\in[N]}h_i.$
    Since the $\ell_1$-distance between two probability measures is
    at most $2$, it follows that
    \[
    \left\|
    \begin{bmatrix}
    \displaystyle
    \sup_{t\geq0}
    \|\tau^\infty_{t,1}-\tau_1\|_1
    \\
    \vdots
    \\
    \displaystyle
    \sup_{t\geq0}
    \|\tau^\infty_{t,N}-\tau_N\|_1
    \end{bmatrix}
    \right\|_h
    \leq
    \frac{2}{h_{\min}}.
    \]
    Therefore,
    \[
    \sup_{i\in[N],\,t\geq k}
    \|\tau^\infty_{t,i}-\tau_i\|_1
    \leq
    2\frac{h_{\max}}{h_{\min}}\rho(\mathcal M)^k
    =
    O(\rho(\mathcal M)^k).
    \]
    Since \(\rho(\mathcal M) = \rho(B(1))\), we have thus obtained the desired inequality.
\end{proof}

A rate of convergence result between finite-horizon MFE and stationary MFE can be obtained directly from Theorems  \ref{thrm:conv} and \ref{thrm:7}.

\begin{corollary}
    Let $(\pi,\tau)$ be a stationary MFE and $(\pmb \pi^T,\pmb \tau^T) \in \mathrm{MFE}_{T,\tau_0}$. Then, we have that
    \[
    \| \tau^T_{k,i} - \tau_i\|_1 \le O\left( \rho(B(1))^k + \frac 1{t^{T-k}_*} \right)
    \]
\end{corollary}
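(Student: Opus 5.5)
The corollary follows by combining Theorem \ref{thrm:conv} and Theorem \ref{thrm:7} through the triangle inequality. The intermediate object is the unique infinite-horizon non-stationary MFE $(\pmb\pi^\infty,\pmb\tau^\infty)\in\mathrm{MFE}_{\tau_0}$. For each $i\in[N]$ and each $k\le T-1$ I would write
\[
\|\tau^T_{k,i}-\tau_i\|_1\le \|\tau^T_{k,i}-\tau^\infty_{k,i}|_T\|_1+\|\tau^\infty_{k,i}-\tau_i\|_1 ,
\]
and bound the two terms separately.

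\textbf{Standing hypotheses.} The statement as written carries no hypotheses, so I would first make explicit that we work under the assumption of Theorem \ref{thrm:7}, namely $V(1)<1$.

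By the remark following Theorem \ref{thrm:conv}, $V(1)<1$ together with continuity of $V$ on $(\overline K_\infty,\beta_{\max}^{-1})$ yields some $t_o>1$ with $V(t_o)<1$. Hence the hypotheses of Theorem \ref{thrm:conv} hold, and $\mathrm{MFE}_{\tau_0}$ is a singleton. I take $t_*$ to be this $t_o$, or a minimizer of $V$ that is larger than $1$, exactly as in the proof of Theorem \ref{thrm:conv}.

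\textbf{Bounding the two terms.}

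The first term is bounded by Theorem \ref{thrm:conv}. Summing over populations bounds it by $O(t_*^{-(T-k)})$.

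The second term is bounded by Theorem \ref{thrm:7}. Its supremum over $t\ge k$ already bounds $\|\tau^\infty_{k,i}-\tau_i\|_1$ by $O(\rho(B(1))^k)$, using the identity $\rho(\mathcal M)=\rho(B(1))$ noted in the proof of that theorem.

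Adding the two bounds gives the claim.

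\textbf{Main obstacle: uniformity of the constants.} I expect the only delicate point to be checking that the hidden constants do not depend on $k$ or $T$.

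For the second term this is immediate: the constant is $2h_{\max}/h_{\min}$, which depends only on $\mathcal M$.

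For the first term, I would reread the explicit bound at the end of the proof of Theorem \ref{thrm:conv}. Its prefactor is $\frac{N}{1-\rho(B(t_*))}\frac{\max c_i}{\min c_i}\max_i\|Q^{\pmb\tau^\infty,i}_T\|_\infty\frac{\beta_{\max}^2}{1-\beta_{\max}t_*}$, and this is uniform because $\|Q^{\pmb\tau^\infty,i}_T\|_\infty\le M_i/(1-\beta_i)$ via Lemma \ref{lem:tency}. The remaining $k$-dependence consists of factors $t_*^{k+1}\beta_{\max}^{T-1}$ and $t_*^{-(T-k)}$. Since $\beta_{\max}t_*<1$, the first is at most a constant times $t_*^{-(T-k)}$: indeed $\beta_{\max}^{T-1}t_*^{k+1}\le t_*^{k+1-(T-1)}$.

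The informal restriction ``$t\ll T$'' in Theorem \ref{thrm:conv} therefore plays no role, and the bound holds for all $k\le T-1$ with a single constant.
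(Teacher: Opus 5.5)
Your proposal is correct and follows the paper's own argument: a triangle inequality through the unique $\pmb\tau^\infty\in\mathrm{MFE}_{\tau_0}$, with Theorem~\ref{thrm:conv} bounding $\|\tau^T_{k,i}-\tau^\infty_{k,i}\|_1$ and Theorem~\ref{thrm:7} (using $\rho(\mathcal M)=\rho(B(1))$) bounding $\|\tau^\infty_{k,i}-\tau_i\|_1$; stating the hypothesis $V(1)<1$ explicitly is a worthwhile addition, since the statement omits it. On uniformity, one caution: the last display in the proof of Theorem~\ref{thrm:conv}, read literally, has $t_*^{k+1}$ multiplying $t_*^{-(T-k)}$, which is not uniform in $k$, so read the $k$-independent constant off the preceding inequality $\min_i c_i\,t_*^{T-k}\sum_j\|\tau^\infty_{k,j}-\tau^T_{k,j}\|_1\le I/(1-\rho(B(t_*)))$ together with $I\le N\max_i c_i\max_i\|Q^{\pmb\tau^\infty,i}_T\|_\infty\beta_{\max}^2t_*/(1-\beta_{\max}t_*)$, which gives the uniform $O(t_*^{-(T-k)})$ bound you claim.
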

\begin{proof}
    This follows directly from Theorems \ref{thrm:conv} and \ref{thrm:7} following a straightforward triangle inequality, and thus we do not include the details.
\end{proof}
}

\section{An Invariance Principle for Contraction under Slow-Fast Dynamic Updates for Lipschitz Systems}\label{sect:7}

The quasi-static algorithm in Section \ref{sect:stat} updates populations' $Q$-functions on a slower time scale than their state measures. As a result, the joint evolution of the $Q$-functions and state measures follows a slow-fast MFG dynamic. This naturally raises the question of whether the contraction condition established in the previous sections can be relaxed by allowing some populations to evolve on faster time scales than others.

For general dynamical systems, separating the dynamics into different time scales can improve convergence: quasi-static updates may converge to an equilibrium under conditions weaker than global contractivity of the full system; see \cite[Remark~2]{doan2025fast}. In this section, however, we show that this effect does not arise when the dynamics are governed by a nonnegative matrix. In that case, introducing slow-fast dynamics does not relax the contractivity requirement, regardless of the quasi-static regime considered. Since our contraction results are obtained by majorizing the original system with a dynamical system of this type, it follows that these contraction conditions cannot be improved by allowing some populations to update their parameters faster than others.

\begin{definition}
For each $i=1,2,\dots,N$, let $(X_i,d_{X_i})$ be a complete metric space. For each $j=1,2,\dots,N$, let
\[
f_j : \prod_{i=1}^N X_i \to X_j
\]
be a map satisfying
\[
d_{X_j}\bigl(f_j(x_1,\dots,x_N),\, f_j(\tilde x_1,\dots,\tilde x_N)\bigr)
\le
\sum_{i=1}^N a_{j,i}\, d_{X_i}(x_i,\tilde x_i),
\]
where $a_{j,i}\ge 0$ for all $i,j$.

We call the matrix $A=[a_{j,i}]_{j,i=1}^N$ the \emph{contraction matrix} of the family $(f_j)_{j=1}^N$ on $\prod_{i=1}^N (X_i,d_{X_i})$.
\end{definition}

Fix $0<N_1<N$. For any given
$
(x_j)_{j=N_1+1}^N \in \prod_{j=N_1+1}^N X_j,
$
we define a \emph{quasi-static update} with respect to the variables $(x_{N_1+j})_{j=1}^{N-N_1}$ to be any element
$
(x_1,\dots,x_{N_1})[x_{N_1+1},\dots,x_N] \in \prod_{i=1}^{N_1} X_i
$
satisfying
\begin{align*}
&(f_1,\dots,f_{N_1})\Bigl((x_1,\dots,x_{N_1})[x_{N_1+1},\dots,x_N],\,x_{N_1+1},\dots,x_N\Bigr) \\
&\qquad = (x_1,\dots,x_{N_1})[x_{N_1+1},\dots,x_N].
\end{align*}

Write
\[
A=
\begin{bmatrix}
A_1 & A_2\\
A_3 & A_4
\end{bmatrix},
\]
where $A_1\in\mathbb R^{N_1\times N_1}$ is the leading principal submatrix of $A$ of order $N_1$. For a given $(x_{N_1+j})_{j=1}^{N-N_1}$, the uniqueness of
$
(x_1,\dots,x_{N_1})[x_{N_1+1},\dots,x_N]
$
can be guaranteed by the condition $\rho(A_1)<1$.

Assume now that $\rho(A_1)<1$. Then the uniqueness of $(x_{N_1+1},\dots,x_N)$ satisfying
\[
(f_{N_1+1},\dots,f_N)\Bigl((x_1,\dots,x_{N_1})[x_{N_1+1},\dots,x_N],\,x_{N_1+1},\dots,x_N\Bigr)
=
(x_{N_1+1},\dots,x_N)
\]
can be guaranteed by the condition
$
\rho\bigl(A_4 + A_3(I-A_1)^{-1}A_2\bigr)<1.
$

Thus, updating the first $N_1$ coordinates of the iteration of $(f_i)_{i\in[N]}$ and then updating the remaining $N-N_1$ coordinates yields a slow-fast dynamical update scheme for the family $(f_i)_{i\in[N]}$. In the next result, we show that, in general,
\[
\rho(A)<1
\quad\Longleftrightarrow\quad
\rho(A_1)<1
\ \text{and}\
\rho\bigl(A_4 + A_3(I-A_1)^{-1}A_2\bigr)<1.
\]
In particular, using only Lipschitz coefficients, one cannot establish convergence of the quasi-static updates of $(f_i)_{i=1}^N$ to an equilibrium unless one can already prove convergence of the direct iteration of the maps $(f_i)_{i=1}^N$.

\begin{theorem}\label{thrm:8}
Let $m\in\mathbb N$, let $n_1,\dots,n_m\in\mathbb N$ be arbitrary, and set
$
N:=n_1+\cdots+n_m.
$
Let
$
A=[A_{ij}]_{i,j=1}^m \in \mathbb{R}^{N\times N}_{\ge 0},
A_{ij}\in \mathbb{R}^{\,n_i\times n_j}.
$
Thus $A$ is a nonnegative matrix partitioned into $m\times m$ blocks with arbitrary block
sizes, and each diagonal block $A_{ii}$ is automatically square.

Define recursively a sequence of nonnegative matrices $R_1,\dots,R_m$ as follows:
$
R_1:=A,
$
and for each $k=1,\dots,m-1$, write $R_k$ in the $2\times 2$ block form
\[
R_k=
\begin{bmatrix}
B_k & C_k\\
D_k & E_k
\end{bmatrix},
\]
where
$
B_k\in \mathbb{R}^{\,n_k\times n_k},
C_k\in \mathbb{R}^{\,n_k\times (n_{k+1}+\cdots+n_m)},
$
$
D_k\in \mathbb{R}^{\,(n_{k+1}+\cdots+n_m)\times n_k},$
$
E_k\in \mathbb{R}^{\,(n_{k+1}+\cdots+n_m)\times (n_{k+1}+\cdots+n_m)},
$
and define
$
R_{k+1}:=E_k+D_k(I-B_k)^{-1}C_k,
$
whenever $(I-B_k)^{-1}$ exists.
Then
$
\rho(A)<1
$ if, and only if, $\rho(B_k)<1$
for all $k=1,\dots,m-1,$ and $\rho(R_m)<1.$
\end{theorem}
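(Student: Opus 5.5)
The plan is to reduce Theorem~\ref{thrm:8} to a single two-block statement and then induct on $m$. The core claim is: for a nonnegative matrix
\[
A=\begin{bmatrix} B & C\\ D & E\end{bmatrix}
\]
with $B$ square,
\[
\rho(A)<1 \iff \rho(B)<1 \text{ and } \rho\bigl(E+D(I-B)^{-1}C\bigr)<1 .
\]
Given this, the theorem follows by induction. Applying it to $R_1=A$ gives $\rho(A)<1$ iff $\rho(B_1)<1$ and $\rho(R_2)<1$. Note that whenever $\rho(B_k)<1$, the inverse $(I-B_k)^{-1}=\sum_{n\ge0}B_k^n$ exists and is nonnegative, so $R_{k+1}$ is again nonnegative. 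Hence the claim applies to $R_2$ with its $(n_2,\, n_3+\cdots+n_m)$ splitting, and so on down to $R_m$. In the direction where $\rho(A)<1$ is assumed, the claim also yields $\rho(B_k)<1$ at each stage, so every $R_{k+1}$ is well defined. This makes the ``whenever'' clause in the definition harmless.

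For the two-block claim I would use the Collatz--Wielandt/subinvariance characterization for nonnegative matrices: $\rho(M)<1$ iff there is a positive vector $x$ with $Mx<x$ entrywise. Strictly, this characterization requires irreducibility. To avoid that issue, I would instead use the fact that $\rho(M)<1$ iff $(I-M)^{-1}$ exists and is nonnegative (the standard M-matrix equivalence, valid for all nonnegative $M$). Both directions then go through the block (Schur complement) inversion formula.

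($\Rightarrow$) Suppose $\rho(A)<1$. Then $B$ is a principal submatrix of $A$, and monotonicity of the spectral radius under taking principal submatrices, already used in Lemma~\ref{lem:8}, gives $\rho(B)\le\rho(A)<1$. Next, $(I-A)^{-1}\ge0$ exists, and its lower-right block equals $(I-R)^{-1}$ with $R=E+D(I-B)^{-1}C$ the Schur complement, by the block inversion formula (which is valid since $I-B$ is invertible). So $(I-R)^{-1}$ exists and is nonnegative, and since $R\ge0$ this gives $\rho(R)<1$.

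($\Leftarrow$) Suppose $\rho(B)<1$ and $\rho(R)<1$. Both $(I-B)^{-1}$ and $(I-R)^{-1}$ are nonnegative. Writing $(I-A)^{-1}$ via the block formula, every block is a sum of products of nonnegative matrices: $(I-B)^{-1}$, $C$, $D$, and $(I-R)^{-1}$. Explicitly, the blocks are $(I-B)^{-1}+(I-B)^{-1}C(I-R)^{-1}D(I-B)^{-1}$, $(I-B)^{-1}C(I-R)^{-1}$, $(I-R)^{-1}D(I-B)^{-1}$, and $(I-R)^{-1}$. So $I-A$ is invertible with a nonnegative inverse, and hence $\rho(A)<1$.

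The main obstacle is justifying the M-matrix equivalence for possibly reducible $M\ge0$, namely that $\rho(M)<1$ iff $(I-M)^{-1}$ exists and is $\ge0$. The forward direction follows from the Neumann series. For the converse, I would take a Perron vector $v\ge0$, $v\ne0$, with $Mv=\rho(M)v$, which exists for any nonnegative matrix by Perron--Frobenius. Then $(I-M)v=(1-\rho(M))v$, so
\[
v=(1-\rho(M))(I-M)^{-1}v .
\]
Since $(I-M)^{-1}v\ge0$ and $v\ge0$ is nonzero, $1-\rho(M)>0$. The only remaining checks are the block inversion formula and the bookkeeping of block sizes in the induction, both of which are routine.
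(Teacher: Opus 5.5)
Your proof is correct. It shares the paper's skeleton: reduce to the $2\times 2$ block case, then induct on $m$ by applying that case to $R_k$ with the splitting $(n_k,\,n_{k+1}+\cdots+n_m)$. The engine of the two-block step is different, though.

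\textbf{Your route.} You characterize $\rho(M)<1$ by inverse-positivity of $I-M$ and read everything off the block (Schur-complement) inverse of $I-A$ in both directions. For the forward direction this requires two extra facts: monotonicity of $\rho$ under principal submatrices, to get $\rho(B)\le\rho(A)$, and the identification of $(I-R)^{-1}$ as the lower-right block of $(I-A)^{-1}$.

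\textbf{The paper's route.} The paper uses the block inverse only for ($\Leftarrow$). In both directions it works with the subinvariant vector $x=(I-A)^{-1}\mathbf 1>0$, which satisfies $Ax=x-\mathbf 1<x$. For ($\Rightarrow$) it partitions $x=(u,v)$ and manipulates inequalities to obtain $B_1u<u$ and then $R_2v<v$. The vectors $u$ and $v$ thus serve as explicit positive certificates, in the Collatz--Wielandt spirit of the rest of the paper, and neither principal-submatrix monotonicity nor the $(2,2)$-block identity is needed.

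\textbf{On irreducibility.} Your worry that the subinvariance criterion needs irreducibility is unfounded. If $x>0$ and $Mx<x$ for a nonnegative $M$, then $M$ has operator norm $\max_i (Mx)_i/x_i<1$ in the weighted norm $\max_i |y_i|/x_i$, so $\rho(M)<1$ with no irreducibility assumption. Conversely, $x=(I-M)^{-1}\mathbf 1$ supplies such a vector whenever $\rho(M)<1$. So the paper's argument is rigorous as written, and your Perron-vector proof of the M-matrix equivalence is an equally valid alternative rather than a necessary repair.

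\textbf{Trade-offs.} Your version is symmetric and avoids strictness bookkeeping. The paper asserts $D_1u>D_1(I-B_1)^{-1}C_1v$, which in general holds only with $\ge$. This is harmless, since $E_1v+D_1u<v$ already supplies the strict inequality. The paper's version, in turn, gives a more elementary forward direction with explicit certificate vectors.
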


\begin{proof}
We will proceed with induction on $m$.
First consider the case $m=2$. Then
\[
A=
\begin{bmatrix}
B_1 & C_1\\
D_1 & E_1
\end{bmatrix},
\]
where
$
B_1\in\mathbb R^{n_1\times n_1},
C_1\in\mathbb R^{n_1\times n_2},
D_1\in\mathbb R^{n_2\times n_1},
E_1\in\mathbb R^{n_2\times n_2},
$
and
$
R_2=E_1+D_1(I-B_1)^{-1}C_1.
$
We show that
$
\rho(A)<1
$ if and only if $
\rho(B_1)<1$ and $\rho(R_2)<1.$

Assume first that $\rho(B_1)<1$ and $\rho(R_2)<1$. Since $B_1\ge 0$ and $R_2\ge 0$,
the Neumann series imply
\[
(I-B_1)^{-1}=\sum_{\ell=0}^\infty B_1^\ell \ge 0,
\qquad
(I-R_2)^{-1}=\sum_{\ell=0}^\infty R_2^\ell \ge 0.
\]
Now
\[
I-A=
\begin{bmatrix}
I-B_1 & -C_1\\
-D_1 & I-E_1
\end{bmatrix},
\]
and the Schur complement of $I-B_1$ in $I-A$ is
\[
(I-E_1)-D_1(I-B_1)^{-1}C_1
=
I-R_2.
\]
Hence $I-A$ is invertible, and the block inverse formula yields
\[
(I-A)^{-1}
=
\begin{bmatrix}
(I-B_1)^{-1}+(I-B_1)^{-1}C_1(I-R_2)^{-1}D_1(I-B_1)^{-1}
&
(I-B_1)^{-1}C_1(I-R_2)^{-1}
\\[2mm]
(I-R_2)^{-1}D_1(I-B_1)^{-1}
&
(I-R_2)^{-1}
\end{bmatrix}.
\]
Thus, as $(I-B_1)^{-1}, (I-R_2)^{-1} \ge0$, it follows that $(I-A)^{-1}\ge0$.
Let $y\in\mathbb R^N$ be the vector of all ones and define
$x:=(I-A)^{-1}y.$ Since $(I-A)^{-1}$ exists, $I-A$ has no non-zero rows.
Then $x>0$ and
$(I-A)x=y>0,$
that is,
$
Ax=x-y<x.
$
Since $A\ge 0$, the existence of a vector $x>0$ such that $Ax<x$ implies
$\rho(A)<1$.

Conversely, assume that $\rho(A)<1$. Then
$(I-A)^{-1}=\sum_{\ell=0}^\infty A^\ell \ge 0.$
Let
$x:=(I-A)^{-1}y>0,$
and write $x=\binom{u}{v}$ conformably with the block decomposition of $A$, where
$u\in\mathbb R^{n_1},$ $v\in\mathbb R^{n_2}.$
Then
$Ax=x-y<x,$
so
$B_1u+C_1v<u,$
$D_1u+E_1v<v.$
In particular,
$B_1u<u.$
Since $B_1\ge 0$, this implies $\rho(B_1)<1$. Therefore $(I-B_1)^{-1}\ge 0$ exists, and from
$(I-B_1)u>C_1v$
we obtain
$u>(I-B_1)^{-1}C_1v.$
Multiplying by $D_1\ge 0$ gives
$D_1u>D_1(I-B_1)^{-1}C_1v.$
Using also $D_1u+E_1v<v$, we conclude that
$\bigl(E_1+D_1(I-B_1)^{-1}C_1\bigr)v<v,$
that is,
$R_2v<v.$
Since $R_2\ge 0$, this implies $\rho(R_2)<1$. Thus the claim holds for $m=2$.

Assume now that the theorem is true for all nonnegative block matrices with
$(m-1)\times(m-1)$ block structure, and let $A$ have $m\times m$ blocks. Write $A=R_1$ in the form
$R_1=
\begin{bmatrix}
B_1 & C_1\\
D_1 & E_1
\end{bmatrix},$
where
$B_1\in\mathbb R^{n_1\times n_1},$ 
$C_1\in\mathbb R^{n_1\times (n_2+\cdots+n_m)},$
$D_1\in\mathbb R^{(n_2+\cdots+n_m)\times n_1},$
$E_1\in\mathbb R^{(n_2+\cdots+n_m)\times (n_2+\cdots+n_m)}.$
By the already proved $m=2$ case,
$\rho(R_1)<1$ if and only if $\rho(B_1)<1$ and 
$\rho(R_2)<1,$
where
$R_2=E_1+D_1(I-B_1)^{-1}C_1.$
Now $R_2$ is again a nonnegative matrix with an $(m-1)\times(m-1)$ block structure whose
block sizes are $n_2,\dots,n_m$. Hence the induction hypothesis applies to $R_2$ and yields
$\rho(R_2)<1$ if and only if
$\rho(B_k)<1$ for $k=2,\dots,m-1,$
and
$\rho(R_m)<1.$
Combining the two equivalences gives
\[
\rho(A)=\rho(R_1)<1
\iff
\rho(B_1)<1,\ \rho(B_2)<1,\ \dots,\ \rho(B_{m-1})<1,\ \rho(R_m)<1.
\]
This completes the induction.
\end{proof}

From the perspective of stochastic approximation, Theorem \ref{thrm:8} shows that if the only available information about a system is its Lipschitz coefficients, then multi-time-scale stochastic approximation \cite{borkar1997stochastic} cannot be shown to converge unless the system is already known to converge under single-time-scale updates.

We now discuss several applications of Theorem \ref{thrm:8} to MFGs. In \cite{anahtarci2020value}, for single-population MFGs, consecutive updates of the $Q$-function and the state measure are used to compute a stationary MFE when the only available information about the system consists of Lipschitz bounds on its components. Theorems \ref{thrm:2} and \ref{thrm:8} show that the same conclusion remains valid in the multi-population setting: if one only knows the Lipschitz coefficients of the system, then using consecutive updates of the $Q$-functions and state measures to compute a stationary MFE converges under the same conditions as the quasi-static approach.
\begin{corollary}\label{cor:4}
Consider the map
\[
\left((Q_i)_{i=1}^N,\tau\right)
\in
\prod_{i=1}^N \mathcal C_i \times \prod_{i=1}^N \mathcal P(X)
\;\mapsto\;
\left((H_{1,i}(Q_i,\tau))_{i=1}^N,\,(H_{2,i}(Q_i,\tau))_{i=1}^N\right).
\]
Its contraction matrix on the product space
$
\prod_{i=1}^N (\mathcal C_i,\|\cdot\|_\infty)\times \prod_{i=1}^N (\mathcal P(X),\|\cdot\|_1)
$
has spectral radius strictly less than $1$ if and only if
\[
\sum_{i\in [N]}
\left(
K_i+\frac{\overline L_iK_i}{\rho_i(1-\beta_i)}
\right)
\frac{1}{1-(\overline K_i-K_i)}
<1.
\]
\end{corollary}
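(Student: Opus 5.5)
The plan is to write down the contraction matrix of the joint map explicitly using Lemma~\ref{lem:1}, eliminate the $Q$-block via Theorem~\ref{thrm:8}, and then invoke Theorem~\ref{thrm:2}. Order the coordinates as $(\|Q_1-\tilde Q_1\|_\infty,\dots,\|Q_N-\tilde Q_N\|_\infty,\|\tau_1-\tilde\tau_1\|_1,\dots,\|\tau_N-\tilde\tau_N\|_1)$. By \eqref{eq:q}, the $Q$-rows give a block $A_1=\mathrm{diag}(\beta_1,\dots,\beta_N)$ and a block $A_2$ whose $i$-th row is $\overline L_i\mathbf 1^\top$. By \eqref{eq:m}, the $\tau$-rows give $A_3=\mathrm{diag}(K_1/\rho_1,\dots,K_N/\rho_N)$, together with a block $A_4$ having diagonal entries $\overline K_i$ and off-diagonal entries $K_i$ in row $i$. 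So the contraction matrix is
\[
A=\begin{bmatrix} A_1 & A_2\\ A_3 & A_4\end{bmatrix}\ge 0 .
\]

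Next apply Theorem~\ref{thrm:8} with $m=2$, $n_1=n_2=N$. Since $\rho(A_1)=\beta_{\max}<1$ always holds, we get $\rho(A)<1$ if and only if $\rho(R_2)<1$, where $R_2=A_4+A_3(I-A_1)^{-1}A_2$. Computing, $(I-A_1)^{-1}=\mathrm{diag}\bigl((1-\beta_i)^{-1}\bigr)$, so $A_3(I-A_1)^{-1}A_2$ has $i$-th row equal to $\frac{\overline L_iK_i}{\rho_i(1-\beta_i)}\mathbf 1^\top$. Adding $A_4$ reproduces exactly the matrix $\mathcal M$ of Lemma~\ref{lem:2}: the diagonal is $\overline K_i+\frac{\overline L_iK_i}{\rho_i(1-\beta_i)}$ and the off-diagonal is $K_i+\frac{\overline L_iK_i}{\rho_i(1-\beta_i)}$. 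Hence $\rho(A)<1\iff\rho(\mathcal M)<1$.

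Finally, Theorem~\ref{thrm:2} characterizes $\rho(\mathcal M)<1$ by the displayed sum condition together with $\max_i(\overline K_i-K_i)<1$. The main obstacle is that the corollary states only the sum condition, so this extra requirement must be shown to be implied. The key point is that the sum, read as a sum of positive terms, presupposes $1-(\overline K_i-K_i)>0$. If some $\overline K_i-K_i\ge 1$, then either a denominator vanishes, making the expression undefined, or a term is negative, so the inequality does not have its intended meaning. I would therefore read the statement as including positivity of the denominators, as Theorem~\ref{thrm:2} does, or else argue through $V(1)$ on its natural domain $r=1>\overline K_\infty$. Alternatively, note that the equivalence in Theorem~\ref{thrm:2} was derived via the monotone function \eqref{ff3} on $(\overline K_\infty,\infty)$, and evaluating it at $x=1$ already requires $1>\overline K_\infty$. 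Beyond this interpretive point, the proof reduces entirely to the block computation above.
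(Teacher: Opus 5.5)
Your proposal is correct and follows the paper's route: the paper's one-line proof uses Theorem~\ref{thrm:8} to eliminate the $Q$-block (whose spectral radius is $\beta_{\max}<1$), which reduces the joint contraction matrix to $\mathcal M$ of Lemma~\ref{lem:2}, and then applies Theorem~\ref{thrm:2}; your explicit computation $A_4+A_3(I-A_1)^{-1}A_2=\mathcal M$ fills in exactly that step. Your caveat is also right, and it is not merely interpretive: $\max_i(\overline K_i-K_i)<1$ is genuinely not implied by the displayed sum (for $N=1$ with $\overline K_1-K_1>1$ the left-hand side is negative while $\rho(\mathcal M)\ge\overline K_1>1$), so it has to be read into the statement, which the paper does tacitly by inheriting it from Theorem~\ref{thrm:2}.
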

\begin{proof}
    This result directly follows from Theorems \ref{thrm:2} and \ref{thrm:8} since the $Q$-functions are updated in a quasi-static manner in Theorem \ref{thrm:2}.
\end{proof}

In the next corollary, we show that, in the regularized setting, if one uses only Lipschitz properties of the system, then allowing some populations to update their state measures faster than others does not yield a more relaxed contraction condition for the stationary MFG.

\begin{corollary}\label{cor:5}
Suppose that
$\{1,2,\dots,N\}
=
\underbrace{\{1,2,\dots,N_1\}}_{I_1}
\cup
\underbrace{\{N_1+1,\dots,N\}}_{I_2}.$
For any family of state measures $\pmb{\nu}\in \prod_{j=N_1+1}^N \mathcal P(X),$
define
$\pmb{\tau}(\pmb{\nu})\in \prod_{j=1}^{N_1}\mathcal P(X)$
to be the family of state measures satisfying
\[\pmb{\tau}(\pmb{\nu})
=
\bigl(H_{2,i}(Q^{(\pmb{\tau}(\pmb{\nu}),\pmb{\nu}),i},(\pmb{\tau}(\pmb{\nu}),\pmb{\nu}))\bigr)_{i=1}^{N_1}.
\]

Then the contraction matrix of the map
$$
\pmb{\tau} \mapsto \bigl(H_{2,i}(Q^{(\pmb{\tau},\pmb{\nu}),i},(\pmb{\tau},\pmb{\nu}))\bigr)_{i=1}^{N_1}
$$
on
$\prod_{j=1}^{N_1}(\mathcal P(X),\|\cdot\|_1),$
and the contraction matrix of the map
\[
\pmb{\nu} \mapsto \bigl(H_{2,i}(Q^{(\pmb{\tau}(\pmb{\nu}),\pmb{\nu}),i},(\pmb{\tau}(\pmb{\nu}),\pmb{\nu}))\bigr)_{i=N_1+1}^N
\]
on
$\prod_{j=N_1+1}^N(\mathcal P(X),\|\cdot\|_1),$
both have spectral radius less than $1$ if and only if
\[
\sum_{i\in [N]} \left(K_i+\frac{\overline L_i K_i}{\rho_i(1-\beta_i)}\right)\frac{1}{1-(\overline K_i-K_i)}<1.
\]
\end{corollary}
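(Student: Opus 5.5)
We will reduce Corollary~\ref{cor:5} to Theorems~\ref{thrm:2} and~\ref{thrm:8} applied to the nonnegative matrix $\mathcal M$ of Lemma~\ref{lem:2}. Partition the population indices as $I_1\cup I_2$ and write $\mathcal M$ in $2\times 2$ block form
\[
\mathcal M=\begin{bmatrix} \mathcal M_{11} & \mathcal M_{12}\\ \mathcal M_{21} & \mathcal M_{22}\end{bmatrix},
\]
with $\mathcal M_{11}\in\mathbb R^{N_1\times N_1}$.

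\textbf{Step 1: the fast map.} Applying Lemma~\ref{lem:2} with $\pmb\nu$ fixed (so the $I_2$ components of $\tau-\tilde\tau$ vanish) shows that the contraction matrix of
\[
\pmb\tau\mapsto\bigl(H_{2,i}(Q^{(\pmb\tau,\pmb\nu),i},(\pmb\tau,\pmb\nu))\bigr)_{i=1}^{N_1}
\]
is the principal block $\mathcal M_{11}$.

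\textbf{Step 2: the slow map.} For the map in $\pmb\nu$, we first bound the fast equilibrium. Take two arguments $\pmb\nu,\tilde{\pmb\nu}$ and use the fixed-point identity for $\pmb\tau(\pmb\nu)$ together with Lemma~\ref{lem:2}. This gives the componentwise inequality
\[
d_1\le \mathcal M_{11}d_1+\mathcal M_{12}d_2,
\]
where $d_1$ is the vector of $\|\tau_i(\pmb\nu)-\tau_i(\tilde{\pmb\nu})\|_1$ over $i\in I_1$ and $d_2$ is the vector of $\|\nu_j-\tilde\nu_j\|_1$ over $j\in I_2$. When $\rho(\mathcal M_{11})<1$, the inverse $(I-\mathcal M_{11})^{-1}=\sum_\ell \mathcal M_{11}^\ell\ge0$ exists, so
\[
d_1\le (I-\mathcal M_{11})^{-1}\mathcal M_{12}d_2 .
\]
Plugging this into the $I_2$ rows of Lemma~\ref{lem:2} gives the contraction matrix
\[
\mathcal M_{22}+\mathcal M_{21}(I-\mathcal M_{11})^{-1}\mathcal M_{12},
\]
which is exactly $R_2$ from Theorem~\ref{thrm:8} with $m=2$. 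Existence and uniqueness of $\pmb\tau(\pmb\nu)$ also follow from $\rho(\mathcal M_{11})<1$ by Theorem~\ref{thrm:0}, applied to the subsystem.

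\textbf{Step 3: combine.} By Theorem~\ref{thrm:8} with $m=2$,
\[
\rho(\mathcal M_{11})<1 \text{ and } \rho(R_2)<1 \iff \rho(\mathcal M)<1 .
\]
By Theorem~\ref{thrm:2}, $\rho(\mathcal M)<1$ holds if and only if the displayed sum is $<1$ and $\max_i(\overline K_i-K_i)<1$.

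We then need to remove the extra condition $\max_i(\overline K_i-K_i)<1$, which does not appear in the corollary's statement. If every term of the sum is positive and the sum is $<1$, then each denominator $1-(\overline K_i-K_i)$ must be positive, at least when every denominator is positive. We would argue the implication using the monotone function \eqref{ff3} of Theorem~\ref{thrm:2}, or simply interpret the displayed condition as also encoding positivity of the denominators, as Theorem~\ref{thrm:2} implicitly does.

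\textbf{Main obstacle.} The delicate point is the "only if" direction: the contraction matrices are defined as the specific Lipschitz majorants $\mathcal M_{11}$ and $R_2$ derived above, not arbitrary Lipschitz constants. So the plan must say explicitly that "contraction matrix" refers to these matrices, constructed from Lemma~\ref{lem:2} through the Schur-complement elimination. Once that identification is made, the equivalence is purely the matrix statement of Theorem~\ref{thrm:8}.
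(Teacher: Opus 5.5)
Your Steps 1--3 are what the paper's one-line proof (``follows from Theorems~\ref{thrm:2} and~\ref{thrm:8}'') unpacks to. The fast map has the principal block $\mathcal M_{11}$ as its contraction matrix. The slow map has the Schur complement $R_2=\mathcal M_{22}+\mathcal M_{21}(I-\mathcal M_{11})^{-1}\mathcal M_{12}$. Theorem~\ref{thrm:8} with $m=2$ then turns the pair of conditions into $\rho(\mathcal M)<1$. You are also right that ``contraction matrix'' must mean these particular Lipschitz majorants for the ``only if'' direction to make sense.

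The step that fails is your attempt to discard $\max_i(\overline K_i-K_i)<1$. It cannot be derived from the displayed sum, by monotonicity of \eqref{ff3} or any other way. Your sentence ``each denominator must be positive, at least when every denominator is positive'' is circular. To see that it is false, write $a_i:=K_i+\frac{\overline L_iK_i}{\rho_i(1-\beta_i)}$. By \eqref{eq:fig}, $\overline K_i-K_i=a_i/2$, so the $i$-th term of the sum is $a_i/(1-a_i/2)$, while the $i$-th diagonal entry of $\mathcal M$ is $\frac32 a_i$. Now take $a_i=3$ for every $i$, which is achievable by choosing $\rho_i$ small. The sum equals $-6N<1$, yet $\rho(\mathcal M_{11})\ge \frac92$, so the fast map is not even a contraction. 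Hence the corollary holds only with $\max_i(\overline K_i-K_i)<1$ added, equivalently with all denominators required to be positive. Theorem~\ref{thrm:2} states this condition explicitly, not implicitly. The paper's own proof has the same omission, since citing Theorem~\ref{thrm:2} delivers that extra condition too. Replace your removal argument with the equivalence stated with this condition included, and your proof is complete.
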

\begin{proof}
    This result directly follows from Theorems \ref{thrm:2} and \ref{thrm:8}.
\end{proof}

It follows directly from Lemma \ref{lem:4} that, in the multi-population finite-horizon setting, verifying contractivity of quasi-static updates with different time scales is already nontrivial even with only two populations. Indeed, one must check the spectral radius of
\[
T_2(\overline K_2) + T_2(K_2)\bigl(I - T_1(\overline K_1)\bigr)^{-1} T_1(K_1),
\]
which is more involved than directly controlling the matrix $\mathcal S_T$, due to the presence of the inverse term $\bigl(I - T_1(\overline K_1)\bigr)^{-1}$.

Another consequence of Theorem \ref{thrm:8} is that, even in the finite-horizon setting, the contraction criterion of Theorem \ref{thrm:4} cannot be sharpened using only the Lipschitz properties of the system. In particular, introducing heterogeneous update time scales across populations does not lead to stronger convergence guarantees.

\begin{corollary}\label{cor:6}
Suppose that
$\{1,2,\dots,N\}
=
\underbrace{\{1,2,\dots,N_1\}}_{I_1}\cup \underbrace{\{N_1+1,N_1+2,\dots,N\}}_{I_2}.$
For any
$\pmb \nu^T=(\nu_{t,i})_{t=0,\;i=N_1+1}^{T-1,\;N}\in\prod_{i=N_1+1}^N \{\nu_{0,i}\}\times\prod_{t=1}^{T-1}\prod_{i=N_1+1}^N \mathcal P(X),$
let
$\pmb \tau^T(\pmb \nu^T)\in\prod_{i=1}^{N_1}\{\tau_{0,i}\}\times\prod_{t=1}^{T-1}\prod_{i=1}^{N_1}\mathcal P(X)$
denote the family of state measures defined by
\[
\pmb \tau^T(\pmb \nu^T) = \left((\tau_{0,i})_{i=1}^{N_1},\left(H_{2,i}\bigl(Q_t^{(\pmb \tau(\pmb \nu),\pmb \nu),i},(\tau_t(\pmb \nu),\nu_t)\bigr)\right)_{i=1,\;t=1}^{N_1,\;T-1}\right).
\]

Then the contraction matrices of the maps
\[
\pmb \tau \mapsto \left((\tau_{0,i})_{i=1}^{N_1},\left(H_{2,i}\bigl(Q_t^{(\pmb \tau,\pmb \nu),i},(\tau_t,\nu_t)\bigr)\right)_{i=1,\;t=1}^{N_1,\;T-1}\right)
\]
and
\[
\pmb \nu \mapsto \left((\nu_{0,i})_{i=N_1+1}^{N},\left(H_{2,i}\bigl(Q_t^{(\pmb \tau(\pmb \nu),\pmb \nu),i},(\tau_t(\pmb \nu),\nu_t)\bigr)\right)_{i=N_1+1,\;t=1}^{N,\;T-1}\right)
\]
on
$\prod_{i=1}^{N_1}\{\tau_{0,i}\} \times \prod_{t=1}^{T-1}\prod_{i=1}^{N_1}(\mathcal P(X),\|\cdot\|_1)$
and
$
\prod_{i=N_1+1}^{N}\{\nu_{0,i}\} \times \prod_{t=1}^{T-1}\prod_{i=N_1+1}^{N}(\mathcal P(X),\|\cdot\|_1),$
respectively, have spectral radius less than \(1\) for all $T\ge 0$ if and only if
\[
\inf_{\overline K_\infty < r < \beta_{\max}^{-1}} \sum_{i\in[N]} \frac{ \frac{K_i+\frac{\overline{L_i}K_i}{\rho_i}}{r}+\frac{\overline L_i K_i\beta_i}{\rho_i(1-\beta_i r)}}{1-\frac{\overline K_i-K_i}{r}}<1.
\]
\end{corollary}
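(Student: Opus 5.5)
The plan is to reduce Corollary \ref{cor:6} to Theorem \ref{thrm:8} applied to the finite-horizon majorant $\mathcal S_T$ of Lemma \ref{lem:4}, and then to invoke Theorem \ref{thrm:4}.

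\textbf{Step 1: identify the contraction matrices.} Fix $T$. Reorder the coordinates of $\mathcal S_T$ into two blocks. The first block collects the $N_1(T-1)$ coordinates $(t,i)$ with $i\in I_1$; the second collects the $(N-N_1)(T-1)$ coordinates with $i\in I_2$. This gives
\[
\mathcal S_T=\begin{bmatrix} B & C\\ D & E\end{bmatrix}.
\]
Arguing exactly as in the proof of Lemma \ref{lem:4}, with $\pmb\nu$ frozen, the map $\pmb\tau\mapsto(\ldots)$ has contraction matrix $B$. Indeed, its Lipschitz bound only involves rows and columns indexed by $I_1$, and the $\pmb\nu$-dependence enters only through the frozen coordinates.

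For the second map, the quasi-static solution $\pmb\tau^T(\pmb\nu^T)$ is a fixed point. The inequality of Lemma \ref{lem:4} then gives
\[
\delta\tau\le B\,\delta\tau+C\,\delta\nu ,
\]
where $\delta\tau$ and $\delta\nu$ are the vectors of $\ell_1$-differences. Once $\rho(B)<1$, this yields $\delta\tau\le (I-B)^{-1}C\,\delta\nu$, because $(I-B)^{-1}=\sum_\ell B^\ell\ge0$. Substituting this into the $I_2$ rows gives the contraction matrix
\[
E+D(I-B)^{-1}C .
\]
This is precisely the block elimination $R_2$ of Theorem \ref{thrm:8} with $m=2$. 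The same mechanism already underlies Corollary \ref{cor:5} in the stationary case, with $\mathcal M$ in place of $\mathcal S_T$.

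\textbf{Step 2: apply Theorem \ref{thrm:8} and pass to the limit.} By Theorem \ref{thrm:8} with $m=2$, for each fixed $T$,
\[
\rho(B)<1\ \text{and}\ \rho\bigl(E+D(I-B)^{-1}C\bigr)<1 \iff \rho(\mathcal S_T)<1 .
\]
Therefore both contraction matrices have spectral radius below $1$ for all $T$ if and only if $\rho(\mathcal S_T)<1$ for all $T$.

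Since $(\rho(\mathcal S_T))_T$ is nondecreasing (Lemma \ref{lem:8}), the condition $\rho(\mathcal S_T)<1$ for all $T$ is implied by $\lim_T\rho(\mathcal S_T)<1$. Theorem \ref{thrm:4} shows that $\lim_T\rho(\mathcal S_T)<1$ is equivalent to the stated variational inequality.

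\textbf{Main obstacle: the converse direction.} From $\rho(\mathcal S_T)<1$ for every $T$ we only obtain $\lambda_\infty\le1$, not $\lambda_\infty<1$. To close this gap I would first try to show that $\lambda_T<\lambda_\infty$ strictly for every $T$, combined with the characterization $\lambda_\infty=\inf_r\rho(B(r))$ from Proposition \ref{prop:quant}.

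The strict inequality $\lambda_T<\lambda_\infty$ would follow from strict monotonicity of the spectral radius for irreducible nonnegative matrices: $\mathcal S_T$ is a proper principal submatrix of $\mathcal S_{T+1}$, which is irreducible. However, this gives $\lambda_T<1$ for all $T$ whenever $\lambda_\infty\le1$, so it does not by itself exclude the boundary case $\lambda_\infty=1$.

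The honest resolution is therefore to interpret the corollary as the paper's Theorem \ref{thrm:4} does, namely as the statement $\lim_T\rho(\mathcal S_T)<1$, which is uniform in $T$. Under that reading the equivalence is exact, and the remaining care goes into the routine check that the frozen-$\pmb\nu$ map really has contraction matrix exactly the principal block $B$, including the boundary rows at $t=0$.
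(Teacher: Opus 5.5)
Your proposal is correct and follows the same route as the paper. The paper's proof is a one-line citation: it names Theorems \ref{thrm:2} and \ref{thrm:4}, but what is actually used is the block-elimination result, Theorem \ref{thrm:8}, applied to $\mathcal S_T$ split along $I_1\cup I_2$. Your Step 1 simply makes explicit the paper's implicit identification of the two contraction matrices as $B$ and $E+D(I-B)^{-1}C$.

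The boundary issue you raise is genuine, and the paper does not address it. By irreducibility, $\lambda_T<\lambda_\infty$ for every $T$. So if $\lambda_\infty=1$, both maps have spectral radius below $1$ for all $T$, while Proposition \ref{prop:quant} and Lemma \ref{lem:61} give $\inf_r V(r)\ge 1$. The literal ``for all $T$'' converse therefore fails in this case. The uniform reading $\lim_T\rho(\mathcal S_T)<1$ that you adopt is the one under which the corollary actually holds.
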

\begin{proof}
    This result directly follows from Theorems \ref{thrm:2} and \ref{thrm:4}.
\end{proof}

\section{Conclusion}
In this work, we have presented contraction conditions for multi-population discrete-time regularized MFGs. Using the variational nature of these contraction conditions, we have obtained error bounds between finite-horizon and infinite-horizon MFE. Furthermore, we have shown that these contraction conditions cannot be improved by introducing slow-fast dynamics when the only knowledge about the system is the Lipschitz coefficients.

The variational contraction conditions we have presented capture the asymptotics of the spectral radius that correspond to the contraction matrix in our setting, and provides a more general working conditions under which one can obtain error bounds between finite-horizon and infinite-horizon MFE compared to the one given in \cite{ayd}. However, our method does not provide a tight lower bound for the spectral radius in the finite-horizon setting unlike \cite[Theorem 2]{ayd}. We expect that spectral radius in the multi-population case should admit a lower bound similar to the one derived in \cite{ayd}. Establishing a tight lower bound for $\rho(\mathcal S_T)$ is an important direction for future research, as it would provide further insight into the asymptotic behavior of contraction rates for finite-horizon MFE in the multi-population setting.

A further research direction of interest would be to identify a continuous-state, continuous-action framework for studying the contraction properties of multi-population MFGs.
\nocite{aydin2025approximation}
\printbibliography
\appendix
\section{Proof of Lemma \ref{lem:1}}\label{app:a}
\begin{proof}[Proof of Lemma \ref{lem:1}]
First, we will prove the inequality \eqref{eq:q}. Using the triangle inequality, we obtain that
    \begin{align*}
&\|H_{1,i}(Q^1,\tau) - H_{1,i}(Q^2,\tilde \tau)\|_{\infty} 
\\&= \sup_{(x,u)\in X\times \mathcal P(A)}\bigg | C_i(x,u,\tau_1,\cdots,\tau_N) - C_i(x,u,\tilde \tau_1,\cdots,\tilde \tau_N) 
\\& \qquad \qquad + \beta_i \sum_{y \in X} Q^1_{\min}(y)P_i(y|x,u,\tau_1,\cdots,\tau_N)-\beta_i \sum_{y \in X} Q^2_{\min}(y)P_i(y|x,u,\tilde \tau_1,\cdots,\tilde \tau_k)\bigg|
\\& \leq \sup_{(x,u)\in X\times \mathcal P(A)} |C_i(x,u,\tau_1,\cdots,\tau_N)-C_i(x,u,\tilde \tau_1,\cdots,\tilde \tau_N)|
\\& \qquad + \sup_{(x,u) \in X\times \mathcal P(A)} \beta_i\underbrace{ \left | \sum_{y \in X} Q^1_{\min}(y)P_i(y|x,a,\tau_1,\cdots,\tau_N) - \sum_{y \in X} Q^2_{\min} P_i(y|x,a,\tilde \tau_1,\cdots,\tilde \tau_N) \right |}_{(\ast)}.
\end{align*}

We further bound the term $(\ast)$ as follows:
    \begin{align*}
    &\left | \sum_{y \in X} Q^1_{\min}(y)P_i(y|x,a,\tau_1,\cdots,\tau_N) - \sum_{y \in X} Q^2_{\min}P_i(y|x,a,\tilde \tau_1,\cdots,\tilde \tau_N) \right | 
    \\&\leq \left | \sum_{y \in X} Q^1_{\min}(y)\left(P_i(y|x,a,\tau_1,\cdots,\tau_N)- P_i(y|x,a,\tilde \tau_1,\cdots,\tilde \tau_N)\right)  \right|
    \\& \qquad + \left| \sum_{y \in X} (Q^1_{\min}-Q^2_{\min})(y) P_i(y|x,u,\tilde \tau_1,\cdots,\tilde \tau_N)\right|.
\end{align*}
Note that
\[
\left| \sum_{y \in X} (Q^1_{\min}-Q^2_{\min})(y) P_i(y|x,u,\tilde \tau_1,\cdots,\tilde \tau_N)\right| \le \| Q^1_{\min}- Q^2_{\min}\|_{\infty}.
\]
Let $\| Q^1_{\min}\|_{\mathrm{Lip}}$ be the smallest Lipschitz coefficient of $Q^1_{\min}$ over $X$. Using the concentration inequality \cite[Lemma A.2]{kontorovich2008concentration}, we directly obtain
\begin{align*}
& \left | \sum_{y \in X} Q^1_{\min}(y)\left(P_i(y|x,a,\tau_1,\cdots,\tau_N)- P_i(y|x,a,\tilde \tau_1,\cdots,\tilde \tau_N)\right)  \right| 
\\& \le \frac{\| Q^1_{\min}\|_{\mathrm{Lip}}}{2} \| P_i(\cdot|x,a,\tau_1,\cdots,\tau_N) - P_i(\cdot|x,a,\tilde \tau_1,\cdots,\tilde \tau_N) \|_1
\\&\le \frac{K_i}2\| Q^1_{\min} \|_{\mathrm{Lip}}\sum_{j \in [N]}\|\tau_j-\tilde \tau_j\|_{1},
\end{align*}
where the last line is due to Assumption \ref{ass:1}. Thus, one obtains
\begin{align*}
    &\left | \sum_{y \in X} Q^1_{\min}(y)\left( P_i(y|x,a,\tau_1,\cdots,\tau_N)- P_i(y|x,a,\tilde\tau_1,\cdots,\tilde \tau_N)\right)  \right| 
    \\& \qquad + \left| \sum_{y \in X} (Q^1_{\min}-Q^2_{\min})(y) P_i(y|x,u,\tilde \tau_1,\cdots,\tilde \tau_N)\right|
    \\& \le \frac{K_i}2\| Q^1_{\min} \|_{\mathrm{Lip}}\sum_{j \in [N]}\|\tau_j-\tilde \tau_j\|_{1} + \| Q^1_{\min}- Q^2_{\min}\|_{\infty},
\end{align*}
It then follows that
\begin{align*}
&\sup_{(x,u) \in X \times \mathcal P(A)}\beta_i \left | \sum_{y \in X} Q^1_{\min}(y)P_i(y|x,a,\tau_1,\cdots,\tau_N) - \sum_{y \in X} Q^2_{\min}(y)P_i(y|x,a,\tilde \tau_1,\cdots,\tilde \tau_N) \right |
\\& \leq \sup_{(x,u) \in X \times \mathcal P(A)}\beta_i K_i\| Q^1_{\min} \|_{\mathrm{Lip}}\sum_{j \in [N]}\|\tau_j-\tilde \tau_j\|_{1} + \beta_i\| Q^1_{\min}- Q^2_{\min}\|_{\infty}
\\& \le \frac{K_i\beta_i}2\| Q^1_{\min} \|_{\mathrm{Lip}}\sum_{j \in [N]}\| \tau_j - \tilde \tau_j\|_1 + \beta_i \| Q^1_{\min}-Q^2_{\min}\|_{\infty}.
\end{align*}
Combining all the terms above, we then obtain
\[
\|H_{1,i}(Q^1,\tau) - H_{1,i}(Q^2,\tilde \tau)\|_{\infty} \le\left(L_i+\frac{K_i\beta_i}2\|Q^1_{\min}\|_{\mathrm{Lip}}\right)\sum_{j \in [N]}\|\tau_j-\tilde \tau_j\|_1+\beta_i \| Q^1_{\min}- Q^2_{\min}\|_{\infty}.
\]
It remains to calculate $\|Q^1_{\min}\|_{\mathrm{Lip}}$. First, note that $\|Q^1_{\min}\|_{\mathrm{Lip}}\le \sup_{u \in \mathcal P(A)}\sup_{x \not = y} |Q^1(x,u)-Q^1(y,u)|.$ Since $Q^1(x,u) = \sum_{a \in A}\mathfrak q^1(x,a)u(a)+\rho_i\Omega_i$ for some $\mathfrak q^1 \in \tilde{\mathcal C}_i$, directly from the definition of $\mathfrak q^1$ we obtain
\[
\sup_{u \in \mathcal P(A)} |Q^1(x,u)-Q^1(y,u)|\le \sup_{a \in A} \| \mathfrak q^1(x,\cdot)\|_{\mathrm{Lip}} \le \frac{L_i}{1-\frac{\beta_i K_i}{2}},
\]
from which \eqref{eq:q} follows.

For the rest of the proof, we will focus on showing that \eqref{eq:m} holds. We start with the triangle inequality:
    \begin{align*}
        &\|H_{2,i}(Q^1,\tau)- H_{2,i}(Q^2,\tilde \tau)\|_1 
        \\&= \sum_y \left|\sum_x P_i(y|x, \textrm{argmin}_{u \in \mathcal P(A)} Q^1(x,u),\tau_1,\cdots,\tau_N)\tau_i(x) - \sum_x P_i(y|x,\textrm{argmin}_{u \in \mathcal P(A)} Q^2(x,u), \tilde \tau_1,\cdots,\tilde \tau_N) \tilde \tau_i(x)\right|
        \\& \le \sum_y \left|\sum_x P(y|x,\textrm{argmin}_{u \in \mathcal P(A)} Q^1(x,u),\tau_1,\cdots,\tau_N)\tau_i(x) - \sum_x P_i(y|x, \textrm{argmin}_{u \in \mathcal P(A)} Q^2(x,u), \tilde \tau_1,\cdots,\tilde \tau_n)\tau_i(x)\right| 
        \\& \qquad +\sum_y \left|\sum_x P_i(y|x, \textrm{argmin}_{u \in \mathcal P(A)} Q^2(x,u), \tilde \tau_1,\dots,\tilde \tau_N)\tau_i(x) - \sum_x P_i(y|x, \textrm{argmin}_{u \in \mathcal P(A)} Q^2(x,u), \tilde \tau_1,\cdots,\tilde \tau_N) \tilde \tau_i(x)\right|.
    \end{align*}
    Consequently, using Assumption \ref{ass:1}, we must have
    \begin{align*}
    &\|H_{2,i}(Q^1,\tau_1,\cdots,\tau_N)- H_{2,i}(Q^2,\tilde \tau_1,\cdots,\tilde \tau_N)\|_1 
    \\&\leq \sum_x \left\| P_i(\cdot|x,\textrm{argmin}_{u \in \mathcal P(A)} Q^1(x,u),\tau_1,\cdots,\tau_N) - P_i(\cdot|x,\textrm{argmin}_{u \in \mathcal P(A)} Q^2(x,u), \tilde \tau_1,\cdots,\tilde \tau_N) \right\|_1 \tau_i(x) 
    \\&\qquad + \frac{K_i}2\left( 1 + \frac{\overline L_i}{\rho_i}\right) \|\tau_i-\tilde{\tau}_i\|_1 \\
    &\leq K_1\left( \sup_x \left\|\textrm{argmin}_{u \in \mathcal P(A)} Q^1(x,u) -\textrm{argmin}_{u \in \mathcal P(A)} Q^2(x,u) \right\|_1 + \sum_{j \in [N]}\|\tau_j - \tilde \tau_j\|_1 \right)
    \\& \qquad+\frac{K_i}2\left( 1 + \frac{\overline L_i}{\rho_i}\right) \|\tau_i-\tilde \tau_i\|_1 \\
    &\leq \frac{K_i}{\rho_i}\| Q^1 - Q^2\|_{\infty}+\overline K_i \|\tau_i-\tilde \tau_i\|_{1} + K_i \sum_{j \not = i} \| \tau_j-\tilde \tau_j\|_1,
\end{align*}
where on the last line we have used that 
\[
 \sup_x \left\|\textrm{argmin}_{u \in \mathcal P(A)} Q^1(x,u) -\textrm{argmin}_{u \in \mathcal P(A)} Q^2(x,u) \right\|_1 \le \frac{1}{\rho_i} \| Q^1-Q^2\|_{\infty};
\]
see \cite[Proposition 3]{anahtarci2023q}. In particular, we have obtained \eqref{eq:m}.
\end{proof}

\end{document}